\theoremstyle{remark}
\theoremstyle{definition}
\newtheorem{defi}{Definition}[section]
\newtheorem{const}[defi]{Construction}
\newtheorem{expl}[defi]{Example}
\newtheorem{rmk}[defi]{Remark}
\newtheorem*{conv}{Convention}
\newtheorem*{plan}{Organization of the paper}
\newtheorem*{merci}{Acknowlegments}
\newtheorem{notat}[defi]{Notation}
\theoremstyle{plain}
\newtheorem{pro}[defi]{Proposition}
\newtheorem{thm}[defi]{Theorem}
\newtheorem*{thmA}{Theorem A}
\newtheorem*{thmB}{Theorem B}
\newtheorem*{thmC}{Theorem C}
\newtheorem*{thmD}{Theorem D}
\newtheorem*{thmE}{Theorem E}
\newtheorem{cor}[defi]{Corollary}
\newtheorem{conj}[defi]{Conjecture}
\newtheorem{lmm}[defi]{Lemma}
\theoremstyle{remark}
\newtheorem*{sproof}{Sketch of proof}
\renewcommand{\Top}{{\mathrm{Top}}}
\newcommand{\hocolim}{\operatorname{hocolim}}
\title{Delooping of high-dimensional spaces of string links}
\author{J. Ducoulombier \thanks{The author was supported by the grant ERC-2015-StG 678156 GRAPHCPX \\ \textbf{2010 Mathematics Suject Classification} Primary:18D50, 55P48, 57R45, Secondary: 55S37, 57Q35, 57Q45, 57R40.\\
\textbf{Keywords}: Goodwillie-Weiss manifold calculus, embedding spaces, delooping, 
little discs operad, bimodules and infinitesimal bimodules over an operad, smooth mappings avoiding a
multi-singularity. }}
\date{}
\begin{document}

\maketitle \vspace{-30pt}

\abstract{We study a connection between a  multivariable version of the Goodwillie-Weiss' calculus of functors and derived mapping spaces of $k$-fold bimodules over a family of  operads extending the results obtained in \cite{Ducoulombier18}. As our main application, under the assumption $d_{i}+3\leq n$ for all $i\in \{1,\ldots,k\}$,  we produce explicit deloopings of high-dimensional spaces of string links modulo immersions from $\sqcup_{i}\mathbb{R}^{d_{i}}$ to $\mathbb{R}^{n}$ and their polynomial approximations.}

\tableofcontents \vspace{-10pt}

\section*{Introduction}

In the present work, we give explicit deloopings of high-dimensional generalizations of spaces of string links modulo immersions and their polynomial approximations in the sense of Goodwillie-Weiss. Let $n$ be the dimension of the ambient space. For $k\geq 1$ and fixed integers $d_{1}, \cdots, d_{k} \geq 1$, a high-dimensional string link of $k$-strands of dimensions $d_{1},\ldots,d_{k}$, respectively, is a smooth embedding 
$$
f:\displaystyle\coprod_{1\leq i\leq k}\mathbb{R}^{d_{i}}\longrightarrow \mathbb{R}^{n}
$$
that coincides outside a compact set with a fixed smooth embedding $\iota$ affine on each components. The space of high-dimensional string links with $k$ strands, endowed with the weak $\mathcal{C}^{\infty}$-topology, is denoted by $Emb_{c}(\,\coprod_{i=1}^{k}\mathbb{R}^{d_{i}}\,;\, \mathbb{R}^{n})$. Similarly, we consider the space of smooth immersions $Imm_{c}(\,\coprod_{i=1}^{k}\mathbb{R}^{d_{i}}\,;\, \mathbb{R}^{n})$ which coincide with $\iota$ outside a compact set and we focus our attention on the homotopy fiber over $\iota$ denoted by
\begin{equation}\label{M5}
\mathcal{L}(d_{1},\ldots,d_{k}\,;\,n) \coloneqq hofib\left(\,
Emb_{c}\left(\,\underset{1\leq i\leq k}{\coprod}\mathbb{R}^{d_{i}}\,;\, \mathbb{R}^{n}\,\right)\longrightarrow 
Imm_{c}\left(\,\underset{1\leq i\leq k}{\coprod}\mathbb{R}^{d_{i}}\,;\, \mathbb{R}^{n}\,\right)\,
\right).\vspace{10pt}
\end{equation}

For $k=1$ and $d+3\leq n$, the homotopy fiber (\ref{M5}), also called the high-dimensional space of long knots modulo immersions in the literature, has been intensively studied  (see \cite{Budney07,Dwyer12,Sakai14,Sinha09,Turchin04}) and is well understood. In particular, Fresse, Turchin and Willwacher in \cite{FresseTWbig} express the rational homotopy type of $\mathcal{L}(d\,;\,n)$ in terms of graph complexes using the following description of the homotopy fiber as a $(d+1)$-iterated loop space:
\begin{equation}\label{M2}
\mathcal{L}(d\,;\,n)\,\, \simeq \,\,\Omega^{d}Bimod_{\mathcal{C}_{d}}^{h}(\mathcal{C}_{d}\,;\,\mathcal{C}_{n})\,\, \simeq \,\,\Omega^{d+1}Operad^{h}(\mathcal{C}_{d}\,;\,\mathcal{C}_{n}).\footnote{Similar results have been obtained by Boavida-Weiss \cite{Weiss15} in the context of framed embeddings using configuration categories. Another delooping theorem has also been obtained by Sakai \cite{Sakai14} in terms of topological Stiefel manifold. Furthermore, Dwyer and Hess are working on an alternative proof using the Boardman-Vogt tensor product in the category of bimodules \cite{Dwyer14} while Batanin and DeLeger are also working on a proof based on an interpretation of bimodules over an operad in terms of algebras over a polynomial monad \cite{BataninDL17}.\vspace{-25pt}}
\end{equation}
The weak homotopy equivalences have been proved by the author in collaboration with Turchin \cite{Ducoulombier17,Ducoulombier18} using the category $Bimod_{O}$ of bimodules over an operad $O$. If we think about an operad as a monoid in the category of sequences with the plethysm, then a bimodule is a sequence $M=\{M(n),\,n\geq 0\}$ which is a left and right module over $O$. For instance, each operad is a bimodule over itself and each morphism of operads $\eta:O_{1}\rightarrow O_{2}$ is a morphism of bimodules over $O_{1}$. In the identifications (\ref{M2}), $\mathcal{C}_{d}$ is the $d$-dimensional little cubes operad  which is an equivalent version of the well known little disks operads encoding the structure of iterated loop spaces. The categories of operads and bimodules are equipped with model category structures and the symbol $"h"$ refers to the derived mapping space.\\{}\\ \vspace{-7pt}

For $k>1$, the homotopy fiber (\ref{M5}) is more mysterious. It is already known that the space $\mathcal{L}(d_{1},\ldots,d_{k}\,;\,n)$ has the homotopy type of a $d$-iterated loop space with $d=min\{ d_{1},\ldots, d_{k}\}$. Unfortunately, the technique developed to prove $(\ref{M2})$ doesn't generalized easily to the case $k>1$. Nevertheless, Tsopm\'en\'e and Turchin in \cite{Turchin15} have been able to get some information about the rational homotopy and rational homology of $\mathcal{L}(d_{1},\ldots,d_{k}\,;\,n)$ using graph complex homology under the condition $2d_{i}+2\leq n$ for all $i\in \{1,\ldots,k\}$. Without explicit description of the iterated loop space, we don't know how to improve this codimension condition. In the present paper, we show that the homotopy fiber (\ref{M5}) admits a description in terms of $d$-iterated loop space  similar to (\ref{M2}) under the condition $d_{i}+3\leq n$ for all $i\in \{1,\ldots,k\}$.

\newpage

For this purpose, we introduce the category of $k$-fold bimodules over $\vec{O}$ where $\vec{O}$ denotes a family of operadic maps $f_{i}:O_{i}\rightarrow O$, with $i\in \{1,\ldots,k\}$. This category is denoted by $Bimod_{\vec{O}}$. Roughly speaking, an object in $Bimod_{\vec{O}}$ is a family of spaces 
$$
M(n_{1},\ldots,n_{k}),\hspace{15pt} \text{ with } n_{i}\in \mathbb{N}\cup \{+\} \text{ and } (n_{1},\ldots,n_{k})\neq (+,\ldots,+),
$$
which is a bimodule over the operad $O_{i}$ along the $i$-th variable. The left module structure admits some constraints encoded by the operad $O$. For the family of operadic maps $f_{i}:\mathcal{C}_{d_{i}}\rightarrow \mathcal{C}_{n}$ and $n>d_{i}$, the two following families of spaces are examples of $k$-fold bimodules:
$$
\mathbb{O}^{+}(n_{1},\ldots,n_{k})=\underset{\substack{1\leq i \leq k \\ n_{i}\neq +}}{\prod} \mathcal{C}_{d_{i}}(n_{i}) \hspace{15pt}\text{and}\hspace{15pt} \mathcal{R}^{k}_{n}(n_{1},\ldots,n_{k}) = \mathcal{R}_{n}\left( \underset{\substack{1\leq i \leq k \\ n_{i}\neq +}}{\textstyle\sum} n_{i} \right),
$$
where $\mathcal{R}_{n}$ is an equivalent version of the little cubes operad called the $n$-dimensional little rectangles operad. The aim of this paper is to study model category structures on $Bimod_{\vec{O}}$ and to prove the following statement:

\begin{thmA}[Theorem \ref{R3}]
For $\vec{O}=\{f_{i}:\mathcal{C}_{d_{i}}\rightarrow\mathcal{C}_{n}\}$ with $d_{i}+3\leq n$ for all $i\in \{1,\ldots,k\}$, one has 
\begin{equation*}
\mathcal{L}(d_{1},\ldots,d_{k}\,;\,n)\,\,\simeq\,\, \Omega^{d}\mathbb{F}'_{\vec{O}}(\mathcal{R}_{n}^{k}),
\end{equation*}
where $\mathbb{F}'_{\vec{O}}(\mathcal{R}_{n}^{k})$ is defined as a limit of mapping spaces of $k$-fold bimodules. In particular, if $d_{1}=\cdots =d_{k}=d$, then 
\begin{equation*}
\mathcal{L}(d,\ldots,d\,;\,n)\,\,\simeq\,\, \Omega^{d}Bimod_{\vec{O}}^{h}(\mathbb{O}^{+}\,;\,\mathcal{R}_{n}^{k}).
\end{equation*} 
\end{thmA}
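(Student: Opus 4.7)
The strategy is to mirror the three-step proof of the single-strand identifications in (\ref{M2}) from \cite{Ducoulombier17,Ducoulombier18}, replacing each step by its $k$-variable analogue built from the category $Bimod_{\vec{O}}$ of $k$-fold bimodules introduced in the earlier sections of this paper.

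\textbf{Convergence.} The codimension hypothesis $n-d_i\geq 3$ is exactly the classical Goodwillie-Klein estimate, so a multivariable form of the Goodwillie-Weiss convergence theorem applies simultaneously to the embedding and immersion functors. Taking homotopy fibers over $\iota$ yields
\begin{equation*}
\mathcal{L}(d_1,\ldots,d_k\,;\,n)\,\simeq\, \underset{\vec{T}}{\lim}\, \mathcal{L}^{\vec{T}}(d_1,\ldots,d_k\,;\,n),
\end{equation*}
where $\vec{T}=(T_1,\ldots,T_k)$ indexes the stages of the multivariable Taylor tower.

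\textbf{Bimodule identification and delooping.} Each stage $\mathcal{L}^{\vec{T}}$ is to be identified with a derived mapping space in a truncation of $k$-fold \emph{infinitesimal} bimodules over $\vec{O}$, the source being a truncation of $\mathbb{O}^{+}$ and the target the corresponding truncation of $\mathcal{R}_n^k$; this extends the classical Boavida-Weiss / Arone-Turchin configuration-space description of the Goodwillie-Weiss stages to the multi-strand setting. A delooping theorem then compares such $k$-fold infinitesimal-bimodule mapping spaces with $d$-fold loop spaces of mapping spaces in $Bimod_{\vec{O}}$, with $d=\min\{d_1,\ldots,d_k\}$, the $d$ loop directions arising from the little $d_i$-cubes structure on the sources. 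Passing to the limit over $\vec{T}$ on both sides, one obtains the first presentation $\Omega^{d}\,\mathbb{F}'_{\vec{O}}(\mathcal{R}_n^k)$. In the homogeneous case $d_1=\cdots=d_k=d$, the bimodule $\mathbb{O}^{+}$ is sufficiently free (in the appropriate model-categorical sense) that the limit defining $\mathbb{F}'_{\vec{O}}$ collapses to the single derived mapping space $Bimod^{h}_{\vec{O}}(\mathbb{O}^{+}\,;\,\mathcal{R}_n^k)$, giving the second formula.

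\textbf{Main obstacle.} The delicate point will be the delooping theorem. For $k=1$ it rests on a Boardman-Vogt-style cofibrant resolution of $\mathcal{C}_d$ as a bimodule and on a careful comparison between simplicial and cosimplicial models. For $k>1$ the $k$ left actions are coupled through the ambient operad $\mathcal{C}_n$, so the correct replacement is a simultaneous multi-simplicial resolution of all $k$ strands that is compatible both with this coupling and with the diagonal little-cubes structure producing the $d$ loop factors. Establishing such a resolution in the model structure on $Bimod_{\vec{O}}$ studied earlier in the paper, and checking that it is sent by the limit construction $\mathbb{F}'_{\vec{O}}$ to a genuine $d$-fold loop space, is where the main technical effort is concentrated.
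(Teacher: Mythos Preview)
Your overall architecture matches the paper's: convergence of the multivariable tower under $n-d_i\geq 3$, identification of the limit with a derived mapping space of $k$-fold \emph{infinitesimal} bimodules (Theorem~\ref{K2}; note the source there is $\mathbb{O}$, not $\mathbb{O}^+$), a delooping theorem passing to $k$-fold bimodules (Theorem~\ref{L6}), and simplification in the homogeneous case. However, your sketch of the delooping step misses two essential ingredients and misidentifies a third.

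First, the delooping (Theorem~\ref{D3}) requires the operads $O_i$ to be $2$-reduced and \emph{coherent}, a condition expressed via certain homotopy colimits over posets $\Psi_k$ of pearled trees being equivalences (Definition~\ref{N1}). The little cubes operads $\mathcal{C}_{d_i}$ are only weakly $2$-reduced, so Theorem~\ref{D3} does not apply directly. The paper handles this by a detour through the Fulton-MacPherson operads $\mathcal{F}_{d_i}$, which are genuinely $2$-reduced, using the zig-zag $\mathcal{C}_d\leftarrow\mathcal{W}(\mathcal{C}_d)\to\mathcal{F}_d$ together with Quillen equivalences of the relevant bimodule categories (Proposition~\ref{Q8}, Theorem~\ref{R1}). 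Your proposal does not anticipate this obstruction.

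Second, the resolutions are not multi-simplicial. The paper constructs an explicit alternative cofibrant replacement $\overline{\mathcal{I}b}(\mathbb{O})$ of $\mathbb{O}$ as a semi-direct product of the Boardman-Vogt resolution $\mathcal{B}^\Lambda(\mathbb{O}^+)$ (indexed by $k$-fold trees with section) with an intermediate sequence $\mathcal{I}$ (indexed by pearled trees carrying internal/external edge labellings). The map $\gamma$ is written down concretely on this model (Construction~\ref{L1}, equation~(\ref{Q0})), and its being an equivalence is reduced, via a fiber-by-fiber comparison of towers of section-extension spaces, to the coherence of each $O_i$ separately.

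Finally, the collapse in the homogeneous case is not about freeness of $\mathbb{O}^+$: when $d_1=\cdots=d_k$ one has $\vec{O}(2)=O(2)$ and the maps $\delta_i\colon\Sigma\vec{O}(2)\to\Sigma O_i(2)$ of~(\ref{P3}) become identities, so the $C_k$-diagram defining $\mathbb{F}'_{\vec{O}}(M)$ degenerates and its limit is just $Bimod^h_{\vec{O}}(\mathbb{O}^+;M)$.
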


The above theorem is proved in two steps. We introduce the notion of $k$-fold infinitesimal bimodule over a family of operadic maps. The first step consists in linking derived mapping spaces of $k$-fold infinitesimal bimodules with embedding spaces whose source manifold has $k$ components. For this purpose, we use a multivariable version of the Goodwillie-Weiss' calculus of functors developped in  \cite{Weiss99.2,Munson14}. The second step connects derived mapping spaces of $k$-fold bimodules and $k$-fold infinitesimal bimodules. The rest of the introduction gives more detailed account of these two steps.

\subsubsection*{First step: Multivariable manifold calculus theory}

This step relies on the papers \cite{Arone14, Munson14}. The theory of manifold calculus of functors developed by Weiss \cite{Weiss99} and Goodwillie-Weiss \cite{Weiss99.2} provides an approximation of contravariant functors $F:\mathcal{O}(M)\rightarrow Top$, where $M$ is a smooth manifold and $\mathcal{O}(M)$ is the poset of its open subsets. They build a tower 
$$
\xymatrix{
& F \ar[dl] \ar[d] \ar[dr] \ar[drr] & & & \\
T_{0}F & T_{1}F\ar[l] & T_{2}F \ar[l] & T_{3}F \ar[l] & \cdots \ar[l]
}
$$ 
which converges to $F$ under some conditions on the functor. By converging, we mean that the induced natural transformation $F\rightarrow T_{\infty}F$, where $T_{\infty}F$ is the limit of the tower, is a pointwise weak homotopy equivalence. The $r$-th polynomial approximation $T_{r}F$ has the advantage to be easier to understand than the initial functor. For instance, Arone and Turchin \cite[Theorem 5.9]{Arone14} prove that $T_{r}F$ can be identified with a derived mapping space of infinitesimal bimodules (or just right modules depending on the context) over the little cubes operad (see Definition \ref{M3}).\\{}\\ \vspace{-7pt}

Thereafter, this theory has been extended to manifold $M$ with many components by Munson and Volic \cite{Munson12} in order to study the usual space of string links (i.e. in the particular case $d_{1}=\cdots = d_{k}=1$). From the homotopy fiber (\ref{M5}) viewed as a contravariant functor 
$$
\vec{\mathcal{U}}=\big(\, \mathcal{U}_{1},\ldots,\mathcal{U}_{k}\,\big)\longmapsto  \mathcal{L}(\vec{U})=hofib\left(\,
Emb_{c}\left(\,\vec{\mathcal{U}}\,;\, \mathbb{R}^{n}\,\right)\longrightarrow 
Imm_{c}\left(\,\vec{\mathcal{U}}\,;\, \mathbb{R}^{n}\,\right)\,
\right)
$$ 
where $\mathcal{U}_{i}$ ranges over a certain category of open subsets of $\mathbb{R}^{d_{i}}$, this theory provides a multivariable polynomial approximation $ T_{\vec{r}}\,\mathcal{L}(-)$, for any $\vec{r}=(r_{1},\ldots,r_{k})\in \mathbb{N}^{k}$. The $\vec{r}$-th approximation has the property to be homotopically characterized by open subsets for which $\mathcal{U}_{i}$ is diffeomorphic to at most $r_{i}$ disjoint open cubes and one anti-cube (see Section \ref{M9}). Similarly to \cite{Arone14} for high-dimensional space of long knots modulo immersions, each polynomial approximation can be identified with derived mapping space of truncated $k$-fold infinitesimal bimodules.\\{}\\ \vspace{-7pt}

The category of $k$-fold infinitesimal bimodules over $\vec{O}$, denoted by $Ibimod_{\vec{O}}$, has for objects families of spaces $N=\{N(n_{1},\ldots,n_{k}), \,n_{i}\in \mathbb{N}\}$ which are right modules over $O_{i}$ along the $i$-th variable. Such an object has also left operations with constraints encoded by $O$ that differ from the left operations of $k$-fold bimodules. For the family of operadic maps $f_{i}:\mathcal{C}_{d_{i}}\rightarrow \mathcal{C}_{n}$ and $n>d_{i}$, the two following families of spaces are examples of $k$-fold infinitesimal bimodules:
$$
\mathbb{O}(n_{1},\ldots,n_{k})=\mathcal{C}_{d_{1}}(n_{1})\times \cdots \times \mathcal{C}_{d_{k}}(n_{k}) \hspace{15pt}\text{and}\hspace{15pt} \mathcal{R}^{k}_{n}(n_{1},\ldots,n_{k}) = \mathcal{R}_{n}(n_{1}+\cdots + n_{k}).
$$
By $\vec{r}$-truncated objects, with $\vec{r}=(r_{1},\ldots,r_{k})\in \mathbb{N}^{k}$, we mean families $N=\{N(n_{1},\ldots,n_{k}), \, 1\leq n_{i}\leq  r_{i}\}$ with the same kind of structure. By abuse of notation, we denote by $T_{\vec{r}}\,Ibimod_{\vec{O}}$ the category of $\vec{r}$-truncated $k$-fold infinitesimal bimodules and we prove the following result:  \\{}\\ \vspace{-13pt}

\begin{thmB}[Theorem \ref{K2}]
For $\vec{O}=\{f_{i}:\mathcal{C}_{d_{i}}\rightarrow \mathcal{C}_{n}\}$, one has weak homotopy equivalences
$$
\begin{array}{rll}\vspace{10pt}
T_{\vec{r}}\,\mathcal{L}(d_{1},\cdots,d_{k}\,;\,n)   & \hspace{-8pt}\simeq  T_{\vec{r}}\,Ibimod_{\vec{O}}^{h}(\mathbb{O}\,;\,\mathcal{R}_{n}^{k}),    & \text{for } \vec{r}\in \mathbb{N}^{k},\\ 
\mathcal{L}(d_{1},\cdots,d_{k}\,;\,n)   & \hspace{-8pt} \simeq T_{\vec{\infty}}\,\mathcal{L}(d_{1},\cdots,d_{k}\,;\,n)  \simeq  Ibimod_{\vec{O}}^{h}(\mathbb{O}\,;\,\mathcal{R}_{n}^{k}), & \text{if } d_{i}+3\leq n \text{ for all } i\in \{1,\ldots,k\}.
\end{array} 
$$
\end{thmB}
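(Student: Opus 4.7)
The approach is to extend the Arone--Turchin identification \cite{Arone14} of the polynomial approximations of the space of long knots modulo immersions with derived mapping spaces of truncated infinitesimal bimodules to the multi-strand setting, working within the multivariable manifold calculus of Munson--Volic \cite{Munson12}. I would first realise the $\vec{r}$-th polynomial approximation $T_{\vec{r}}\,\mathcal{L}(\vec{\mathcal{U}})$ as the homotopy limit of $\mathcal{L}$ over the poset of open sub-tuples $\vec{\mathcal{V}}=(\mathcal{V}_{1},\ldots,\mathcal{V}_{k})\subseteq \vec{\mathcal{U}}$ with $\mathcal{V}_{i}$ a disjoint union of at most $r_{i}$ open balls; since the source manifolds $\R^{d_{i}}$ are non-compact and the embedding $\iota$ is fixed outside a compact set, this indexing poset is augmented by one anti-ball component per strand, playing the role of a neighbourhood of infinity.

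Next, on such a good open tuple carrying $n_{i}$ balls plus the anti-ball on the $i$-th strand, I would establish the pointwise weak equivalence $\mathcal{L}(\vec{\mathcal{V}})\simeq \mathcal{R}_{n}(n_{1}+\cdots+n_{k})$. An embedding of a disjoint union of open balls modulo immersions straightens homotopically to a configuration of little rectangles in $\R^{n}$, while the anti-ball components, constrained to agree with $\iota$, force the whole configuration to sit inside a single ambient rectangle. The inclusion maps of sub-tuples translate, under this identification, precisely into the right $\mathcal{C}_{d_{i}}$-action along the $i$-th strand and the global left $\mathcal{C}_{n}$-action which together endow $\mathcal{R}_{n}^{k}$ with its $k$-fold infinitesimal bimodule structure. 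Assembling these pointwise equivalences into a diagrammatic one then exhibits $T_{\vec{r}}\,\mathcal{L}$ as the derived mapping space $T_{\vec{r}}\,Ibimod_{\vec{O}}^{h}(\mathbb{O};\mathcal{R}_{n}^{k})$, with source $\mathbb{O}$ playing the role of the universal object intrinsically carrying the $\mathcal{C}_{d_{i}}$-action on each strand.

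Under the codimension hypothesis $d_{i}+3\leq n$, convergence of the multivariable Taylor tower $\mathcal{L}\to T_{\vec{\infty}}\,\mathcal{L}$ is obtained from the Goodwillie--Weiss convergence theorem \cite{Weiss99.2} applied along each strand as extended by Munson--Volic \cite{Munson12}, the condition $d_{i}+3\leq n$ providing the handle-by-handle connectivity estimates required by the relevant multiple disjunction statement. The principal obstacle lies in the second step: turning the pointwise identifications into a natural zig-zag of equivalences between the diagram of good open sets and the diagram defining $T_{\vec{r}}\,Ibimod_{\vec{O}}^{h}(\mathbb{O};\mathcal{R}_{n}^{k})$. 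In the single-variable case treated in \cite{Ducoulombier18} this rests on an explicit cofibrant bar-construction resolution of $\mathbb{O}$; here the technical heart of the argument will be to verify that the multivariable bar resolution interacts correctly with the interplay between the $k$ independent right $\mathcal{C}_{d_{i}}$-actions on the source and the shared left $\mathcal{C}_{n}$-action prescribed by $\vec{O}$.
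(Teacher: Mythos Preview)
Your outline is correct and follows the same overall strategy as the paper: set up the compactly supported multivariable calculus with anti-cube components, identify the values of $\mathcal{L}$ on good open tuples with $\mathcal{R}_{n}^{k}$, identify the source diagram with $\mathbb{O}$, and invoke Goodwillie--Weiss/Munson--Volic for convergence under $d_{i}+3\leq n$.

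The one place where the paper proceeds differently from what you anticipate is the ``assembly'' step. You expect the technical heart to be verifying that an explicit multivariable bar resolution of $\mathbb{O}$ interacts correctly with the left $\mathcal{C}_{n}$-action and the $k$ right $\mathcal{C}_{d_{i}}$-actions. The paper instead bypasses this entirely by a categorical argument: it first shows that the category of $k$-fold infinitesimal bimodules over $\vec{O}$ is equivalent to enriched contravariant functors on a small category $\Gamma(\vec{O})$ built from standard embeddings of cubes and anti-cubes, so that $sEmb_{\ast}(-;\vec{\mathcal{U}})$ is literally representable. The enriched Yoneda lemma then gives the equivalence $F(\vec{\mathcal{U}})\simeq T_{\vec{r}}\,Ibimod_{\vec{O}}^{h}(sEmb(-;\vec{\mathcal{U}});F)$ for $\vec{\mathcal{U}}$ in $\Gamma(\vec{O})$, and the uniqueness characterisation of degree-$\vec{r}$ polynomial functors (both sides are polynomial of degree $\vec{r}$ and agree on good open tuples) promotes this to all $\vec{\mathcal{U}}$. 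No direct manipulation of a bar complex is needed for this theorem; the explicit resolutions enter only later, in the delooping comparison with $k$-fold bimodules. Your proposed route via bar resolutions would also work, but the Yoneda/polynomial-uniqueness argument is shorter and is the one the paper uses (following Arone--Turchin \cite{Arone14} rather than \cite{Ducoulombier18} for this step).
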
\vspace{3pt}

\subsubsection*{Second step: Relations between the categories $Ibimod_{\vec{O}}$ and $Bimod_{\vec{O}}$}

In this step, we fix a family of operadic maps $\vec{O}=\{f_{i}:O_{i}\rightarrow O\}$. Similarly to the previous step with the little cubes operads, we can consider the $k$-fold infinitesimal bimodule $\mathbb{O}$ and the $k$-fold bimodule $\mathbb{O}^{+}$ defined as follows:\vspace{7pt}
\begin{equation*}
\mathbb{O}(n_{1},\ldots,n_{k})=O_{1}(n_{1})\times \cdots \times O_{k}(n_{k}) \hspace{15pt}\text{and}\hspace{15pt} \mathbb{O}^{+}(n_{1},\ldots,n_{k})=\underset{\substack{1\leq i\leq k\\ n_{i}\neq +}}{\prod} O_{i}(n_{i}).\vspace{4pt}
\end{equation*}
Given a morphism of $k$-fold bimodules $\eta:\mathbb{O}^{+}\rightarrow M$, we prove that $M$ inherits a $k$-fold infinitesimal bimodule structure. We also introduce the following two subspaces where $M_{i}=\{M_{i}(n)=M(+,\ldots,+,n,+,\ldots,n)\}$ is a bimodule over $O_{i}$ and $T_{\vec{r}}\,Bimod_{\vec{O}}$ is the category of $\vec{r}$-truncated $k$-fold bimodules: 
$$
\begin{array}{rcc}\vspace{10pt}
\mathbb{F}_{\vec{O}}(M) & \subset & Map_{\ast}\left( 
\textstyle\sum \big( \underset{i \in A}{\textstyle\prod} O_{i}(2)\big)\,;\, Bimod_{\vec{O}}^{h}(\mathbb{O}^{+}\,;\, M)\right) \times \underset{1\leq i\leq k}{\displaystyle \prod} Map_{\ast}\left( 
\textstyle\sum O_{i}(2)\,;\, Bimod_{O_{i}}^{h}(O_{i}\,;\, M_{i})\right), \\ 
T_{\vec{r}}\,\mathbb{F}_{\vec{O}}(M) & \subset & Map_{\ast}\left( 
\textstyle\sum \big( \underset{i \in A}{\textstyle\prod} O_{i}(2)\big)\,;\, T_{\vec{r}}\,Bimod_{\vec{O}}^{h}(\mathbb{O}^{+}\,;\, M)\right) \times \underset{1\leq i\leq k}{\displaystyle \prod} Map_{\ast}\left( 
\textstyle\sum O_{i}(2)\,;\, T_{r_{i}}\,Bimod_{O_{i}}^{h}(O_{i}\,;\, M_{i})\right).
\end{array} \vspace{5pt}
$$  
The symbol $\Sigma$ refers to the suspension and $Map_{\ast}$ to morphisms of pointed spaces. The above spaces have explicit descriptions in terms of limits of  diagrams (see Section \ref{P1}). In particular, if $O_{1}=\cdots =O_{k}=O$, then the limits can be simplified and the above spaces take the following form: 
$$
\begin{array}{rcl}\vspace{10pt}
\mathbb{F}_{\vec{O}}(M) & = & Map_{\ast}\left( 
\textstyle\sum O(2)\,;\, Bimod_{\vec{O}}^{h}(\mathbb{O}^{+}\,;\, M)\right), \\ 
T_{\vec{r}}\,\mathbb{F}_{\vec{O}}(M) & = & Map_{\ast}\left( 
\textstyle\sum O(2)\,;\, T_{\vec{r}}\,Bimod_{\vec{O}}^{h}(\mathbb{O}^{+}\,;\, M)\right).
\end{array} 
$$

\newpage

By using well chosen cofibrant resolutions of $\mathbb{O}$ and $\mathbb{O}^{+}$ in the categories of $k$-fold infinitesimal bimodules and $k$-fold bimodules, respectively, we are able to build explicitly the two following maps only if the operads $O_{1},\ldots,O_{k}$ are $2$-reduced (i.e. $O_{i}(0)=O_{i}(1)=\ast$): 
\begin{equation}\label{N0}
\begin{array}{rcl}\vspace{7pt}
\gamma:\mathbb{F}_{\vec{O}}(M) & \longrightarrow  & Ibimod_{\vec{O}}^{h}(\mathbb{O}\,;\,M), \\ 
\gamma_{\vec{r}}:T_{\vec{r}}\,\mathbb{F}_{\vec{O}}(M) & \longrightarrow  & T_{\vec{r}}\,Ibimod_{\vec{O}}^{h}(\mathbb{O}\,;\,M).
\end{array} 
\end{equation}

Unfortunately, the maps (\ref{N0}) are not necessarily weak homotopy equivalences. In \cite{Ducoulombier18}, in the particular case $k=1$, we give with Turchin conditions for (\ref{N0}) to be weak homotopy equivalences. An operad satisfying this condition is said to be \textit{coherent}. The property of being coherent is expressed in terms of sequence of morphisms $\{\rho_{i}\}_{i\geq 1}$ of certain homotopy colimits that must be weak homotopy equivalences (see Definition \ref{N1}). An operad is $r$-coherent, with $r\in \mathbb{N}$, if the morphisms $\{\rho_{i}\}_{1\leq i\leq r}$ are weak homotopy equivalences.  \\{}\\ \vspace{-15pt}

\begin{thmC}[Theorem \ref{D3}] Let $\vec{j}=(j_{1},\ldots,j_{k})\in \mathbb{N}^{k}$ and  $\vec{r}\leq \vec{j}$. If, for all $i\in \{1,\ldots, k\}$, the operad $O_{i}$ is well pointed, $\Sigma$-cofibrant and $j_{i}$-coherent, then the map $\gamma_{\vec{r}}$ is a weak homotopy equivalence under some technical conditions on $M$. In particular, if the operads $O_{1},\ldots,O_{k}$ are coherent, then the map $\gamma$ is a weak homotopy equivalence.\vspace{6pt}
\end{thmC}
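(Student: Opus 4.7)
The plan is to generalize the strategy developed in \cite{Ducoulombier18} (the case $k=1$) to the multivariable situation, by reducing Theorem C to $k$ parallel instances of the coherence condition, one for each operad $O_{i}$. First, I would construct explicit cofibrant resolutions $\widetilde{\mathbb{O}}^{+}$ and $\widetilde{\mathbb{O}}$ of $\mathbb{O}^{+}$ and $\mathbb{O}$ in $Bimod_{\vec{O}}$ and $Ibimod_{\vec{O}}$ respectively, modelled on the Boardman--Vogt type tree resolutions of \cite{Ducoulombier18} but with vertices decorated by elements of the $O_{i}(n_{i})$ along the $i$-th strand, with the constraint that the ``outer'' labels encoding interactions between strands are taken in $O$. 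The hypothesis that each $O_{i}$ is well pointed and $\Sigma$-cofibrant, and the $2$-reducedness assumption built into the construction of $\mathbb{F}_{\vec{O}}$, are exactly what is needed for these resolutions to satisfy the cofibrancy conditions relative to the relevant monads.

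With the resolutions in place, I would rewrite both $T_{\vec{r}}\,\mathbb{F}_{\vec{O}}(M)$ and $T_{\vec{r}}\,Ibimod_{\vec{O}}^{h}(\mathbb{O}\,;\,M)$ as limits of punctured cubical (or more generally $\vec{r}$-truncated) diagrams built from the strata of $\widetilde{\mathbb{O}}^{+}$ and $\widetilde{\mathbb{O}}$. The key point is that both sides are naturally filtered by the multi-arity $(n_{1},\ldots,n_{k})$, and the map $\gamma_{\vec{r}}$ is the canonical map between the two induced limits; on each successive stratum it is governed by a multivariable analogue of the maps $\rho_{i}$ from Definition \ref{N1}. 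The technical conditions on $M$ (fibrancy in both model structures, together with mild connectivity/pointedness compatible with $\eta:\mathbb{O}^{+}\to M$) are used here to guarantee that these limits compute the derived mapping spaces and that the cubical diagrams are homotopy cartesian.

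I would then proceed by a double induction on the total arity $n_{1}+\cdots+n_{k}$ and on $k$. The inductive hypothesis handles all strata where at least one $n_{i}=0$, reducing by a Fubini-type interchange of homotopy limits to products of lower-arity or lower-$k$ cases, ultimately to single-variable statements for each bimodule $M_{i}$ over $O_{i}$, which are covered by the $k=1$ result of \cite{Ducoulombier18} using the $j_{i}$-coherence of $O_{i}$. The inductive step adds one top-dimensional cell in arity $\vec{n}$ with $n_{i}\leq j_{i}$, and the obstruction to $\gamma_{\vec{r}}$ being a weak equivalence on that cell is exactly a fibre of a map of the form $\rho_{n_{i_{0}}}$ for the strand $i_{0}$ along which the cell was attached, which is contractible by the coherence hypothesis.

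The main obstacle will be isolating the single-variable coherence maps $\rho_{i}$ inside the multivariable comparison. In the single-strand case, the punctured cube has one missing vertex whose insertion is governed by $\rho_{i}$; in the multivariable case the missing locus is a union indexed by subsets $A\subset\{1,\ldots,k\}$ of strands where an ``infinitesimal'' operation is being inserted, and the constraints coming from the outer operad $O$ couple these insertions. I would handle this by filtering the resolutions by $|A|$ and showing that the associated graded of $\gamma_{\vec{r}}$ splits as a smash product of the single-variable $\rho_{i}$'s for $i\in A$ — this is precisely where the product structure in the definition of $\mathbb{F}_{\vec{O}}(M)$, with its factors indexed by subsets $A$, matches the combinatorics of the resolution. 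Taking $\vec{r}\to\vec{\infty}$ and assuming full coherence then yields the statement for $\gamma$ itself.
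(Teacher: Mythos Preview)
Your overall strategy---build explicit Boardman--Vogt resolutions, filter by multi-arity, induct, and reduce the obstruction at each stage to single-variable coherence of the $O_i$---matches the paper's. But two specific choices in your plan would not go through as stated, and the paper's proof handles them differently.

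First, the resolution on the infinitesimal side is not the naive Boardman--Vogt resolution $\mathcal{I}b(\mathbb{O})$; the paper constructs an \emph{alternative} resolution $\overline{\mathcal{I}b}(\mathbb{O})$ as a semi-direct product of $\mathcal{B}^{\Lambda}(\mathbb{O}^{+})$ and an intermediate sequence $\mathcal{I}$ (Construction~\ref{L1}). This is precisely what makes the map $\gamma$ definable by an explicit formula and compatible with the filtration on both sides. Your ``$\widetilde{\mathbb{O}}$ modelled on Boardman--Vogt trees'' is too vague here; without the semi-direct product structure you will not get the map $\gamma$ to factor through the bimodule resolution.

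Second, and more seriously, your description of the inductive obstruction is inconsistent: you first say it is ``a fibre of a map of the form $\rho_{n_{i_0}}$ for the strand $i_0$ along which the cell was attached,'' then later that the associated graded ``splits as a smash product of the single-variable $\rho_i$'s.'' The first version is wrong and the second is the right one. In the paper (for $k=2$), the inductive step from $\vec{m}=(m_1,m_2)$ to $\vec{n}=(m_1+1,m_2)$ compares fibers described as \emph{spaces of section extensions} over $\overline{\mathcal{I}b}(\mathbb{O})(m_1+1;i)$ relative to two different boundaries $\partial\subset\partial'$; the key lemma is that $\partial\hookrightarrow\partial'$ is a weak equivalence. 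This is proved via a homotopy retract (extending Lemma~\ref{L2}) that splits $\overline{\mathcal{I}b}(\mathbb{O})(m_1+1;i)$ as $\overline{\mathcal{I}b}_{O_1}(O_1)(m_1+1)\times\overline{\mathcal{I}b}_{O_2}(O_2)(i)$, under which $\partial$ and $\partial'$ become \emph{pushout products} of the corresponding single-variable boundary inclusions. Both coherence hypotheses---$(m_1+1)$-coherence of $O_1$ \emph{and} $i$-coherence of $O_2$---are used simultaneously, one per factor, to make the map between pushout products an equivalence. There is no single ``strand along which the cell was attached''; every strand contributes at every step. Your induction on $k$ is also not how the paper proceeds (it fixes $k$ and inducts on the multi-arity via a tower of intermediate bimodules $\overline{\mathcal{I}b}_i(\mathbb{O})[\vec{n}]$), and it is not clear your proposed Fubini reduction for strata with some $n_i=0$ would interact correctly with the constraints coming from $O$.
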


In \cite{Ducoulombier18}, we show that the little cubes operad is coherent. Unfortunately, the little cubes operad is only weakly $2$-reduced (i.e. $\mathcal{C}_{d_{i}}(0)=\ast$ and $\mathcal{C}_{d_{i}}(1)\simeq \ast$). By using properties on the model category structures associated to the categories of $k$-fold bimodules and $k$-fold infinitesimal bimodules, we can extend the previous statement to the family of little cubes operads. For this purpose, we use the Fulton-MacPherson operad which is $2$-reduced and connected by a zig-zag of weak homotopy equivalences to the little cubes operad. \\{}\\ \vspace{-15pt}

\begin{thmD}[Theorem \ref{R1}]
Let $\vec{O}=\{f_{i}:\mathcal{C}_{d_{1}}\rightarrow \mathcal{C}_{n}\}$ and let $\eta:\mathbb{O}^{+}\rightarrow M$ be a map of $k$-fold bimodules. Under some technical conditions on $M$, one has the following weak homotopy equivalences: 
\begin{equation*}
\begin{array}{rcl}\vspace{10pt}
\mathbb{F}_{\vec{O}}(M) & \simeq & Ibimod_{\vec{O}}^{h}(\mathbb{O}\,;\,M), \\ 
T_{\vec{r}}\,\mathbb{F}_{\vec{O}}(M) & \simeq & T_{\vec{r}}\,Ibimod_{\vec{O}}^{h}(\mathbb{O}\,;\,M).
\end{array} 
\end{equation*}
In particular, if $d_{1}=\cdots = d_{k}=d$, then one has
\begin{equation*}
\begin{array}{rcl}\vspace{10pt}
Map_{\ast}\left( 
\textstyle\sum \mathcal{C}_{d}(2)\,;\, Bimod_{\vec{O}}^{h}(\mathbb{O}^{+}\,;\, M)\right) & \simeq & Ibimod_{\vec{O}}^{h}(\mathbb{O}\,;\,M), \\ 
Map_{\ast}\left( 
\textstyle\sum \mathcal{C}_{d}(2)\,;\, T_{\vec{r}}\,Bimod_{\vec{O}}^{h}(\mathbb{O}^{+}\,;\, M)\right) & \simeq & T_{\vec{r}}\,Ibimod_{\vec{O}}^{h}(\mathbb{O}\,;\,M).
\end{array}
\end{equation*}\vspace{3pt}
\end{thmD}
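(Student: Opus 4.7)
The plan is to reduce Theorem D to Theorem C by replacing the family $\vec{O}=\{f_i:\mathcal{C}_{d_i}\to\mathcal{C}_n\}$ by a weakly equivalent family of strictly $2$-reduced operads. The comparison maps $\gamma$ and $\gamma_{\vec{r}}$ of (\ref{N0}) were only constructed under the hypothesis that each $O_i$ is $2$-reduced, and the little cubes operad is merely weakly $2$-reduced. To fix this, I would use the Fulton--MacPherson operad $\mathcal{F}_d$: it is $2$-reduced, $\Sigma$-cofibrant, well pointed, and connected to $\mathcal{C}_d$ by a zig-zag of weak equivalences of operads. Functoriality of the construction yields a family $\vec{\mathcal{F}}=\{f'_i:\mathcal{F}_{d_i}\to\mathcal{F}_n\}$ together with a zig-zag of componentwise weak equivalences of families of operadic maps relating $\vec{\mathcal{F}}$ to $\vec{O}$.

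Next I would verify that $\vec{\mathcal{F}}$ satisfies the hypotheses of Theorem C. The only non-formal point is coherence: by \cite{Ducoulombier18} the little cubes operad is coherent, and coherence (Definition \ref{N1}) is defined through weak equivalences of certain homotopy colimits, which makes it invariant under weak equivalences of $\Sigma$-cofibrant, well pointed operads. Hence $\mathcal{F}_d$ is coherent too, and Theorem C applied to $\vec{\mathcal{F}}$ gives weak equivalences
\begin{equation*}
\mathbb{F}_{\vec{\mathcal{F}}}(\widetilde{M})\;\simeq\;Ibimod_{\vec{\mathcal{F}}}^{h}(\mathbb{O};\widetilde{M}),\qquad T_{\vec{r}}\,\mathbb{F}_{\vec{\mathcal{F}}}(\widetilde{M})\;\simeq\;T_{\vec{r}}\,Ibimod_{\vec{\mathcal{F}}}^{h}(\mathbb{O};\widetilde{M}),
\end{equation*}
for a suitable $k$-fold bimodule $\widetilde{M}$ transported from $M$.

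It then remains to transport these equivalences back along the zig-zag. For this I would invoke the model category structures on $Bimod_{\vec{O}}$, $Ibimod_{\vec{O}}$ and their $\vec{r}$-truncations from earlier in the paper: a componentwise weak equivalence of families of operadic maps between $\Sigma$-cofibrant, well pointed operads induces Quillen equivalences on the associated categories of $k$-fold (infinitesimal) bimodules. The derived mapping spaces appearing on both sides, and hence the pointed mapping spaces defining $\mathbb{F}_{(-)}(-)$ and $T_{\vec{r}}\,\mathbb{F}_{(-)}(-)$, are therefore homotopy invariant under the zig-zag (using also that $\mathcal{C}_d(2)\simeq\mathcal{F}_d(2)$ as pointed $\Sigma_2$-spaces). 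The final equivalences in the special case $d_1=\cdots=d_k=d$ then drop out from the simplifications of $\mathbb{F}_{\vec{O}}(M)$ and $T_{\vec{r}}\,\mathbb{F}_{\vec{O}}(M)$ recorded after (\ref{N0}).

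The main obstacle lies in this last step: one must check that the maps $\gamma$ and $\gamma_{\vec{r}}$ themselves are natural, at least up to homotopy, along the zig-zag, so that the equivalence obtained over $\vec{\mathcal{F}}$ really identifies with an equivalence over $\vec{O}$. Since $\gamma,\gamma_{\vec{r}}$ are built from specific cofibrant resolutions of $\mathbb{O}$ and $\mathbb{O}^+$, these resolutions must be chosen functorially across the change-of-operad Quillen equivalences, and one has to verify that the technical conditions imposed on $M$ pass to $\widetilde{M}$. This homotopical bookkeeping, rather than any new conceptual ingredient, is where the bulk of the work is expected to lie.
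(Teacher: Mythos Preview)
Your proposal is correct and follows essentially the same strategy as the paper: pass to the Fulton--MacPherson family $\vec{\mathcal{F}}$ via a zig-zag of weak equivalences (the paper makes this explicit as $\vec{\mathcal{C}}\leftarrow\vec{\mathcal{W}(\mathcal{C})}\rightarrow\vec{\mathcal{F}}$ and checks in Proposition~\ref{Q8} that the zig-zag extends to the whole family $\vec{O}_S$), transport $M$ across the zig-zag using the Quillen equivalences on (infinitesimal) bimodule categories, apply Theorem~C over $\vec{\mathcal{F}}$, and transport back.

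One point worth streamlining: you identify as the ``main obstacle'' checking that $\gamma$ and $\gamma_{\vec r}$ are natural along the zig-zag. The paper does not do this, and does not need to. Theorem~\ref{R1} only asserts an abstract weak equivalence of spaces, not that the specific map $\gamma$ built in Section~\ref{F1} is a weak equivalence. Accordingly the paper simply argues that both $\mathbb{F}_{\vec{\mathcal{C}}}(M)$ and $Ibimod^h_{\vec{\mathcal{C}}}(\mathbb{O};M^-)$ are invariant under the zig-zag (via the chain $\mathbb{F}_{\vec{\mathcal{C}}}(M)\cong\mathbb{F}_{\vec{\mathcal{F}}}(M_1^c)$ and likewise for $Ibimod$), and then applies Theorem~C once, at $\vec{\mathcal{F}}$. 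The transport of $M$ together with its structure map from $\mathbb{O}^+$ is handled by Proposition~\ref{J4}, which uses left properness (Theorem~\ref{J0}(iii)) and a standard lemma on Quillen equivalences of undercategories (Lemma~\ref{J1}); this is the precise mechanism ensuring the technical conditions on $M$ pass to $M_1^c$.
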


The last step consists in identifying the iterated loop spaces.  Since the suspension of $\mathcal{C}_{d}(2)$ is homotopically equivalent to the sphere of dimension $d$, the space $\sum(\prod_{i}\mathcal{C}_{d_{i}}(2))$ can be factorized through $S^{d}$. Finally, we show that \vspace{5pt}
$$
\mathbb{F}_{\vec{O}}(M) \simeq  \Omega^{d}\mathbb{F}'_{\vec{O}}(M)\hspace{15pt}\text{and}\hspace{15pt} T_{\vec{r}}\,\mathbb{F}_{\vec{O}}(M)\simeq \Omega^{d}\big(\,T_{\vec{r}}\,\mathbb{F}'_{\vec{O}}(M)\,\big),\vspace{5pt}
$$
where $\mathbb{F}'_{\vec{O}}(M)$ and $T_{\vec{r}}\,\mathbb{F}'_{\vec{O}}(M)$ are subspaces 
$$
\mathbb{F}'_{\vec{O}}(M) \subset Map_{\ast}\left( 
 S^{0} \,;\, Bimod_{\vec{O}}^{h}(\mathbb{O}^{+}\,;\, M)\right)\times \underset{1\leq i \leq k}{\displaystyle \prod}  Map_{\ast}\left( 
 S^{d_{i}-d} \,;\, Bimod_{O_{i}}^{h}(O_{i}\,;\, M_{i})\right), \vspace{3pt}
$$

$$
T_{\vec{r}}\,\mathbb{F}'_{\vec{O}}(M)   \subset Map_{\ast}\left( 
S^{0}\,;\, T_{\vec{r}}\,Bimod_{\vec{O}}^{h}(\mathbb{O}^{+}\,;\, M)\right) \times \underset{1\leq i \leq k}{\displaystyle \prod} Map_{\ast}\left( 
 S^{d_{i}-d}\,;\, T_{r_{i}}\,Bimod_{O_{i}}^{h}(O_{i}\,;\, M_{i})\right).\vspace{5pt}
$$
with explicit descriptions in terms of limits of mapping spaces of $k$-fold bimodules.

\begin{thmE}[Theorem \ref{L6}]
Let $\vec{O}=\{f_{i}:\mathcal{C}_{d_{i}}\rightarrow \mathcal{C}_{n}\}$ and let $\eta:\mathbb{O}^{+}\rightarrow M$ be a map of $k$-fold bimodules. Under some technical conditions on $M$, one has the following weak homotopy equivalences: 
\begin{equation*}
\begin{array}{rcl}\vspace{10pt}
\Omega^{d}\big(\mathbb{F}'_{\vec{O}}(M)\big) & \simeq & Ibimod_{\vec{O}}^{h}(\mathbb{O}\,;\,M), \\ 
\Omega^{d}\big( T_{\vec{r}}\,\mathbb{F}'_{\vec{O}}(M)\big) & \simeq & T_{\vec{r}}\,Ibimod_{\vec{O}}^{h}(\mathbb{O}\,;\,M).
\end{array} 
\end{equation*}
In particular, if $d_{1}=\cdots = d_{k}=d$, then one has
\begin{equation*}
\begin{array}{rcl}\vspace{10pt}
\Omega^{d}\big( Bimod_{\vec{O}}^{h}(\mathbb{O}^{+}\,;\, M)\big) & \simeq & Ibimod_{\vec{O}}^{h}(\mathbb{O}\,;\,M), \\ 
\Omega^{d} \big( T_{\vec{r}}\,Bimod_{\vec{O}}^{h}(\mathbb{O}^{+}\,;\, M)\big) & \simeq & T_{\vec{r}}\,Ibimod_{\vec{O}}^{h}(\mathbb{O}\,;\,M).
\end{array}
\end{equation*}\vspace{3pt}
\end{thmE}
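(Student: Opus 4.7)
The plan is to deduce Theorem E from Theorem D by a purely formal delooping of the mapping spaces that constitute $\mathbb{F}_{\vec{O}}(M)$. Theorem D already provides the weak equivalences $\mathbb{F}_{\vec{O}}(M)\simeq Ibimod_{\vec{O}}^{h}(\mathbb{O}\,;\,M)$ and $T_{\vec{r}}\,\mathbb{F}_{\vec{O}}(M)\simeq T_{\vec{r}}\,Ibimod_{\vec{O}}^{h}(\mathbb{O}\,;\,M)$ under the same technical hypotheses on $M$, so it suffices to produce the two delooping equivalences
\begin{equation*}
\mathbb{F}_{\vec{O}}(M)\simeq \Omega^{d}\mathbb{F}'_{\vec{O}}(M)\qquad\text{and}\qquad T_{\vec{r}}\,\mathbb{F}_{\vec{O}}(M)\simeq \Omega^{d}\bigl(T_{\vec{r}}\,\mathbb{F}'_{\vec{O}}(M)\bigr),
\end{equation*}
which are formal and require no additional assumptions on $M$.

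The core input is the classical equivalence $\mathcal{C}_{d_{i}}(2)\simeq S^{d_{i}-1}$, so that $\Sigma\mathcal{C}_{d_{i}}(2)\simeq S^{d_{i}}\simeq S^{d}\wedge S^{d_{i}-d}$ with $d=\min_{i}d_{i}$. The suspension-loop adjunction then yields, for any pointed space $Y$,
\begin{equation*}
Map_{\ast}\bigl(\Sigma\mathcal{C}_{d_{i}}(2)\,;\,Y\bigr)\simeq \Omega^{d}Map_{\ast}\bigl(S^{d_{i}-d}\,;\,Y\bigr).
\end{equation*}
For the mixed factor involving $\Sigma(\prod_{i\in A}\mathcal{C}_{d_{i}}(2))$ with $A\subset\{1,\ldots,k\}$, I would invoke the James splitting $\Sigma(X_{1}\times\cdots\times X_{m})\simeq\bigvee_{\emptyset\neq B\subset\{1,\ldots,m\}}\Sigma\bigl(\bigwedge_{i\in B}X_{i}\bigr)$ to identify it with a wedge of spheres whose dimensions are all bounded below by $d$. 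Each such sphere decomposes as $S^{d}\wedge S^{m}$ for some $m\geq 0$, so the corresponding mapping space is canonically a $d$-fold loop space.

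Combining these identifications and using that $\Omega^{d}$ commutes with limits, $\mathbb{F}_{\vec{O}}(M)$ is rewritten as the $d$-fold loop space of a subspace of
\begin{equation*}
Map_{\ast}\bigl(S^{0}\,;\,Bimod_{\vec{O}}^{h}(\mathbb{O}^{+}\,;\,M)\bigr)\times\prod_{i=1}^{k}Map_{\ast}\bigl(S^{d_{i}-d}\,;\,Bimod_{O_{i}}^{h}(O_{i}\,;\,M_{i})\bigr),
\end{equation*}
and this subspace is precisely $\mathbb{F}'_{\vec{O}}(M)$, its limit description from Section \ref{P1} being obtained by transporting the compatibility arrows defining $\mathbb{F}_{\vec{O}}(M)$ across the James splitting. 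The same argument applies to the $\vec{r}$-truncated setting, because truncation commutes with loops and with the relevant limits of mapping spaces of bimodules. In the equal-dimensional case $d_{1}=\cdots=d_{k}=d$ all the spheres $S^{d_{i}-d}$ collapse to $S^{0}$, the product factor disappears, and one recovers $\Omega^{d}Bimod_{\vec{O}}^{h}(\mathbb{O}^{+}\,;\,M)\simeq Ibimod_{\vec{O}}^{h}(\mathbb{O}\,;\,M)$.

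The principal obstacle is the bookkeeping required to check that the James splitting and the suspension-loop adjunction can be performed coherently with respect to the limit diagram defining $\mathbb{F}_{\vec{O}}(M)$. The James decomposition is not strictly natural in the product, and the compatibility arrows coming from $\eta$ and from the bimodule structures cross several of the wedge summands, so establishing the equivalence at the level of subspaces (rather than merely factor by factor) is the delicate step. Once this is verified, composing the resulting delooping equivalences with Theorem D yields Theorem E.
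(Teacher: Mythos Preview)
Your overall strategy---deduce Theorem~E from Theorem~D by delooping the $C_k$-diagram that defines $\mathbb{F}_{\vec{O}}(M)$, then pull $\Omega^{d}$ out of the limit---is exactly the paper's approach. The gap is in your identification of the source of the mapping space at the nodes $0$ and $(0,i)$. You treat it as the direct product $\prod_{i\in A}\mathcal{C}_{d_i}(2)$ (following the loose notation in the introduction), but the actual definition in Section~\ref{P1} uses $\vec{O}(2):=\vec{O}_{S}(\{1,2\},\ldots,\{1,2\})$, the \emph{fiber} product from Definition~\ref{Z3}. For the little cubes family with $d_1\leq\cdots\leq d_k<n$, the operadic maps $\iota':\mathcal{C}_{d_1}\hookrightarrow\mathcal{C}_{d_2}\hookrightarrow\cdots\hookrightarrow\mathcal{C}_n$ are nested injections, so this fiber product is homeomorphic to $\mathcal{C}_{d_1}(2)\simeq S^{d_1-1}$, and hence $\Sigma\vec{O}(2)\simeq S^{d_1}$ directly.

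This eliminates the need for the James splitting entirely, and with it the coherence obstruction you identify in your final paragraph. The paper's argument in Section~\ref{M8} is then immediate: define $F'_{\vec{O};M}$ on $C_k$ by replacing $\Sigma\vec{O}(2)\simeq S^{d_1}$ by $S^0$ and $\Sigma\mathcal{C}_{d_i}(2)\simeq S^{d_i}\simeq S^{d_1}\wedge S^{d_i-d_1}$ by $S^{d_i-d_1}$; the maps $\delta_i$ and $rest_i$ are manifestly compatible with these smash decompositions, so $F_{\vec{O};M}\simeq\Omega^{d_1}F'_{\vec{O};M}$ is a natural homotopy equivalence of $C_k$-diagrams, and $\Omega^{d_1}$ commutes with the limit. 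No wedge summands appear, no cross-terms, and there is nothing to check beyond the identification of $\vec{O}(2)$ with a single sphere.
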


 As a consequence of this result, we are able to give an explicit description of $\mathcal{L}(d_{1},\ldots,d_{k}\,;\,n)$ and their polynomial approximations. Finally, we produce another application of the above theorem in order to get a description of the iterated loop spaces associated to polynomial approximations of high-dimensional spaces of string links modulo immersions with singularities (see Section \ref{N5}). In the latter case, we don't know if the polynomial approximation converges. So, we only obtain results on the polynomial approximations.\vspace{7pt}

\begin{plan}
In Section \ref{N2}, we review some preliminaries about topological operads, bimodules and infinitesimal bimodules over an operad. Little cubes operads and non-overlapping little cubes bimodules are introduced.  \vspace{3pt}

Section \ref{B5} is devoted to the category of $k$-fold infinitesimal bimodules over a family of topological operads relative to another operad. We build the free $k$-fold infinitesimal bimodule functor and we introduce two different model category structures: Reedy and projective. In the last subsection, we give an explicit and functorial way to build  cofibrant replacements in both model category structures. \vspace{3pt}

The main goal of Section \ref{L3} is to prove \textbf{Theorem B} (alias Theorem \ref{K2}). We recall some basics about multivariable Goodwillie-Weiss' calculus of functors and we adapt this theory for manifolds which are not necessarily compact. We prove that the category of $k$-fold infinitesimal bimodules over a family of operads is equivalent to a category of diagrams in spaces. We use this description to prove that polynomial approximations of good context-free functors produce $k$-fold infinitesimal bimodules. \vspace{3pt}

Section \ref{N3} is devoted to the category of $k$-fold bimodules over a family of topological operads relative to another operad. Similarly to Section \ref{B5}, we build the free $k$-fold bimodule functor and we introduce two different model category structures: Reedy and projective. In the last subsection, we give an explicit and functorial way to build  cofibrant replacements in both model category structures. In particular, we show that the resolution associated to the $k$-fold bimodule $\mathbb{O}^{+}$ has some specific properties used to prove the main theorems in Section \ref{D5}.\vspace{3pt}

In Section \ref{D5} we prove \textbf{Theorem C}  (alias Theorem \ref{D3}). For this purpose, we introduce the notion of coherent operad and we use explicit cofibrant replacements of $\mathbb{O}$ and $\mathbb{O}^{+}$. Unfortunately, the resolution introduced in Section \ref{B5} is not well adapted to define the maps (\ref{N0}). To solve this problem, we consider an alternative resolution of $\mathbb{O}$ obtained as a semi-direct product of the resolution of $\mathbb{O}^{+}$ as a $k$-fold bimodule and another $k$-fold sequence $\mathcal{I}$. \vspace{3pt}

In the last section \ref{N4} we prove \textbf{Theorem D} (alias Theorem \ref{J3}). We show how to get iterated loop spaces from a map $\eta:\mathbb{O}^{+}\rightarrow M$ of the $k$-fold bimodules. In particular we prove \textbf{Theorem A} (alias Theorem \ref{L6}). Then we give another application to the space $\vec{u}$-immersions modulo immersions (see Definition \ref{N5}).\vspace{7pt}
\end{plan}

\begin{conv}
By a space we mean a compactly generated Hausdorff space and by abuse of notation we denote by $Top$ this category (see e.g. \cite[section 2.4]{Hovey99}). If $X$, $Y$ and $Z$ are spaces, then $Top(X;Y)$ is equipped with the compact-open topology in order to have a homeomorphism $Top(X;Top(Y;Z))\cong Top(X\times Y;Z)$. We also refer the reader to \cite[Appendix A]{Ducoulombier18} for a more detailed study and more references. \vspace{3pt}

The category $Top$ is endowed with a cofibrantly generated monoidal model category structure with weak homotopy equivalences and Serre fibrations. In the paper the categories considered are enriched over $Top$. By convention, if $\mathcal{C}$ is a model category enriched over $Top$, then the derived mapping space $\mathcal{C}^{h}(A;B)$ is the space $\mathcal{C}(A^{c};B^{f})$ with $A^{c}$ a cofibrant replacement of $A$ and $B^{f}$ a fibrant replacement of $B$.\vspace{-10pt}
\end{conv}

\newpage

\section{Background and convention} \label{N2}
In the following, we recall the terminology related to the notion of topological operad and "infinitesimal" bimodule. Let $\Sigma$ be the category of finite sets and isomorphisms between them. By a $\Sigma$-sequence we mean a covariant functor from $\Sigma$ to the category of topological spaces. We will write $M(n)$ for $M(\{1,\ldots,n\})$ and $M(0)$ for $M(\emptyset)$. In practice, a $\Sigma$-sequence is given by a family of topological spaces $M(0),$ $M(1),\ldots$ together with  actions of the symmetric groups: for each permutation $\sigma \in \Sigma_{n}$, there is a continuous map
\begin{equation}\label{A0}
\begin{array}{rcl}
\sigma^{\ast}:M(n) & \longrightarrow & M(n); \\ 
 x & \longmapsto & x\cdot \sigma,
\end{array} 
\end{equation}
satisfying the relation $(x\cdot\sigma)\cdot \tau=x\cdot(\sigma\tau)$ with $\tau\in \Sigma_{n}$. We denote by $Seq$ the category of $\Sigma$-sequences. \vspace{7pt}

Given an integer $r\geq 1$, we also consider the category of $r$-truncated sequences $T_{r}Seq$. Let $\Sigma_{r}$ be the category of finite sets of cardinality smaller than $r$ and isomorphisms between them. An $r$-truncated sequence is a functor from $\Sigma_{r}$ to the category of topological spaces. In practice, an $r$-truncated sequence is given by a family of topological spaces $M(0),\ldots, M(r)$ together with an action of the symmetric group $\Sigma_{n}$ for each $n\leq r$. A (possibly truncated) sequence is said to be \textit{pointed} if there is a distinguished element $\ast_{1}\in M(1)$ called \textit{unit}. One has an obvious functor 
$$
T_{r}(-):Seq \longrightarrow T_{r}Seq.\vspace{7pt}
$$

\begin{defi}\textbf{Topological operad}

\noindent An \textit{operad} is a pointed $\Sigma$-sequence $O$ together with operations called \textit{operadic compositions}
\begin{equation}\label{A1}
\circ_{a}:O(A)\times O(B)\longrightarrow O(A\cup_{a}B),\hspace{15pt} \text{with }a\in A, 
\end{equation}
where $A\cup_{a}B=(A\setminus\{a\})\coprod B$. These operations satisfy compatibility relations with the symmetric group action as well as associativity and unit axioms \cite{Arone14}. A map between two operads should respect the operadic compositions.  We denote by $Operad$ the categories of operads. An operad $O$ is said to be \textit{reduced} (resp. $2$\textit{-reduced}) if $O(0)=\ast$ (resp. $O(0)=O(1)=\ast$).\vspace{7pt}

Given an integer $r\geq 1$,we also consider the category of $r$-truncated operads $T_{r}Operad$. The objects are pointed $r$-truncated sequences endowed with operadic compositions (\ref{A1}) for $|A\cup_{a}B|\leq r$ and $|A|\leq r$. One has an obvious functor
$$
T_{r}(-):Operad \longrightarrow T_{r}Operad.\vspace{7pt}
$$

In practice, an operad is determined by a family of topological spaces $O(0),$ $O(1),\ldots$ together with actions of the symmetric groups and operadic compositions of the form 
$$
\circ_{i}:O(n)\times O(m)\longrightarrow O(n+m-1),\hspace{15pt} \text{with } i\in \{1,\ldots,n\},
$$
satisfying some relations. In the rest of the paper, we often switch between the two definitions in order to simplify the notation.\vspace{7pt}
\end{defi}

\begin{expl}\textbf{The overlapping little rectangles operad $\mathcal{R}^{\infty}_{d}$}\label{E0}

\noindent A $d$-dimensional little rectangle is a continuous map $r:[0\,,\,1]^{d}\rightarrow [0\,,\,1]^{d}$ arising from an embedding preserving the direction of the axes. In other words, a little rectangle $r$ is an application of the form $$r(t_{1},\ldots,t_{d})=(a_{1}t_{1}+b_{1},\ldots, a_{d}t_{d}+b_{d})$$ for some real constant $a_{i}$ and $b_{i}$, with $a_{i}>0$. The operad $\mathcal{R}^{\infty}_{d}$ is the sequence $\{\mathcal{R}^{\infty}_{d}(n)\}$ whose $n$-th component is given by $n$ little rectangles. The unit is the identity map whereas $\sigma\in \Sigma_{n}$ permutes the parameters as follows:\vspace{4pt}
$$
\sigma^{\ast}:\mathcal{R}^{\infty}_{d}(n)\longrightarrow \mathcal{R}^{\infty}_{d}(n)\,\,\,;\,\,\, <r_{1},\ldots,r_{n}>  \longmapsto  <r_{\sigma(1)},\ldots,r_{\sigma(n)}>.\vspace{4pt}
$$ 
The operadic composition $\circ_{i}$ is given by the formula\vspace{3pt}
$$
\begin{array}{clcl}\vspace{3pt}
\circ_{i}: & \mathcal{R}^{\infty}_{d}(n)\times \mathcal{R}^{\infty}_{d}(m) & \longrightarrow & \mathcal{R}^{\infty}_{d}(n+m-1); \\ 
 &  <r_{1},\ldots,r_{n}>\,;\, <r'_{1},\ldots,r'_{m}> & \longmapsto & <r_{1},\ldots,r_{i-1}, r_{i}\circ r'_{1},\ldots, r_{i}\circ r'_{m},r_{i+1},\ldots,r_{n}>.
\end{array} \vspace{3pt}
$$
By convention $\mathcal{R}^{\infty}_{d}(0)$ is the one point topological space and the operadic composition $\circ_{i}$ with this point consists in removing the $i$-th little rectangle. 
\end{expl}\vspace{-20pt}

\newpage

\begin{expl}\textbf{The little rectangles operad $\mathcal{R}_{d}$ and the little cubes operad $\mathcal{C}_{d}$}

\noindent The $d$-dimensional little rectangles operad is a sub-operad of $\mathcal{R}^{\infty}_{d}$ whose $n$-th component is the configuration of $n$ little rectangles with disjoint interiors. In other words, $\mathcal{R}_{d}(n)$ is the subspace of $\mathcal{R}_{d}^{\infty}(n)$ formed by configurations $<r_{1},\ldots,r_{n}> $ satisfying the relation 
$$
r_{i}(]0\,,\,1[)\,\cap\, r_{j}(]0\,,\,1[)=\emptyset,\hspace{15pt}\forall i\neq j.\vspace{2pt}
$$

The $d$-dimensional little cubes operad $\mathcal{C}_{d}$ is a sub-operad of $\mathcal{R}_{d}$ whose $n$-th component is the configuration of $n$ little rectangles with disjoint interiors $<r_{1},\ldots,r_{n}>$ for which the rectangles $r_{i}$ are of the form 
$$
r_{i}(t_{1},\ldots,t_{d})=(a^{i}t_{1}+b_{1}^{i},\ldots, a^{i}t_{d}+b_{d}^{i}),\hspace{15pt} \text{with } a^{i}>0. \vspace{7pt}  
$$

In both cases, the operadic compositions and the action of the symmetric group arise from the overlapping little rectangles operad $\mathcal{R}_{d}^{\infty}$. Furthermore, these two operads are obviously equivalent to the well known little discs operad.

\begin{figure}[!h]
\begin{center}
\includegraphics[scale=0.27]{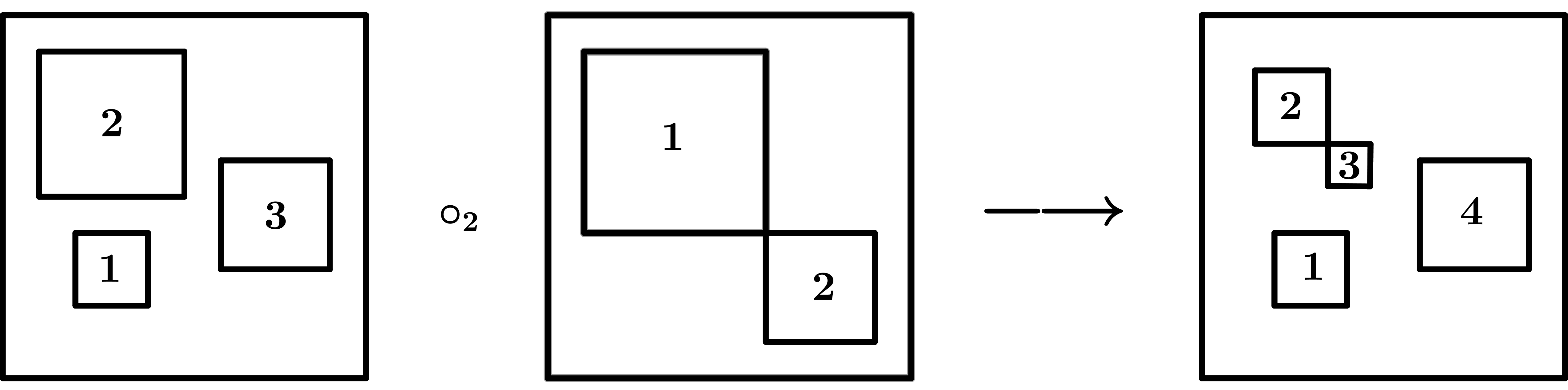}
\caption{Illustration of the operadic composition $\circ_{2}:\mathcal{C}_{2}(3)\times \mathcal{C}_{2}(2)\rightarrow \mathcal{C}_{2}(4)$.}\vspace{-10pt}
\end{center}
\end{figure}

For $d\leq d'$, there are two maps of operads $\iota:\mathcal{R}_{d}\rightarrow\mathcal{R}_{d'}$ and $\iota':\mathcal{C}_{d}\rightarrow\mathcal{C}_{d'}$. The map $\iota$ sends a little rectangle $r$ of dimension $d$ to the rectangle of dimension $d'$ given by the formula 
$$
\iota(r)(t_{1},\ldots,t_{d'})=(r(t_{1},\ldots,t_{d}),t_{d+1},\ldots, t_{d'}).$$
The map $\iota'$ sends a little rectangle of dimension $d$ of the form $r(t_{1},\ldots,t_{d})=(a t_{1}+b_{1},\ldots,at_{d}+b_{d})$ to the little rectangle of dimension $d'$ given by  the formula
$$
\iota'(r)(t_{1},\ldots,t_{d'})=\big( at_{1}+b_{1},\ldots, at_{d}+b_{d},at_{d+1}+ (1-a)/2,\ldots, at_{d'}+ (1-a)/2\big).\vspace{-15pt}
$$

\begin{figure}[!h]
\hspace{-32pt}\includegraphics[scale=0.55]{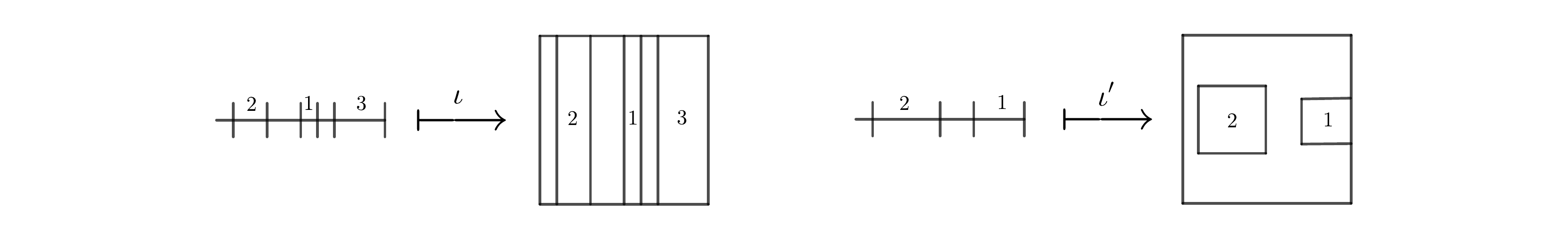}\vspace{-10pt}
\caption{Illustration of the maps $\iota:\mathcal{R}_{1}(3)\rightarrow\mathcal{R}_{2}(3)$ and $\iota':\mathcal{C}_{1}(2)\rightarrow\mathcal{C}_{2}(2)$.}
\end{figure}
\end{expl}

\begin{defi}\label{B9}\textbf{Bimodule over an operad}

\noindent A bimodule over $O$, also called $O$-\textit{bimodule}, is given by a sequence $M\in Seq$ together with operations 
\begin{equation}\label{A2}
\begin{array}{llr}\vspace{7pt}
\gamma_{r}: & M(A)\times \underset{a\in A}{\prod} O(B_{a})\longrightarrow  M(\underset{a\in A}{\coprod} B_{a}), & \text{right operations},\\ \vspace{7pt}
\gamma_{l}: & O(A)\times \underset{a\in A}{\prod} M(B_{a})\longrightarrow  M(\underset{a\in A}{\coprod} B_{a}),& \text{left operations},\vspace{-5pt}
\end{array}
\end{equation}
satisfying compatibility with the symmetric group action, associativity and unit axioms \cite{Arone14}. In particular, there is a continuous map $\gamma_{\emptyset}:O(\emptyset)\rightarrow M(\emptyset)$ in arity $0$. A map between $O$-bimodules should respect the operations. We denote by $Bimod_{O}$ the category of $O$-bimodules. Thanks to the unit in $O(1)$, the right operations $\gamma_{r}$ can equivalently be defined as a family of continuous maps
$$
\circ^{a}:M(A)\times O(B)\longrightarrow M(A\cup_{a}B),\hspace{15pt}\text{with }a\in A.\vspace{-20pt}
$$

\newpage

Given an integer $r\geq 0$, we also consider the category of $r$-truncated bimodules $T_{r}Bimod_{O}$. An object is an $r$-truncated sequence endowed with left and right operations (\ref{A2}) under the conditions $|A|\leq r$ and $\sum |B_{a}|\leq r$ for $\gamma_{r}$ and the condition $\sum |B_{a}|\leq r$ for $\gamma_{l}$. One has an obvious functor 
$$
T_{r}(-):Bimod_{O}\longrightarrow T_{r}Bimod_{O}.\vspace{2pt}
$$

In practice, a bimodule over $O$ is determined by a family of topological spaces $M(0),$ $M(1),\ldots$ together with actions of the symmetric groups and operations of the form 
\begin{equation*}
\begin{array}{llr}\vspace{7pt}
\circ^{i}: & M(n)\times O(m)\longrightarrow  M(m+n-1), & \text{right operations},\\ \vspace{7pt}
\gamma_{l}: & O(n)\times M(m_{1})\times \cdots \times M(m_{n}) \longrightarrow  M(m_{1}+\cdots + m_{n}),& \text{left operations}.\vspace{-5pt}
\end{array}
\end{equation*}
\end{defi}

\begin{expl}\textbf{The $m$-overlapping little rectangles bimodule $\mathcal{R}_{d}^{(m)}$}

\noindent The $d$-dimensional $m$-overlapping little rectangles bimodule has been introduced by Dobrinskaya and Turchin in \cite{Turchin14}. Its $n$-th component is the subspace of $\mathcal{R}^{\infty}_{d}(n)$ formed by configurations $<r_{1},\ldots,r_{n}> $ satisfying the relation \vspace{2pt}
$$
\forall S\subset \{1,\ldots, n\},\,\, |S|=m,\hspace{15pt}  \underset{s\in S}{\textstyle\bigcap}\,\,r_{s}(]0\,,\,1[)=\emptyset.
$$
In particular, $\mathcal{R}_{d}^{(2)}$ coincides with the little rectangles operad. For $m>2$, the $\Sigma$-sequence $\mathcal{R}_{d}^{(m)}=\{\mathcal{R}_{d}^{(m)}(n)\}$ is not an operad since the operadic composition introduced in Definition \ref{E0} doesn't necessarily preserve the number of intersections between the little rectangles. Nevertheless, the $m$-overlapping little rectangles inherits a bimodule structure over $\mathcal{C}_{d}$ (or over $\mathcal{R}_{d}$) from the operadic structure of $\mathcal{R}_{d}^{\infty}$.\vspace{-5pt}

\begin{figure}[!h]
\begin{center}
\includegraphics[scale=0.32]{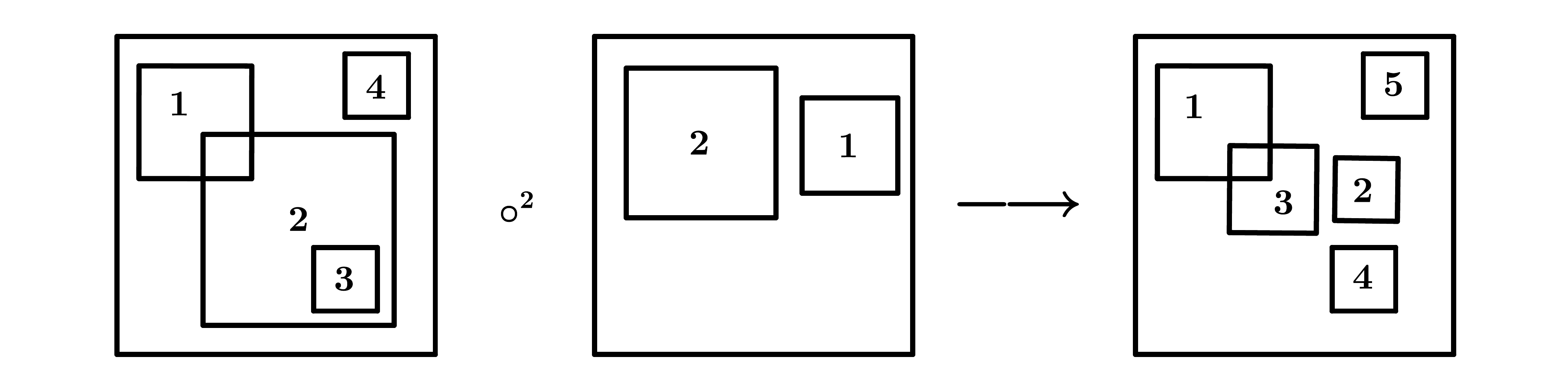}\vspace{-5pt}
\caption{Illustration of the operation $\circ^{2}:\mathcal{R}_{2}^{(3)}(4)\times\mathcal{C}_{2}(2)\rightarrow\mathcal{R}_{2}^{(3)}(5)$.}\vspace{-10pt}
\end{center}
\end{figure}
\end{expl}

\begin{defi}\label{M3}\textbf{Infinitesimal bimodule over an operad}

\noindent An infinitesimal bimodule over $O$, or \textit{$O$-Ibimodule}, is a  sequence $N\in Seq$ endowed with operations 
\begin{equation}\label{C3}
\begin{array}{llr}\vspace{7pt}
\circ_{a}:O(A)\times N(B) 
\rightarrow N(A\cup_{a}B) & \text{for } a\in A,  & \text{infinitesimal left operations,}\\ 
\circ^{b}:N(B)\times O(A)
\rightarrow N(B\cup_{b}A) & \text{for } b\in B, & \text{infinitesimal right operations,}
\end{array} 
\end{equation}
satisfying unit, associativity, commutativity and compatibility with the symmetric group axioms \cite{Arone14}. A map between infinitesimal bimodules should respect the operations. We denote by $Ibimod_{O}$ the category of infinitesimal bimodules over $O$. \vspace{7pt}

Given an integer $r\geq 1$, we also consider the category of $r$-truncated bimodules $T_{r}Ibimod_{O}$. An object is an $r$-truncated sequence endowed with infinitesimal left and right operations (\ref{C3}) under the conditions $|B\cup_{b}A|\leq r$ and $|B|\leq r$. One has an obvious functor 
$$
T_{r}(-):Ibimod_{O}\longrightarrow T_{r}Ibimod_{O}.
$$

In practice, an infinitesimal bimodule over $O$ is determined by a family of topological spaces $N(0),$ $N(1),\ldots$ together with  actions of the symmetric groups and operations of the form 
\begin{equation*}
\begin{array}{llr}\vspace{7pt}
\circ_{i}:O(n)\times N(m) 
\rightarrow N(n+m-1) & \text{for } 1\leq i\leq n,  & \text{infinitesimal left operations,}\\ 
\circ^{i}:N(m)\times O(n)
\rightarrow N(n+m-1) & \text{for } 1\leq i\leq n, & \text{infinitesimal right operations,}
\end{array} 
\end{equation*}
\end{defi}

\begin{expl}
Any operad $O$ is obviously an infinitesimal bimodule over itself. Unfortunately, a bimodule over $O$ is not necessarily an infinitesimal bimodule. Nevertheless, if there exists a map of $O$-bimodules $\eta:O\rightarrow M$, then $M$ inherits an infinitesimal bimodule structure over $O$. Since the right operations and the right infinitesimal operations are the same, we just need to define the left infinitesimal operations
$$
\begin{array}{rcl}\vspace{4pt}
\circ_{i}:O(n)\times M(m) & \longrightarrow & M(n+m-1); \\ 
(\,x\,;\,y) & \longmapsto & \gamma_{l}(\,x\,;\,\underset{i-1}{\underbrace{\eta(\ast_{1}),\ldots,\eta(\ast_{1})}},y,\underset{n-i}{\underbrace{\eta(\ast_{1}),\ldots,\eta(\ast_{1})}}\,).
\end{array} 
$$ 
\end{expl}

\section{The model category of $k$-fold infinitesimal bimodules}\label{B5}

As shown in \cite{Arone14}, the classical notion of infinitesimal bimodule is convenient in order to understand spaces of embeddings since its $r$-th polynomial approximation (see Definition \ref{J5}) is related to derived mapping spaces of $r$-truncated infinitesimal bimodules over the little cubes operad. In the present work we deal with a multivariable version of the polynomial approximation and, for this reason, we need a generalization of the classical notion of infinitesimal bimodule. This section is devoted to introduce the category of $k$-fold infinitesimal bimodules over a family of reduced operads. In particular, we build model category structures as well as functorial cofibrant replacements. 

\subsection{The category of "truncated" $k$-fold infinitesimal bimodules}\label{B1}

Let $\Sigma^{\times k}$ be the category of families of $k$ finite sets $(A_{1},\ldots, A_{k})$ and families of isomorplisms between them. A $k$-fold sequence is a covariant functor $N$ from $\Sigma^{\times k}$ to the category of topological spaces. We will write $N(n_{1},\ldots,n_{k})$ for the space $N(\{1,\ldots,n_{1}\},\ldots,\{1,\ldots,n_{k}\})$. In practice, a $k$-fold sequence is given by a family of spaces $N(n_{1},\ldots,n_{k})$, with $(n_{1},\ldots,n_{k})\in \mathbb{N}^{k}$, together with actions of the symmetric groups: for each element $\sigma\in \Sigma_{n_{1}}\times\cdots\times \Sigma_{n_{k}}$, there is a map 
$$
\begin{array}{rcl}
\sigma^{\ast}:N(n_{1},\ldots,n_{k}) & \longrightarrow & N(n_{1},\ldots,n_{k}); \\ 
 x & \longmapsto & x\cdot \sigma,
\end{array} 
$$
satisfying some relations. We denote by $Seq_{k}$ the category of $k$-fold sequences. \vspace{7pt}

Given an element $\vec{r}=(r_{1},\ldots,r_{k})\in \mathbb{N}^{k}$, we also consider the sub-category $T_{\vec{r}}\,\Sigma^{\times k}$ whose objects are families of finite sets $(A_{1},\ldots,A_{k})$ with $|A_{i}|\leq r_{i}$. An $\vec{r}$-truncated $k$-fold sequence is a covariant functor from $T_{\vec{r}}\,\Sigma^{\times k}$ to spaces.  We denote by  $T_{\vec{r}}\,Seq_{k}$ the category of $\vec{r}$-truncated $k$-fold sequences. Furthermore, there exists an obvious functor
$$
T_{\vec{r}}\,(-):Seq_{k}\longrightarrow T_{\vec{r}}\,Seq_{k}.\vspace{7pt}
$$

\begin{defi}\textbf{The $k$-fold sequence $\vec{O}$}

\noindent Let $O_{1},\cdots,O_{k}$ be a family of reduced operads and let $X$ be topological monoid equipped with a family of maps of topological monoids $f_{i}:O_{i}(1)\rightarrow X$. We say that $O_{1},\cdots,O_{k}$ is a family of reduced operads relative to $X$. We introduce the $k$-fold sequence\vspace{3pt} 
\begin{equation}\label{J7}
\vec{O}(A_{1},\ldots,A_{k})=O_{1}(A_{1}^{\ast})\underset{X}{\times} \cdots \underset{X}{\times} O_{k}(A_{k}^{\ast}),\vspace{3pt}
\end{equation}
with $A_{i}^{\ast}=A_{i}\sqcup\{\ast\}$ the pointed set marked by $\ast$. The product  over $X$ is obtained using the composite map $O_{i}(A_{i}^{\ast})\rightarrow O_{i}(\ast)=O_{i}(1)\rightarrow X$ where the first map consists in composing all the inputs other than the marked one with the unique point in $O_{i}(0)$. The $k$-fold sequence (\ref{J7}) inherits an algebraic structure in the sense that one has an associative operation\vspace{3pt}
\begin{equation}\label{B6}
\begin{array}{cccc}\vspace{4pt}
\tilde{\mu}: & \vec{O}(A_{1},\ldots,A_{k})\times \vec{O}(B_{1},\ldots,B_{k}) & \longrightarrow & \vec{O}(A_{1}\sqcup B_{1},\ldots,A_{k}\sqcup B_{k}); \\ 
 & (x_{1},\ldots,x_{k})\,;\,(y_{1},\ldots,y_{k})  & \longmapsto & (x_{1}\circ_{\ast}y_{1},\ldots,x_{k}\circ_{\ast}y_{k}).
\end{array} \vspace{3pt}
\end{equation}
We will write $\vec{O}(n_{1},\ldots,n_{k})$ for the space $O_{1}(\{1,\ldots,n_{1}\})\times_{X}\cdots\times_{X}O_{k}(\{1,\ldots,n_{k}\})$, with $n_{i}\geq 1$, where the set $\{1,\ldots,n_{i}\}$ is assumed to be pointed by $1$. In that case, the associative operation comes from the operadic composition $\circ_{1}$ on each variable. 
\end{defi}

\begin{defi}\textbf{The category of $k$-fold infinitesimal bimodules over $\vec{O}$}

\noindent A $k$-fold infinitesimal bimodule over $\vec{O}$, or just $\vec{O}$-\textit{Ibimodule}, is a $k$-fold sequence $N$ together with operations called $k$-fold infinitesimal right operations and $k$-fold infinitesimal left operations, respectively,\vspace{2pt}
\begin{equation}\label{A3}
\begin{array}{ll}\vspace{7pt}
\circ_{i}^{a}:N(A_{1},\ldots,A_{k})\times O_{i}(B) 
\longrightarrow N(A_{1},\ldots,A_{i}\cup_{a}B,\ldots, A_{k}), & \text{for } 1\leq i\leq k \text{ and } a\in A_{i},\\ 
\mu:\vec{O}(B_{1},\ldots,B_{k})\times N(A_{1},\ldots,A_{k})
\longrightarrow N(A_{1}\sqcup B_{1},\ldots, A_{k}\sqcup B_{k}), & 
\end{array} \vspace{2pt}
\end{equation}
satisfying compatibility relations with the symmetric group,  associativity and unit axioms (see Appendix \ref{W0}). A map between $k$-fold infinitesimal bimodules should respect the operations. The category of $k$-fold infinitesimal bimodules is denoted by $Ibimod_{\vec{O}}$.\vspace{5pt}

Given an element $\vec{r}=(r_{1},\ldots,r_{k})\in \mathbb{N}^{k}$, we also consider the category of $\vec{r}$-truncated $k$-fold infinitesimal bimodules over $\vec{O}$, denoted by $T_{\vec{r}}\,Ibimod_{\vec{O}}$. An object is an $\vec{r}$-truncated $k$-fold sequence together with operations of the form (\ref{A3}) with $|A_{i}|\leq r_{i}$ and $|A_{i}\sqcup B_{i}|\leq r_{i}$. One has an obvious functor
$$
T_{\vec{r}}\,(-):Ibimod_{\vec{O}}\longrightarrow T_{\vec{r}}\,Ibimod_{\vec{O}}
$$

In practice, a $k$-fold infinitesimal bimodule over $\vec{O}$ is determined by a family of topological spaces $N(n_{1},\ldots,n_{k})$, with $(n_{1},\ldots,n_{k})\in \mathbb{N}^{k}$, together with an action of the symmetric group $\Sigma_{n_{1}}\times \cdots \times \Sigma_{n_{k}}$ and operations of the form
$$
\begin{array}{ll}\vspace{7pt}
\circ_{i}^{j}:N(n_{1},\ldots,n_{k})\times O_{i}(m)\longrightarrow N(n_{1},\ldots, n_{i}+m-1,\ldots n_{k}), & \text{with } i\leq k \text{ and } j\leq n_{i}, \\ 
\mu:\vec{O}(n_{1},\ldots,n_{k})\times N(m_{1},\ldots,m_{k})\longrightarrow N(n_{1}+m_{1}-1,\ldots,n_{k}+m_{k}-1). & 
\end{array}\vspace{2pt} 
$$
For the rest of the paper, we use also the following notation:
$$
\begin{array}{ll}\vspace{7pt}
x\circ_{i}^{j}y=\circ_{i}^{j}(x\,;\,y) & \text{for } x\in N(n_{1},\ldots,n_{k}) \,\text{ and } y\in O_{i}(m).
\end{array} 
$$

Sometimes, we use the terminology of family of operads $O_{1},\ldots,O_{k}$ relative to another operad $O$ in the context of $k$-fold infinitesimal bimodules. In that case, it means that the family operads $O_{1},\ldots,O_{k}$  is relative to the topological monoid $O(1)$ arising from the arity $1$ of the operad. Indeed, any topological monoid can be seen as an operad concentrated in arity $1$ and, conversely, the arity $1$ of any operad produces a topological monoid.\vspace{10pt}
\end{defi}

\begin{pro}\label{B3}
The category of $1$-fold infinitesimal bimodules over a reduced operad $O$ is equivalent to the usual category of infinitesimal bimodules over $O$.
\end{pro}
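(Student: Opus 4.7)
The plan is to construct an explicit equivalence of categories. For $k=1$ the fiber product in (\ref{J7}) is trivial, so $\vec{O}(B)=O(B\sqcup\{\ast\})$, and a $1$-fold infinitesimal bimodule $N$ over $\vec{O}$ amounts to a $\Sigma$-sequence equipped with right operations $\circ_{1}^{a}\colon N(A)\times O(B)\to N(A\cup_{a}B)$ and a single "pointed left" operation $\mu\colon O(B^{\ast})\times N(A)\to N(A\sqcup B)$. The right operations coincide verbatim with those of Definition \ref{M3}, so only the left operations require attention: I will show that $\mu$ is equivalent data to the family $\{\circ_{b}\}$ of classical infinitesimal left compositions.

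I would define a functor $\Phi\colon Ibimod_{\vec{O}}\to Ibimod_{O}$ keeping the right operations unchanged, and recovering $\circ_{b}\colon O(A')\times N(C)\to N(A'\cup_{b}C)$ as follows. Given $b\in A'$, let $\psi_{b}\colon A'\to (A'\setminus\{b\})\sqcup\{\ast\}$ be the canonical bijection that is the identity on $A'\setminus\{b\}$ and sends $b$ to $\ast$, and set
$$
\circ_{b}(x,y)\,:=\,\mu\bigl(O(\psi_{b})(x),\,y\bigr)\;\in\;N\bigl(C\sqcup(A'\setminus\{b\})\bigr)\,=\,N(A'\cup_{b}C).
$$
The $\Sigma$-equivariance axiom for the family $\{\circ_{b}\}$ then follows from the $\Sigma$-equivariance of $\mu$ together with naturality of the bijections $\psi_{b}$. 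For the reverse functor $\Psi\colon Ibimod_{O}\to Ibimod_{\vec{O}}$, simply take $\mu:=\circ_{\ast}$ at the marked point. These assignments are inverse on underlying data and compatible with morphisms (which in both categories are structure-preserving maps of $\Sigma$-sequences).

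The heart of the argument is checking that axioms correspond under this dictionary. The critical observation is that, when $k=1$, the associative operation $\tilde{\mu}$ of (\ref{B6}) is simply the operadic composition $\circ_{\ast}$ on $O(-^{\ast})$, so the "left--left" associativity axiom relating $\mu$ and $\tilde{\mu}$ becomes exactly the nested associativity of two classical left compositions at marked inputs. Compatibility between $\mu$ and the right operations $\circ_{1}^{a}$ translates into the classical left--right compatibility of Definition \ref{M3}, and the unit axioms are immediate.

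The main obstacle is therefore purely bookkeeping: going through, axiom by axiom, the full list of relations for $\mu$ deferred to Appendix \ref{W0} and matching them with the classical axioms. This verification is routine but tedious, which is exactly why the authors relegate the complete list to an appendix. Note that reducedness of $O$ plays no role here, since for $k=1$ the fiber product in (\ref{J7}) collapses; the hypothesis becomes relevant only for $k\geq 2$ to ensure the structure maps $O_{i}(A_{i}^{\ast})\to O_{i}(1)\to X$ used in the fiber product are well-defined.
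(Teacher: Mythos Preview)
Your proof is correct and follows essentially the same approach as the paper: both construct explicit inverse functors by keeping the right operations unchanged and translating between the single pointed left operation $\mu$ and the family $\{\circ_{b}\}$ via a relabeling that moves $b$ to the marked input, then check that the axioms of Appendix~\ref{W0} match the classical ones. The only cosmetic difference is that you phrase the relabeling with the bijection $\psi_{b}$ on finite sets while the paper works with integer arities and the transposition $(1\,;\,i)\in\Sigma_{n}$; your added remark that reducedness of $O$ is irrelevant for $k=1$ is correct and slightly sharper than the paper's statement.
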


\begin{proof}
The aim of this proposition is to show that the notion of $k$-fold infinitesimal bimodule is already a generalization of the usual notion of infinitesimal bimodule. Let $N$ be an infinitesimal bimodule over $O$. So, $N$ is obviously a right module over $O$ and the $1$-fold infinitesimal left operation is given by:\vspace{4pt}
$$
\begin{array}{cccc}
\mu: & O(n)\times N(m) & \longrightarrow & N(n+m-1); \\ 
 & (x\,;\,y) & \longmapsto & x\circ_{1}y.
\end{array} \vspace{4pt}
$$
Conversely, if $N$ is a $1$-fold infinitesimal bimodule, then $N$ is still a right module over $O$. The left infinitesimal operations are obtained using the $1$-fold left infinitesimal bimodule operation over $\vec{O}=O$ together with the action of the symmetric group (seen as an operad):\vspace{4pt}
$$
\begin{array}{cccl}
\circ_{i}: & O(n)\times N(m) & \longrightarrow & N(n+m-1); \\ 
 & (x\,;\,y) & \longmapsto & \mu\big(x\cdot (1\,;\,i)\,;\,y\big)\cdot \big((1\,;\,i)\circ_{i}id_{\Sigma_{m}}\big)^{-1},
\end{array} \vspace{4pt}
$$
where $(1\,;\,i)\in \Sigma_{n}$ is the permutation between $1$ and $i$. In particular, the left operation $\circ_{1}$ corresponds to $\mu$. This $1:1$ correspondence is well defined and implies an equivalence of categories.   
\end{proof}

\begin{rmk}\label{N6}\textbf{A $k$-fold Ibimodule as a $1$-fold Ibimodule over a colored operad}

\noindent A colored operad with set of colors $S=\{c_{1},\ldots,c_{k}\}$ is given by a family of spaces $O:=\{O(A_{c_{1}},\ldots, A_{c_{k}}\,;\,c)\}$, with $A_{c_{i}}$ a finite set associated to the color $c_{i}$ and $c\in S$, together with operadic compositions $\circ_{a_{i}}$, with $a_{i}\in A_{c_{i}}$, of the form\vspace{2pt}
$$
\circ_{a_{i}}:O(A_{c_{1}},\ldots, A_{c_{k}}\,;\,c)\times O(B_{c_{1}},\ldots, B_{c_{k}}\,;\,c_{i})\longrightarrow O(A_{c_{1}}\sqcup B_{c_{1}},\ldots, A_{c_{i}}\cup_{a_{i}} B_{c_{i}},\ldots, A_{c_{k}}\sqcup B_{c_{k}} \,;\,c).\vspace{3pt}
$$

Besides, we can also consider the category of $1$-fold infinitesimal bimodules over a colored operad $O$, also denoted by $Ibimod_{O}$, whose objects are family of topological spaces $N:=\{N(A_{c_{1}},\ldots, A_{c_{k}}\,;\,c)\}$ together with operations of the form \vspace{2pt}
$$
\circ_{a_{i}}:O(A_{c_{1}},\ldots, A_{c_{k}}\,;\,c)\times N(B_{c_{1}},\ldots, B_{c_{k}}\,;\,c_{i})\longrightarrow N(A_{c_{1}}\sqcup B_{c_{1}},\ldots, A_{c_{i}}\cup_{a_{i}} B_{c_{i}},\ldots, A_{c_{k}}\sqcup B_{c_{k}} \,;\,c),\vspace{5pt}
$$
$$
\circ^{a_{i}}:N(A_{c_{1}},\ldots, A_{c_{k}}\,;\,c)\times O(B_{c_{1}},\ldots, B_{c_{k}}\,;\,c_{i})\longrightarrow N(A_{c_{1}}\sqcup B_{c_{1}},\ldots, A_{c_{i}}\cup_{a_{i}} B_{c_{i}},\ldots, A_{c_{k}}\sqcup B_{c_{k}} \,;\,c),\vspace{5pt}
$$
again satisfying some relations similar to the usual notion of infinitesimal bimodule over an operad (we refer the reader to \cite{Ducoulombier16} for more details). \vspace{7pt}

Then, from a family of topological operads $O_{1},\ldots,O_{k}$ relative to topological monoid $X$, we build the following colored operad with set of colors $S=\{c_{1},\ldots , c_{k+1}\}$:\vspace{5pt}
\begin{equation}\label{N7}
\tilde{O}(A_{c_{1}},\ldots, A_{c_{k+1}}\,;\,c):=\left\{
\begin{array}{cl}\vspace{3pt}
O_{i}(A_{c_{i}}) & \text{if } c=c_{i} \text{ for } i\leq k \text{ and } A_{c_{j}}=\emptyset \text{ for } j\neq i, \\ \vspace{3pt}
O_{1}(A_{c_{1}}^{\ast})\times_{X}\cdots \times _{X} O_{k}(A_{c_{k}}^{\ast}) & \text{if } c=c_{k+1} \text{ and } A_{c_{k+1}}=\ast, \\ 
\emptyset & \text{otherwise}.
\end{array} 
\right.\vspace{2pt}
\end{equation}
The colored operadic structure on $\tilde{O}$ comes from the operadic structures of  $O_{1},\ldots , O_{k}$ as well as the operation (\ref{B6}). Then, we consider a particular $1$-fold infinitesimal bimodule $N_{\ast}$ over the colored operad (\ref{N7}) defined as follows:\vspace{7pt}
$$
N_{\ast}(A_{c_{1}},\ldots, A_{c_{k+1}}\,;\,c):= \left\{
\begin{array}{cl}\vspace{5pt}
\ast & \text{if } A_{c_{k+1}}=\emptyset \text{ and } c=c_{k+1},\\ 
\emptyset & \text{otherwise}.
\end{array} 
\right.\vspace{7pt}
$$
Finally, one has the following equivalence of categories:
$$
Ibimod_{\vec{O}} \cong Ibimod_{\tilde{O}}\downarrow N_{\ast}.
$$
\end{rmk}\vspace{5pt}

\begin{expl}\label{E1}\textbf{The $k$-fold infinitesimal bimodule $\mathbb{O}$}

\noindent Let $O_{1},\ldots,O_{k}$ be a family of reduced operads relative to a topological monoid $X$. Then, we consider the $k$-fold sequence given by the formula\vspace{2pt}
\begin{equation}\label{J8}
\mathbb{O}(A_{1},\ldots,A_{k})=O_{1}(A_{1})\times \cdots \times O_{k}(A_{k}), \hspace{15pt}\text{for } (A_{1},\ldots A_{k})\in \Sigma^{\times k}.\vspace{2pt}
\end{equation}
The $k$-fold infinitesimal right operations are obtained using the operadic structures of $O_{1},\ldots,O_{k}$. The $k$-fold infinitesimal left operations are given  by \vspace{2pt}
$$
\begin{array}{ccc}\vspace{5pt}
\mu:\vec{O}(B_{1},\ldots,B_{k})\times \mathbb{O}(A_{1},\ldots,A_{k}) & \longrightarrow & \mathbb{O}(A_{1}\sqcup B_{1},\ldots, A_{k}\sqcup B_{k}); \\ 
(x_{1},\ldots,x_{k})\,;\, (y_{1},\ldots,y_{k}) & \longmapsto & (x\circ_{\ast}y_{1},\ldots,x_{k}\circ_{\ast}y_{k}).
\end{array} \vspace{5pt}
$$

More generally, if $N_{1},\ldots,N_{k}$ are infinitesimal bimodules over the reduced operads $O_{1},\ldots,O_{k}$, respectively, then the $k$-fold sequence 
$$
\mathbb{N}(A_{1},\ldots,A_{k})=N_{1}(A_{1})\times \cdots \times N_{k}(A_{k}),\hspace{15pt}\text{for } (A_{1},\ldots A_{k})\in \Sigma^{\times k},
$$ 
inherits a $k$-fold infinitesimal bimodule structure over the family of reduced operads $O_{1},\ldots,O_{k}$ relative to any topological monoid $X$.
\end{expl}

\begin{expl}\label{D7}\textbf{The $k$-fold infinitesimal bimodule $\mathcal{R}_{n}^{k}$}

\noindent In the following, $\vec{O}$ is associated to the family of reduced operads $\mathcal{C}_{d_{1}},\ldots,\mathcal{C}_{d_{k}}$ relative to the topological monoid $\mathcal{C}_{n}(1)$. Without loss of generality, we assume that $d_{1}\leq \cdots\leq d_{k}<n$. Then, we introduce the $k$-fold sequence 
$$
\mathcal{R}_{n}^{k}(A_{1},\ldots,A_{k})=\mathcal{R}_{n}(A_{1}\sqcup\cdots \sqcup A_{k}), \hspace{15pt} \forall \, (A_{1},\ldots,A_{k})\in \Sigma^{\times k}.\vspace{4pt}
$$
The $k$-fold sequence so obtained inherits a $k$-fold infinitesimal bimodule structure over $\vec{O}$. The $k$-fold infinitesimal right operations are defined using the composite map $\kappa_{i}:\mathcal{C}_{d_{i}}\rightarrow \mathcal{C}_{d_{k}}\hookrightarrow \mathcal{R}_{d_{k}}\rightarrow \mathcal{R}_{n}$ and the operadic structure of $\mathcal{R}_{n}$. In order to define the $k$-fold infinitesimal left operations, we need a map of the form\vspace{3pt}
\begin{equation}\label{D4}
\varepsilon:\mathcal{C}_{d_{1}}(A_{1}^{\ast}) \underset{\mathcal{C}_{n}(1)}{\times} \cdots \underset{\mathcal{C}_{n}(1)}{\times}\mathcal{C}_{d_{k}}(A_{k}^{\ast}) \longrightarrow \mathcal{R}_{n}(A_{1}\sqcup \cdots \sqcup A_{k}\sqcup \{\ast\}).\vspace{4pt}
\end{equation}
For this purpose, we consider the element $c_{k}\in \mathcal{R}_{n}(k)$ which subdivides the unit cubes into $k$ equal little rectangles along the last coordinate. From a point $(x_{1},\ldots, x_{k})$ in the product space over $\mathcal{C}_{n}(1)$, the composition $(\ldots ( c_{k}\circ_{k}\kappa_{k}(x_{k}))\ldots )\circ_{1}\kappa_{1}(x_{1})$ is an element in $\mathcal{R}_{n}(A_{1}^{\ast}\sqcup \cdots \sqcup A_{k}^{\ast})$.  Nevertheless, since the product in $(\ref{D4})$ is over the space $\mathcal{C}_{n}(1)$ and since $d_{i}<n$, the marked rectangles in the composition are aligned and such an element can be identified with a point in $\mathcal{R}_{n}(A_{1}\sqcup \cdots \sqcup A_{k}\sqcup \{\ast\})$ by gluing together all the marked rectangles.\vspace{-5pt}

\begin{figure}[!h]
\begin{center}
\includegraphics[scale=0.315]{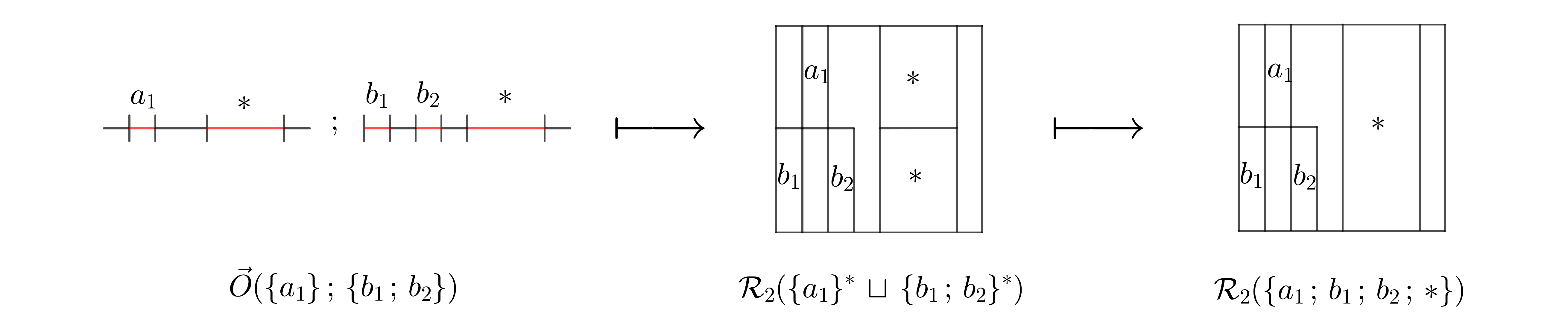}\vspace{-9pt}
\caption{Illustration of the map $\varepsilon$.}\vspace{-11pt}
\end{center}
\end{figure}

\noindent Finally, the $k$-fold  infinitesimal left operation is defined by the formula \vspace{2pt}
$$
\begin{array}{cccc}\vspace{3pt}
\mu: & \vec{O}_{n}(A_{1},\ldots,A_{k})\times \mathcal{R}^{k}_{n}(B_{1},\ldots,B_{k}) & \longrightarrow  & \mathcal{R}^{k}_{n}(A_{1}\sqcup B_{1},\ldots,A_{k}\sqcup B_{k}); \\ 
 & (x_{1},\ldots,x_{k})\,;\,y & \longmapsto & \varepsilon(x_{1},\ldots,x_{k})\circ_{\ast}y.
\end{array} \vspace{-15pt}
$$
\begin{figure}[!h]
\begin{center}
\includegraphics[scale=0.29]{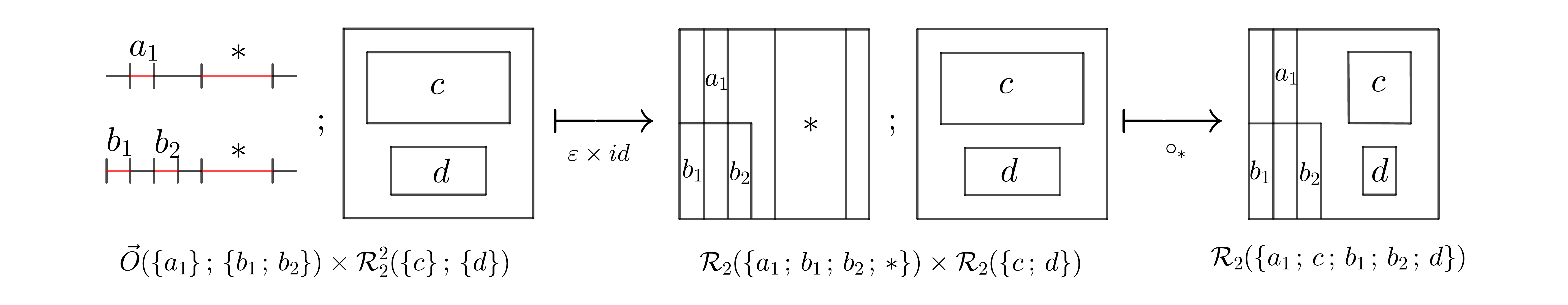}\vspace{-5pt}
\caption{Illustration of the $2$-fold left infinitesimal operation.}
\end{center}\vspace{-25pt}
\end{figure}
\end{expl}

\subsection{The Reedy and projective model category structures on $Ibimod_{\vec{O}}$}\label{C1}

First, we fix a family of reduced operads $O_{1},\ldots,O_{k}$ relative to a topological monoid $X$. The purpose of this section is to define a model category structure on the category of $k$-fold infinitesimal bimodules over $\vec{O}$. More precisely, we introduce two different model category structures: projective and Reedy. Both model category structures have advantages and inconveniences. With the Reedy model structure, we need to deal with fibrant replacements but it makes the main theorem of Section \ref{D5} easier to prove. On the other hand, the projective model category structure is related to the Goodwillie-Weiss manifold calculus theory and all the objects are fibrants. In the following, we compare these two structures and we give properties needed to prove the main results in Section \ref{N4}. 

\newpage

\subsubsection{The projective model category structure} 

Similarly to \cite{Ducoulombier16}, the category of $k$-fold sequences can be endowed with a cofibrantly generated model category structure, called \textit{projective} model structure, in which all the objects are fibrant. More precisely, a map is a weak equivalence (resp. a fibration) if each of its components is a weak homotopy equivalence (resp. a Serre fibration). In order to define a model category structure on the category of $k$-fold infinitesimal bimodules, we need an adjunction\vspace{1pt}
\begin{equation}\label{A7}
\mathcal{F}_{Ib\,;\,\vec{O}}:Seq_{k}\leftrightarrows Ibimod_{\vec{O}}:\mathcal{U},\vspace{2pt}
\end{equation}  
where $\mathcal{U}$ is the forgetful functor. As usual in the operadic theory, the free $k$-fold infinitesimal bimodule functor can be described in terms of coproduct indexed by a set of trees.\vspace{7pt}

\begin{defi}\textbf{The set of $k$-fold reduced pearled trees}\label{B4}

\noindent A \textit{pearled tree} $T=(T\,;\,p)$ is a planar rooted tree $T$ with a particular vertex $p\in V(T)$ in the path joining the first leaf or univalent vertex (according to the planar order) and the root. A pearled tree is said to be \textit{reduced} if the vertices other than the pearl are connected to the pearl by an inner edge. Furthermore, we need the following  notation:
\begin{itemize}
\item[$\blacktriangleright$] $V^{rp}(T)$ is the set of vertices other than the pearl composing the path from the pearl to the root of $T$,
\item[$\blacktriangleright$] $V^{c}(T)$ is the set of vertices other than the pearl and other than the vertices in $V^{rp}(T)$.
\end{itemize}\vspace{-9pt}

\begin{figure}[!h]
\begin{center}
\includegraphics[scale=0.4]{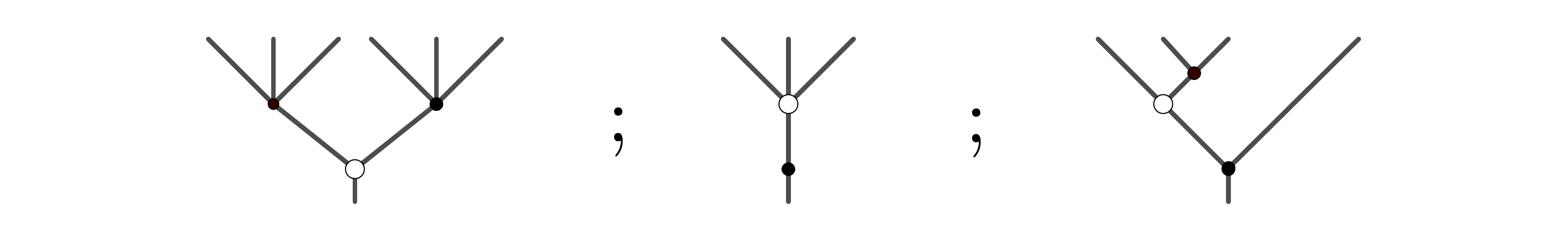}\vspace{-7pt}
\caption{Illustration of reduced pearled trees.}\vspace{-10pt}
\end{center}
\end{figure}

For $\vec{n}=(n_{1},\ldots,n_{k})\in \mathbb{N}^{k}$, we introduce the set $rpTree[\,\vec{n}\,]$ of $k$-fold reduced pearled trees $\vec{T}=(T_{1},\ldots,T_{k},\vec{\sigma})$ where $T_{i}$ is a reduced pearled tree having $n_{i}$ leaves and $\vec{\sigma}\in \Sigma_{n_{1}}\times \cdots \times \Sigma_{n_{k}}$ is a permutation labelling the leaves of the reduced pearled trees. Furthermore, we assume that $|V^{rp}(T_{1})|=|V^{rp}(T_{i})|$ for all $i\leq k$. In particular, if $v^{1}\in V^{rp}(T_{1})$, then we denote by $v^{i}$ the corresponding vertex in $T_{i}$ according to the planar structure. \vspace{-9pt}

\begin{figure}[!h]
\begin{center}
\includegraphics[scale=0.38]{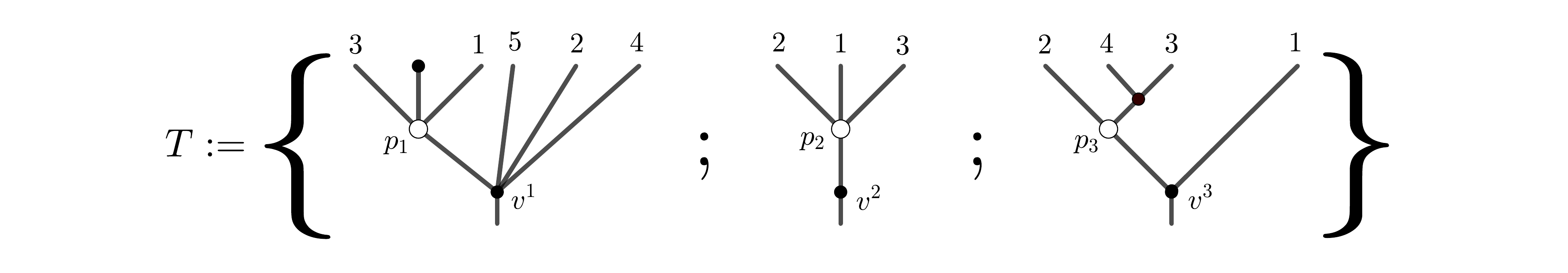}\vspace{-11pt}
\caption{Illustration of an element in $rpTree[5\,;\,3\,;\,4]$.}\vspace{-12pt}
\end{center}
\end{figure}
\end{defi}

\begin{const}\label{A8}
Let $N=\{N(n_{1},\ldots,n_{k})\}$ be a $k$-fold sequence. The free $k$-fold infinitesimal bimodule $\mathcal{F}_{Ib\,;\,\vec{O}}(N)$, also denoted by $\mathcal{F}_{Ib}(N)$ when $\vec{O}$ is understood, consists in labelling the vertices of $k$-fold reduced pearled trees by elements in $N$ and elements in the operads $O_{1},\ldots,O_{k}$. More precisely, one has\vspace{3pt}
\begin{equation}\label{F6}
\mathcal{F}_{Ib}(N)(\,\vec{n}\,)= \left.\left(\underset{\vec{T} \in\, rpTree[\,\vec{n}\,]}{\coprod} \!\!\!\!\!\! N(|p_{1}|,\ldots, |p_{k}|)\times \underset{v^{1}\in V^{rp}(T_{1})}{\prod} \vec{O}(|v^{1}|,\ldots,|v^{k}|) \,\times \underset{\substack{i\in\{1,\ldots,k\}\\v\in V^{c}(T_{i})}}{\prod}\!\!\!\!\!\! O_{i}(\,|v|\,)\right)\,\, \right/ \!\!\sim \vspace{3pt}
\end{equation}
where the equivalence relation is generated by the usual unit axiom and the compatibility relations with the group action $ \Sigma_{n_{1}}\times \cdots \times \Sigma_{n_{k}}$ preserving the position of pearls. We denote by $[T\,;\,m\,;\,\{x_{v}\}]$ a point in $\mathcal{F}_{Ib}(N)$. The reader can easily check that the construction so obtained for $1$-fold infinitesimal bimodules is homeomorphic to the construction of the free infinitesimal bimodule introduced in \cite{Ducoulombier16}.

\newpage

\begin{figure}[!h]
\hspace{-25pt}\includegraphics[scale=0.58]{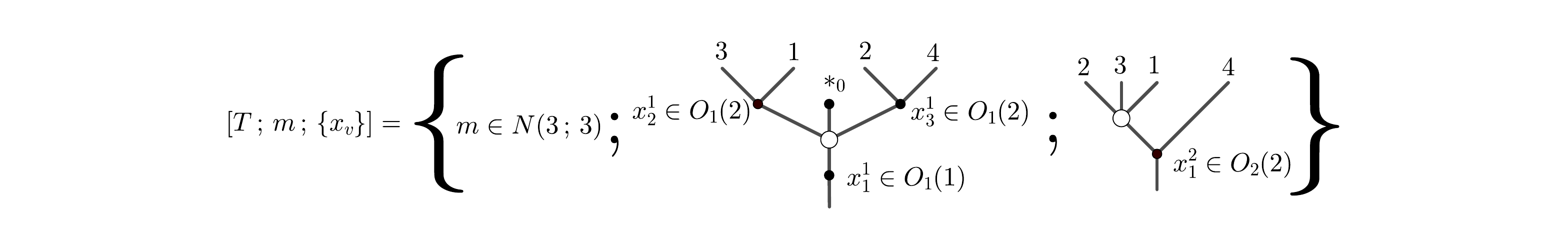}\vspace{-12pt}
\caption{Illustration of a point in the space $\mathcal{F}_{Ib}(N)(4\,;\,4)$ with $f_{1}(x_{^{1}})=f_{2}(x_{1}^{2}\circ_{2}\ast_{0})$.}\label{A6}\vspace{-1pt}
\end{figure} 

 The $k$-fold infinitesimal right operation $\circ_{i}^{j}$ with an element $x\in O_{i}(n)$ consists in grafting a corolla labelled by $x$ into the $j$-th leaf of the tree $T_{i}$. If the element so obtained contains an inner edge connecting two vertices other than a pearl, then we contract it using the operadic structure of $O_{i}$. For instance, the composition $\circ_{1}^{2}$ of the point represent in Figure \ref{A6} with an element $x\in O_{1}(2)$ gives rise to \vspace{3pt}

\hspace{-60pt}\includegraphics[scale=0.58]{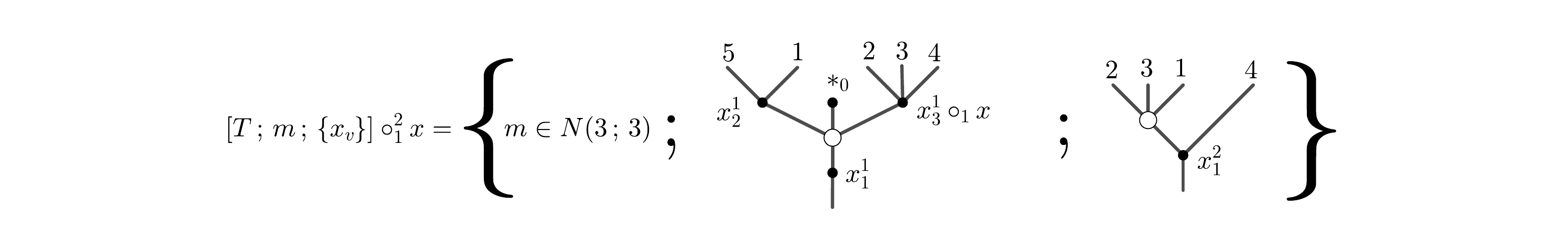}\vspace{2pt}

 Similarly, from an element $(x_{1},\ldots,x_{k})\in \vec{O}$, the $k$-fold infinitesimal left operation consists in grafting each reduced pearled tree $T_{i}$ into the first leaf of the corolla labelled by $x_{i}$. If the element so obtained contains an inner edge connecting two vertices other than a pearl, then we contract it using the operadic structures of $O_{1},\ldots,O_{k}$ as illustrated in the picture below. For instance, the composition of a point $(x_{1}\,,\,x_{2})\in \vec{O}(2\,,\,2)$ with the element represented in Figure \ref{A6} gives rise to \vspace{1pt}

\hspace{-85pt}\includegraphics[scale=0.58]{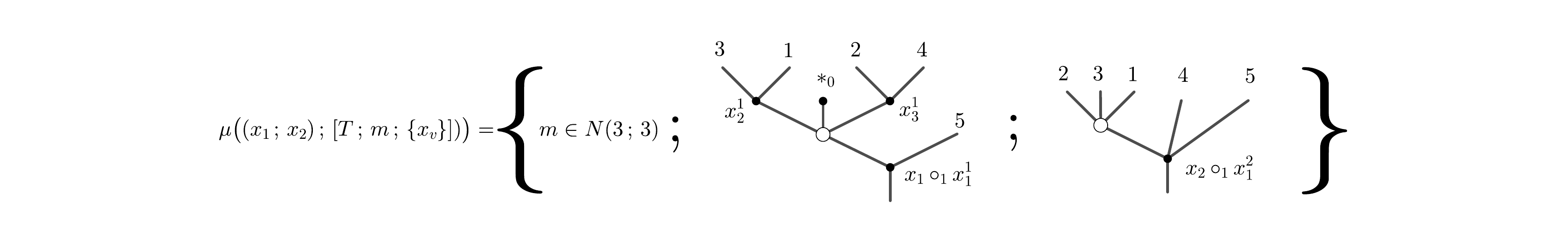}\vspace{7pt}

\end{const}

\begin{thm}\label{A9}
The pair of functors $(\mathcal{F}_{Ib}\,;\,\mathcal{U})$ forms an adjunction. Furthermore, the category of $k$-fold infinitesimal bimodules inherits a cofibrantly generated model category structure in which all the objects are fibrant and making the adjunction (\ref{A7}) into a Quillen adjunction. More precisely, a $k$-fold infinitesimal bimodule map $f$ is a weak equivalence (resp. a fibration) if the induced map $\mathcal{U}(f)$ is a weak equivalence (resp. a fibration) in the category of $k$-fold sequences. This model category structure is called projective model category structure. 
\end{thm}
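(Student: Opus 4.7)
The plan is to carry out three stages: verify the adjunction, transfer the cofibrantly generated projective structure from $Seq_{k}$ along it, and observe that universal fibrancy falls out for free.

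First, for the adjunction, the combinatorial description (\ref{F6}) supplies a direct bijection. Given a $k$-fold sequence map $\phi:N\to\mathcal{U}(M)$, the corresponding morphism $\tilde{\phi}:\mathcal{F}_{Ib}(N)\to M$ of $\vec{O}$-Ibimodules is forced on a representative $[\vec{T};m;\{x_{v}\}]$: iteratively apply the infinitesimal right operations $\circ_{i}^{a}$ of $M$ to absorb the corollas at vertices of $V^{c}(T_{i})$, then apply the $k$-fold infinitesimal left operation $\mu$ to absorb the corollas at vertices of $V^{rp}(T_{i})$, starting from $\phi(m)\in M(|p_{1}|,\ldots,|p_{k}|)$. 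The equivalence relation in Construction \ref{A8}, together with the unit and associativity axioms of the definition of a $\vec{O}$-Ibimodule, guarantees well-definedness; restriction to pearled corollas (trees reduced to their pearl) produces a two-sided inverse, and continuity is immediate.

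Second, I would invoke the classical transfer theorem for cofibrantly generated model structures along a left adjoint. If $I,J$ denote the generating cofibrations and generating trivial cofibrations of the projective structure on $Seq_{k}$, the candidate generating sets on $Ibimod_{\vec{O}}$ are $\mathcal{F}_{Ib}(I)$ and $\mathcal{F}_{Ib}(J)$. The required hypotheses are (a) smallness of the domains of $\mathcal{F}_{Ib}(I)$ and $\mathcal{F}_{Ib}(J)$ relative to the corresponding cell complexes, and (b) every relative $\mathcal{F}_{Ib}(J)$-cell complex has underlying map a weak equivalence in $Seq_{k}$. Condition (a) is formal, since limits and filtered colimits in $Ibimod_{\vec{O}}$ are created at the underlying $k$-fold sequence level: the operations (\ref{A3}) commute with them in $Top$.

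The genuine obstacle is condition (b). I would resolve it via the Quillen path-object argument. Since every object of $Seq_{k}$ is fibrant in the projective structure, it suffices to construct a functorial good path object for each $\vec{O}$-Ibimodule $M$. Set $\mathrm{Path}(M)(\vec{n})=M(\vec{n})^{[0,1]}$; the continuous Ibimodule operations of $M$ lift pointwise in the $[0,1]$-coordinate (precomposing with the constant-path inclusion on the $\vec{O}$-factor when needed), giving a $\vec{O}$-Ibimodule structure for which the diagonal $M\to\mathrm{Path}(M)$ is a weak equivalence of Ibimodules and the endpoint evaluation $\mathrm{Path}(M)\to M\times M$ is a fibration. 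This provides the path object required by the path-object variant of the transfer theorem and completes the verification. Universal fibrancy then follows immediately, since every $k$-fold sequence is projectively fibrant in $Seq_{k}$ and fibrations in $Ibimod_{\vec{O}}$ are detected by $\mathcal{U}$.
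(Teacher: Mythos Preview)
Your proof is correct and follows essentially the same route as the paper: invoke the transfer theorem along the free--forgetful adjunction and verify the acyclicity condition via the path object $\mathrm{Path}(M)(\vec{n})=\Map([0,1],M(\vec{n}))$, exactly as the paper does. The paper additionally mentions an alternative argument through the identification of $Ibimod_{\vec{O}}$ with a slice of $1$-fold infinitesimal bimodules over the colored operad $\tilde{O}$ (Remark~\ref{N6}), but your direct transfer argument coincides with the paper's explicit one.
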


\begin{proof}
There are many ways to prove the theorem. One of them consists in using Remark \ref{N6} and the fact that the category of infinitesimal bimodules over a colored operad is already equipped with a projective model category structure (see \cite{Ducoulombier19,Ducoulombier18}). The structure so obtained coincides with the structure described in the theorem since the free $k$-fold infinitesimal bimodule functor is an alternative description of the free infinitesimal bimodule functor over the corresponding colored operad. \vspace{7pt}

Another way to prove the theorem is the transfer Theorem \cite[Section 2.5]{Berger03} applied to the adjunction (\ref{A7}). One has to check that there exists a functorial fibrant replacement and a functorial factorization of the diagonal map in the category $Ibimod_{\vec{O}}$. Since all the objects are fibrant in the category of $k$-fold sequences, the identity functor provides a functorial fibrant replacement. So, for any $N\in Ibimod_{\vec{O}}$, we need to prove the existence of an object $Path(N)\in Ibimod_{\vec{O}}$ such that there is a factorization of the diagonal map\vspace{5pt}
$$
\xymatrix{
\Delta: N \ar[r]^{\hspace{-5pt}\simeq}_{\hspace{-5pt}f_{1}} & Path(N) \ar@{->>}[r]_{f_{2}} & N\times N,
}
$$
where $f_{1}$ is a weak equivalence and $f_{2}$ is a fibration. Let us consider\vspace{3pt}
$$
Path(N)(n_{1},\ldots,n_{k})=Map\big( [0\,,\,1]\,;\,N(n_{1},\ldots,n_{k})\big).\vspace{3pt}
$$
This $k$-fold sequence inherits a $k$-fold infinitesimal bimodule structure from $N$. The map from $N$ to $Path(N)$, sending a point to the constant path, is clearly a homotopy equivalence. Furthermore, the map \vspace{3pt}
$$
f_{2}:Map\big( [0\,,\,1]\,;\,N(n_{1},\ldots,n_{k})\big)\longrightarrow Map\big( \partial [0\,,\,1]\,;\,N(n_{1},\ldots,n_{k})\big)=(N\times N)(n_{1},\ldots,n_{k})\vspace{3pt}
$$
induced by the inclusion $i:\partial [0\,,\,1]\rightarrow [0\,,\,1]$ is a fibration since the map $i$ is a cofibration.
\end{proof}\vspace{3pt}

\begin{rmk}
Construction \ref{A8} and Theorem \ref{A9} admit an analogue version for $\vec{r}$-truncated $k$-fold infinitesimal bimodules. In that case, we need to consider the set $rpTree[\vec{n}\leq \vec{r}]$ instead of $rpTree[\vec{n}]$ in Construction \ref{A8} where $rpTree[\vec{n}\leq \vec{r}]$ is the set of elements $(T_{1},\ldots,T_{k},\vec{\sigma})\in rpTree[\vec{n}]$ for which the number of leaves plus the number of univalent vertices of $T_{i}$ is smaller than $r_{i}$.
\end{rmk}

\subsubsection{The Reedy model category structure}\label{G0}

We refer the reader to \cite{Fresse17} for this section. Let $\Lambda^{\times k}$ be the category whose objects are families of $k$ finite sets $(A_{1},\ldots, A_{k})$ and morphisms are families of injective maps between them. In particular, $\Sigma^{\times k}$ is the sub-category of isomorphisms of $\Lambda^{\times k}$. By a $k$-fold $\Lambda$-sequence, we understand a contravariant functor from $\Lambda^{\times k}$ to spaces and we denote the corresponding category by $\Lambda Seq_{k}$. In practice, such an object is given by a $k$-fold sequence $N$ together with maps generated by applications of the form \vspace{3pt}
$$
\lambda^{\ast}_{i\,;\,j}:N(n_{1},\ldots, n_{k})\longrightarrow N(n_{1},\ldots, n_{j}-1, \ldots, n_{k}), \vspace{1pt}
$$
associated to the map \vspace{3pt}
$$
\begin{array}{rcl}\vspace{5pt}
\lambda_{i\,;\,j}:(\{1,\ldots,n_{1}\},\ldots,\{1,\ldots,n_{j}-1\},\ldots , \{1,\ldots,n_{k}\}) & \longrightarrow &  (\{1,\ldots,n_{1}\},\ldots,\{1,\ldots,n_{j}\},\ldots , \{1,\ldots,n_{k}\}); \\ 
 (l_{1},\ldots,l_{k}) & \longmapsto & 
 \left\{
 \begin{array}{ll}\vspace{5pt}
 (l_{1},\ldots,l_{k}) & \text{if } l_{j}<i, \\ 
 (l_{1},\ldots,l_{j}+1,\ldots,l_{l}) & \text{if } l_{j}\geq i.
 \end{array} 
 \right. 
\end{array} \vspace{1pt}
$$

Given an element $\vec{r}=(r_{1},\ldots,r_{k})\in \mathbb{N}^{k}$, we also consider the sub-category $T_{\vec{r}}\,\Lambda^{\times k}$ whose objects are families of finite sets $(A_{1},\ldots,A_{k})$ with $|A_{i}|\leq r_{i}$. An $\vec{r}$-truncated $\Lambda$-sequence is a contravariant functor from $T_{\vec{r}}\,\Lambda^{\times k}$ to spaces and we denote by $T_{\vec{r}}\,\Lambda Seq_{k}$ the corresponding category. There is an obvious functor\vspace{3pt}
$$
T_{\vec{r}}(-):\Lambda Seq_{k}\longrightarrow T_{\vec{r}}\,\Lambda Seq_{k}.\vspace{3pt}
$$ 

The categories $\Lambda Seq_{k}$ and $T_{\vec{r}}\,\Lambda Seq_{k}$ are endowed with the so called Reedy model structure. For a $k$-fold (possibly truncated) $\Lambda$-sequence $X$, we denote by $\mathcal{M}(X)$ the $k$-fold (truncated) $\Sigma$-sequence, called \textit{matching object} of $X$, defined by \vspace{5pt}
$$
\mathcal{M}(X)(\,\vec{l}\,)=\underset{\substack{u\in \Lambda^{\times k}_{<}(\vec{i}\,;\,\vec{l})\\ \vec{i}< \vec{l}}}{lim} X(\,\vec{i}\,)\vspace{3pt}
$$
where $\Lambda^{\times k}_{<}$ is the subcategory of $\Lambda^{\times k}$ consisting of family of order preserving maps. According to \cite[Theorem 8.3.19]{Fresse17}, the categories $\Lambda Seq_{k}$ and $T_{\vec{r}}\,\Lambda Seq_{k}$ are endowed with a cofibrantly generated model category structure for which weak equivalences are objectwise weak homotopy equivalences while fibrations are morphisms $f:X\rightarrow Y$ for which any induced map \vspace{3pt}
$$
X(\,\vec{r}\,)\longrightarrow \mathcal{M}(X)(\,\vec{l}\,)\times_{\mathcal{M}(Y)(\,\vec{l}\,)}Y(\,\vec{l}\,)\vspace{3pt}
$$
is a Serre fibration in every arity where defined. As shown in \cite[Theorem 8.3.20]{Fresse17}, a morphism in $\Lambda Seq_{k}$ and $T_{\vec{r}}\,\Lambda Seq_{k}$ is a cofibration if and only if it is a cofibration as a morphism in $Seq_{k}$ and $T_{\vec{r}}\, Seq_{k}$, respectively.

Let $O_{1},\ldots, O_{k}$ be a family of reduced operads relative to a topological monoid $X$. From now on, we denote by $\Lambda Ibimod_{\vec{O}}$  and $T_{\vec{r}}\, \Lambda Ibimod_{\vec{O}}$ the categories of $k$-fold infinitesimal bimodules and $\vec{r}$-truncated $k$-fold infinitesimal bimodules, respectively, equipped with the Reedy model category structure. This structure is transferred from the categories $\Lambda Seq_{k}$ and $T_{\vec{r}}\,\Lambda Seq_{k}$, respectively, along the adjunctions\vspace{4pt}
\begin{equation}\label{F7}
\begin{array}{rcl}\vspace{7pt}
\mathcal{F}_{Ib\,;\,\vec{O}}^{\Lambda}:\Lambda Seq_{k} & \leftrightarrows & \Lambda Ibimod_{\vec{O}}:\mathcal{U}, \\ 
T_{\vec{r}}\mathcal{F}_{Ib\,;\,\vec{O}}^{\Lambda}:T_{\vec{r}}\Lambda Seq_{k}& \leftrightarrows & T_{\vec{r}}\Lambda Ibimod_{\vec{O}}:\mathcal{U},
\end{array} \vspace{2pt}
\end{equation}
where the free functors, also denoted by $\mathcal{F}_{Ib}^{\Lambda}$ and $T_{\vec{r}}\mathcal{F}_{Ib}^{\Lambda}$ when $\vec{O}$ is understood, are obtained from the functors $\mathcal{F}_{Ib}$ and its truncated version by taking the restriction of the coproduct (\ref{F6}) to the $k$-fold reduced pearled trees without univalent vertices other than the pearls. \vspace{7pt}

In other words, let us consider the category of $k$-fold infinitesimal bimodules associated to the family of $\Lambda$-operads $O_{1}^{>0},\ldots, O_{k}^{>0}$ relative the monoid $X$. The $\Lambda$-operad $O_{i}^{>0}$ is obtained from $O_{i}$ by removing the unique point in arity $0$ (see \cite{Fresse17} for more details). The forgetting maps are still well defined using the $\Lambda$-structure of the operad $O_{i}^{>0}$. This category is denoted by $Ibimod_{\vec{O}_{>0}}$. Then, one has \vspace{3pt}
$$
\mathcal{F}_{Ib\,;\,\vec{O}}^{\Lambda}(N)\coloneqq\mathcal{F}_{Ib\,;\,\vec{O}_{>0}}(N), \hspace{15pt} \text{and}\hspace{15pt} T_{\vec{r}}\,\mathcal{F}_{Ib\,;\,\vec{O}}^{\Lambda}(N)\coloneqq T_{\vec{r}}\,\mathcal{F}_{Ib\,;\,\vec{O}_{>0}}(N). \vspace{3pt}
$$
By construction, these objects are $k$-fold (truncated) infinitesimal bimodules over $\vec{O}_{>0}$. We can extend this structure in order to get $k$-fold (truncated) infinitesimal bimodules over $\vec{O}$ using the operadic structure of $O_{1},\ldots, O_{k}$ or the $\Lambda^{\times k}$ structure of $N$. For instance, the image of the point in  $\mathcal{F}_{Ib}^{\Lambda}(N)(4\,;\,2)$\vspace{-10pt}
\begin{figure}[!h]
\begin{center}
\includegraphics[scale=0.17]{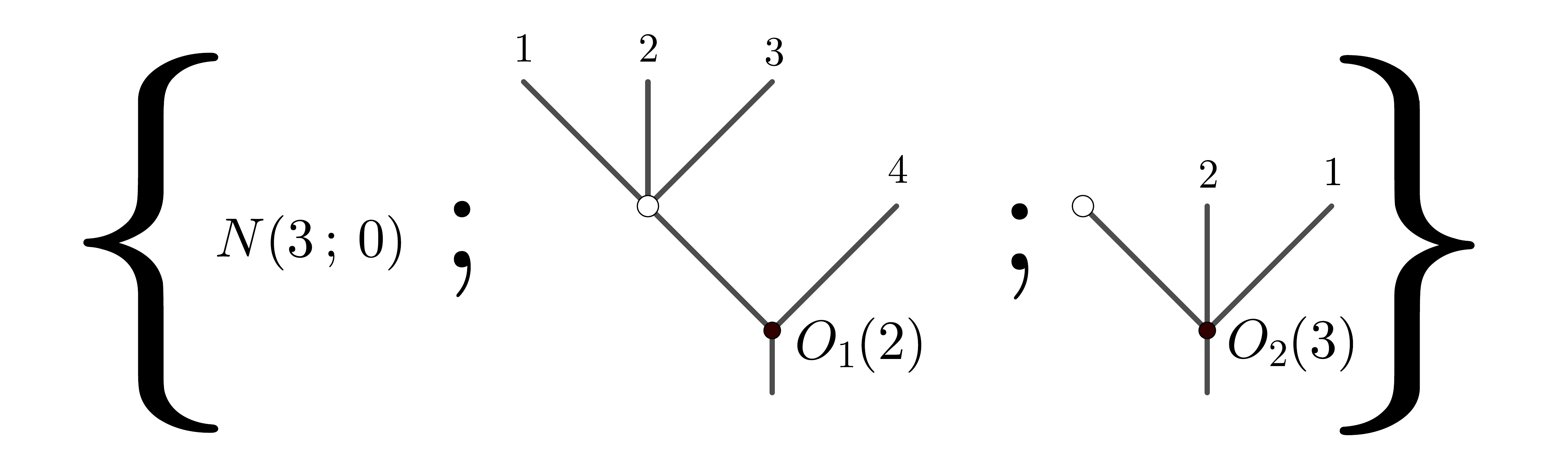}\vspace{-10pt}
\end{center}
\end{figure}

\noindent through the composite map $(-\circ_{2}^{1} \ast)  \circ_{1}^{1} \ast:\big( \mathcal{F}_{Ib}^{\Lambda}(N)(4\,;\,2)\times O_{1}(0)\big)\times O_{2}(0)\rightarrow \mathcal{F}_{Ib}^{\Lambda}(N)(3\,;\,1)$ is \vspace{3pt}

\begin{figure}[!h]
\begin{center}
\includegraphics[scale=0.17]{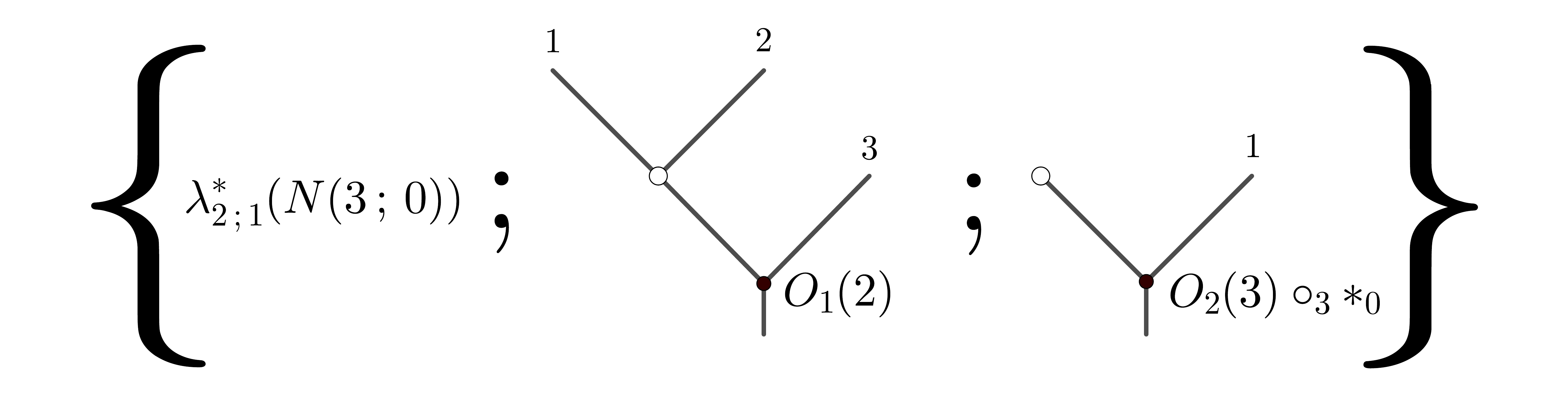}\vspace{-7pt}
\end{center}
\end{figure}

\begin{thm}{\cite{Ducoulombier19}}\label{K8} One has the following properties on the Reedy model category structure: 
\begin{itemize}
\item[$(i)$] The categories $\Lambda Ibimod_{\vec{O}}$ and $T_{\vec{r}}\Lambda Ibimod_{\vec{O}}$, with $\vec{r}\in \mathbb{N}^{k}$, admit a cofibrantly generated model category structure, called Reedy model category structure, transferred from $\Lambda Seq_{k}$ and $T_{\vec{r}}\Lambda Seq_{k}$, respectively, along the adjunctions $(\ref{F7})$.
\item[$(ii)$] A morphism in the category of $k$-fold (possibly truncated) infinitesimal bimodules over $\vec{O}$ is a cofibration for the Reedy model category structure if and only if it is a cofibration as a morphism of $k$-fold (truncated) infinitesimal bimodules over $\vec{O}_{>0}$ equipped with the projective model structure.     
\end{itemize}
\end{thm}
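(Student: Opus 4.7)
The plan is to apply the Berger--Moerdijk transfer principle of \cite{Berger03} to the adjunctions (\ref{F7}), lifting the Reedy model structure on $\Lambda Seq_{k}$ (resp.\ $T_{\vec{r}}\Lambda Seq_{k}$) recalled just before the statement. For part (i), the hypotheses to verify are that $\mathcal{F}_{Ib}^{\Lambda}$ preserves small objects, that the forgetful functor preserves filtered colimits, and that a functorial path object exists in the target category providing a factorization of the diagonal as a weak equivalence followed by a Reedy fibration. The first two are automatic: the building blocks of the coproduct (\ref{F6}) are all small in $Top$, and the forgetful functor just evaluates a $k$-fold sequence componentwise, so filtered colimits commute with it away from the coproduct indices.

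For the path object I would reuse the strategy of Theorem \ref{A9} and set $Path(N)(n_{1},\ldots,n_{k}) = Map([0,1]\,;\,N(n_{1},\ldots,n_{k}))$. Since $[0,1]$ is fixed and all the $k$-fold infinitesimal bimodule operations together with the $\Lambda^{\times k}$-structure maps of $N$ transfer pointwise to $Path(N)$ by post-composition, this construction lives in $\Lambda Ibimod_{\vec{O}}$. The diagonal factors as $N \to Path(N) \to N \times N$, where the first map is an objectwise homotopy equivalence and the second is induced by the cofibration $\partial[0,1] \hookrightarrow [0,1]$. The key observation is that the matching functor $\mathcal{M}(-)$ is a finite limit and therefore commutes with $Map([0,1];-)$, so the Reedy fibration condition on the second map reduces arity by arity to a pointwise Serre fibration property, which follows from the same cofibration of $[0,1]$. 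The argument carries over unchanged to the truncated setting.

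For part (ii), the essential structural observation is already contained in the definition of the free Reedy functor: one has $\mathcal{F}_{Ib\,;\,\vec{O}}^{\Lambda}(N) = \mathcal{F}_{Ib\,;\,\vec{O}_{>0}}(N)$ as $k$-fold infinitesimal bimodules over $\vec{O}_{>0}$, with the residual operations involving nullary elements of $\vec{O}$ being imposed through the $\Lambda^{\times k}$-structure rather than through the free construction itself. Because Reedy cofibrations in $\Lambda Seq_{k}$ coincide with projective cofibrations of underlying $k$-fold sequences by \cite[Theorem 8.3.20]{Fresse17}, the generating cofibrations of both the Reedy structure on $\Lambda Ibimod_{\vec{O}}$ and the projective structure on $Ibimod_{\vec{O}_{>0}}$ are obtained by applying the same free functor to the same generators of $Seq_{k}$. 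Since both model structures are cofibrantly generated, a morphism is a cofibration if and only if it is a retract of a relative cell complex built from these common generators, and the two classes therefore coincide. The main subtlety I anticipate is the Reedy fibration check in part (i) for the path object, where one must confirm that the interaction between $Map([0,1];-)$ and the $\Lambda$-matching maps does not spoil the Serre fibration property in the enriched setting; once that is carefully established, both claims follow formally from the transfer theorem.
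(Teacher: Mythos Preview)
Your approach to part~(i) has a genuine gap. You invoke the Berger--Moerdijk transfer principle via the path-object argument, mirroring Theorem~\ref{A9}. But that argument in \cite[Section~2.5]{Berger03} requires, in addition to functorial path objects, a functorial \emph{fibrant replacement} in the target category. In Theorem~\ref{A9} this was free because every object of $Seq_k$ (projective) is fibrant. In the Reedy setting it is not: an object $N\in\Lambda Seq_k$ is fibrant only if each matching map $N(\vec{l})\to\mathcal{M}(N)(\vec{l})$ is a Serre fibration, which fails in general. Moreover, your verification that $Path(N)\to N\times N$ is a Reedy fibration implicitly uses the SM7 pullback-corner property for the pair $\bigl(\partial[0,1]\hookrightarrow[0,1],\; N(\vec{l})\to\mathcal{M}(N)(\vec{l})\bigr)$, and that produces a fibration only when the second map already is one, i.e.\ only for fibrant $N$. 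So as written the path-object argument does not close, and you would need either to construct a fibrant replacement functor independently (e.g.\ via a small-object argument on the would-be generating acyclic cofibrations) or to verify the acyclicity condition for $\mathcal{F}_{Ib}^{\Lambda}(J)$-cell complexes directly.

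Your argument for part~(ii) is essentially correct in outline: the identity $\mathcal{F}_{Ib;\vec{O}}^{\Lambda}=\mathcal{F}_{Ib;\vec{O}_{>0}}$ together with \cite[Theorem~8.3.20]{Fresse17} does force the two classes of cofibrations to coincide, since the acyclic fibrations agree (both are detected as objectwise acyclic Serre fibrations). The paper, however, takes a different and shorter route for both parts: it uses Remark~\ref{N6} to identify $Ibimod_{\vec{O}}$ with an undercategory of infinitesimal bimodules over the colored operad $\tilde{O}$, and then cites \cite{Ducoulombier19}, where the Reedy transfer and the cofibration characterisation are already established in the colored setting. This avoids re-running the transfer argument and sidesteps the fibrant-replacement issue entirely.
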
\vspace{7pt}

\begin{sproof}
We already know that this properties are trues in the context of infinitesimal bimodules over a colored operad \cite{Ducoulombier19}. So, the theorem is a consequence of the description of $k$-fold infinitesimal bimodules in terms of $1$-fold infinitesimal bimodules over a colored operad introduced in Remark \ref{N6}.
\end{sproof}

\subsubsection{Connections between the two model category structures and properties}\label{K6}

In the previous sections, we introduce two model category structures, projective and Reedy, on the category of $k$-fold (possibly truncated) infinitesimal bimodules over $\vec{O}$. In what follows, we show that these two structures are more or less the same homotopically speaking and induce the same derived mapping space up to a homeomorphism (see the identification (\ref{G7})). For this reason,  we won't distinguish between the two mapping spaces and we will simply write $Ibimod_{\vec{O}}^{h}(-\,;\,-)$ and $T_{\vec{r}}\,Ibimod_{\vec{O}}^{h}(-\,;\,-)$.\vspace{5pt}

\begin{thm}{\cite{Ducoulombier19}}\label{H6} One has the following relations between the Reedy and projective model structures:
\begin{itemize}
\item[$(i)$] One has Quillen equivalences:
$$
\begin{array}{rcl}\vspace{5pt}
id: Ibimod_{\vec{O}} & \leftrightarrows & \Lambda Ibimod_{\vec{O}}:id, \\ 
id: T_{\vec{r}}\,Ibimod_{\vec{O}} & \leftrightarrows & T_{\vec{r}}\,\Lambda Ibimod_{\vec{O}}:id.
\end{array} 
$$
\item[$(ii)$] For any pair $M$ and $N$ of $k$-fold (possibly truncated) infinitesimal bimodules over $\vec{O}$, one has equivalence of mapping spaces
\begin{equation}\label{G7}
\begin{array}{rcl}\vspace{5pt}
Ibimod_{\vec{O}}^{h}(M\,;\,N) & \cong & \Lambda Ibimod_{\vec{O}}^{h}(M\,;\,N), \\ 
T_{\vec{r}}\,Ibimod_{\vec{O}}^{h}(M\,;\,N) & \cong & T_{\vec{r}}\,\Lambda Ibimod_{\vec{O}}^{h}(M\,;\,N).
\end{array} 
\end{equation}
\end{itemize}
\end{thm}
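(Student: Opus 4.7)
The plan is to reduce both statements to the corresponding facts in the category of infinitesimal bimodules over a colored operad established in \cite{Ducoulombier19}, exploiting the same strategy that the author uses in the sketch of Theorem \ref{K8}. The key input is the equivalence of categories
$$Ibimod_{\vec{O}} \;\cong\; Ibimod_{\tilde{O}}\downarrow N_{\ast}$$
from Remark \ref{N6}, together with its obvious analogue for the $\Lambda$-version and for truncated variants. First I would verify that this equivalence intertwines the free functors $\mathcal{F}_{Ib;\vec{O}}$ and $\mathcal{F}^{\Lambda}_{Ib;\vec{O}}$ with the free $1$-fold infinitesimal bimodule functors over $\tilde{O}$ and $\tilde{O}_{>0}$, so that both the projective and the Reedy model structures on $Ibimod_{\vec{O}}$ are transported to the projective and Reedy model structures on $Ibimod_{\tilde{O}}\downarrow N_{\ast}$ inherited from the ambient colored operadic setting.

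For part (i), once the translation to the colored operad $\tilde{O}$ is in place, the Quillen equivalence between the projective and Reedy model structures on $Ibimod_{\tilde{O}}$ is supplied by the cited reference. Passing to the overcategory $Ibimod_{\tilde{O}}\downarrow N_{\ast}$ preserves this Quillen equivalence: the identity functor remains a left (and right) adjoint, weak equivalences are levelwise in both structures and therefore coincide, and Theorem \ref{K8}(ii) together with a comparison of generating sets shows that projective cofibrations are Reedy cofibrations. The derived unit and counit of the identity adjunction are then automatically weak equivalences. The truncated statement is handled identically, using the truncation functor and the corresponding result for $T_{\vec{r}}\Lambda Ibimod_{\tilde{O}}$.

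For part (ii), I would deduce the identification of derived mapping spaces directly from the Quillen equivalence of (i). Take a projective cofibrant replacement $M^{c}$ of $M$; since all objects are projective fibrant by Theorem \ref{A9}, one has $Ibimod^{h}_{\vec{O}}(M;N) = Ibimod_{\vec{O}}(M^{c};N)$. On the Reedy side, $M^{c}$ is still cofibrant by the comparison of cofibrations from (i), and any Reedy fibrant replacement $N\to N^{f}$ is a weak equivalence between projective fibrant objects; hence the induced map
$$Ibimod_{\vec{O}}(M^{c};N)\longrightarrow \Lambda Ibimod_{\vec{O}}(M^{c};N^{f}) = \Lambda Ibimod^{h}_{\vec{O}}(M;N)$$
is a weak equivalence. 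The $\vec{r}$-truncated case is identical.

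The main obstacle is the careful bookkeeping required in the reduction step: one must verify that the projective generating cofibrations of $Ibimod_{\vec{O}}$ really map to Reedy cofibrations under the equivalence with the overcategory of colored operadic infinitesimal bimodules, which amounts to a precise comparison between the free functors $\mathcal{F}_{Ib}$ and $\mathcal{F}^{\Lambda}_{Ib}$. This is essentially the content of Theorem \ref{K8}(ii), and it is here that the results of \cite{Ducoulombier19} do the heavy lifting; once this compatibility is in hand, both assertions are formal consequences.
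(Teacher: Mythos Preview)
Your proposal is correct and follows essentially the same route as the paper: the paper's own sketch of proof simply invokes Remark~\ref{N6} to identify $Ibimod_{\vec{O}}$ with $Ibimod_{\tilde{O}}\downarrow N_{\ast}$ and then cites \cite{Ducoulombier19} for the corresponding statement over colored operads. Your write-up is in fact more detailed than the paper's sketch, spelling out the compatibility of free functors, the passage to overcategories, and the explicit cofibrant/fibrant replacement argument for part~(ii); the only minor point to note is that your argument for~(ii) yields a weak equivalence of mapping spaces rather than a literal homeomorphism, but since derived mapping spaces are only defined up to weak equivalence this is the correct level of precision.
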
\vspace{5pt}

\begin{sproof}
We already know that this kind of properties are trues in the context of infinitesimal bimodules over a colored operad. So, the theorem is a consequence of the description of $k$-fold infinitesimal bimodules in terms of $1$-fold infinitesimal bimodules over a colored operad introduced in Remark \ref{N6}.\vspace{7pt}
\end{sproof}

A map $\vec{\alpha}:\vec{O}\rightarrow \vec{O}'$ between two families of operads $O_{1},\ldots O_{k}$ and $O_{1}',\ldots O_{k}'$ relative to topological monoids $X$ and $X'$, respectively, is a family of operadic maps $\alpha_{i}:O_{i}\rightarrow O_{i}'$ and a  map of monoids $\alpha:X\rightarrow X'$ such that the following diagram commutes:\vspace{3pt}
$$
\xymatrix{
O_{i}(1) \ar[r]^{\alpha_{i}} \ar[d]^{f_{i}} & O_{i}'(1) \ar[d]^{f_{i}'} \\
X \ar[r]^{\alpha} & X' 
}\vspace{3pt}
$$
Such a map $\vec{\alpha}$ is said to be a weak equivalence if the maps $\alpha$, $\alpha_{i}$ and the induced map \vspace{3pt}
$$
O_{1}(A_{1}^{\ast})\underset{X}{\times}\cdots \underset{X}{\times}O_{k}(A_{k}^{\ast}) \longrightarrow O'_{1}(A_{1}^{\ast})\underset{X}{\times}\cdots \underset{X}{\times}O'_{k}(A_{k}^{\ast}), \hspace{15pt} \forall (A_{1},\ldots,A_{k})\in \Sigma^{\times k},\vspace{3pt} 
$$
are weak homotopy equivalences. \vspace{7pt}

\begin{thm}{\cite{Ducoulombier19}}
For any weak equivalence $\vec{\alpha}:\vec{O}\rightarrow \vec{O}'$ of families of reduced operads relative to monoids with cofibrant components, one has Quillen equivalences\vspace{3pt}
$$
\begin{array}{rcl}\vspace{7pt}
\alpha_{Ib}^{!}:\Lambda Ibimod_{\vec{O}} & \leftrightarrows & \Lambda Ibimod_{\vec{O}'}:\alpha^{\ast}_{Ib}, \\ 
\alpha_{Ib}^{!}: T_{\vec{r}}\,\Lambda Ibimod_{\vec{O}} & \leftrightarrows & T_{\vec{r}}\,\Lambda Ibimod_{\vec{O}'}:\alpha^{\ast}_{Ib},
\end{array} \vspace{1pt}
$$
where $\alpha_{Ib}^{\ast}$ is the restriction functor and $\alpha_{Ib}^{!}$ is the induction functor. \vspace{5pt} 
\end{thm}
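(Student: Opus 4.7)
My plan is to follow the same strategy used for Theorems \ref{K8} and \ref{H6}, namely to reduce the statement to the corresponding Quillen equivalence result for $1$-fold infinitesimal bimodules over a colored operad, which is available in \cite{Ducoulombier19}.

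First I would invoke Remark \ref{N6} to reinterpret both sides of the adjunction as slice categories of infinitesimal bimodules over colored operads. Starting from $\vec{\alpha}:\vec{O}\rightarrow\vec{O}'$, I would construct an associated map $\tilde{\alpha}:\tilde{O}\rightarrow\tilde{O}'$ of $(k+1)$-colored operads, where $\tilde{O}$ and $\tilde{O}'$ are built from $\vec{O}$ and $\vec{O}'$ via formula (\ref{N7}). Under the identifications $Ibimod_{\vec{O}}\cong Ibimod_{\tilde{O}}\downarrow N_{\ast}$ and $Ibimod_{\vec{O}'}\cong Ibimod_{\tilde{O}'}\downarrow N_{\ast}'$, the restriction/induction adjunction $(\alpha^{!}_{Ib},\alpha^{\ast}_{Ib})$ corresponds to the restriction/induction adjunction $(\tilde{\alpha}^{!},\tilde{\alpha}^{\ast})$ of the colored operad map, passed to the appropriate slice categories (noting that $N_{\ast}$ and $N_{\ast}'$ have matching shape so the slice constructions are compatible).

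The key verification is that the hypothesis that $\vec{\alpha}$ is a weak equivalence of families of operads relative to monoids, together with the cofibrancy of components, translates exactly into the hypothesis of the corresponding theorem for colored operads. By definition, being a weak equivalence of $\vec{\alpha}$ requires weak homotopy equivalences in each arity \emph{and} on each mixed product $O_{1}(A_{1}^{\ast})\times_{X}\cdots\times_{X}O_{k}(A_{k}^{\ast})\to O'_{1}(A_{1}^{\ast})\times_{X'}\cdots\times_{X'}O'_{k}(A_{k}^{\ast})$. By the construction (\ref{N7}) of $\tilde{O}$, these are precisely the arity-wise spaces of $\tilde{O}$, so $\tilde{\alpha}$ is a componentwise weak equivalence of colored operads, which is the standard hypothesis for a Quillen equivalence. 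The cofibrancy assumption on the components of $X$, $X'$ (and of each $O_{i}$, $O'_{i}$) transfers to cofibrancy of the components of $\tilde{O}$ and $\tilde{O}'$, thanks to the fact that the mixed products are formed over cofibrant monoids.

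Having reduced to the colored operad case, I would cite the corresponding result from \cite{Ducoulombier19}, which gives the Quillen equivalence $\tilde{\alpha}^{!}:\Lambda Ibimod_{\tilde{O}}\leftrightarrows\Lambda Ibimod_{\tilde{O}'}:\tilde{\alpha}^{\ast}$. Passing to the slice categories over the common image of $N_{\ast}$ preserves the Quillen equivalence because both $(\tilde{\alpha}^{!},\tilde{\alpha}^{\ast})$ restrict to the slices and the unit/counit conditions on cofibrant/fibrant objects descend to the corresponding slices. The truncated case is handled identically, replacing $\Lambda Ibimod$ by $T_{\vec{r}}\,\Lambda Ibimod$ throughout, using the truncated colored operad version of the same reduction. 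The main technical obstacle I anticipate is the verification that the mixed-product weak equivalences correspond exactly to arity-wise weak equivalences of the colored operad $\tilde{O}$, and that the cofibrancy hypotheses behave well under this correspondence; once that bookkeeping is handled, the rest is a direct citation of the colored-operad Quillen equivalence theorem.
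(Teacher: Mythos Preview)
Your proposal is correct and follows essentially the same approach as the paper's own sketch of proof: reduce via Remark~\ref{N6} to the colored operad $\tilde{O}$, observe that the hypothesis on $\vec{\alpha}$ makes $\tilde{\alpha}:\tilde{O}\to\tilde{O}'$ a componentwise weak equivalence of colored operads with cofibrant components, and then invoke the Quillen equivalence result for infinitesimal bimodules over colored operads from \cite{Ducoulombier19}, passing to the slice categories over $N_{\ast}$. Your write-up is in fact more detailed than the paper's sketch, which simply records the identification $Ibimod_{\vec{O}}\cong Ibimod_{\tilde{O}}\downarrow N_{\ast}$ and cites the colored-operad theorem.
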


\begin{sproof}
According to the notation introduced in Remark \ref{N6}, the weak equivalence $\vec{\alpha}:\vec{O}\rightarrow \vec{O}'$ induced a weak equivalence of colored operads $\tilde{\alpha}:\tilde{O}\rightarrow \tilde{O}'$ where $\tilde{O}$ and $\tilde{O}'$ are the colored operads associated with the families $O_{1},\ldots O_{k}$ and $O_{1}',\ldots O_{k}'$, respectively, obtained from the formula (\ref{N7}). Nevertheless, a weak equivalence of colored operads with cofibrant components induces a Quillen equivalence between the corresponding categories of infinitesimal bimodules. In other words, one has\vspace{5pt}
$$
\xymatrix{
Ibimod_{\vec{O}}\cong Ibimod_{\tilde{O}}\downarrow N_{\ast} \ar@<0.5ex>[r] & Ibimod_{\tilde{O}'}\downarrow N_{\ast}\cong Ibimod_{\vec{O}'} \ar@<0.5ex>[l]^{\substack{\text{Quillen} \\ \text{equivalence}}}.
}
$$
\end{sproof}

\subsection{The Boardman-Vogt resolution for $k$-fold infinitesimal bimodules}\label{C6}

As explained in the previous section, the category of $k$-fold infinitesimal bimodules is endowed with a projective model category structure in which all the objects are fibrant. Consequently, in order to compute the derived mapping space \vspace{3pt}
$$
Ibimod_{\vec{O}}^{h}(N_{1}\,;\,N_{2})\coloneqq Ibimod_{\vec{O}}(N_{1}^{c}\,;\,N_{2}),\vspace{3pt}
$$
we only need an explicit cofibrant replacement $N_{1}^{c}$ for any $k$-fold infinitesimal bimodule $N_{1}$. For this purpose, we use a kind of Boardman-Vogt resolution which is well known in the context of operads \cite{Berger06,Boardman68,Boardman73} as well as the context of (infinitesimal) bimodules \cite{Ducoulombier16,Ducoulombier18}. Thereafter, we deduce from this cofibrant replacement another cofibrant replacement of $N_{1}$ in the Reedy model category of $k$-fold infinitesimal bimodules over $\vec{O}$.\vspace{7pt}

\begin{defi}\label{C8}\textbf{The set of $k$-fold pearled trees}

\noindent For $\vec{n}=(n_{1},\ldots,n_{k})\in \mathbb{N}^{k}$, we consider the set $pTree[\,\vec{n}\,]$ of $k$-fold pearled trees $\vec{T}=(T_{1},\ldots,T_{k},\vec{\sigma})$ where $T_{i}$ is a pearled tree having $n_{i}$ leaves and $\vec{\sigma}\in \Sigma_{n_{1}}\times \cdots \times \Sigma_{n_{k}}$ is a permutation labelling the leaves of the pearled trees.  According to the notation introduced in Definition \ref{B4}, we also assume that $|V^{rp}(T_{1})|=|V^{rp}(T_{i})|$  for all $i\leq k$. \vspace{-15pt}   

\begin{figure}[!h]
\begin{center}
\includegraphics[scale=0.33]{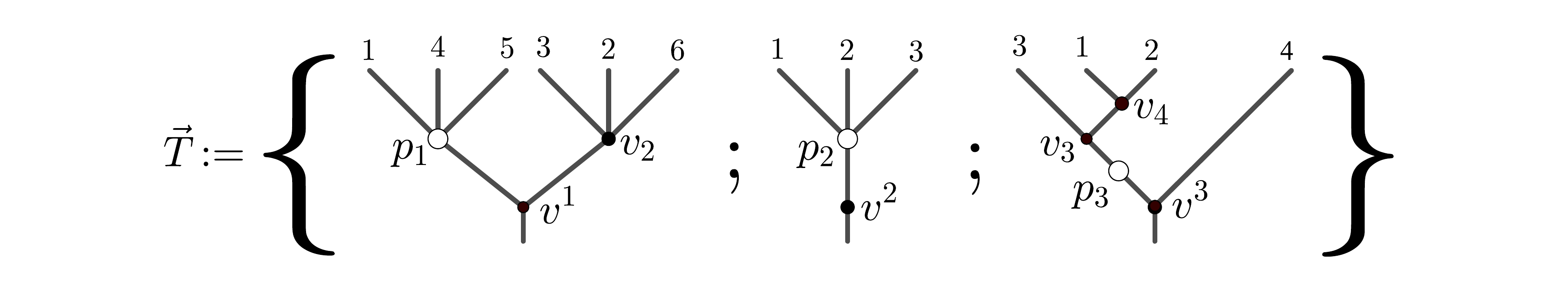}\vspace{-13pt}
\caption{Illustration of an element in $pTree[6\,;\,3\,;\,4]$.}\label{K7}\vspace{-10pt}
\end{center}
\end{figure}
\end{defi}

\begin{const}\label{E2}
Let $N$ be a $k$-fold infinitesimal bimodule over $\vec{O}$. For any $\vec{n}=(n_{1},\ldots,n_{k})\in \mathbb{N}^{k}$, the space $\mathcal{I}b_{\vec{O}}(N)(\,\vec{n}\,)$, also denoted by $\mathcal{I}b(N)(\,\vec{n}\,)$ when $\vec{O}$ is understood, is defined as the quotient of the subspace of the coproduct\vspace{3pt}
\begin{equation}\label{F8}
\left.\left(\underset{\vec{T} \in\, pTree[\vec{n}]}{\coprod} \!\!\!\!\!\! N(|p_{1}|,\ldots, |p_{k}|) \,\times \underset{v^{1}\in V^{rp}(T_{1})}{\prod} \big[ \vec{O}(|v^{1}|,\ldots,|v^{k}|)\times [0\,,\,1]\big] \times \underset{\substack{i\in \{1,\ldots, n\}\\ v\in V^{c}(T_{i})}}{\prod}\!\!\!\! \big[ O_{i}(|v|)\times [0\,,\,1]\big]\right)\,\,\right/ \!\!\sim\vspace{2pt}
\end{equation}
formed by points satisfying the following condition: if $e$ is an inner edge from $v$ to $v'$, according to the orientation towards the pearl, then one has $t_{v}\leq t_{v'}$ where $t_{v}$ and $t_{v'}$ are the real numbers associated to $v$ and $v'$, respectively. By convention, the pearls are indexed by $0$. For instance, any point indexed by the element represented in Figure \ref{K7} must satisfy to the conditions $t_{v^{1}}= t_{v^{2}}=t_{v^{3}}$, $t_{v^{1}}\leq t_{v_{2}}$ and $t_{v_{3}}\leq t_{v_{4}}$. \vspace{7pt}

The equivalent relation is generated by the compatibility relations with the symmetric group action $\Sigma_{n_{1}}\times \cdots \times \Sigma_{n_{k}}$ and the unit axiom. Furthermore, if two vertices connected by an inner edge $e$ are indexed by the same real number $t_{e}$, then we contract $e$ using the operadic structures of $O_{1},\ldots,O_{k}$ or the $k$-fold infinitesimal bimodule structure of $N$. The vertex so obtained is indexed by $t_{e}$. For instance, the point in $\mathcal{I}b(N)(4\,;\,4)$ \vspace{-10pt} 
\begin{figure}[!h]
\begin{center}
\includegraphics[scale=0.47]{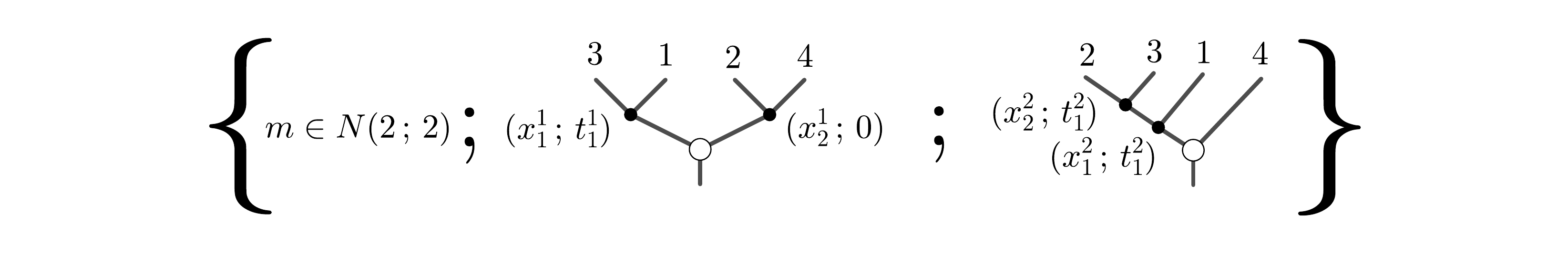}\vspace{-22pt}
\end{center}
\end{figure}

\noindent is equivalent to the following one \vspace{-10pt}

\begin{figure}[!h]
\begin{center}
\includegraphics[scale=0.47]{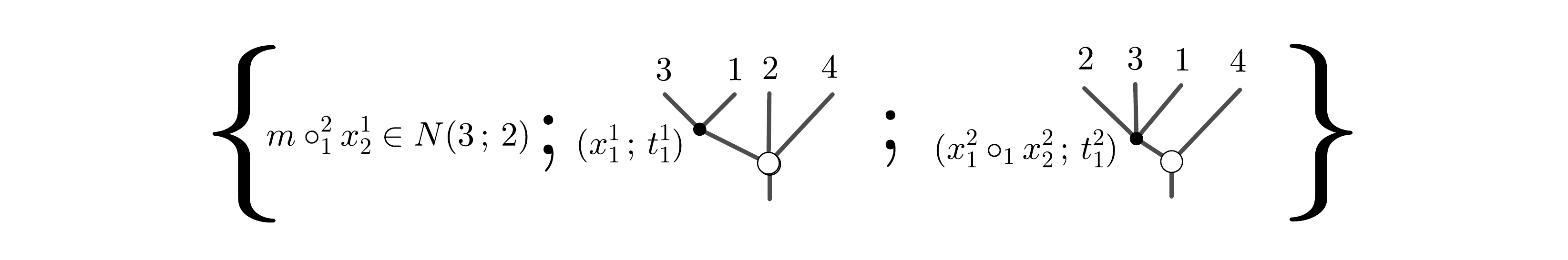}\vspace{-54pt}
\end{center}
\end{figure}

\newpage

The $k$-fold infinitesimal right operation $\circ_{i}^{j}$ with an element $x\in O_{i}(n)$ consists in grafting a $n$-corolla indexed by the pair $(x\,;\,1)$ into the $j$-th leaf of the tree $T_{i}$. Similarly, given an element $(x_{1},\ldots, x_{k})$ in the space  $\vec{O}(n_{1},\ldots,n_{k})$, the $k$-fold infinitesimal left operation consists in grafting each pearled tree $T_{i}$ into the first leaf of the $n_{i}$-corolla indexed by $(x_{i}\,;\,1)$. Moreover, one considers two maps of $k$-fold infinitesimal bimodules\vspace{3pt}
\begin{equation}\label{B0}
\eta:\mathcal{I}b(N)\longrightarrow N \hspace{15pt}\text{and}\hspace{15pt} \tau:\mathcal{F}_{Ib}(N)\longmapsto \mathcal{I}b(N)\vspace{5pt}
\end{equation}
where $\eta$ sends the real numbers to $0$ while $\tau$ indexes the vertices other than the pearls by $1$. The reader can easily check that the above construction for $1$-fold infinitesimal bimodules is homeomorphic to the Boardman-Vogt resolution for infinitesimal bimodules introduced in \cite{Ducoulombier18}.\vspace{5pt}
\end{const}

From now on, we provide a filtration of the resolution $\mathcal{I}b(N)$ according to the number of geometrical inputs which is the number of leaves plus the number of univalent vertices other than the pearls. A point in $\mathcal{I}b(N)$ is said to be \textit{prime} if the real numbers indexing the vertices are strictly smaller than $1$. Besides, a point is said to be \textit{composite} if one of the real numbers is $1$ and such a point can be associated to a \textit{prime component}. More precisely, the prime component is obtained by cutting the vertices indexed by $1$. For instance, the prime component associated to the composite point \vspace{-15pt}
\begin{figure}[!h]
\begin{center}
\includegraphics[scale=0.6]{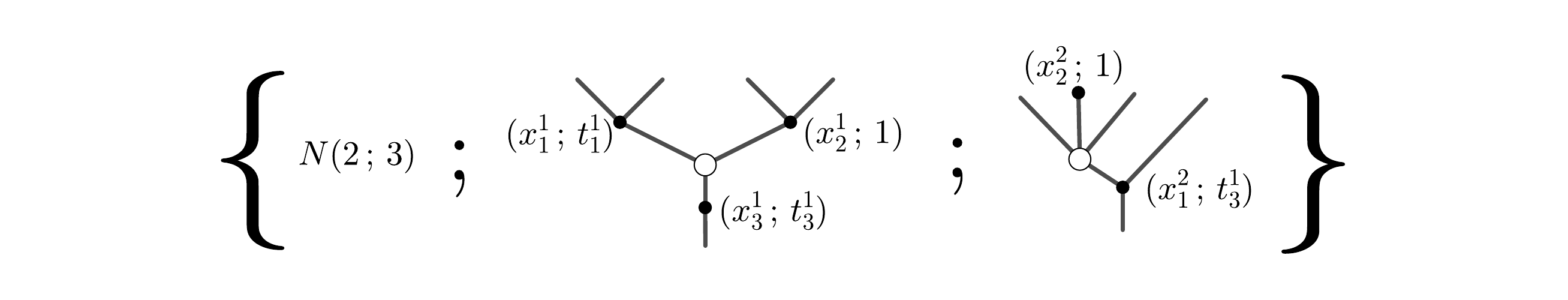}\vspace{-21pt}
\end{center}
\end{figure}

\noindent is given by the following element \vspace{-10pt}

\begin{figure}[!h]
\begin{center}
\includegraphics[scale=0.6]{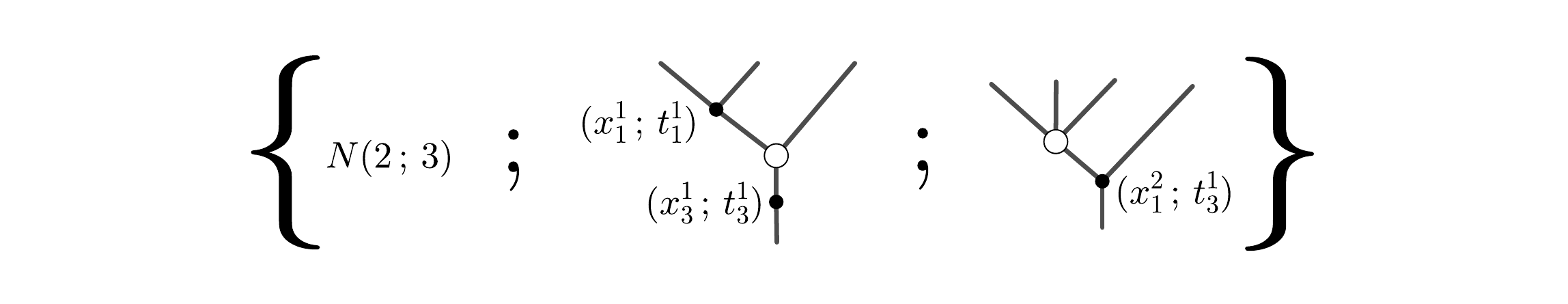}
\vspace{-20pt}
\end{center}
\end{figure}

 For $\vec{n}=(n_{1},\ldots,n_{k})\in \mathbb{N}^{k}$, a prime point, indexed by an element $(T_{1},\ldots,T_{k},\sigma)$, is in the $\vec{n}$-th filtration term $\mathcal{I}b(N)[\,\vec{n}\,]$ if $T_{i}$ has at most $n_{i}$ geometrical inputs. Then, a composite point is in the $\vec{n}$-th filtration term if its prime component is in $\mathcal{I}b(N)[\,\vec{n}\,]$. For instance the composite point above is in $\mathcal{I}b(N)[3\,;\,4]$.  By construction, $\mathcal{I}b(N)[\,\vec{n}\,]$ is a $k$-fold infinitesimal bimodule and, for each pair $\vec{m}\leq \vec{n}$, there is an inclusion of $k$-fold infinitesimal bimodules\vspace{3pt}
\begin{equation}\label{G1}
\iota[\vec{m}\leq \vec{n}]:\mathcal{I}b(N)[\,\vec{m}\,]\longrightarrow \mathcal{I}b(N)[\,\vec{n}\,].\vspace{7pt}
\end{equation}

\begin{thm}\label{K9}
Let $N$ be a $k$-fold infinitesimal bimodule over $\vec{O}$. If $\vec{O}$ and $N$ are cofibrant as $k$-fold sequences, then $\mathcal{I}b(N)$ and $T_{\vec{r}\,}(\mathcal{I}b(N)[\,\vec{r}\,])$ are cofibrant replacements of $N$ and $T_{\vec{r}}\,(N)$ in the projective model categories $Ibimod_{\vec{O}}$ and $T_{\vec{r}}\,Ibimod_{\vec{O}}$, respectively. In particular, the maps $\eta$ and $\tau$ (see (\ref{B0})) are, respectively, a weak equivalence and a cofibration.\vspace{7pt}
\end{thm}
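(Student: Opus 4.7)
The plan is to follow the classical Boardman-Vogt pattern, adapting the single-variable argument of \cite{Ducoulombier18}. First I would handle the weak equivalence $\eta:\mathcal{I}b(N)\to N$. A continuous section $s:N\to \mathcal{I}b(N)$ sends $x\in N(\vec{n})$ to the element supported on the $k$-fold pearled tree whose $i$-th component is the $n_i$-corolla (so $V^{rp}=V^{c}=\emptyset$), the pearls being jointly labelled by $x$. Obviously $\eta\circ s=\mathrm{id}_N$. The reverse homotopy $h:\mathcal{I}b(N)\times [0,1]\to \mathcal{I}b(N)$ rescales each parameter $t_v$ to $u\cdot t_v$: at $u=1$ it is the identity, while at $u=0$ every non-pearl vertex becomes indexed by $0$, so the equivalence relation in (\ref{F8}) contracts every inner edge using the operadic structures of the $O_i$'s and the $k$-fold infinitesimal bimodule structure of $N$, reducing the point to $s(\eta(-))$. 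Compatibility of $h$ with the equivalence relation is automatic because the rescaling affects each parameter independently.

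The technical heart is showing that $\tau:\mathcal{F}_{Ib}(N)\to \mathcal{I}b(N)$ is a cofibration. Cofibrancy of $\mathcal{I}b(N)$ then follows from the factorization $\emptyset \to \mathcal{F}_{Ib}(N) \overset{\tau}{\to} \mathcal{I}b(N)$: the first map is a cofibration since $\mathcal{F}_{Ib}$ is left Quillen by (\ref{A7}) and $N$ is cofibrant as a $k$-fold sequence, and $\tau$ is one by the claim. To establish the claim I would use the filtration (\ref{G1}) and order the multi-indices $\vec{n}\in \mathbb{N}^k$ suitably so that each elementary step $\mathcal{I}b(N)[\vec{m}]\to \mathcal{I}b(N)[\vec{n}]$ is a pushout in $Ibimod_{\vec{O}}$ along a free map $\mathcal{F}_{Ib}(A)\to \mathcal{F}_{Ib}(B)$, where $A\to B$ is a cofibration of $k$-fold sequences built from boundary inclusions $\partial [0,1]^V\hookrightarrow [0,1]^V$ indexed by the total vertex set $V=V^{rp}(T_1)\sqcup \bigsqcup_i V^{c}(T_i)$ of each newly added tree, tensored with the appropriate factor of labels in $N$, $\vec{O}$ and the $O_i$'s. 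The boundary face $t_v=1$ is precisely what glues onto the already attached $\mathcal{F}_{Ib}$-cells, while each face $t_v=0$ produces via contraction a point already present in $\mathcal{I}b(N)[\vec{m}]$.

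The principal obstacle is the combinatorial bookkeeping: because the path vertices $V^{rp}(T_i)$ across the $k$ trees are synchronized and indexed by the fibred product $\vec{O}(|v^1|,\ldots,|v^k|)$, the cubes attached at different filtration steps share faces in a nontrivial way, and the attaching maps must be organized so that each pushout is genuinely a pushout along a single cofibration. Treating the synchronized vertices in $V^{rp}$ as one joint block and the asynchronous vertices in each $V^c(T_i)$ independently mirrors the factorization of (\ref{F8}) and yields the desired pushout description; the hypothesis that $\vec{O}$ is cofibrant as a $k$-fold sequence is precisely what makes each attaching map a cofibration. For the $\vec{r}$-truncated statement, I would repeat the argument with $pTree[\vec{n}]$ restricted to trees having at most $r_i$ geometric inputs in the $i$-th coordinate: the section and the rescaling homotopy descend, the pushout construction truncates at the appropriate filtration level, and $T_{\vec{r}}(\mathcal{I}b(N)[\vec{r}])$ is therefore a cofibrant replacement of $T_{\vec{r}}(N)$ in $T_{\vec{r}}Ibimod_{\vec{O}}$.
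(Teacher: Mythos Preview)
Your approach coincides with the paper's ``by hand'' method; the paper also offers a shortcut via Remark~\ref{N6}, reducing to the Boardman--Vogt resolution for infinitesimal bimodules over the associated colored operad, already treated in \cite{Ducoulombier18}. For the untruncated resolution your section $s$, your rescaling homotopy for $\eta$, and your cellular pushout description of $\tau$ and of the filtration maps $\iota[\vec m\leq\vec n]$ are all correct and match the paper's sketch.

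The gap is in the truncated case. You assert that the rescaling homotopy descends to $T_{\vec r}(\mathcal{I}b(N)[\vec r])$, but it does not. Membership of a composite point $p$ in $\mathcal{I}b(N)[\vec r]$ is tested on its \emph{prime component}, not on the full underlying tree; once you rescale by $u<1$, every parameter drops below $1$, the point becomes prime, and its filtration degree is now measured by the full tree, which can exceed $\vec r$. Concretely, take $k=1$ and $r=1$: start from the pearled $1$-corolla, apply $\circ^{1}$ with the point of $O(0)$, then apply $\mu$ with any $\theta\in O(2)$. The resulting composite point lies in $T_{1}(\mathcal{I}b(N)[1])(1)$ (its prime component is the pearled $1$-corolla), yet after rescaling it is a prime point whose tree carries one leaf and one univalent vertex, hence two geometric inputs, and therefore sits outside $\mathcal{I}b(N)[1]$. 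The paper flags precisely this: in the truncated case one must first contract the inner edges not connected to a leaf, absorbing the stray univalent branches into their neighbours via the operadic and Ibimodule structure, and only then run the rescaling homotopy. With that preliminary contraction added, your argument goes through.
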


\begin{sproof}
There are many ways to prove this theorem. One of them consists in using Remark \ref{N6} and the fact that the Boardman-Vogt resolution introduced in Construction \ref{E2} is homeomorphic to the Boardman-Vogt resolution  introduced \cite{Ducoulombier18} in the category of infinitesimal bimodules over the appropriated colored operad. \vspace{5pt}

We can also check by hand the theorem. In that case the strategy is the following: the map $\eta$ induces a homotopy equivalence in the category of $k$-fold sequences in which the homotopy consists in bringing the real numbers indexing the vertices to $0$. For the truncated case, we need first to contract the inner edges which are not connected to a leaf because the homotopy sending the real numbers to $0$ doesn't necessarily preserve the number of geometrical inputs. On the other hand, the maps $\mu$ and $\iota[\vec{m}\leq \vec{n}]$ can be proved to be cofibrations in the category of $k$-fold infinitesimal bimodules by induction on the number of vertices of $k$-fold pearled trees. 
\end{sproof}


\subsubsection{Boardman-Vogt resolution in the $\Lambda^{\times k}$ setting}

From now on, we fix a family of reduced operads $O_{1},\ldots,O_{k}$  relative to a topological monoid $X$ and we adapt the construction in the previous section to produce cofibrant replacements in the category of $k$-fold infinitesimal bimodules equipped with the Reedy model category structure. According to the notation introduced in Section \ref{G0}, we set\vspace{3pt}
$$
\mathcal{I}b_{\vec{O}}^{\Lambda}(N)\coloneqq \mathcal{I}b_{\vec{O}_{>0}}(N).\vspace{3pt}
$$
In other words, this $k$-fold sequence, also denoted by $\mathcal{I}b^{\Lambda}(N)$ when $\vec{O}$ is understood, is obtained by taking the restriction of the coproduct (\ref{F8}) to the $k$-fold pearled trees without univalent vertices other than the pearls. By construction, it is a $k$-fold infinitesimal bimodule over $\vec{O}_{>0}$. Similarly to the free functor in the previous section, we can extend this structure in order to get a $k$-fold infinitesimal bimodule over $\vec{O}$ using the operadic structure of $O_{1},\ldots,O_{k}$ and the $\Lambda^{\times k}$ structure of $M$. Furthermore, the $k$-fold infinitesimal bimodule maps over $\vec{O}_{>0}$
\begin{equation}\label{G2}
\eta:\mathcal{I}b^{\Lambda}(N)\longrightarrow N \hspace{15pt}\text{and}\hspace{15pt} \tau:\mathcal{F}_{Ib}^{\Lambda}(N)\longmapsto \mathcal{I}b^{\Lambda}(N),\vspace{1pt}
\end{equation}
respect the $\Lambda^{\times k}$ structure. In the same way, the filtration (\ref{G1}) gives rise to a filtration of $k$-fold infinitesimal bimodules over $\vec{O}_{>0}$  \vspace{3pt}
$$
\iota[\vec{m}\leq \vec{n}]:\mathcal{I}b^{\Lambda}(N)[\,\vec{m}\,]\longrightarrow \mathcal{I}b^{\Lambda}(N)[\,\vec{n}\,],\vspace{6pt}
$$
compatible with the right action by $O_{i}(0)$. So, this is a filtration of $k$-fold infinitesimal bimodules over $\vec{O}$. \vspace{7pt}

\begin{thm}
Let $N$ be a $k$-fold infinitesimal bimodule over $\vec{O}$. If $\vec{O}$ and $N$ are cofibrant as $k$-fold sequences, then $\mathcal{I}b^{\Lambda}(N)$ and $T_{\vec{r}\,}(\mathcal{I}b^{\Lambda}(N)[\,\vec{r}\,])$ are cofibrant replacements of $N$ and $T_{\vec{r}}\,(N)$ in the Reedy model categories $\Lambda Ibimod_{\vec{O}}$ and $T_{\vec{r}}\Lambda Ibimod_{\vec{O}}$, respectively. In particular, the maps $\eta$ and $\tau$ (see (\ref{G2})) are, respectively, a weak equivalence and a cofibration.
\end{thm}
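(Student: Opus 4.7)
The plan is to deduce this statement from Theorem \ref{K9} (the projective analogue, already established) together with Theorem \ref{K8}(ii) which identifies cofibrations in the Reedy model category $\Lambda Ibimod_{\vec{O}}$ with cofibrations in the projective model category $Ibimod_{\vec{O}_{>0}}$. The key observation is that, by definition of the Boardman--Vogt resolution in the $\Lambda^{\times k}$ setting,
$$
\mathcal{I}b^{\Lambda}(N)=\mathcal{I}b_{\vec{O}_{>0}}(N),\qquad T_{\vec{r}}(\mathcal{I}b^{\Lambda}(N)[\vec{r}])=T_{\vec{r}}(\mathcal{I}b_{\vec{O}_{>0}}(N)[\vec{r}]),
$$
and similarly $\mathcal{F}_{Ib}^{\Lambda}(N)=\mathcal{F}_{Ib\,;\,\vec{O}_{>0}}(N)$. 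So the resolution appearing here is literally the one constructed in Section \ref{C6}, but applied to the family of reduced $\Lambda$-operads $\vec{O}_{>0}$.

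First, I would apply Theorem \ref{K9} to $\vec{O}_{>0}$, which is cofibrant as a $k$-fold sequence because $\vec{O}$ is, and to the cofibrant $k$-fold sequence $N$. This yields that $\mathcal{I}b^{\Lambda}(N)$ is a cofibrant replacement of $N$ in the projective model category $Ibimod_{\vec{O}_{>0}}$, with $\tau$ a projective cofibration and $\eta$ a weak equivalence; and the analogous statement for $T_{\vec{r}}(\mathcal{I}b^{\Lambda}(N)[\vec{r}])$ in $T_{\vec{r}}Ibimod_{\vec{O}_{>0}}$.

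Next, I would transfer these conclusions to the Reedy structure on $\Lambda Ibimod_{\vec{O}}$. By Theorem \ref{K8}(ii) a morphism is a Reedy cofibration over $\vec{O}$ if and only if it is a projective cofibration over $\vec{O}_{>0}$, so $\tau$ is automatically a Reedy cofibration. For weak equivalences, both model structures in question are transferred along forgetful functors landing in $\Lambda Seq_{k}$ (or $Seq_{k}$), so the class of weak equivalences in each case coincides with the objectwise weak homotopy equivalences of $k$-fold sequences; thus $\eta$ remains a weak equivalence in $\Lambda Ibimod_{\vec{O}}$. The same argument, together with the truncated version of Theorem \ref{K8}(ii), handles the $\vec{r}$-truncated case.

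The only point that requires verification, and which I expect to be the main (but minor) obstacle, is the compatibility of structures: $\eta$ and $\tau$ are constructed a priori as morphisms of $k$-fold infinitesimal bimodules over $\vec{O}_{>0}$ that respect the $\Lambda^{\times k}$-structure, and one must check that they extend to morphisms in $\Lambda Ibimod_{\vec{O}}$, i.e.\ that they are compatible with the right actions by $O_i(0)$ recovered from the $\Lambda^{\times k}$-structure. This is immediate from the paragraph preceding the statement, where precisely this extension is used to equip $\mathcal{I}b^{\Lambda}(N)$ with its $\vec{O}$-action and to exhibit the filtration $\iota[\vec{m}\leq\vec{n}]$ as a filtration of $k$-fold infinitesimal bimodules over $\vec{O}$, so no genuine difficulty arises.
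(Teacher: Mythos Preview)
Your proposal is correct and follows exactly the route the paper takes: the paper's own proof is the single line ``It is a consequence of Theorems \ref{K8} and \ref{K9}'', and you have simply unpacked how those two theorems combine (using $\mathcal{I}b^{\Lambda}(N)=\mathcal{I}b_{\vec{O}_{>0}}(N)$, invoking Theorem \ref{K9} for the projective cofibrancy and weak equivalence over $\vec{O}_{>0}$, and then Theorem \ref{K8}(ii) to identify Reedy cofibrations with projective cofibrations over $\vec{O}_{>0}$). Your remark about compatibility with the $\Lambda^{\times k}$-structure and the right actions by $O_i(0)$ is the only point needing a word of care, and as you note it is handled by the discussion immediately preceding the statement.
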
  \vspace{3pt}

\begin{proof}
It is a consequence of Theorems \ref{K8} and \ref{K9}.
\end{proof}\vspace{-9pt}

\section{Multivariable manifold calculus and embeddings spaces}\label{L3}

In \cite{Arone14}, Arone and Turchin show that the Goodwillie-Weiss tower associated to embedding spaces is related to derived mapping spaces of truncated $1$-fold infinitesimal bimodules over the little cubes operad. In this section, we extend this result to embedding spaces whose source object is a manifold with many components (not necessarily the same dimension) using a multivariable version of the Goodwillie-Weiss manifold calculus theory introduced by Munson and Volic in \cite{Munson12}. More precisely, the main result of this section is the following one:\vspace{7pt}

\begin{thm}\label{K2}
Let $d_{1}\leq \ldots \leq d_{k} <n$ be a family of integers. Then, one has\vspace{3pt}
$$
\begin{array}{rcll}\vspace{8pt}
T_{\vec{r}}\,\mathcal{L}(d_{1},\ldots,d_{k}\,;\,n) & \simeq & T_{\vec{r}}\, Ibimod_{\vec{O}}^{h}\big( \mathbb{O}\,;\, \mathcal{R}_{n}^{k}\,\big),& \text{for any } \vec{r}\in (\mathbb{N}\sqcup \{\infty\})^{k},\\ 
\mathcal{L}(d_{1},\ldots,d_{k}\,;\,n) & \simeq & Ibimod_{\vec{O}}^{h}\big( \mathbb{O}\,;\, \mathcal{R}_{n}^{k}\,\big), & \text{if } d_{k}+3\leq n,
\end{array} \vspace{-1pt}
$$ 
where $\mathcal{R}_{n}^{k}$ is the $k$-fold infinitesimal bimodule introduced in Example \ref{D7} ; $\vec{O}$ and $\mathbb{O}$ are the $k$-fold sequences associated to the family of reduced operads $\mathcal{C}_{d_{1}},\ldots,\mathcal{C}_{d_{k}}$ relative to $\mathcal{C}_{n}(1)$ and the functor $T_{\vec{r}}$ associated to the embedding space (which is not the truncated functor) is the $\vec{r}$-th polynomial approximation which will be introduced in the next subsections.  
\end{thm}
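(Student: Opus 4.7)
The strategy is to extend the Arone--Turchin identification \cite{Arone14} (the $k=1$ case) to the multivariable context, using the multivariable Goodwillie--Weiss calculus of Munson--Volic \cite{Munson12}. First, view the homotopy fiber $\mathcal{L}(\vec{U})$ as a contravariant functor on the product poset $\mathcal{O}(\mathbb{R}^{d_1})\times\cdots\times\mathcal{O}(\mathbb{R}^{d_k})$ restricted to tuples of open subsets coinciding with $\mathbb{R}^{d_i}$ outside a compact set. Because $\mathcal{L}(\vec{U})$ depends only on the diffeomorphism type of $\vec{U}$ rel the boundary behavior (an isotopy/context-free functor), the multivariable $\vec{r}$-th polynomial approximation $T_{\vec{r}}\mathcal{L}$ is, up to weak equivalence, determined by the values of $\mathcal{L}$ on tuples $\vec{U}$ in which each $U_i$ is diffeomorphic to the disjoint union of at most $r_i$ open cubes together with one ``anti-cube'' (a complement of a compact cube).

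Next, I would establish an equivalence of categories between $T_{\vec{r}}\,Ibimod_{\vec{O}}$ and a category of diagrams over the same indexing category that governs $T_{\vec{r}}\mathcal{L}$. The objects in an $\vec{r}$-truncated $k$-fold infinitesimal bimodule $N(n_1,\ldots,n_k)$ correspond exactly to the values on a tuple consisting of $n_i$ cubes plus an anti-cube in the $i$-th coordinate; the $k$-fold infinitesimal right operations $\circ_i^j$ encode the insertion of cubes into cubes along the $i$-th coordinate, while the $k$-fold infinitesimal left operation $\mu$ encodes the global restructuring that the anti-cubes undergo simultaneously (this is precisely what the definition of $\vec{O}$ as a fibered product over $\mathcal{C}_n(1)$ is designed to capture). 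Under this identification, the homotopy limit defining $T_{\vec{r}}\mathcal{L}$ becomes a derived mapping space of $\vec{r}$-truncated $k$-fold infinitesimal bimodules.

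For the source and target objects, one evaluates the diagram built from $\mathcal{L}$: on tuples of cubes alone, the embedding space is homotopy equivalent to a configuration space of little cubes, giving $\mathbb{O}(n_1,\ldots,n_k)=\prod_i \mathcal{C}_{d_i}(n_i)$; on tuples that include the anti-cubes one recovers $\mathcal{R}_n^k(n_1,\ldots,n_k)=\mathcal{R}_n(n_1+\cdots+n_k)$, since an embedding of $\coprod_i \mathbb{R}^{d_i}$ into $\mathbb{R}^n$ restricted to a configuration of cubes, modulo its immersive information, is homotopically a configuration in $\mathbb{R}^n$. The mapping space thus precisely computes the required polynomial approximation.

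Finally, for the unbounded statement under $d_k+3\leq n$, I would invoke the convergence of the multivariable Goodwillie--Weiss tower: the analyticity/connectivity estimates of Goodwillie--Klein--Weiss, adapted to the multivariable setting in \cite{Munson12}, yield $\mathcal{L}(d_1,\ldots,d_k;n)\simeq T_{\vec\infty}\mathcal{L}$ whenever each codimension $n-d_i\geq 3$. Combined with the compatibility of $T_{\vec{r}}\,Ibimod_{\vec{O}}^h$ with the tower $\vec{r}\to\vec\infty$ (homotopy limit over a cofiltered diagram), the first equivalence passes to the limit. The main obstacle I expect is the careful verification of the diagram-category equivalence in the second step: one must check that the indexing category for truncated $k$-fold infinitesimal bimodules (trees with pearls) matches, up to homotopy, the poset of diffeomorphism classes of good tuples $\vec{U}$, and that the gluing data on both sides correspond under the identification. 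This is exactly the role of Construction \ref{E2} and the pearled-tree resolution, which make the evaluation comparison into a computable weak equivalence.
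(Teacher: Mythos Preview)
Your outline tracks the paper's strategy: multivariable calculus reduces to values on cubes-plus-anti-cubes, $Ibimod_{\vec{O}}$ is identified with contravariant functors on a small category $\Gamma(\vec{O})$, and convergence is invoked for the untruncated statement. Two points, however, are confused and would prevent the argument from going through as written.

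First, your account of where $\mathbb{O}$ and $\mathcal{R}_n^k$ come from is wrong. You say that evaluating $\mathcal{L}$ ``on tuples of cubes alone'' gives $\mathbb{O}=\prod_i\mathcal{C}_{d_i}(n_i)$ and ``on tuples that include the anti-cubes'' gives $\mathcal{R}_n^k$. But in the compactly supported setting every object of $\Gamma(\vec{O})$ already carries an anti-cube in each slot, and evaluating $\mathcal{L}$ on any such object yields a space equivalent to $\mathcal{R}_n(\sum n_i)$, never $\prod_i\mathcal{C}_{d_i}(n_i)$. The source $\mathbb{O}$ is not a value of $\mathcal{L}$ at all: it is the \emph{representable} functor $sEmb(-;\sqcup_i\mathbb{R}^{d_i})$ on $\Gamma(\vec{O})$, identified with $\mathbb{O}$ as in Example~\ref{K3}(2). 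The target $\mathcal{R}_n^k$ is the $k$-fold infinitesimal bimodule obtained from $\mathcal{L}$ itself (via Example~\ref{K3}(1)); the paper passes through an auxiliary functor $\overline{Emb}_{st}(-;\mathbb{R}^n)$ to make this identification precise. Getting the roles of source and target straight is essential, since the whole argument is that $T_{\vec r}\mathcal{L}$ is a derived mapping space \emph{out of} a representable object.

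Second, the technical mechanism for the comparison is not the one you name. The key step is Proposition~\ref{K5}, and its proof rests on two ingredients: the enriched Yoneda lemma applied to the representable $sEmb_*(-;\vec{\mathcal{U}})$, and the uniqueness clause in the characterization of degree-$\vec r$ polynomial functors (Theorem~\ref{J6}(4)). The pearled-tree resolution of Construction~\ref{E2} enters only in a supporting role: it is used to exhibit the derived mapping space as a homotopy limit of spaces of the form $Map(sEmb(\vec{\mathcal{V}};\vec{\mathcal{U}});Y)$, each of which is polynomial in $\vec{\mathcal{U}}$, thereby showing that the derived mapping space is itself polynomial. It is not, as you suggest, the device that ``makes the evaluation comparison into a computable weak equivalence''; that is Yoneda plus the polynomial characterization.
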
\vspace{7pt}

The first subsection is devoted to the multivariable manifold calculus theory associated to a contravariant functor of the form $F:\mathcal{O}(\sqcup_{i}M_{i})\rightarrow Top$. We consider two cases: $M_{i}$ is a compact smooth manifold and $M_{i}=\mathbb{R}^{d_{i}}$. Then, we give an alternative definition of $k$-fold infinitesimal bimodules in terms of contravariant functors. This description is used to show that, under some conditions, a contravariant functor $F$ produces a $k$-fold infinitesimal bimodule. In the last subsection, we identify $F$ with a derived mapping space of $k$-fold infinitesimal bimodules and we prove the above theorem.

\subsection{Multivariable manifold calculus theory}

As recalled in the introduction, the theory of manifold calculus of functors developed by Weiss \cite{Weiss99} and Goodwillie-Weiss \cite{Weiss99.2} provides an approximation of contravariant functors $F:\mathcal{O}(M)\rightarrow Top$, where $M$ is a smooth manifold and $\mathcal{O}(M)$ the poset of its open subsets. More precisely, they build the following tower of fibrations\vspace{5pt}
$$
\xymatrix{
& F \ar[dl] \ar[d] \ar[dr] \ar[drr] & & & \\
T_{0}F & T_{1}F\ar[l] & T_{2}F \ar[l] & T_{3}F \ar[l] & \cdots \ar[l]
}\vspace{5pt}
$$ 
which converges to $F$ under some conditions on the functor. By converging, we mean that the induced natural transformation $F\rightarrow T_{\infty}\,F$, where $T_{\infty}\,F$ is the limit of the tower, is pointwise weak homotopy equivalence. The $r$-th polynomial approximation $T_{r}F$ has the advantage to be easier to understand than the initial functor $F$. For instance, Arone and Turchin \cite[Theorem 5.9]{Arone14} prove that $T_{r}F$ can be identified with derived mapping space of infinitesimal bimodules (or just right modules depending on the context) over the little cubes operad. \vspace{7pt}

Thereafter, Munson and Volic \cite{Munson12} extend this theory to the context of contravariant functors $F:\mathcal{O}(M)\rightarrow Top$ where $M$ is now a disjoint union of compact smooth manifolds $M=\coprod_{i}M_{i}$ and they use it to build an explicit cosimplicial replacement for the space of links \cite{Munson14}. This section is subdivides into two parts. The first one is devoted to recall the theory for compact smooth manifolds. Then, we consider a "compactly supported" version in order to study the particular case $M=\coprod_{i}\mathbb{R}^{d_{i}}$.

\subsubsection{Case 1: Compact smooth manifolds $M=\coprod_{i}M_{i}$} 

From now one, we fix $M_{1},\ldots, M_{k}$ to be compact smooth manifolds of dimension $d_{1},\ldots,d_{k}$, respectively. We denote by $\mathcal{O}(\coprod_{i}M_{i})$ the poset of open subsets of $\coprod_{i}M_{i}$. By using the identification \vspace{3pt}
$$
\xymatrix@R=5pt{
\mathcal{O}\big(\, \underset{1\leq i \leq k}{\displaystyle\coprod} M_{i} \big) \ar[r]^{\cong} & \underset{1\leq i\leq k}{\displaystyle\prod} \mathcal{O}(M_{i});\\
\mathcal{U} \ar@{|->}[r] & \vec{\mathcal{U}}=(\mathcal{U}\cap M_{1},\ldots, \mathcal{U}\cap M_{k}),
}\vspace{3pt}
$$
any contravariant functor $F:\mathcal{O}(\coprod_{i}M_{i})\rightarrow Top$ can be seen as a functor on a single variable or as a multivariable functor. Following the notation introduced by Munson and Volic, we say that such a functor is \textit{good} if the following two conditions hold:\vspace{5pt}
\begin{itemize}
\item[$(i)$] $F$ takes isotopy equivalences to weak equivalences,\vspace{3pt}
\item[$(ii)$] for any sequences $\vec{\mathcal{U}_{0}}\subset \vec{\mathcal{U}_{1}}\subset \cdots$, with $\vec{\mathcal{U}_{i}}=(\mathcal{U}_{i}^{1},\ldots, \mathcal{U}_{i}^{k})\in \prod_{i}\mathcal{O}(M_{i})$, one has a homotopy equivalence\vspace{3pt}
$$
F(\cup \vec{\mathcal{U}_{i}})\longrightarrow holim_{i}\, F(\vec{\mathcal{U}}_{i}). 
$$
\end{itemize}\vspace{3pt}

A good contravariant functor $F:\mathcal{O}(\coprod_{i}M_{i})\rightarrow Top$ is said to be \textit{polynomial of degree} $\vec{r}=(r_{1},\ldots,r_{k})$ if $F$ is polynomial of degree $r_{i}$ along the $i$-th variable. In other words, for any open subsets $\mathcal{U}_{j}\in \mathcal{O}(M_{j})$, with $j\neq i$, and any cubical diagram $D:\{0\,;\,1\}^{r_{i}+1}\rightarrow \mathcal{O}(M_{i})$ for which the natural map \vspace{3pt}
$$
\underset{\hspace{30pt}\{0\,;\,1\}^{r_{i}+1}_{\emptyset}}{colim}\, D(-)\longrightarrow D(1,\ldots,1),\hspace{15pt} \text{with } \{0\,;\,1\}^{r_{i}+1}_{\emptyset}\coloneqq\{0\,;\,1\}^{r_{i}+1}\setminus (1,\ldots,1),\vspace{3pt}
$$
is an isotopy equivalence, then the following map must be a homotopy equivalence:\vspace{5pt}
$$
F(\mathcal{U}_{1},\ldots,D(1,\ldots,1),\ldots, \mathcal{U}_{k})\longrightarrow\hspace{-18pt} \underset{\hspace{30pt}\{0\,;\,1\}^{r_{i}+1}_{\emptyset}}{holim}\, F(\mathcal{U}_{1},\ldots,D(-),\ldots, \mathcal{U}_{k}).
$$

\begin{defi}\textbf{The polynomial approximation for compact manifolds}

\noindent For any $\vec{r}=(r_{1},\ldots,r_{k})\in \mathbb{N}^{k}$ and $\vec{\mathcal{U}}=(\mathcal{U}_{1},\ldots , \mathcal{U}_{k})\in\mathcal{O}(\coprod_{i}M_{i})$, we denote by $\mathcal{O}_{\vec{r}}\,(\vec{\mathcal{U}})$ the poset formed by elements $(V_{1},\ldots,V_{k})\in \mathcal{O}(\vec{\mathcal{U}})$ for which $V_{i}$ is diffeomorphic to at most $r_{i}$ disjoint open cubes of dimension $d_{i}$. Equivalently, $\mathcal{O}_{\vec{r}}\,(\vec{\mathcal{U}})$ can be defined as the poset consisting of families of smooth embeddings $A_{i}\times I_{d_{i}}\hookrightarrow \mathcal{U}_{i}$, where $A_{i}$ is a set with at most $r_{i}$ elements and $I_{d_{i}}$ is the open unit cube of dimension $d_{i}$. The $\vec{r}$-th polynomial approximation of $F$ is the contravariant functor $T_{\vec{r}}\, F:\mathcal{O}(\coprod_{i}M_{i})\rightarrow Top$ given by the formula\vspace{5pt}
$$
T_{\vec{r}}\, F(\vec{\mathcal{U}})\coloneqq\hspace{-10pt} \underset{\hspace{20pt}\mathcal{O}_{\vec{r}}\,(\vec{\mathcal{U}})}{holim}\, F(-).\vspace{5pt}
$$
\end{defi}

\begin{thm}{\cite[Theorems 4.14 and 4.16]{Munson12}}\label{J6}
One has the following properties:\vspace{2pt}
\begin{itemize}
\item[$(1)$] $T_{\vec{r}}\,F$ is polynomial of degree $\vec{r}$.\vspace{2pt}
\item[$(2)$] If $F$ is polynomial of degree $\vec{r}$, then $F\rightarrow T_{\vec{r}}\,F$ is a homotopy equivalence.\vspace{2pt}
\item[$(3)$] If $F$ is polynomial of degree $\vec{r}$, then $F$ is also polynomial of degree $\vec{s}$, with $\vec{r}\leq \vec{s}$. \vspace{1pt}
\item[$(4)$] The map $F(\vec{\mathcal{U}})\rightarrow T_{\vec{r}}\,F(\vec{\mathcal{U}})$ is a weak equivalence for any $\vec{U}\in \mathcal{O}_{\vec{r}}(M)$. Moreover, $T_{\vec{r}}\,F$ is characterized (up to equivalence) as the polynomial functor of degree $\vec{r}$ with this property.
\end{itemize}
\end{thm}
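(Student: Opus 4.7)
The plan is to run the multivariable Goodwillie--Weiss machinery on the functor $\vec{\mathcal{U}}\mapsto \mathcal{L}(\vec{\mathcal{U}})$, identify its polynomial approximations with derived mapping spaces of truncated $k$-fold infinitesimal bimodules, and then invoke a convergence theorem under the codimension hypothesis. The first step is to adapt the Munson--Volic polynomial-approximation theory (Theorem \ref{J6}) to the non-compact source $\coprod_i \mathbb{R}^{d_i}$. Because the string-link functor is defined on embeddings agreeing with the fixed affine $\iota$ outside a compact set, I will restrict to a ``context-free'' variant of $\mathcal{O}(\coprod_i \mathbb{R}^{d_i})$ consisting of open subsets that contain the complement of a compact set, and rebuild $T_{\vec{r}}$ as a homotopy limit over the subposet $\mathcal{O}_{\vec{r}}$ of such $\vec{\mathcal{U}}$ in which $\mathcal{U}_i$ is diffeomorphic to at most $r_i$ open cubes plus one ``anti-cube'' (the complement of a compact set). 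I then need to check that $\mathcal{L}$ is good in the Munson--Volic sense, which reduces to the usual isotopy invariance of embedding spaces and the standard homotopy sheaf property of $\mathrm{Emb}_c$ and $\mathrm{Imm}_c$.

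Next, I would give an alternative categorical description of $\vec{r}$-truncated $k$-fold infinitesimal bimodules over $\vec{O}=\{\mathcal{C}_{d_i}\to\mathcal{C}_n\}$ as contravariant functors out of a diagram category $\mathsf{D}_{\vec{r}}$ built from configurations of disjoint little cubes of the appropriate dimensions, with morphisms generated by forgetting a cube (the $\Lambda^{\times k}$-structure of Section \ref{G0}) and rescaling via composition with elements of $\mathcal{C}_{d_i}$ and $\mathcal{C}_n$. In this presentation, the generating object $\mathbb{O}$ corresponds to picking one configuration in each $\mathcal{C}_{d_i}$, while $\mathcal{R}_n^k$ corresponds to the string-link constraint that all components land in a single ambient $\mathcal{R}_n$-configuration; the product over $\mathcal{C}_n(1)$ in the definition of $\vec{O}$ matches the fact that the ambient cubes are aligned along the chosen direction exploited in Example \ref{D7}. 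The derived mapping space $T_{\vec{r}}\,Ibimod_{\vec{O}}^h(\mathbb{O};\mathcal{R}_n^k)$ is then computed by the Boardman--Vogt resolution $T_{\vec{r}}(\mathcal{I}b(\mathbb{O})[\vec{r}])$ of Theorem \ref{K9}, and I can reinterpret it as a homotopy limit indexed by precisely the combinatorial data of $\mathcal{O}_{\vec{r}}$.

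With these two descriptions in hand, I would set up a natural zig-zag between $T_{\vec{r}}\,\mathcal{L}(\vec{U})=\operatorname{holim}_{\mathcal{O}_{\vec{r}}(\vec{U})} \mathcal{L}(-)$ and $T_{\vec{r}}\,Ibimod_{\vec{O}}^h(\mathbb{O};\mathcal{R}_n^k)$ via a cofinal functor $\mathsf{D}_{\vec{r}}\to \mathcal{O}_{\vec{r}}$ that sends a configuration of cubes to its image in $\coprod_i \mathbb{R}^{d_i}$ (together with the anti-cube component). On objects diffeomorphic to disjoint cubes, evaluating $\mathcal{L}$ yields a product of spaces of embeddings of cubes into $\mathbb{R}^n$ modulo immersions, which is weakly equivalent to $\mathcal{R}_n^k$ in the appropriate arity; this is exactly the matching-object description that the Reedy/projective Quillen equivalence of Theorem \ref{H6} encodes. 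A spectral-sequence or Bousfield--Kan style argument then yields the first (truncated) equivalence in the theorem for all $\vec{r}$, including $\vec{r}=\vec\infty$.

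Finally, for the second assertion I would invoke the multivariable Goodwillie--Klein--Weiss convergence theorem: under $d_i+3\leq n$ for each $i$, the canonical map $\mathcal{L}(d_1,\ldots,d_k;n)\to T_{\vec\infty}\mathcal{L}(d_1,\ldots,d_k;n)$ is a weak equivalence (this follows from the multivariable disjunction lemmas of Goodwillie--Klein applied to each component separately, combined with the handle-decomposition induction used by Munson--Volic). Combining this convergence with the $\vec{r}=\vec\infty$ case of the first statement gives the conclusion. The principal obstacle will be the second step: setting up the diagram-category description of $Ibimod_{\vec{O}}$ so that it matches $\mathcal{O}_{\vec{r}}$ on the nose after cofinality, in particular making sure that the $\mathcal{C}_n(1)$-quotient in the definition of $\vec{O}$ precisely captures the ``aligned marked cube'' gluing that appears when an anti-cube is shared between several components of the source; once that combinatorial dictionary is fixed, the rest is a routine adaptation of the Arone--Turchin argument to the multivariable non-compact setting.
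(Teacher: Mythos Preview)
Your proposal does not address Theorem~\ref{J6} at all. The statement you were asked to prove is the collection of basic properties of the multivariable polynomial approximation $T_{\vec r}F$ (it is polynomial of degree~$\vec r$; it is idempotent on polynomial functors; polynomial degree is monotone; $T_{\vec r}F$ is characterized by agreeing with $F$ on $\mathcal{O}_{\vec r}(M)$). What you have written is instead a proof sketch for Theorem~\ref{K2}, the identification $T_{\vec r}\,\mathcal{L}(d_1,\ldots,d_k;n)\simeq T_{\vec r}\,Ibimod_{\vec O}^h(\mathbb{O};\mathcal{R}_n^k)$. You even invoke Theorem~\ref{J6} as an input (``The first step is to adapt the Munson--Volic polynomial-approximation theory (Theorem~\ref{J6})\ldots''), so you are using the statement rather than proving it.

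For reference, the paper does not prove Theorem~\ref{J6} either: it is quoted verbatim from Munson--Voli\'c \cite[Theorems~4.14 and~4.16]{Munson12} for the compact case, and the compactly-supported analogue is handled by the one-line remark that $T_{\vec r}F(\vec{\mathcal U})$ can be rewritten as an iterated single-variable homotopy limit, reducing everything to the classical Weiss results variable by variable. A genuine proof of Theorem~\ref{J6} would proceed along those lines: property~(4) is immediate from cofinality (the object $\vec{\mathcal U}\in\mathcal{O}_{\vec r}(M)$ is terminal in $\mathcal{O}_{\vec r}(\vec{\mathcal U})$ up to isotopy), property~(3) is a cube-splitting argument, and properties~(1) and~(2) follow from Weiss's characterization of polynomial functors applied one coordinate at a time. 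None of this appears in your proposal.

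As a separate remark, your sketch of Theorem~\ref{K2} is broadly in the spirit of the paper's actual proof (via Proposition~\ref{K5} and the enriched Yoneda lemma), though the paper does not need an explicit cofinal functor $\mathsf{D}_{\vec r}\to\mathcal{O}_{\vec r}$; it instead shows directly that both sides are polynomial of degree~$\vec r$ and agree on $\mathcal{O}_{\partial;\vec r}(\sqcup_i\mathbb{R}^{d_i})$, then invokes property~(4) of Theorem~\ref{J6}.
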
\vspace{7pt}

\begin{expl}
As explained in \cite[Theorem 4.19]{Munson12}, for any smooth manifold $N$ of dimension $n$, the contravariant functor associated  to the space of immersions $Imm(\coprod_{i}M_{i}\,;\, N)$ is polynomial of degree $(1,\ldots,1)$ while the contravariant functor associated to the embedding space $Emb(\coprod_{i}M_{i}\,;\, N)$\vspace{3pt}
$$
Emb(-\,;\,N):\mathcal{O}(\textstyle\coprod_{i}M_{i})\longrightarrow Top\vspace{3pt}
$$
is not polynomial of degree $\vec{r}$ for any $\vec{r}\in \mathbb{N}^{k}$. However, if $d_{i}+3\leq n$ for all $i\in \{1,\ldots,k\}$, then the following map is a homotopy equivalence:
$$
Emb(\textstyle\coprod_{i}M_{i}\,;\, N)\longrightarrow T_{\vec{\infty}}Emb(\textstyle\coprod_{i}M_{i}\,;\, N).\vspace{2pt}
$$
\end{expl}

\subsubsection{Case 2: Compactly supported version $M=\coprod_{i}\mathbb{R}^{d_{i}}$} \label{M9}

For the present work, we need to consider a "compactly supported" version of multivariable manifold calculus for subsets of $M=\coprod_{i}\mathbb{R}^{d_{i}}$, in order to study functors that are invariant with respect to isotopies with bounded support.\vspace{7pt}

In what follows, we denote by $\mathcal{O}_{\partial}(M)=\mathcal{O}_{\partial}(\coprod_{i}\mathbb{R}^{d_{i}})$ the poset of open subsets of $\coprod_{i}\mathbb{R}^{d_{i}}$ whose complement is bounded. A morphism $(\mathcal{U}_{1},\ldots,\mathcal{U}_{k})=\mathcal{U}\rightarrow V=(V_{1},\ldots,V_{k})$ is said to be an isotopy equivalence if there are smooth embeddings $V_{i}\rightarrow U_{i}$, that coincide with the identity outside a bounded subset of $V_{i}$ such that both compositions are isotopic to the identity via an isotpy  which is constant outside a bounded subset.  \vspace{7pt}

The notion of \textit{good} contravariant functor $F:\mathcal{O}_{\partial}(\coprod_{i}\mathbb{R}^{d_{i}})\rightarrow Top$ as well as the notion of polynomial functor of degree $\vec{r}$ are defined similarly to the previous case. We only need to consider the poset $\mathcal{O}_{\partial}(\coprod_{i}\mathbb{R}^{d_{i}})$ instead of the poset $\mathcal{O}(\coprod_{i}M_{i})$ in the definitions.\vspace{7pt} 

\begin{defi}\label{J5}\textbf{Compactly supported version of the polynomial approximation}

\noindent For any $\vec{r}=(r_{1},\ldots,r_{k})\in \mathbb{N}^{k}$ and $\vec{\mathcal{U}}=(\mathcal{U}_{1},\ldots , \mathcal{U}_{k})\in\mathcal{O}_{\partial}(\coprod_{i}\mathbb{R}^{d_{i}})$, we denote by $\mathcal{O}_{\partial\,;\,\vec{r}}\,(\vec{\mathcal{U}})$ the poset formed by elements $(V_{1},\ldots,V_{k})\in \mathcal{O}(\vec{\mathcal{U}})$ for which $V_{i}$ is diffeomorphic to at most $r_{i}$ disjoint open cubes and one anti-cube (complementary of the closed unit cube) of dimension $d_{i}$. \vspace{7pt}

Equivalently, the poset consists of families of compactly supported smooth embeddings $A_{i}^{\ast}\boxtimes I_{d_{i}}\hookrightarrow \mathcal{U}_{i}$, where $A_{i}^{\ast}=A_{i}\sqcup \{\ast\}$ is a pointed set marked by $\ast$. Here, $A_{i}^{\ast}\boxtimes I_{d_{i}}$ is the disjoint union of $|A_{i}|$ open cubes labelled by the set $A_{i}$ and one anti-cube associated to the marked element. By "compactly supported", we means that the embeddings are required to agree with the identity on the anti-cube outside a bounded set. So, the $\vec{r}$-th polynomial approximation of $F$ is given by the formula\vspace{5pt}
\begin{equation}\label{N8}
T_{\vec{r}}\, F(\vec{\mathcal{U}})\coloneqq\hspace{-10pt} \underset{\hspace{20pt}\mathcal{O}_{\partial\,;\,\vec{r}}\,(\vec{\mathcal{U}})}{holim}\, F(-).
\end{equation}
\end{defi}\vspace{-20pt}

\newpage

\begin{thm}
The properties in Theorem \ref{J6} are true for good contravariant functors $F:\mathcal{O}_{\partial}(\coprod_{i}\mathbb{R}^{d_{i}})\rightarrow Top$.
\end{thm}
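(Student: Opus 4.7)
The plan is to deduce the four properties from Munson--Volic's theorem in the compact case (Theorem \ref{J6}) by isolating the fixed ``anti-cube at infinity'' and reducing to a relative compact problem. For any $\vec{\mathcal{U}}\in \mathcal{O}_{\partial}(M)$, fix $R$ large enough that the bounded complement of $\mathcal{U}_i$ lies in the open cube $P_i^R\subset \mathbb{R}^{d_i}$ of side length $R$. An element $\vec{V}\in \mathcal{O}_{\partial,\vec{r}}(\vec{\mathcal{U}})$ then decomposes, up to isotopy equivalence, into its intersection with $\prod_i P_i^R$ (an open subset of the compact manifold with corners $\prod_i \overline{P_i^R}$ diffeomorphic to at most $r_i$ open cubes along the $i$-th coordinate) together with a standard collar of the anti-cube, forced by the compactly-supported condition on the parametrizing embedding. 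A cofinality argument then identifies $\mathcal{O}_{\partial,\vec{r}}(\vec{\mathcal{U}})$ homotopically with the Munson--Volic poset $\mathcal{O}_{\vec{r}}$ applied to $\mathcal{U}_i\cap P_i^R$ with prescribed behavior near the boundary, and the goodness hypothesis (invariance under compactly-supported isotopy) transports the value of $F$ accordingly.

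Granted this reduction, properties (3) and (4) are essentially formal: (3) follows from the inclusion $\mathcal{O}_{\partial,\vec{r}}(\vec{\mathcal{U}})\subset \mathcal{O}_{\partial,\vec{s}}(\vec{\mathcal{U}})$ for $\vec{r}\leq \vec{s}$ together with the commutation of homotopy limits, and (4) holds because $\vec{\mathcal{U}}$ is cofinal in $\mathcal{O}_{\partial,\vec{r}}(\vec{\mathcal{U}})$ whenever $\vec{\mathcal{U}}\in \mathcal{O}_{\partial,\vec{r}}(M)$, so the homotopy limit defining $T_{\vec{r}}F(\vec{\mathcal{U}})$ in (\ref{N8}) collapses onto $F(\vec{\mathcal{U}})$. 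Property (1), that $T_{\vec{r}}F$ is polynomial of degree $\vec{r}$, follows by the same interchange-of-homotopy-limits argument as in the compact case, applied on the reduced compact poset. Property (2) is then formal from (1) and (4): a polynomial functor of degree $\vec{r}$ is determined up to equivalence by its restriction to $\mathcal{O}_{\partial,\vec{r}}$, and $T_{\vec{r}}F$ agrees with $F$ on that subposet, so the map $F\to T_{\vec{r}}F$ is a weak equivalence when $F$ is already polynomial of degree $\vec{r}$.

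The main obstacle lies in property (1): one must verify the polynomial condition in the presence of a fixed anti-cube, which requires running the Munson--Volic handle induction on the complement $\overline{P_i^R}\setminus V$ without disturbing the anti-cube collar. The compactly-supported isotopy extension theorem is the crucial technical input here, since it provides the required isotopy equivalences when one merges and subdivides handles in the interior while leaving the behavior near infinity unchanged. A further subtlety is that the auxiliary parameter $R$ must be allowed to grow, so I would conclude by checking that the relevant homotopy limits are independent of $R$ (by cofinality of the inclusion of posets indexed by larger $R$), which in turn uses only the goodness of $F$.
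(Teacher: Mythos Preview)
Your approach is correct in spirit but takes a different route from the paper. The paper's (very brief) sketch does not attempt a reduction to the compact Munson--Volic theorem at all. Instead it observes that the multivariable approximation factors as an iterated single-variable homotopy limit,
\[
T_{\vec r}F(\vec{\mathcal U})\;\simeq\;\underset{\mathcal O_{\partial,r_1}(\mathcal U_1)}{\mathrm{holim}}\Bigl(\cdots\underset{\mathcal O_{\partial,r_k}(\mathcal U_k)}{\mathrm{holim}}\,F(-)\Bigr),
\]
and then proves the four properties by induction on the number of variables, exactly as Munson--Volic do in the compact case, but rerunning their arguments directly in the compactly supported single-variable setting rather than quoting the compact theorem as a black box.

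Your route---cut off at a large cube $P_i^R$, peel off the anti-cube collar, and identify $\mathcal O_{\partial,\vec r}(\vec{\mathcal U})$ cofinally with a Munson--Volic poset for the compact piece---is a genuine alternative and is the kind of compactification argument one finds in Arone--Turchin's single-variable treatment. What it buys you is a literal invocation of the compact theorem; what it costs you is the care you already flag: you are not landing in the compact closed-manifold setting of Theorem~\ref{J6} but in a relative version for $\overline{P_i^R}$ with prescribed collar behaviour, and the independence-of-$R$ cofinality step must be checked. None of this is wrong, but it is more scaffolding than the paper needs. The iterated-holim route sidesteps the compactification entirely: once you know $T_{\vec r}$ is an iterated $T_{r_i}$, the polynomiality, characterisation, and convergence statements reduce to their single-variable analogues one coordinate at a time, and the anti-cube never has to be separated from the rest of the argument.
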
\vspace{5pt}

\begin{sproof}
Similarly to \cite{Munson12}, the properties can be proved by induction on the number of variables using the fact that the polynomial approximation (\ref{N8}) can be rewritten as follows: \vspace{3pt}
$$
T_{\vec{r}}\, F(\vec{\mathcal{U}})\coloneqq\hspace{-10pt} \underset{\hspace{20pt}\mathcal{O}_{\partial\,;\,r_{1}}\,(\mathcal{U}_{1})}{holim}\, \big( \cdots \big(\hspace{-10pt} \underset{\hspace{20pt}\mathcal{O}_{\partial\,;\,r_{k}}\,(\mathcal{U}_{k})}{holim}\, F(-)\,\big)\cdots \big).
$$
\end{sproof}\vspace{3pt}

\begin{defi}\label{J9}\textbf{Standard embeddings}

\noindent A \textit{standard isomorphism} of $\mathbb{R}^{n}$ is a self homeomorphism that is the composition of a translation and a multiplication by a positive scalar. Let $X$ be a subspace of $\mathbb{R}^{n}$. An embedding $f:X\rightarrow \mathbb{R}^{n}$ is called a \textit{standard embedding} if the restriction of $f$ to any connected component of $X$ coincides with the inclusion followed by a standard isomorphism of $\mathbb{R}^{n}$.\vspace{5pt} 

More generally, if $Y$ is another subset of $\mathbb{R}^{n}$, then a standard embedding from $X$ to $Y$ is a standard embedding of $X$ into $\mathbb{R}^{n}$ whose image lies in $Y$. We denote by $sEmb(X\,;\,Y)$ the space of standard embeddings from $X$ into $Y$. For instance, as a symmetric sequence, one has the following identification:
$$
\mathcal{C}_{d}(A)=sEmb\big(\, A\times [0\,,\,1]^{d}\,;\, [0\,,\,1]^{d}\,\big), \hspace{15pt} \text{for any finite set }A.
$$
\end{defi}\vspace{3pt}

\begin{defi}\label{K1}\textbf{Context-free functors}

\noindent Let $\mathcal{M}$ be the topologically enriched category whose objects are families of pairs $((\mathcal{U}_{1}\,,\,\mathcal{U}_{1}^{\ast}),\ldots,(\mathcal{U}_{k}\,,\,\mathcal{U}_{k}^{\ast}))$, where $\mathcal{U}_{i}$ is  disjoint unions (possibly empty) of open subsets of $\mathbb{R}^{d_{i}}$ and $\mathcal{U}_{i}^{\ast}$ is a connected component of $\mathcal{U}_{i}$ that is the complement of a compact subset. Morphisms from $((\mathcal{U}_{1}\,,\,\mathcal{U}_{1}^{\ast}),\ldots,(\mathcal{U}_{k}\,,\,\mathcal{U}_{k}^{\ast}))$ to $((\mathcal{V}_{1}\,,\,\mathcal{V}_{1}^{\ast}),\ldots,(\mathcal{V}_{k}\,,\,\mathcal{V}_{k}^{\ast}))$ are standard embeddings from $\mathcal{U}_{i}$ to $\mathcal{V}_{i}$ that take $\mathcal{U}_{i}^{\ast}$ to $\mathcal{V}_{i}^{\ast}$. \vspace{5pt}

Let $F$ be a good contravariant functor from $\mathcal{O}_{\partial}(\coprod_{i}\mathbb{R}^{d_{i}})$ to spaces. We say that $F$ is \textit{context-free} if it factors (up to natural equivalence) through the category $\mathcal{M}$. In other words, there is a contravariant functor $F':\mathcal{M}\rightarrow Top$ such that $F$ is weakly equivalent to the composed functor
$$
\xymatrix{
\mathcal{O}_{\partial}(\coprod_{i}\mathbb{R}^{d_{i}}) \ar[r] & \mathcal{M} \ar[r]^{F'} & Top.
}
$$ 
\end{defi}\vspace{1pt}

\begin{expl}\label{Q3}
Similarly to the compact case, the reader can check that the contravariant functor associated to the space of immersions compactly supported $Imm_{c}(\coprod_{i}\mathbb{R}^{d_{i}}\,;\, \mathbb{R}^{n})$ is polynomial of degree $(1,\ldots,1)$ and context-free. If there no ambiguity about the dimensions $d_{1},\ldots,d_{k},n$, then we denote by $\mathcal{L}(-)$ the contravariant functor associated to the space $\mathcal{L}(d_{1},\ldots,d_{k}\,;\,n)$ given by \vspace{3pt}
$$
\mathcal{L}(\vec{U})=hofib\left(\,
Emb_{c}\left(\,\vec{\mathcal{U}}\,;\, \mathbb{R}^{n}\,\right)\longrightarrow 
Imm_{c}\left(\,\vec{\mathcal{U}}\,;\, \mathbb{R}^{n}\,\right)\,
\right). \vspace{3pt}
$$
Similarly to the compact case, we can prove by induction that the functors \vspace{3pt}
$$
Emb_{c}(-\,;\,\mathbb{R}^{n}),\,\, \mathcal{L}(-):\mathcal{O}_{\partial}(\textstyle\coprod_{i}\mathbb{R}^{d_{i}})\longrightarrow Top \vspace{3pt}
$$
are context-free but there are not polynomial of degree $\vec{r}$ for any $\vec{r}\in \mathbb{N}^{k}$. However, if $d_{i}+3\leq n$ for all $i\in \{1,\ldots, k\}$, then the following maps are homotopy equivalences:\vspace{3pt}
$$
\begin{array}{rcl}\vspace{10pt}
Emb_{c}(\textstyle\coprod_{i}\mathbb{R}^{d_{i}}\,;\, \mathbb{R}^{n}) & \longrightarrow & T_{\vec{\infty}}Emb_{c}(\textstyle\coprod_{i}\mathbb{R}^{d_{i}}\,;\, \mathbb{R}^{n}), \\ 
\mathcal{L}(d_{1},\ldots,d_{k}\,;\,n) & \longrightarrow & T_{\vec{\infty}}\mathcal{L}(d_{1},\ldots,d_{k}\,;\,n).
\end{array} 
$$
\end{expl}

\subsection{A $k$-fold infinitesimal bimodule as a contravariant functor}

In order to compare Goodwillie-Weiss calculus towers with derived mapping spaces of $1$-fold infinitesimal bimodules, Arone and Turchin \cite[Proposition 3.9]{Arone14} identify the category $Ibimod_{O}$ with an enriched category of contravariant functors from a small category $\Gamma(O)$ to topological spaces. In the following, we adapt this method for $k$-fold infinitesimal bimodules and the multivariable Goodwillie-Weiss calculus. First, we recall the definition of the category $\Gamma(O)$. 

\begin{defi}\textbf{The small category $\Gamma(O)$}

\noindent Let $O$ be a reduced operad and let $\Gamma(O)$ be the category enriched over $Top$ whose objects are finite pointed set. By convention, we denote by $\ast_{A}$ the marked element in the pointed set $A$. The space of morphisms between the two pointed sets $A$ and $B$ is given by \vspace{5pt}
$$
\Gamma(O)(A\,;\,B)=\underset{\alpha:A\rightarrow B}{\coprod} \hspace{5pt}\underset{b\in B}{\prod} \hspace{5pt} O(\,\alpha^{-1}(b)\,) \vspace{5pt}
$$
where the coproduct is indexed by pointed maps. The composition law is not the usual one. Let us fix two pointed maps  $\alpha:A\rightarrow B$ and $\beta:B\rightarrow C$. We need a map of the form \vspace{5pt}
$$
\underset{c\in C}{\prod}\hspace{5pt}O(\,\beta^{-1}(c)\,)\times \underset{b\in B}{\prod}\hspace{5pt}O(\,\alpha^{-1}(b)\,) \longrightarrow \underset{c\in C}{\prod}\hspace{5pt}O(\,(\beta\circ\alpha)^{-1}(c)\,). \vspace{5pt}
$$
For this purpose, we rewrite the left hand term as follows: \vspace{5pt}
$$
\underset{\text{Part 1}}{\underbrace{O(\,\alpha^{-1}(\ast_{B})\,)\times O(\,\beta^{-1}(\ast_{C})\,) \times \underset{b\in \beta^{-1}(\ast_{C})\setminus \{\ast_{B}\}}{\prod}\hspace{-5pt}O(\,\alpha^{-1}(b)\,)}} \times \underset{c\in C\setminus \{\ast_{C}\}}{\prod}\hspace{5pt} \underset{\text{Part 2}}{\underbrace{ O(\,\beta^{-1}(c)\,) \times \underset{b\in \beta^{-1}(c)}{\prod} \hspace{5pt} O(\,\alpha^{-1}(b)\,) }}. \vspace{3pt}
$$
In Part 2, we use the operadic structure of $O$ in order to get an element in $O(\, (\beta\circ\alpha)^{-1}(c)\,)$ with $c\neq \ast_{C}$. In Part 1, first we use the composition $O(\,\alpha^{-1}(\ast_{B})\,)\circ_{\ast_{A}} O(\,\beta^{-1}(\ast_{C})\,)$ followed by the operadic composition $\circ_{b}$, with $b\in  \beta^{-1}(\ast_{C})\setminus \{\ast_{B}\}$ in order to compose the other points.  
\end{defi} \vspace{5pt}

\begin{defi}\textbf{The small category $\Gamma(\vec{O})$}

\noindent Let $O_{1},\ldots,O_{k}$ be a family of reduced operads relative to a topological monoid $X$. Let  $\Gamma(\vec{O})$ be the category enriched over $Top$ whose objects are families of pointed sets of the form $\vec{A}=(A_{1},\ldots,A_{k})$. By convention, we denote by $\ast_{A_{i}}$ the marked element in the pointed set $A_{i}$. The space of morphisms between two families $\vec{A}=(A_{1},\ldots,A_{k})$ and $\vec{B}=(B_{1},\ldots,B_{k})$ is given by the formula  \vspace{3pt}
\begin{equation}\label{Z1}
\Gamma(\vec{O})(\vec{A}\,;\,\vec{B})\coloneqq\Gamma(O_{1})(A_{1}\,;\,B_{1}) \underset{X}{\times} \cdots \underset{X}{\times}\Gamma(O_{k})(A_{k}\,;\,A_{k}), \vspace{1pt}
\end{equation}
where the product over $X$ is obtained using the composite map \vspace{5pt}
$$
\Gamma(O_{i})(A_{i}\,;\,B_{i})= \underset{\alpha:A_{i}\rightarrow B_{i}}{\coprod} \hspace{5pt}\underset{b\in B_{i}}{\prod} \hspace{5pt} O_{i}(\,\alpha^{-1}(b)\,)\longrightarrow \underset{\alpha:A_{i}\rightarrow B_{i}}{\coprod} O_{i}(\,\alpha^{-1}(\ast_{B_{i}})\,)\longrightarrow \underset{\alpha:A_{i}\rightarrow B_{i}}{\coprod} O_{i}(\ast_{A_{i}})\longrightarrow X \vspace{3pt}
$$
The composition law is obtained from the composition law on each factor $\Gamma(O_{i})(A_{i}\,;\,B_{i})$. The reader can easily check that the composition so obtained is still well defined.
\end{defi} \vspace{5pt}

\begin{pro}
The category of $k$-fold infinitesimal bimodules over $\vec{O}$ is equivalent to the category of contravariant functors from $\Gamma(\vec{O})$ to topological spaces. \vspace{5pt}
\end{pro}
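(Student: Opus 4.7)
The plan is to adapt the proof of \cite[Proposition 3.9]{Arone14}, which treats the $1$-fold case over a single reduced operad, to the $k$-fold setting. I would construct a pair of mutually inverse functors between $Ibimod_{\vec{O}}$ and the category of contravariant functors $\Gamma(\vec{O})^{op} \to Top$. The key observation that makes the $k$-fold case go through is that the fiber product $\underset{X}{\times}$ appearing in the morphism spaces (\ref{Z1}) of $\Gamma(\vec{O})$ matches exactly the fiber product in the definition (\ref{J7}) of $\vec{O}$, so that the $k$-fold left infinitesimal operations $\mu$ correspond precisely to the ``collapsing into the marked point'' morphisms of $\Gamma(\vec{O})$.

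For the direction $\Phi:Fun(\Gamma(\vec{O})^{op},Top)\to Ibimod_{\vec{O}}$, given $F$ I would set $N(A_1,\ldots,A_k)\coloneqq F(A_1^\ast,\ldots,A_k^\ast)$. The infinitesimal right operation $\circ_i^a$ with $y\in O_i(B)$ is induced by the morphism of $\Gamma(\vec{O})$ whose $i$-th projection is the pointed map $(A_i\cup_a B)^\ast\to A_i^\ast$ collapsing $B$ onto $a$, labelled by $y$ over $a$ and by units elsewhere, and whose $j$-th projection (for $j\neq i$) is the identity with all labels units; this tuple automatically lies in the fiber product since each marked label is a unit mapping to $1\in X$. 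The left operation $\mu$ applied to $(y_1,\ldots,y_k)\in\vec{O}(B_1,\ldots,B_k)$ is induced by the morphism whose $i$-th projection is the pointed map $(A_i\sqcup B_i)^\ast\to A_i^\ast$ collapsing $B_i$ into $\ast_{A_i^\ast}$ with label $y_i$ over the basepoint; this morphism lives in the fiber product (\ref{Z1}) exactly because the tuple $(y_1,\ldots,y_k)$ was a point of the fiber product defining $\vec{O}$. Functoriality of $F$ then delivers all the axioms listed in Appendix \ref{W0}.

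For the inverse $\Psi$, I would set $F(A_1,\ldots,A_k)\coloneqq N(A_1\setminus\{\ast_{A_1}\},\ldots,A_k\setminus\{\ast_{A_k}\})$; on morphisms, I would decompose a general element $(\vec{\alpha},\{x_{i,b}\})\in\Gamma(\vec{O})(\vec{A};\vec{B})$ as a sequence of infinitesimal right operations using the labels $x_{i,b}$ for $b\neq\ast_{B_i}$ (in any order, the result being independent of the choice by the commutativity of the right operations), followed by a single left operation $\mu$ applied to the tuple $(x_{1,\ast_{B_1}},\ldots,x_{k,\ast_{B_k}})$. This tuple genuinely defines an element of $\vec{O}(\alpha_1^{-1}(\ast_{B_1}),\ldots,\alpha_k^{-1}(\ast_{B_k}))$ precisely because the original morphism was a point of the fiber product over $X$.

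The main technical burden is checking functoriality of $\Psi$: one must verify that the composition law of $\Gamma(\vec{O})$, which is not the naive one (as emphasized in the excerpt via the ``Part 1 / Part 2'' splitting), is sent to composition of the corresponding operations in $N$. Coordinate by coordinate this reduces to the mixed-associativity between $\mu$ and $\circ_i^a$, together with the operadic composition in each $O_i$, so the bulk of the verification is the $1$-fold Arone--Turchin computation applied one factor at a time. The only genuinely new ingredient, and the anticipated main obstacle, is the bookkeeping needed to ensure that after composing a ``left-type'' morphism in one coordinate with a ``right-type'' morphism in another, the resulting labels still project compatibly to $X$; here one uses that the units of $O_i$ all map to $1\in X$ and that the $\mu$ operation of a $k$-fold infinitesimal bimodule is defined on the full $\vec{O}$ rather than on each $O_i$ separately. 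Once this is in place, the equivalences $\Phi\circ\Psi\cong id$ and $\Psi\circ\Phi\cong id$ are immediate from the constructions.
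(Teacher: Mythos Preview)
Your proposal is correct and follows essentially the same approach as the paper: both construct mutually inverse functors with $N(A_1,\ldots,A_k)\leftrightarrow F(A_1^\ast,\ldots,A_k^\ast)$, recover the left operation $\mu$ from the collapse-to-basepoint morphisms in $\Gamma(\vec{O})$, and recover the right operations $\circ_i^a$ from the remaining labels. The paper's proof is terser (it leaves the functoriality check and the compatibility with the composition law to the reader), while you spell out the decomposition of a general morphism into right operations followed by a single $\mu$ and flag the fiber-product bookkeeping explicitly; but the substance is the same.
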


\begin{proof}
First, let $N$ be a $k$-fold infinitesimal bimodule. We will associate with $N$ an enriched contravariant functor $\overline{N}:\Gamma(\vec{O})\rightarrow Top$. It is defined on objects of $\Gamma(\vec{O})$ by the formula \vspace{3pt}
$$
\overline{N}(A_{1},\ldots,A_{k})\coloneqq N(A_{1}\setminus \{\ast_{A_{1}}\},\ldots,A_{k}\setminus \{\ast_{A_{k}}\}).
$$
Let $\vec{A}=(A_{1},\ldots,A_{k})$ and $\vec{B}=(B_{1},\ldots,B_{k})$ be two objects in the category $\Gamma(\vec{O})$. To describe the action of $\overline{N}$ on morphisms, we need to construct continuous maps \vspace{3pt}
$$
\overline{N}(\vec{B})\times \Gamma(\vec{O})(\vec{A}\,;\,\vec{B})\longrightarrow \overline{N}(\vec{A})
$$ 
that are associative and unital with respect to composition in $\Gamma(\vec{O})$. Since the category of topological spaces is obviously  enriched, tensored and cotensored over itself, it follows that the product distributes over coproducts. So, our task is equivalent to constructing morphisms  \vspace{3pt}
$$
\underset{
\begin{array}{c}
\parallel \\ 
\vec{O}(\,\alpha^{-1}_{1}(\ast_{B_{1}})\setminus \{\ast_{A_{1}}\},\ldots,\alpha^{-1}_{k}(\ast_{B_{k}})\setminus \{\ast_{A_{k}}\} \,)
\end{array} 
 }{\underbrace{O(\,\alpha^{-1}_{1}(\ast_{B_{1}}) \,)\underset{X}{\times} \cdots \underset{X}{\times} O(\,\alpha^{-1}_{k}(\ast_{B_{k}}) \,)}}\hspace{-15pt} \times \overline{N}(\vec{B}) \times \underset{\substack{ i\in \{1,\ldots, k\}\\ b\in B_{i}\setminus \{\ast_{B_{i}}\}}}{\prod} O(\,\alpha_{i}^{-1}(b)\,)\longrightarrow \overline{N}(\vec{A}), \vspace{3pt}
$$
where $\alpha_{i}:A_{i}\rightarrow B_{i}$ is a pointed map. Nevertheless, this map can be defined using the $k$-fold infinitesimal bimodule structure of $N$. Furthermore, the operation so obtained respect the composition in $\Gamma(\vec{O})$. \vspace{5pt}

Conversely, suppose that $\overline{N}:\Gamma(\vec{O})\rightarrow Top$ is an enriched contravariant functor. We need to associate with it a $k$-fold infinitesimal bimodule $N$. Objectwise, $N$ is defined  as \vspace{1pt}
$$
N(A_{1},\ldots, A_{k})=\overline{N}(A_{1}^{\ast},\ldots , A_{k}^{\ast}),\hspace{15pt} \text{for any }(A_{1},\ldots,A_{k})\in \Sigma^{\times k}. \vspace{1pt}
$$ 
The $k$-fold left infinitesimal bimodule operations  \vspace{1pt}
$$
\mu:\vec{O}(B_{1},\ldots,B_{k})\times N(A_{1},\ldots,A_{k})\longrightarrow N(A_{1}\sqcup B_{1},\ldots, A_{k}\sqcup B_{k}) \vspace{1pt}
$$ 
are obtained using the pointed maps $\alpha_{i}:(A_{i}\sqcup B_{i})^{\ast}\rightarrow A_{i}^{\ast}$ sending $a\in A_{i}$ to itself and the other elements to the base point $\ast\in A_{i}^{\ast}$.  The functor $\overline{N}$, applied to the family of maps $\{\alpha_{i}\}$, gives rise to an operation of the form  \vspace{2pt}
$$
\underset{
\begin{array}{c}
\parallel \\ 
\vec{O}(B_{1},\ldots,B_{k})
\end{array} 
 }{\underbrace{O(\,\alpha^{-1}_{1}(\ast) \,)\underset{X}{\times} \cdots \underset{X}{\times} O(\,\alpha^{-1}_{k}(\ast) \,)}}\hspace{2pt} \times 
 \underset{
\begin{array}{c}\vspace{2pt}
\parallel \\ 
N(A_{1},\ldots,A_{k})
\end{array} 
 }{\underbrace{\overline{N}(A_{1}^{\ast},\ldots,A_{k}^{\ast})\hspace{-5pt}\phantom{\underset{X}{\times}}}} \times \underset{\substack{ i\in \{1,\ldots, k\}\\ a\in A_{i}}}{\prod} O_{i}(1)\longrightarrow \underset{
\begin{array}{c}\vspace{2pt}
\parallel \\ 
N(A_{1}\sqcup B_{1},\ldots, A_{k}\sqcup B_{k})
\end{array} 
 }{\underbrace{\overline{N}((A_{1}\sqcup B_{1})^{\ast},\ldots, (A_{k}\sqcup B_{k})^{\ast})\hspace{-5pt}\phantom{\underset{X}{\times}}}}. \vspace{2pt}
$$
By taking the unit $\ast_{1}\in O_{i}(1)$, we get exactly the operation researched. In the same way, we can define the $k$-fold right infinitesimal operations making $N$ into $k$-fold infinitesimal bimodule over $\vec{O}$. The reader can check that the composition law in the category $\Gamma(\vec{O})$ and the $k$-fold infinitesimal bimodule's axioms are compatible.
\end{proof}

\subsubsection{The particular case of the little cubes operads}

For the rest of this section, $\vec{O}$ and $\mathbb{O}$ are the $k$-fold sequences $(\ref{J7})$ and $(\ref{J8})$, respectively, associated to the family of reduced operads $\mathcal{C}_{d_{1}},\ldots, \mathcal{C}_{d_{k}}$ relative to the monoid $\mathcal{C}_{n}(1)$ with $d_{i}<n$ for all $i\in \{1,\ldots,k\}$. Without loss of generality, we assume that $d_{1}\leq \cdots \leq d_{k}$. This section is devoted to give an alternative description of the category $\Gamma(\vec{O})$ using open cubes and anti-cubes. \vspace{7pt} 

First, let us notice a property which is specific to the little cubes operads. Indeed the following proposition implies that any $1$-fold infinitesimal bimodule over the $n$-dimensional little rectangles operad $\mathcal{R}_{d}$ inherits a $k$-fold infinitesimal bimodule structure over the families of the little cubes operads $\mathcal{C}_{d_{1}},\ldots, \mathcal{C}_{d_{k}}$ relative to the monoid $\mathcal{C}_{n}(1)$ as illustrated in  Example \ref{D7}. \vspace{5pt}

\begin{pro}\label{L8}
There exist a covariant functor $D:\Gamma(\,\vec{O}\,)\rightarrow \Gamma(\,\mathcal{R}_{n}\,)$. \vspace{3pt}
\end{pro}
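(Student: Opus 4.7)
The plan is to define $D$ on objects by the wedge of pointed sets, $D(A_{1},\ldots,A_{k}):=A_{1}\vee\cdots\vee A_{k}$ (identifying the $k$ base points to a single base point $*$), and on morphism spaces by assembling the operad maps $\kappa_{i}:\mathcal{C}_{d_{i}}\to \mathcal{R}_{n}$ away from the distinguished fibre with the map $\varepsilon$ of Example~\ref{D7} at the distinguished fibre. Concretely, an element of $\Gamma(\vec{O})(\vec{A};\vec{B})$ is determined by a family of pointed maps $\alpha_{i}:A_{i}\to B_{i}$, a compatible tuple $(x_{1},\ldots,x_{k})\in \mathcal{C}_{d_{1}}(\alpha_{1}^{-1}(*_{B_{1}}))\times_{\mathcal{C}_{n}(1)}\cdots\times_{\mathcal{C}_{n}(1)}\mathcal{C}_{d_{k}}(\alpha_{k}^{-1}(*_{B_{k}}))$, and labels $y_{i,b}\in \mathcal{C}_{d_{i}}(\alpha_{i}^{-1}(b))$ for each $b\in B_{i}\setminus\{*_{B_{i}}\}$. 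I assign to this data the pointed map $\alpha_{1}\vee\cdots\vee\alpha_{k}:D(\vec{A})\to D(\vec{B})$, together with $\varepsilon(x_{1},\ldots,x_{k})\in \mathcal{R}_{n}((\alpha_{1}\vee\cdots\vee\alpha_{k})^{-1}(*))$ at the base point and $\kappa_{i}(y_{i,b})\in \mathcal{R}_{n}(\alpha_{i}^{-1}(b))$ at each other fibre, using that $(\alpha_{1}\vee\cdots\vee\alpha_{k})^{-1}(b)=\alpha_{i}^{-1}(b)$ for $b\in B_{i}\setminus\{*_{B_{i}}\}$. The fact that the product in \eqref{Z1} is fibered over $X=\mathcal{C}_{n}(1)$ is precisely what guarantees that $\varepsilon$ is defined on the given tuple, since its construction in Example~\ref{D7} requires the images of the marked inputs of the $x_{i}$ to agree after applying $\kappa_{i}$.

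Unitality of $D$ is straightforward: the identity of $\vec{A}$ in $\Gamma(\vec{O})$ is given by $\alpha_{i}=\mathrm{id}_{A_{i}}$ with every fibre labelled by the unit $*_{1}\in \mathcal{C}_{d_{i}}(1)$, and both $\kappa_{i}$ and $\varepsilon$ send units to units (stacking $k$ copies of the unit cube along the last coordinate via $c_{k}$ and then gluing the marked slots produces the unit cube of $\mathcal{R}_{n}(1)$).

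Functoriality for composition is the main technical obstacle. Given a composable pair $\vec{A}\xrightarrow{\vec{\alpha}}\vec{B}\xrightarrow{\vec{\beta}}\vec{C}$, one must match the image by $D$ of the composite in $\Gamma(\vec{O})$ with the composite in $\Gamma(\mathcal{R}_{n})$ of the two images. Outside the base point of $\vec{C}$ this reduces to the fact that each $\kappa_{i}$ is a morphism of operads, so it intertwines the operadic compositions in $\mathcal{C}_{d_{i}}$ with those in $\mathcal{R}_{n}$; the only bookkeeping is to distinguish the elements of $B_{i}$ mapped to the base point of $C_{i}$ by $\beta_{i}$ from the others, since the two types contribute differently to the formula. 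The delicate case is the base point fibre of $\vec{C}$, where the composition law of $\Gamma(\vec{O})$ is the twisted one described after~\eqref{Z1}: one first composes the base-point labels of $\vec{\beta}$ and $\vec{\alpha}$ along $\circ_{*}$ on each variable, and then grafts in the labels $y_{i,b}$ of $\vec{\alpha}$ attached to those $b\in B_{i}$ that $\beta_{i}$ sends to the base point of $C_{i}$. What I must check is that $\varepsilon$ intertwines this twisted composition, variable by variable, with the standard composition in $\mathcal{R}_{n}$ at the marked slot, namely an identity of the form
\[
\varepsilon\bigl(\vec{\beta}_{*}\circ_{*}\vec{\alpha}_{*}\bigr)\circ_{*}\bigl(\text{labels at other base-point-preimages}\bigr)=\varepsilon(\vec{\beta}_{*})\circ_{*}\varepsilon(\vec{\alpha}_{*})\circ_{*}(\cdots)
\]
in $\mathcal{R}_{n}$. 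This boils down to the observation that the stacking rectangle $c_{k}\in\mathcal{R}_{n}(k)$ that defines $\varepsilon$ is associative with respect to substitution at its marked slot: because $d_{i}<n$, the marked rectangles produced by successive applications of $\varepsilon$ remain aligned along the last coordinate, so gluing them in two different orders gives the same element of $\mathcal{R}_{n}$. The statement then follows from the associativity and unit axioms of the operad $\mathcal{R}_{n}$ together with the analogous axioms for the $\mathcal{C}_{d_{i}}$.
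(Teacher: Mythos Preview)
Your construction of $D$ is exactly the one in the paper: on objects the wedge of pointed sets, and on morphisms the assignment that uses $\kappa_{i}$ on the non-basepoint fibres and $\varepsilon$ on the basepoint fibre. The paper in fact stops after giving this description and does not spell out the verification of functoriality, so your discussion of unitality and of the composition law goes beyond what is written there; your argument for the basepoint fibre (that the glued marked rectangle produced by $\varepsilon$ always spans the full last coordinate, so substitution at $\ast$ respects the layer decomposition induced by $c_{k}$) is the right geometric reason the twisted composition in $\Gamma(\vec O)$ matches the ordinary one in $\Gamma(\mathcal R_{n})$.
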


\begin{proof}
On the objects, the functor $D$ sends a family of pointed sets $\vec{A}=(A_{1},\ldots,A_{k})$ to the pointed set \vspace{4pt}
$$
D(\,\vec{A}\,)=\{\ast\}\sqcup \underset{1\leq i \leq k}{\bigsqcup} A_{i}\setminus \{\ast_{A_{i}}\}.
$$
Let $\vec{A}$ and $\vec{B}$ be two families of pointed sets. We consider a point $\vec{x}=(\{x_{b_{1}}\},\ldots , \{x_{b_{k}}\})\in \Gamma(\vec{O})(\,\vec{A}\,;\,\vec{B}\,)$ indexed by the family of pointed maps $\vec{\alpha}\coloneqq\{\alpha_{i}:A_{i}\rightarrow B_{i}\}$. According to our notation, one has\vspace{5pt}
$$
\{x_{b_{i}}\}\in \prod_{b_{i}\in B_{i}}\mathcal{C}_{d_{i}}(\alpha_{i}^{-1}(b_{i})).\vspace{-25pt}
$$

\newpage

\noindent Finally, $D(\,\vec{x}\,)=\{y_{b}\}_{b\in D(\,\vec{B}\,)}$ is the element in $\Gamma(\mathcal{R}_{n})(D(\,\vec{A}\,)\,;\,D(\,\vec{B}\,))$ indexed by the pointed map  \vspace{2pt}
$$
\begin{array}{rcl}\vspace{7pt}
D(\,\vec{\alpha}\,):\{\ast\}\sqcup \underset{1\leq i \leq k}{\displaystyle\bigsqcup} A_{i}\setminus \{\ast_{A_{i}}\} & \longrightarrow & \{\ast\}\sqcup \underset{1\leq i \leq k}{\displaystyle\bigsqcup} B_{i}\setminus \{\ast_{B_{i}}\} \\ \vspace{5pt}
\ast & \longmapsto & \ast \\ 
a_{i}\in A_{i}\setminus\{\ast_{A_{i}}\} & \longmapsto & \left\{
\begin{array}{cl}\vspace{7pt}
\alpha_{i}(a_{i}) & \text{if } \alpha_{i}(a_{i})\neq \ast_{B_{i}}, \\ 
\ast & \text{otherwise}.
\end{array} \right.
\end{array}  \vspace{3pt}
$$
If $b=b_{i}\in B_{i}\setminus \{\ast_{B_{i}}\}$, then $y_{b}=\kappa_{i}(x_{b_{i}})$ where $\kappa_{i}$ is the composite map of operads $\kappa_{i}:\mathcal{C}_{d_{i}}\rightarrow \mathcal{C}_{d_{k}}\hookrightarrow \mathcal{R}_{d_{k}}\rightarrow \mathcal{R}_{n}$. However, if $b=\ast$, then $y_{b}$ is obtained using the map $(\ref{D4})$: \vspace{3pt}
$$
y_{\ast}=\varepsilon(x_{\ast_{B_{1}}},\ldots, x_{\ast_{B_{k}}}), \hspace{15pt}\text{since } (x_{\ast_{B_{1}}},\ldots, x_{\ast_{B_{k}}})\in \vec{O}\big(\alpha_{1}^{-1}(\ast_{B_{1}}),\ldots,\alpha_{k}^{-1}(\ast_{B_{k}})\big). \vspace{3pt}
$$
Thus finishes the proof of the proposition.
\end{proof}
\vspace{9pt}

From now on, we denote by $I_{n}$ the unit open cube of dimension $n$ and we denote by $I_{n}^{c}\coloneqq\mathbb{R}^{n}\setminus [0\,,\,1]^{n}$ the anti-cube of dimension $n$. By definition, one has a description of the space of morphisms (\ref{Z1}) in terms of standard embeddings (see Definition \ref{J9})\vspace{6pt}
\begin{equation}\label{O2}
\Gamma(\,\vec{O}\,)(\,\vec{A}\,;\,\vec{B}\,)\cong 
sEmb\big(A_{1}\times I_{d_{1}}\,;\, B_{1}\times I_{d_{1}}\big) \underset{sEmb\big( I_{d_{1}}\,;\, I_{d_{2}}\big)}{\varprod}\cdots \cdots \underset{sEmb\big( I_{d_{k-1}}\,;\, I_{d_{k}}\big)}{\varprod} sEmb\big(A_{k}\times I_{d_{k}}\,;\, B_{k}\times I_{d_{k}}\big),\vspace{4pt}
\end{equation}
where the map from $sEmb(A_{i}\times I_{d_{i}}\,;\, B_{i}\times I_{d_{i}})$ to $sEmb(I_{d_{i}}\,;\, I_{d_{i+1}})$ is obtained by taking the standard embedding associated to the marked element followed by the inclusion $\mathbb{R}^{d_{i}}\rightarrow \mathbb{R}^{d_{i+1}}$. Similarly, the map from $sEmb(A_{i+1}\times I_{d_{i+1}}\,;\, B_{i+1}\times I_{d_{i+1}})$ to $sEmb(I_{d_{i}}\,;\, I_{d_{i+1}})$ is obtained by taking the inclusion $\mathbb{R}^{d_{i}}\rightarrow \mathbb{R}^{d_{i+1}}$ followed by the standard embedding associated to the marked element. The composite law is still the same. \vspace{7pt}

The next proposition shows that if we change slightly the definition of the space $(\ref{O2})$ using the anti-cube, then we can simplify the composition of the category  $\Gamma(\,\vec{O}\,)$. In that case, the new composition law is the ordinary composition of standard embeddings. For any pointed set $A$, we consider the following notation for the disjoint union of $|A|-1$ open cubes and one anti-cube:\vspace{3pt}
$$
A\boxtimes I_{n}\coloneqq \big[(A\setminus \{\ast_{A}\})\times I_{n}\big] \displaystyle \coprod I_{n}^{c}.\vspace{5pt}
$$

\begin{pro}\label{K0}
The category $\Gamma(\,\vec{O}\,)$ is equivalent to the category ( also denoted by $\Gamma(\,\vec{O}\,)$ by abuse of notation) whose objects are families of pointed set $\vec{A}=(A_{1},\ldots,A_{k})$ and the space of morphisms from $\vec{A}$ to $\vec{B}$ is \vspace{6pt}
$$
sEmb\big(A_{1}\boxtimes I_{d_{1}}\,;\, B_{1}\boxtimes I_{d_{1}}\big) \underset{sEmb\big( I_{d_{1}}^{c}\,;\, I_{d_{2}}^{c}\big)}{\varprod}\cdots \cdots \underset{sEmb\big( I_{d_{k-1}}^{c}\,;\, I_{d_{k}}^{c}\big)}{\varprod} sEmb\big(A_{k}\boxtimes I_{d_{k}}\,;\, B_{k}\boxtimes I_{d_{k}}\big),\vspace{4pt}
$$
where the map from $sEmb(A_{i+1}\boxtimes I_{d_{i+1}}\,;\, B_{i+1}\boxtimes I_{d_{i+1}})$ is obtained by taking the inclusion $\mathbb{R}^{d_{i}}\rightarrow \mathbb{R}^{d_{i+1}}$ followed by the the standard embedding associated to the anti-cube. The map from $sEmb(A_{i}\boxtimes I_{d_{i}}\,;\, B_{i}\boxtimes I_{d_{i}})$ to $sEmb(I_{d_{i}}^{c}\,;\, I_{d_{i+1}}^{c})$ is obtained by taking the standard embedding associated to the anti-cube followed by the usual inclusion of spaces $\mathbb{R}^{d_{i}}\rightarrow \mathbb{R}^{d_{i+1}}$. In that case, the composite law is just ordinary composition of standard embeddings. 
\end{pro}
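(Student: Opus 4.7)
The plan is to construct an explicit functor $F$ between the two formulations of $\Gamma(\vec{O})$ that is the identity on objects and a homeomorphism on morphism spaces, and then to verify that it intertwines the twisted composition law described above with ordinary composition of standard embeddings. The key observation is that the inversion map $f \mapsto f^{-1}$ yields a homeomorphism $\mathcal{C}_d(1) = sEmb(I_d\,;\,I_d) \cong sEmb(I_d^c\,;\,I_d^c)$, because a standard affine map $f(x) = \lambda x + v$ with $f([0,1]^d) \subset [0,1]^d$ has inverse satisfying $[0,1]^d \subset f^{-1}([0,1]^d)$, and vice versa. This inversion at the marked point is exactly what converts the ``twisted'' operadic composition of $\Gamma(\vec{O})$ at the base-point fiber into plain composition of standard embeddings.

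On morphisms, given $\phi \in \Gamma(\vec{O})(\vec{A}\,;\,\vec{B})$, indexed factor-wise by pointed maps $\alpha_i : A_i \to B_i$ and configurations $c_b^i \in \mathcal{C}_{d_i}(\alpha_i^{-1}(b))$ for $b \in B_i$, I define $F(\phi)$ as follows. For $b \in B_i \setminus \{\ast_{B_i}\}$, the configuration $c_b^i$ is already a standard embedding of cubes into the target $b$-cube, and I keep it untouched. For $b = \ast_{B_i}$, let $g_{\ast_{A_i}}^i \in \mathcal{C}_{d_i}(1)$ be the $\ast_{A_i}$-component of $c_{\ast_{B_i}}^i$; I set the $\ast_{A_i}$-slot of the new morphism to $(g_{\ast_{A_i}}^i)^{-1} \in sEmb(I_{d_i}^c\,;\,I_{d_i}^c)$, and for each other $a \in \alpha_i^{-1}(\ast_{B_i}) \setminus \{\ast_{A_i}\}$ I set the $a$-slot to $(g_{\ast_{A_i}}^i)^{-1} \circ g_a^i \in sEmb(I_{d_i}\,;\,I_{d_i}^c)$. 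The pairwise disjointness of the interiors of the little cubes in $c_{\ast_{B_i}}^i$ guarantees that these pieces assemble into a bona fide standard embedding $A_i \boxtimes I_{d_i} \to B_i \boxtimes I_{d_i}$. The fibered-product compatibility across consecutive dimensions is preserved because the projection to the anti-cube component is $(g_{\ast_{A_i}}^i)^{-1}$, and inversion of standard affine maps commutes with the inclusion $\mathbb{R}^{d_i} \hookrightarrow \mathbb{R}^{d_{i+1}}$.

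The assignment is a bijection with continuous inverse: from a standard embedding $f : I_{d_i}^c \to I_{d_i}^c$ one recovers $g_{\ast_{A_i}}^i = f^{-1}$, and then each cube-to-anti-cube component $h : I_{d_i} \to I_{d_i}^c$ produces the original little cube $g_a^i = g_{\ast_{A_i}}^i \circ h$ inside $[0,1]^{d_i}$, with the image landing in $[0,1]^{d_i}$ exactly because $h(I_{d_i}) \subset I_{d_i}^c$.

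The main point, and the most delicate step, is that $F$ preserves composition. For composable $\phi, \psi$, ordinary composition in the new picture matches, on every non-base-point fiber, the standard operadic substitution defining the $\Gamma(\vec{O})$-composition on that fiber. The substantive case is the fiber over $c = \ast_{C_i}$: in the original formulation it is computed by the $\circ_{\ast_{A_i}}$-composition of the $\ast_{B_i}$-configuration of $\phi$ and the $\ast_{C_i}$-configuration of $\psi$, followed by further $\circ_b$-plugs at the remaining preimages, with the new base point landing at position $g_{\ast_{A_i}}^i \circ h_{\ast_{B_i}}^i$ (scaling the $\ast_{B_i}$-cube of $\psi$ by the $\ast_{A_i}$-cube of $\phi$). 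Under the translation above, the composite base-point component in the new picture is $(h_{\ast_{B_i}}^i)^{-1} \circ (g_{\ast_{A_i}}^i)^{-1} = (g_{\ast_{A_i}}^i \circ h_{\ast_{B_i}}^i)^{-1}$, which is precisely what the recipe assigns to the new base-point position; the cube-to-anti-cube slots match by the same cancellation, and the non-base-point slots match by ordinary operadic composition. This bookkeeping at the base point is the only non-trivial verification; no new conceptual ingredient beyond the inversion trick is needed, and the multi-variable compatibility with the fibered-product structure is automatic since all transformations commute with the dimension-raising inclusion.
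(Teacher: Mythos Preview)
Your proposal is correct and follows essentially the same approach as the paper: the paper's proof simply cites \cite[Proposition~4.9]{Arone14} for the one-variable case $\Gamma(\mathcal{C}_n)$ and asserts that the same argument extends to the family, while you have explicitly reconstructed that argument---the inversion trick $g\mapsto g^{-1}$ at the marked component together with the renormalization $g_a\mapsto (g_{\ast_A})^{-1}\circ g_a$ of the remaining base-fiber cubes is precisely the Arone--Turchin construction. Your verification that the twisted $\Gamma(\vec O)$-composition at the base-point fiber becomes ordinary composition of standard embeddings after inversion, and that the fibered-product constraints across dimensions are preserved because affine inversion commutes with $\mathbb{R}^{d_i}\hookrightarrow\mathbb{R}^{d_{i+1}}$, is exactly the content the paper leaves to the reference.
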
\vspace{4pt}

\begin{proof}
In \cite[Proposition 4.9]{Arone14}, Arone and Turchin prove the above statement for $\Gamma(\mathcal{C}_{n})$. The same arguments work for the family of reduced operads. 
\end{proof}

\newpage

\begin{expl}\label{K3}In what follows, we list the main examples of contravariant functor from $\Gamma(\,\vec{O}\,)$ to spaces:\vspace{7pt}

\noindent $(1)$ Let $F$ be a good, contravariant and context-free functor from $\mathcal{O}_{\partial}(\sqcup_{i}\mathbb{R}^{d_{i}})$ to spaces. So, $F$ can be seen as a contravariant functor from $\mathcal{M}$ (see Definition \ref{K1}) to spaces. Since the category described in the previous proposition can be interpreted as a subcategory of $\mathcal{M}$ by sending any family of pointed sets $\vec{A}$ to the element \vspace{3pt}
$$
\big(\,(A_{1}\boxtimes I_{d_{1}}\,;\, I_{d_{1}}^{c}),\ldots,(A_{k}\boxtimes I_{d_{k}}\,;\, I_{d_{k}}^{c})\,\big) \in \mathcal{M},\vspace{3pt}
$$  
the functor $F$ produces a contravariant functor from $\Gamma(\,\vec{O}\,)$ to spaces. \vspace{7pt}

\noindent $(2)$ Let $\vec{\mathcal{U}}$ be an element in $\mathcal{O}_{\partial}(\sqcup_{i}\mathbb{R}^{d_{i}})$. We consider the contravariant functor $sEmb(-\,;\,\vec{\mathcal{U}})$  from $\Gamma(\,\vec{O}\,)$ to spaces given by the formula
$$
\vec{A}=(A_{1},\ldots,A_{k})\longmapsto \displaystyle \underset{1\leq i \leq k}{\prod} sEmb(A_{i}\boxtimes I_{d_{i}}\,;\,\mathcal{U}_{i}).\vspace{3pt}
$$ 
Furthermore, in the context of $1$-fold infinitesimal bimodules, Arone and Turchin \cite[Lemma 4.11]{Arone14} show that $sEmb(-\,;\,\mathbb{R}^{d_{i}})$ and the little cubes operad $\mathcal{C}_{d_{i}}$ are weakly equivalent as infinitesimal bimodules over $\mathcal{C}_{d_{i}}$. Similarly, $sEmb(-\,;\,\sqcup_{i}\mathbb{R}^{d_{i}})$ and $\mathbb{O}$ are weakly equivalent as $k$-fold infinitesimal bimodules.\vspace{7pt}

\noindent $(3)$ Let $\vec{\mathcal{U}}$ be an element in $\mathcal{O}_{\partial}(\sqcup_{i}\mathbb{R}^{d_{i}})$. We consider the contravariant functor $sEmb_{\ast}(-\,;\,\vec{\mathcal{U}})$  from $\Gamma(\,\vec{O}\,)$ to spaces given by the formula\vspace{3pt}
$$
\vec{A}=(A_{1},\ldots,A_{k})\longmapsto \displaystyle 
sEmb\big(A_{1}\boxtimes I_{d_{1}}\,;\,\mathcal{U}_{1}\big) \underset{sEmb\big( I_{d_{1}}^{c}\,;\, I_{d_{2}}^{c}\big)}{\varprod}\cdots \cdots \underset{sEmb\big( I_{d_{k-1}}^{c}\,;\, I_{d_{k}}^{c}\big)}{\varprod} sEmb\big(A_{k}\boxtimes I_{d_{k}}\,;\, \mathcal{U}_{k}\big),\vspace{3pt}
$$ 
where the product is defined as in Proposition $\ref{K0}$. Furthermore, there is a natural transformation induced by the inclusion from $sEmb_{\ast}(-\,;\,\vec{\mathcal{U}})$  to $sEmb(-\,;\,\vec{\mathcal{U}})$ which is obviously a weak equivalence objectwise. So, $sEmb_{\ast}(-\,;\,\vec{\mathcal{U}})$ and $sEmb(-\,;\,\vec{\mathcal{U}})$ are weakly equivalent as $k$-fold infinitesimal bimodules.
\end{expl}

\subsection{Embedding spaces as mapping spaces of $k$-fold infinitesimal bimodules}

This section is devoted to the proof of Theorem \ref{K2}. First, we give a general statement which is valuable for any good, contravariant context-free functor. Then, we show that Theorem \ref{K2} is a consequence of the proposition below applied to the contravariant functors $\mathcal{L}$ (see Example \ref{Q3}).\vspace{7pt}

\begin{pro}\label{K5}
Let $F$ be a good, contravariant and context-free functor from $\mathcal{O}_{\partial}(\sqcup_{i}\mathbb{R}^{d_{i}})$ to spaces. Due to Example \ref{K3}, $F$ can be seen as a $k$-fold infinitesimal bimodule. Then, for any $\vec{r}\in \mathbb{N}^{k}$ and $\vec{\mathcal{U}}\in \mathcal{O}_{\partial}(\sqcup_{i}\mathbb{R}^{d_{i}})$, one has the weak homotopy equivalence \vspace{3pt}
\begin{equation}\label{K4}
T_{\vec{r}}\,F(\vec{\mathcal{U}})\simeq T_{\vec{r}}\,Ibimod_{\vec{O}}^{h}\big(\, sEmb(-\,;\,\vec{\mathcal{U}})\,;\,F(-)\,\big).\vspace{5pt}
\end{equation}
\end{pro}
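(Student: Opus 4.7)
The plan is to adapt the single-variable argument of Arone--Turchin \cite[Theorem 5.9 and Section 4]{Arone14} to the multivariable setting by writing both sides of \eqref{K4} as homotopy limits over weakly equivalent categories, using the context-free structure of $F$ and the description of $\Gamma(\vec{O})$ from Proposition \ref{K0}.

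First, I would unwind the left-hand side. By Definition \ref{J5}, $T_{\vec{r}}\,F(\vec{\mathcal{U}})$ is the homotopy limit of $F$ over $\mathcal{O}_{\partial\,;\,\vec{r}}(\vec{\mathcal{U}})$. I would replace this poset, up to homotopy, by the topologically enriched category $\mathcal{E}_{\vec{r}}(\vec{\mathcal{U}})$ whose objects are pairs $(\vec{A},\phi)$ where $\vec{A}=(A_{1},\ldots,A_{k})$ is a family of pointed sets with $|A_{i}\setminus\{\ast_{A_{i}}\}|\leq r_{i}$, and $\phi$ lies in the space $sEmb_{\ast}(\vec{A}\,;\,\vec{\mathcal{U}})$ of Example \ref{K3}(3). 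This category is the Grothendieck construction of the functor $sEmb_{\ast}(-\,;\,\vec{\mathcal{U}})$ over the full subcategory $T_{\vec{r}}\,\Gamma(\vec{O})\subset\Gamma(\vec{O})$. Because $F$ is context-free (Definition \ref{K1}), its value on the open subset $\phi(\vec{A}\boxtimes\vec{I})\subset\vec{\mathcal{U}}$ depends only on $\vec{A}$, so the homotopy limit rewrites as
\[
T_{\vec{r}}\,F(\vec{\mathcal{U}}) \;\simeq\; \underset{(\vec{A},\phi)\in\mathcal{E}_{\vec{r}}(\vec{\mathcal{U}})}{\mathrm{holim}}\, F(\vec{A}).
\]

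Second, I would recast the right-hand side as a homotopy end. Using the equivalence between $k$-fold infinitesimal bimodules over $\vec{O}$ and contravariant topologically enriched functors on $\Gamma(\vec{O})$ (established just before Proposition \ref{L8}), combined with the explicit Boardman--Vogt cofibrant replacement of Theorem \ref{K9} in its truncated version, the $\vec{r}$-truncated derived mapping space reads
\[
T_{\vec{r}}\,Ibimod_{\vec{O}}^{h}\bigl(sEmb(-\,;\,\vec{\mathcal{U}})\,;\,F\bigr) \;\simeq\; \int^{h}_{\vec{A}\in T_{\vec{r}}\Gamma(\vec{O})}\!\!\Map\bigl(sEmb_{\ast}(\vec{A}\,;\,\vec{\mathcal{U}})\,;\,F(\vec{A})\bigr),
\]
where Example \ref{K3}(3) permits the replacement of $sEmb(-\,;\,\vec{\mathcal{U}})$ by the weakly equivalent $sEmb_{\ast}(-\,;\,\vec{\mathcal{U}})$ whose structure maps are literally compositions of standard embeddings (Proposition \ref{K0}), which is precisely what makes the end well-defined. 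A standard Fubini identity for homotopy ends then rewrites this as the homotopy limit of the constant functor $F(\vec{A})$ over the Grothendieck construction $\mathcal{E}_{\vec{r}}(\vec{\mathcal{U}})$, matching the expression from the first step.

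The main technical obstacle I anticipate is the cofinality argument comparing the combinatorial Grothendieck construction $\mathcal{E}_{\vec{r}}(\vec{\mathcal{U}})$ with the geometric poset $\mathcal{O}_{\partial\,;\,\vec{r}}(\vec{\mathcal{U}})$, i.e.\ showing that the forgetful functor $(\vec{A},\phi)\mapsto\phi(\vec{A}\boxtimes\vec{I})$ has contractible homotopy fibers. For $k=1$ this is essentially \cite[Proposition 4.9 and Lemma 5.3]{Arone14}. For general $k$, I would argue by induction on $k$, exploiting the fibered-product description \eqref{Z1} of $\Gamma(\vec{O})$ and the compatible fibered-product in Proposition \ref{K0} so that each inductive step reduces to the one-variable case; context-freeness of $F$ then propagates the fiberwise contractibility through the induction, yielding the desired weak equivalence \eqref{K4}.
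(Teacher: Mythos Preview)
Your approach is a valid alternative but differs from the paper's proof. You proceed by directly identifying both sides of \eqref{K4} with a common homotopy limit over a Grothendieck construction, which forces you to confront a homotopy-initial (cofinality) comparison between the enriched category $\mathcal{E}_{\vec{r}}(\vec{\mathcal{U}})$ and the poset $\mathcal{O}_{\partial\,;\,\vec{r}}(\vec{\mathcal{U}})$. As you correctly flag, this is the delicate step, and while your inductive sketch is plausible, it requires care in the topologically enriched setting.

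The paper sidesteps this entirely. Rather than computing both sides explicitly, it shows that the right-hand side of \eqref{K4}, viewed as a functor of $\vec{\mathcal{U}}$, is itself good and polynomial of degree $\vec{r}$: each building block $Map\bigl(sEmb(\vec{\mathcal{V}}\,;\,\vec{\mathcal{U}})\,;\,Y\bigr)$ is polynomial of degree $\vec{r}$ because $sEmb(\vec{\mathcal{V}}\,;\,-)$ is equivalent to a product of configuration spaces, and the derived mapping space is a homotopy limit of such building blocks via the Boardman--Vogt resolution. Then the characterization in Theorem \ref{J6}(4) reduces the problem to checking that the natural transformation $F\to T_{\vec{r}}\,Ibimod_{\vec{O}}^{h}\bigl(sEmb(-\,;\,\vec{\mathcal{U}})\,;\,F\bigr)$ is an equivalence on objects $\vec{\mathcal{U}}\in\mathcal{O}_{\partial\,;\,\vec{r}}\cap\Gamma(\vec{O})$. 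For those, one passes to the representable $sEmb_{\ast}(-\,;\,\vec{\mathcal{U}})$ and invokes the enriched Yoneda lemma directly.

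In short: your route trades the black-box characterization of $T_{\vec{r}}$ for an explicit cofinality argument, while the paper trades the cofinality argument for the polynomial characterization. The paper's approach is shorter and avoids the enriched-cofinality subtleties, but yours is more self-contained in that it does not rely on the uniqueness part of Theorem \ref{J6}(4).
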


\begin{proof}
We adapt the proof of \cite[Proposition 5.9]{Arone14}. Let $\vec{\mathcal{V}}=(\mathcal{V}_{1},\ldots,\mathcal{V}_{k})$ be an object in $\Gamma(\,\vec{O}\,)$, where $V_{i}$ is the disjoint union of one anti-cube and $s_{i}$ open cubes of dimension $d_{i}$, with $s_{i}\leq r_{i}$. Then we consider the functor from $\mathcal{M}$ to spaces $\vec{\mathcal{U}}\mapsto sEmb(\vec{\mathcal{V}}\,;\,\vec{\mathcal{U}})$ (see the second point in Example \ref{K3}). Notice that it is naturally equivalent to the functor that associated to $\vec{\mathcal{U}}$ the product of configuration spaces of $s_{i}$-tuples of points in $\mathcal{U}_{i}$, with $i\in \{1,\ldots,k\}$. In other words, one has \vspace{5pt}
$$
sEmb(\vec{\mathcal{V}}\,;\,\vec{\mathcal{U}})\simeq \displaystyle \underset{1\leq i\leq k}{\prod} Conf(s_{i}\,;\,\mathcal{U}_{i}).\vspace{3pt}
$$ 
It follows that it is an isotopy functor in the sense that if $\vec{\mathcal{U}}\rightarrow \vec{\mathcal{U}'}$ is a standard embedding that happens to be an isotopy equivalence, then the induced map \vspace{4pt}
$$
sEmb(\vec{\mathcal{V}}\,;\,\vec{\mathcal{U}})\rightarrow sEmb(\vec{\mathcal{V}}\,;\,\vec{\mathcal{U}'})\vspace{-13pt}
$$ 
\newpage

\noindent is a homotopy equivalence. It also takes filtered unions to filtered homotopy colimits. Furthermore, it is easy to check that this functor of $\vec{\mathcal{U}}$ is polynomial of degree $\vec{r}$. As a consequence, for any space $Y$, the contravariant functor\vspace{4pt}
$$
\vec{\mathcal{U}}\longmapsto Map\big(\, sEmb(\vec{\mathcal{V}}\,;\,\vec{\mathcal{U}})\,;\,Y\,\big)\vspace{5pt}
$$ 
is also polynomial of degree $\vec{r}$. Moreover, using the explicit description $(\ref{F8})$ of the cofibrant replacement in the projective model category of $k$-fold infinitesimal bimodules, we can identified the derived mapping space of $k$-infinitesimal bimodules in $(\ref{K4})$ with the "homotopy" limit of mapping space of the form $Map(\, sEmb(\vec{\mathcal{V}}\,;\,\vec{\mathcal{U}})\,;\,Y\,)$. Consequently, the derived mapping space is also a good, contravariant functor that is polynomial of degree $\vec{r}$.\vspace{7pt}

According to point $(4)$ of Theorem \ref{J6} and since the derived mapping space is polynomial of degree $\vec{r}$, we only need to check that the natural transformation from $F$ to the derived mapping space is a weak equivalence for any element in $\mathcal{O}_{\partial\,;\,\vec{r}}\,(\sqcup_{i}\mathbb{R}^{d_{i}})$. Let us start with an element $\vec{\mathcal{U}}\in \mathcal{O}_{\partial\,;\,\vec{r}}\,(\sqcup_{i}\mathbb{R}^{d_{i}}) \cap \Gamma(\vec{O})$, where $\Gamma(\vec{O})$ is seen as a subcategory of $\mathcal{M}$. In other words, $\mathcal{U}_{i}$ is the disjoint union of $r_{i}$ open cubes and one anti-cube. Then, we consider the following commutative diagram
$$
\xymatrix@R=15pt{
F(\,\vec{\mathcal{U}}\,) \ar[r]\ar[rd] & T_{\vec{r}}\,Ibimod_{\vec{O}}^{h}\big(\, sEmb_{\ast}(-\,;\,\vec{\mathcal{U}})\,;\,F(-)\,\big)  \\
& T_{\vec{r}}\,Ibimod_{\vec{O}}^{h}\big(\, sEmb(-\,;\,\vec{\mathcal{U}})\,;\,F(-)\,\big)\ar[u]
}
$$ 
The right vertical map is a weak equivalence since $sEmb_{\ast}(-\,;\,\vec{\mathcal{U}})$ and $sEmb(-\,;\,\vec{\mathcal{U}})$ are weakly equivalent as $k$-fold infinitesimal bimodules. The upper horizontal map is also a weak equivalence using the enriched Yoneda lemma together with the fact that $sEmb_{\ast}(-\,;\,\vec{\mathcal{U}})$ is a representable contravariant functor from $\Gamma(\,\vec{O}\,)$ to spaces. So, the diagonal map in the above diagram is a weak equivalence too. Finally, since $F$ and the functor associated to the derived mapping space are good and since any element in $\mathcal{O}_{\partial\,;\,\vec{r}}\,(\sqcup_{i}\mathbb{R}^{d_{i}})$ is isotopy equivalent to an element in $\mathcal{O}_{\partial\,;\,\vec{r}}\,(\sqcup_{i}\mathbb{R}^{d_{i}}) \cap \Gamma(\vec{O})$, the map from $F$ to the derived mapping space is a weak equivalence for any element in $\mathcal{O}_{\partial\,;\,\vec{r}}\,(\sqcup_{i}\mathbb{R}^{d_{i}})$. 
\end{proof}  \vspace{5pt}

\begin{proof}[Proof of Theorem \ref{K2}]
We recall that $\mathcal{L}$ is the contravariant functor associated to the space $\mathcal{L}(d_{1},\ldots,d_{k}\,;\,n)$. It is a good and context-free functor. As a consequence of Theorem \ref{K5}, one has the following weak homotopy equivalence:\vspace{3pt}
$$
T_{\vec{r}}\,\mathcal{L}(d_{1},\ldots,d_{k}\,;\,n)=T_{\vec{r}}\, \mathcal{L}(\underset{1\leq i\leq k}{\coprod} \mathbb{R}^{d_{i}}) \simeq 
T_{\vec{r}}\,Ibimod_{\vec{O}}^{h}\big( sEmb(-\,;\,\underset{1\leq i\leq k}{\coprod} \mathbb{R}^{d_{i}}) \,;\, \mathcal{L}(-)\big).\vspace{3pt}
$$
As explained in Example \ref{K3}, the functor $sEmb(-\,;\,\sqcup_{i}\mathbb{R}^{d_{i}})$ is weakly equivalent to $\mathbb{O}$ as $k$-fold infinitesimal bimodules. Unfortunately, the functor $\mathcal{L}$ is not directly related to the $k$-fold infinitesimal bimodule $\mathcal{R}_{n}^{k}$. To solve this problem, let us consider some hyperplans $H_{1},\ldots,H_{k}$ of $\mathbb{R}^{n}$ of dimension $d_{1},\ldots,d_{k}$, respectively. Since $d_{1}\leq \cdots \leq d_{k}<n$, we can suppose that the hyperplans are parallel along the last coordinate. Then, we introduce the functor\vspace{3pt} 
$$
\overline{Emb}_{st}(-\,;\,\mathbb{R}^{n}):\mathcal{O}_{\partial\,;\,\vec{r}}\,(\sqcup_{i}\mathbb{R}^{d_{i}}) \longrightarrow Top,\vspace{3pt}
$$ 
defined as the homotopy fiber of the inclusion $Emb_{st}(\vec{\mathcal{U}}\,;\,\mathbb{R}^{n})\rightarrow Imm_{st}(\vec{\mathcal{U}}\,;\,\mathbb{R}^{n})$, where $Emb_{st}(\vec{\mathcal{U}}\,;\,\mathbb{R}^{n})$ is the space of smooth embeddings $f\coloneqq\{f_{i}:\mathcal{U}_{i}\rightarrow\mathbb{R}^{n}\}$ in which $f_{i}$ agree, outside a bounded set, with a standard embedding into $H_{i}$, followed by the inclusion into $\mathbb{R}^{n}$. The space $Imm_{st}(\vec{\mathcal{U}}\,;\,\mathbb{R}^{n})$ is defined in the same way. The functor so obtained is context-free and the natural transformation induced by the inclusion \vspace{3pt}
$$
\mathcal{L}(-)\longrightarrow \overline{Emb}_{st}(-\,;\,\mathbb{R}^{n})\vspace{3pt}
$$ 
is a weak equivalence of $k$-fold infinitesimal bimodules. Similarly to \cite[Theorem 5.10]{Arone14}, one has a weak equivalence of $k$-fold infinitesimal bimodules between $\overline{Emb}_{st}(-\,;\,\mathbb{R}^{n})$ and $\mathcal{R}_{n}^{k}$. Thus finishes the proof of the first part of the theorem. The second part is a consequence of Example \ref{Q3}.
\end{proof}


\section{The model category of $k$-fold bimodules}\label{N3}

In \cite{Ducoulombier18}, we prove that the derived mapping space of $1$-fold infinitesimal bimodules over the little cubes operad $\mathcal{C}_{d}$ is weakly equivalent to an explicit $d$-iterated loop space using mapping space of bimodules. In order to get a similar statement in the context of $k$-fold infinitesimal bimodules, we introduce in this section the notion of $k$-fold bimodule over a family of reduced operads $O_{1},\ldots,O_{k}$ relative to another operad $O$. Similarly to Section \ref{B5}, we consider two model category structures and we build explicit cofibrant resolutions in both cases.

\subsection{The category of "truncated" $k$-fold bimodules}

We consider a particular set of cardinality $-1$ denoted by $+$ and called the \textit{augmented set}. Let $\Sigma^{\times k}_{+}$ be the category of families $(A_{1},\ldots, A_{k})$ of $k$ finite sets (possibly augmented) different to $(+,\ldots, +)$ and families of isomorplisms between them. A $k$-fold augmented sequence is a  functor $M$ from $\Sigma^{\times k}_{+}$ to spaces. We will write $M(n_{1},\ldots,n_{k})$, with $n_{i}\in \mathbb{N}\sqcup \{+\}$ and $(n_{1},\ldots,n_{k})\neq (+,\ldots,+)$, for the space $M(A_{1},\ldots,A_{k})$ where \vspace{5pt}
$$
A_{i}=\left\{
\begin{array}{cc}\vspace{5pt}
\{1,\ldots,n_{i}\} & \text{if } n_{i}\neq +, \\ 
+ & \text{if } n_{i}= +.
\end{array} 
\right.\vspace{7pt}
$$

In practice, a $k$-fold augmented sequence is given by a family of topological spaces $M(n_{1},\ldots,n_{k})$, with $n_{i}\in \mathbb{N}\sqcup\{+\}$ and $(n_{1},\ldots,n_{k})\neq (+,\ldots,+)$, together with actions of the symmetric groups: for each element $\sigma\in \Sigma_{n_{1}}\times\cdots\times \Sigma_{n_{k}}$, there is a continuous map \vspace{4pt}
$$
\begin{array}{rcl}\vspace{3pt}
\sigma^{\ast}:M(n_{1},\ldots,n_{k}) & \longrightarrow & M(n_{1},\ldots,n_{k}); \\ 
 x & \longmapsto & x\cdot \sigma,
\end{array} \vspace{3pt}
$$
satisfying some relations. By convention, we assume that $\Sigma_{+}=\ast$ and we denote by $Seq_{k}^{+}$ the category of $k$-fold augmented sequences. \vspace{7pt}

Given an element $\vec{r}=(r_{1},\ldots,r_{k})\in \mathbb{N}^{k}$, we also consider the category $T_{\vec{r}}\,\Sigma^{\times k}_{+}$ whose objects are families of finite sets, possibly augmented, $(A_{1},\ldots,A_{k})$ with $|A_{i}|\leq r_{i}$. The category of $\vec{r}$-truncated $k$-fold augmented sequences, denoted by $T_{\vec{r}}\,Seq_{k}^{+}$, is formed by covariant functors from $T_{\vec{r}}\,\Sigma^{\times k}_{+}$ to spaces. Furthermore, there is an obvious functor\vspace{5pt}
$$
T_{\vec{r}}\,(-):Seq_{k}^{+}\longrightarrow T_{\vec{r}}\,Seq_{k}^{+}.\vspace{10pt}
$$

\begin{defi}\label{Z3}\textbf{The family of spaces $\vec{O}_{S}$}

\noindent Let $f_{i}:O_{i}\rightarrow O$, with $i\in \{1,\ldots,k\}$, be a family of operadic maps between reduced operads. The family of operads $O_{1},\ldots,O_{k}$ is said to be relative to the operad $O$. Let $\mathcal{P}_{+}(A)$ be the poset of subsets of $A$ plus the augmented set $+$. We also consider the set $\mathcal{P}_{k}(A)$ formed by families $S=\{S_{1},\ldots,S_{k}\}$, with $S_{i}\in \mathcal{P}_{+}(A)$ such that, for each $a\in A$, one of the following condition is satisfied: \vspace{5pt}
\begin{itemize}
\item[$i)$] $\forall i\in \{1,\ldots,k\}$ the element $a$ is contained in $S_{i}$;\vspace{5pt}
\item[$ii)$] there exists a unique $i\in \{1,\ldots,k\}$ such that  $a\in S_{i}$.\vspace{5pt}
\end{itemize}
For any element $S\in \mathcal{P}_{k}(A)$, we introduce the space\vspace{5pt}
\begin{equation}\label{D8}
\vec{O}_{S}(S_{1},\ldots,S_{k})\coloneqq\left\{\left.
\{x_{l}\}\in \underset{\substack{1\leq i\leq k\\S_{i}\neq +}}{\displaystyle \prod} O_{i}(S_{i})\,\,\right| \,\, \forall S_{i}\neq + \text{ and } S_{j}\neq +,\,\, f_{i}[S_{i}\,;\,S_{j}](x_{i})=f_{j}[S_{i}\,;\,S_{j}](x_{j})\right\},\vspace{5pt}
\end{equation}
where the composite map 
$
f_{i}[S_{i}\,;\,S_{j}]:O_{i}(S_{i})\rightarrow O_{i}(S_{i}\cap S_{j})\rightarrow O(S_{i}\cap S_{j})
$ 
is obtained by composing all inputs other than $S_{i}\cap S_{j}$ with the unique point in $O_{i}(0)$ followed by the operadic map $f_{i}$. By abuse of notation, we denote by $(x_{1},\ldots,x_{k})$ a point in the space (\ref{D8}) with $x_{i}=+$ if $S_{i}=+$.  \newpage

Let $a\in A$, $S^{1}=(S_{1}^{1},\ldots, S_{k}^{1})\in \mathcal{P}_{k}(A)$ and $S^{2}=(S_{1}^{2},\ldots, S_{k}^{2})\in \mathcal{P}_{k}(B)$ with the condition $S_{i}^{2}=+$ iff $a\notin S_{i}^{1}$. The family (\ref{D8}) inherits an algebraic structure from the operads $O_{1},\ldots, O_{k}$ in the sense that one has associative operations of the form\vspace{5pt}
\begin{equation}\label{D6}
\begin{array}{rclcl}\vspace{7pt}
\mu_{a}:\vec{O}_{S^{1}}(S_{1}^{1},\ldots, S_{k}^{1}) & \times & \vec{O}_{S^{2}}(S_{1}^{2},\ldots, S_{k}^{2}) & \longrightarrow & \vec{O}_{S^{1}\cup_{a}S^{2}}(S_{1}^{1}\cup_{a}S_{1}^{2},\ldots, S_{k}^{1}\cup_{a}S_{k}^{2}); \\ 
(x_{1}^{1},\ldots,x_{k}^{1}) & ; & (x_{1}^{2},\ldots,x_{k}^{2}) & \longmapsto & (x_{1}^{3},\ldots,x_{k}^{3}),
\end{array} \vspace{5pt}
\end{equation}
where the points $x_{i}^{3}\in O_{i}(S_{i}^{1}\cup_{a}S_{i}^{2})$ and the family $S^{1}\cup_{a}S^{2}\in \mathcal{P}_{k}(A\cup_{a}B)$ are defined as follows:\vspace{5pt}
$$
S_{i}^{1}\cup_{a}S_{i}^{2}=\left\{
\begin{array}{cc}\vspace{7pt}
S_{i}^{1} & \text{if } a\notin S_{i}^{1}, \\ 
(S_{i}^{1}\sqcup S_{i}^{2})\setminus \{a\} & \text{if } a\in S_{i}^{1},
\end{array} 
\right. \hspace{15pt} \text{and}\hspace{15pt}
x_{i}^{3}=\left\{
\begin{array}{cc}\vspace{7pt}
x_{i}^{1} & \text{if } a\notin S_{i}^{1}, \\ 
x_{i}^{1}\circ_{a}x_{i}^{2} & \text{if } a\in S_{i}^{1}.
\end{array} 
\right. \vspace{5pt}
$$
For any $S=\{S_{1},\ldots,S_{k}\}$, with $S_{i}\in \mathcal{P}_{+}(\{1,\ldots,n\})$, we will write $\vec{O}_{S}(n_{1},\ldots, n_{k})$ for the space $\vec{O}_{S}(S_{1},\ldots, S_{k})$ with \vspace{5pt}
$$
n_{i}=\left\{
\begin{array}{cc}\vspace{5pt}
|S_{i}| & \text{if } S_{i}\neq +, \\ 
+ & \text{if } S_{i}=+.
\end{array} \right.\vspace{10pt}
$$
\end{defi}

\begin{rmk}\label{D2}
The family $S=\{S_{1},\ldots,S_{k}\}\in \mathcal{P}_{k}(\{1,\ldots,n\})$ in the above definition can be interpreted as a $n$-coralla $T_{n}$ together with a family of applications $f_{i}:E(T_{n})\rightarrow \{external \,;\,internal\}$, with $1\leq i\leq k$, labelling the edges of the corolla as follows: if $S_{i}=+$, then the edges are indexed by $external$ ; 
if $S_{i}\neq +$, then the trunk is indexed by $internal$ and, according to the planar order of the corolla, the subset of leaves, whose images by the application $f_{i}$ are $internal$, is in bijection with the set $S_{i}$. By convention the edges indexed by $external$ are represented by dotted edges.  \vspace{0pt}

\hspace{-92pt}\includegraphics[scale=0.44]{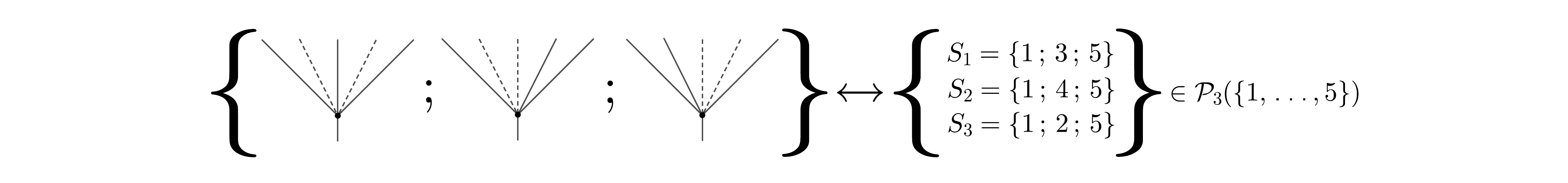}
\end{rmk}

\begin{defi} \textbf{The category of $k$-fold bimodules over $\vec{O}$}

\noindent A $k$-fold bimodule over $\vec{O}$, or just $\vec{O}$-\textit{bimodule}, is a $k$-fold augmented sequence $M$ together with operations called $k$-fold right operations of the form\vspace{5pt}
\begin{equation*}
\begin{array}{ll}
\circ_{i}^{a}:M(A_{1},\ldots,A_{k})\times O_{i}(B) 
\longrightarrow M(A_{1},\ldots,A_{i}\cup_{a}B,\ldots, A_{k}), & \text{with } 1\leq i\leq k \text{ and } a\in A_{i}\neq +,
\end{array} \vspace{5pt}
\end{equation*}
and, for any $S\in \mathcal{P}_{k}(A)$, $k$-fold left operations of the form\vspace{5pt}
\begin{equation*}
\begin{array}{ll}
\mu_{S}:\vec{O}_{S}(S_{1},\ldots,S_{k})\times \,\,\,
\underset{a\in A}{\displaystyle\prod}\,\,\, M(B_{1}^{a},\ldots,B_{k}^{a}) \longrightarrow M\left(\,\,\, \underset{\substack{a\in A\\ B_{1}^{a}\neq +}}{\displaystyle\coprod}B_{1}^{a},\ldots ,  \underset{\substack{a\in A\\ B_{k}^{a}\neq +}}{\displaystyle\coprod}B_{k}^{a} \,\,\, \right),
 & \text{with } B_{i}^{a}=+ \text{ iff } a\notin S_{i}.
\end{array} \vspace{5pt}
\end{equation*}
By convention, if $S_{i}=+$, then the coproduct $\coprod_{a}B_{i}^{a}$ is the augmented set $+$. These operations satisfy compatibility relations with the symmetric group as well as associativity and unit axioms (see Appendix \ref{W1}). Due to the condition $(A_{1},\ldots, A_{k})\neq (+,\ldots,+)$, the notion of $1$-fold bimodule is equivalent to the usual notion of bimodule over an operad. A map of $k$-fold bimodules should respect the operations. We denote by $Bimod_{\vec{O}}$ the category of $k$-fold bimodules. Finally, let us notice that a $k$-fold bimodule $M$ is equipped with maps of the form \vspace{7pt}
$$
\gamma_{\vec{A}}:\underset{\substack{1\leq i\leq k\\ A_{i}\neq +}}{\prod}O_{1}(\emptyset)\rightarrow M(\vec{A}),\hspace{15pt}\forall \vec{A}=(A_{1},\ldots,A_{k})\in \Sigma^{\times k}_{+}\text{ with } A_{i}\in \{+\,,\,\emptyset\}.\vspace{-20pt}
$$

\newpage

In practice, a $k$-fold bimodule over $\vec{O}$ is determined by a family of topological spaces $M(n_{1},\ldots,n_{k})$, with $n_{i}\in \mathbb{N}\sqcup\{+\}$ and $(n_{1},\ldots,n_{k})\neq (+,\ldots,+)$, together with actions of the symmetric groups $\Sigma_{n_{1}}\times \cdots \times \Sigma_{n_{k}}$, $k$-fold right operations of the form\vspace{1pt}
$$
\begin{array}{ll}
\circ_{i}^{j}:M(n_{1},\ldots,n_{k})\times O_{i}(m)\rightarrow M(n_{1},\ldots, n_{i}+m-1,\ldots n_{k}), & \text{with } i\leq k, \, n_{i}\neq + \text{ and } j\leq n_{i},
\end{array} \vspace{3pt}
$$
and, for any $S=(S_{1},\ldots,S_{k})\in\mathcal{P}_{k}(\{1,\ldots,n\})$, $k$-fold left operations of the form \vspace{5pt}
$$
\begin{array}{ll}
\mu_{S}:\vec{O}_{S}(n_{1},\ldots,n_{k})\times \underset{1\leq i\leq n}{\displaystyle\prod} M(m_{1}^{i},\ldots,m_{k}^{i})\rightarrow M\left(\,\,\, \underset{\substack{1\leq i\leq k\\ m_{1}^{i}\neq +}}{\displaystyle\sum}m_{1}^{i},\ldots,\underset{\substack{1\leq i\leq k\\ m_{k}^{i}\neq +}}{\displaystyle\sum}m_{k}^{i}\,\,\,\right), & \text{with } m_{j}^{i}=+ \text{ iff } j\notin S_{i}. \vspace{5pt}
\end{array} 
$$
By convention, if $S_{j}=+$, then one has $\sum_{i}m_{j}^{i}=+$. For the rest of the paper, we use also the  notation
$$
\begin{array}{ll}\vspace{7pt}
x\circ_{i}^{j}y=\circ_{i}^{j}(x\,;\,y) & \text{for } x\in M(n_{1},\ldots,n_{k}) \,\text{ and } y\in O_{i}(m).
\end{array}\vspace{-2pt}
$$

Given an element $\vec{r}=(r_{1},\ldots,r_{k})\in \mathbb{N}^{k}$, we also consider the category of $\vec{r}$-truncated $k$-fold  bimodule over $\vec{O}$ denoted by $T_{\vec{r}}\,Bimod_{\vec{O}}$. An object is an $\vec{r}$-truncated $k$-fold augmented sequence together with operations as above. One has an obvious functor\vspace{2pt}
$$
T_{\vec{r}}\,(-):Bimod_{\vec{O}}\longrightarrow T_{\vec{r}}\,Bimod_{\vec{O}}.
$$ 
\end{defi}\vspace{5pt}

\begin{expl}\label{M1}\textbf{The $k$-fold bimodule $\mathbb{O}^{+}$}

\noindent Let $O_{1},\ldots,O_{k}$ be a family of reduced operads relative to $O$. Similarly to Example \ref{E1}, the $k$-fold augmented sequence $\mathbb{O}^{+}$ defined as follows:\vspace{5pt}
\begin{equation}\label{Q9}
\mathbb{O}^{+}(A_{1},\ldots , A_{k}):= \underset{\substack{1\leq i \leq k\\ A_{i}\neq +}}{\displaystyle \prod} O(A_{i}),\hspace{15pt} \forall (A_{1},\ldots,A_{k})\in \Sigma_{+}^{\times k},\vspace{3pt}
\end{equation}
inherits $k$-fold right operations from the operadic structures of $O_{1},\ldots,O_{k}$. For any $S=(S_{1},\ldots,S_{k})\in \mathcal{P}_{k}(A)$ and $S_{i}=\{a_{1}^{i},\ldots , a_{l_{i}}^{i}\}$, the $k$-fold left operations are given by the formula \vspace{3pt}
$$
\begin{array}{rcl}\vspace{5pt}
\mu_{S}:\vec{O}_{S}(S_{1},\ldots,B_{k})\times \underset{a\in A}{\displaystyle\prod} \,\,\,\mathbb{O}^{+}(B_{1}^{a},\ldots,B_{k}^{a}) & \longrightarrow & \mathbb{O}^{+}\left(\,\,\, \underset{\substack{a\in A\\ B_{1}^{a}\neq +}}{\displaystyle\coprod} B_{1}^{a},\ldots, \underset{\substack{a\in A\\ B_{k}^{a}\neq +}}{\displaystyle\coprod} B_{k}^{a}\,\,\,\right); \\ 
(x_{1},\ldots,x_{k})\,;\, \big\{(y_{1}^{a},\ldots,y_{k}^{a})\big\}_{a\in A} & \longmapsto & (z_{1},\ldots,z_{k}),
\end{array} \vspace{3pt}
$$
with $z_{i}$ defined using the operadic structures of $O_{1},\ldots,O_{k}$:
$$
z_{i}=\big(\,\cdots\big(\, x_{i}\circ_{a_{1}^{i}}y_{i}^{a_{1}^{i}}\,\big)\circ_{a_{2}^{i}}y_{i}^{a_{2}^{i}}\cdots\,\big)\circ_{a_{l_{i}}^{i}}y_{i}^{a_{l_{i}}^{i}}.\vspace{5pt}
$$

More generally, if $M_{1},\ldots,M_{k}$ are bimodules over the reduced operads $O_{1},\ldots,O_{k}$, respectively, then the $k$-fold augmented sequence $\mathbb{M}^{+}$, given by the formula \vspace{5pt}
$$
\mathbb{M}^{+}(A_{1},\ldots,A_{k}):= \underset{\substack{1\leq i \leq k\\ A_{i}\neq +}}{\displaystyle \prod} M(A_{i}), \hspace{15pt} \forall (A_{1},\ldots,A_{k})\in \Sigma_{+}^{\times k},
$$ inherits a $k$-fold bimodule structure over the family of reduced operads $O_{1},\ldots,O_{k}$ relative to the terminal operad (i.e. the operad with only one point in each arity). Consequently, $\mathbb{M}^{+}$ is also a $k$-fold bimodule over the family $O_{1},\ldots,O_{k}$ relative to any other operad $O$.
\end{expl}

\begin{notat}
 Since $\Sigma^{\times k}$ is a subcategory of $\Sigma^{\times k}_{+}$, there is a functor\vspace{3pt}
$$(-)^{-}:Seq_{k}^{+}\rightarrow Seq_{k}.$$ 
\end{notat}\vspace{2pt}

\begin{pro}\label{D9}
Let $\eta:\mathbb{O}^{+}\rightarrow M$ be a map of $k$-fold bimodules over $\vec{O}$. Then $M^{-}$ inherits a $k$-fold infinitesimal bimodule structure over the family of reduced operad $O_{1},\ldots,O_{k}$ relative to the topological monoid $O(1)$ and the induced map $\eta:\mathbb{O}\rightarrow M^{-}$ is a map of $k$-fold infinitesimal bimodules.\vspace{3pt}
\end{pro}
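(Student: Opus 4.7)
The plan is to construct the $k$-fold infinitesimal bimodule structure on $M^{-}$ by realising each of its operations as a carefully chosen instance of the operations of $M$, using $\eta$ to fill in the "missing" slots with operadic units (mirroring the $k=1$ example given right after Definition~\ref{M3}). The $k$-fold infinitesimal right operations $\circ_{i}^{a}$ on $M^{-}$ are simply the restrictions of the $k$-fold right operations of $M$ to inputs in $\Sigma^{\times k}$: they touch only the $i$-th coordinate and never introduce an augmented entry, so they transfer verbatim.

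For the infinitesimal left operation
$$\mu:\vec{O}(B_{1},\ldots,B_{k})\times M^{-}(A_{1},\ldots,A_{k})\longrightarrow M^{-}(A_{1}\sqcup B_{1},\ldots,A_{k}\sqcup B_{k}),$$
I would set $A:=B_{1}\sqcup\cdots\sqcup B_{k}\sqcup\{*\}$ and $S_{i}:=B_{i}\sqcup\{*\}$, producing $S=(S_{1},\ldots,S_{k})\in\mathcal{P}_{k}(A)$: the element $*$ lies in every $S_{i}$, and each $b\in B_{i}$ lies only in $S_{i}$. Since $S_{i}\cap S_{j}=\{*\}$ for $i\neq j$, the compatibility condition in (\ref{D8}) reduces exactly to the pullback condition over $O(1)$, yielding a canonical identification $\vec{O}_{S}(S_{1},\ldots,S_{k})=\vec{O}(B_{1},\ldots,B_{k})$. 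Setting $B_{i}^{*}:=A_{i}$, $B_{i}^{b}:=\{b\}$ for $b\in B_{i}$, and $B_{j}^{b}:=+$ for $j\neq i$ makes the target arities of $\mu_{S}$ agree with $A_{i}\sqcup B_{i}$, and I would define
$$\mu\big((x_{1},\ldots,x_{k})\,;\,y\big)\;:=\;\mu_{S}\Big((x_{1},\ldots,x_{k})\,;\,y,\;\{\eta(\ast_{1})\}_{b\in B_{i},\,1\leq i\leq k}\Big),$$
where the $*$-labelled input is $y$ and, for $b\in B_{i}$, the $b$-labelled input is the image under $\eta$ of the unit of $O_{i}$ viewed inside $\mathbb{O}^{+}(+,\ldots,+,\{b\},+,\ldots,+)=O_{i}(\{b\})$.

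The axioms of a $k$-fold infinitesimal bimodule on $M^{-}$ then follow from the corresponding axioms for $M$: equivariance is automatic by functoriality; the unit axiom comes from plugging $(\ast_{1},\ldots,\ast_{1})\in\vec{O}(1,\ldots,1)$ into $\mu_{S}$ and invoking its unit axiom; compatibility of $\mu$ with the right operations $\circ_{i}^{a}$ follows from the same compatibility for $\mu_{S}$; and associativity of $\mu$ with itself is obtained by matching an iterated application of $\mu$ with a single $\mu_{S'}$ for an appropriate composite $S'=S^{1}\cup_{a}S^{2}$, using that $\eta$ is a morphism of $k$-fold bimodules so that the surplus $\eta(\ast_{1})$ decorations are harmlessly absorbed by the operadic units in the $O_{i}$. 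The induced map $\eta:\mathbb{O}\to M^{-}$ commutes with the right operations tautologically; commutativity with $\mu$ reduces, via the description of the left operation on $\mathbb{O}^{+}$ from Example~\ref{M1}, to the identity $\mu_{S}^{\mathbb{O}^{+}}((x_{1},\ldots,x_{k});(y_{1},\ldots,y_{k}),\{\ast_{1}\})=(x_{1}\circ_{*}y_{1},\ldots,x_{k}\circ_{*}y_{k})$, which matches the infinitesimal left operation on $\mathbb{O}$ recorded in Example~\ref{E1}.

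The main obstacle will be the combinatorial bookkeeping in the associativity verification, since the identification between families $S\in\mathcal{P}_{k}(A)$ and the pointed-corolla picture of Remark~\ref{D2} must be carried consistently through decompositions $S=S^{1}\cup_{a}S^{2}$; once this dictionary is set up, however, every axiom to check becomes a direct consequence of an axiom of $M$ together with the morphism property of $\eta$.
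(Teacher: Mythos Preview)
Your proposal is correct and matches the paper's own proof essentially verbatim: both transfer the right operations directly, and both build the infinitesimal left operation $\mu$ by choosing a family $S\in\mathcal{P}_{k}(A)$ in which one distinguished element (your $*$, the paper's index $1$) lies in every $S_{i}$ while the remaining elements lie in exactly one $S_{i}$, then apply $\mu_{S}$ with the $M^{-}$-input in the distinguished slot and $\eta(\ast_{1})$ in all the others. Your set-theoretic labelling is slightly cleaner than the paper's index arithmetic, and you spell out the axiom verifications that the paper leaves to the reader, but the argument is the same.
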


\begin{proof}
Since the $k$-fold right infinitesimal operations and the $k$-fold right operations are the same, we only need to build maps of the form \vspace{3pt}
$$
\mu:\vec{O}(n_{1},\ldots,n_{k})\times M^{-}(m_{1},\ldots,m_{k})\longrightarrow M^{-}(n_{1}+m_{1}-1,\ldots,n_{k}+m_{k}-1).\vspace{5pt}
$$
For this purpose, we consider the family  $S=\{S_{1},\ldots,S_{k}\}\in\mathcal{P}_{k}(\{1,\ldots,n_{1}+\cdots+n_{k}-k+1\})$ with \vspace{3pt}
$$
S_{i}=\{1,n_{1}+\cdots+n_{i-1}-i+3,\ldots, n_{1}+\cdots+n_{i}-i+1\}.\vspace{3pt}
$$
By construction, one has an operation of the form\vspace{3pt}
$$
\mu_{S}: \vec{O}_{S}(n_{1},\ldots,n_{k})\times  M^{-}(m_{1},\ldots,m_{k})\times\!\!\underset{j=2}{\overset{n_{1}+\cdots+n_{k}-k+1}{\prod}}\!\!M(m_{1}^{j},\ldots,m_{k}^{j}) \longrightarrow M^{-}(n_{1}+m_{1}-1,\ldots,n_{k}+m_{k}-1),
$$
where $(m_{1}^{j},\ldots,m_{k}^{j})$, with $m_{i}^{j}\in \mathbb{N}\sqcup \{+\}$, is given by 
$$
m^{j}_{i}=\left\{
\begin{array}{cc}\vspace{5pt}
1 & \text{if } j\in S_{i}, \\ 
+ & \text{if } j\notin S_{i}.
\end{array} 
\right.
$$
By construction, one has $(m_{1}^{j},\ldots,m_{k}^{j})\neq (+,\ldots,+)$. Then, we use the image of the units $\ast_{1}\in O_{i}(1)$  by the map $\eta:\mathbb{O}^{+}\rightarrow M$ in order to get the operation researched. The reader can check that the induced map $\eta:\mathbb{O}\rightarrow M^{-}$ is a morphism of $k$-fold infinitesimal bimodules. \vspace{5pt}
\end{proof}

\begin{expl}\label{L7}\textbf{The $k$-fold bimodule $\mathcal{R}_{n}^{k}$}

\noindent Let $d_{1}\leq \cdots \leq d_{k}<n$ be integers and let $\vec{O}$ be the object (\ref{D8}) associated to the family of reduced operads $\mathcal{C}_{d_{1}},\ldots, \mathcal{C}_{d_{k}}$ relative to the operad $\mathcal{C}_{n}$. By abuse of notation, we denote by  $\mathcal{R}_{n}^{k}$ the $k$-fold augmented sequence given by the formula\vspace{3pt}
$$
\mathcal{R}_{n}^{k}(A_{1},\ldots,A_{k})=\mathcal{R}_{n}\left(\,\,\, \underset{\substack{1\leq i\leq k\\ A_{i}\neq +}}{\displaystyle\coprod}A_{i}\,\,\,\right),\hspace{15pt} \forall (A_{1},\ldots,A_{k})\in \Sigma^{\times k}_{+}.\vspace{5pt}
$$ 
The $k$-fold right operations are defined using the composite map of operads $\kappa_{i}:\mathcal{C}_{d_{i}}\rightarrow \mathcal{C}_{d_{k}}\hookrightarrow \mathcal{R}_{d_{k}}\rightarrow \mathcal{R}_{n}$ together with the operadic structure of $\mathcal{R}_{n}$. Let $S=\{S_{1},\ldots,S_{k}\}$ be an element in $\mathcal{P}_{k}(A)$ with $A=\{a_{1},\ldots,a_{n}\}$. In a first time, we assume that $S_{i}\neq +$, for any $i\in \{1,\ldots,k\}$. We want to define the map\vspace{5pt}
\begin{equation}\label{Z5}
\mu_{S}: \vec{O}_{S}(S_{1},\ldots,S_{k})\times \underset{a\in A}{\displaystyle \prod} \mathcal{R}_{n}^{k}(B_{1}^{a},\ldots, B_{k}^{a})\longrightarrow \mathcal{R}_{n}^{k}\left(\,\,\, \underset{\substack{a\in A\\ B_{1}^{a}\neq +}}{\displaystyle\coprod}B_{1}^{a},\ldots,\underset{\substack{a\in A\\ B_{k}^{a}\neq +}}{\displaystyle\coprod}B_{k}^{a}\,\,\,\right),\hspace{15pt}\text{with } B_{i}^{a}=\emptyset \text{ iff } a\notin S_{i}.\vspace{5pt}
\end{equation}
For this purpose, we need a map of the form\vspace{5pt}
$$
\varepsilon: \vec{O}_{S}(S_{1},\ldots,S_{k})\longrightarrow \mathcal{R}_{n}\left(\,\,\, \underset{1\leq i \leq k}{\coprod}S_{i}\,\,\,\right) \longrightarrow \mathcal{R}_{n}\left( A \right). \vspace{-15pt}
$$
\newpage

\noindent The first map sends a point $(x_{1},\ldots,x_{k})$ to the operadic composition $(\cdots ( c_{k}\circ_{k}\kappa_{k}(x_{k}))\cdots ) \circ_{1}\kappa_{1}(x_{1})$ where the element $c_{k}\in \mathcal{C}_{n}(k)$ is introduced in Example \ref{D7}. Due to the definition $(\ref{D8})$, two little rectangles indexed by the same element $a$ in configurations $x_{i}$ and $x_{i+1}$, respectively, share a face of codimension $1$ in the operadic composition. Consequently, the second map consists in gluing together such pair of rectangles as illustrated in Figure \ref{Z4}. \vspace{-5pt} 
\begin{figure}[!h]
\begin{center}
\includegraphics[scale=0.32]{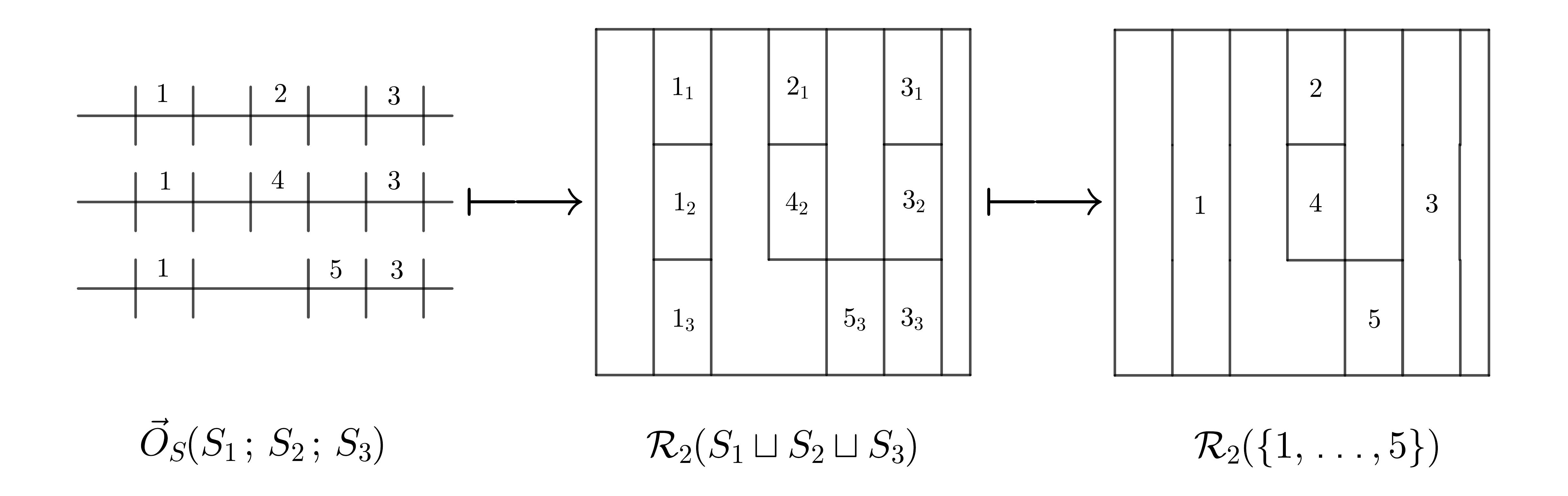}\vspace{-10pt}
\caption{Illustration of $\varepsilon$ with $S_{1}=\{1\,;\,2\,;\,3\}$, $S_{2}=\{1\,;\,3\,;\,4\}$ and $S_{3}=\{1\,;\,3\,;\,5\}$ in $\mathcal{P}_{+}(\{1,\ldots,5\})$.}\label{Z4}\vspace{-7pt}
\end{center}
\end{figure}

\noindent Finally, the map $(\ref{Z5})$ is defined using the operadic structure of $\mathcal{R}_{n}$ as follows:\vspace{5pt}
$$
\mu_{S}\big( (x_{1},\ldots,x_{k})\,;\,\{y_{a}\}_{a\in A} \big)=\big(\cdots \big(\varepsilon(x_{1},\ldots,x_{k})\circ_{a_{1}}y_{a_{1}}\big)\cdots \big)\circ_{a_{n}}y_{a_{n}}.\vspace{5pt}
$$
For instance, the $k$-fold left operation applied to the points \vspace{-1pt}
\begin{figure}[!h]
\begin{center}
\includegraphics[scale=0.32]{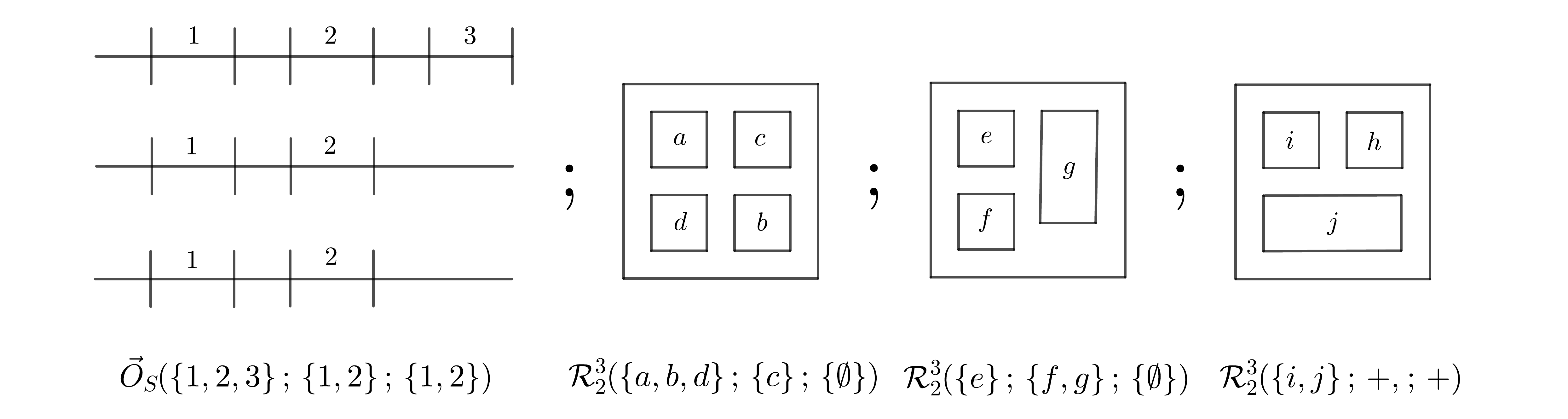}
\end{center}\vspace{-10pt}
\end{figure}

\noindent gives rise to the following element:\vspace{-5pt}
\begin{figure}[!h]
\begin{center}
\includegraphics[scale=0.32]{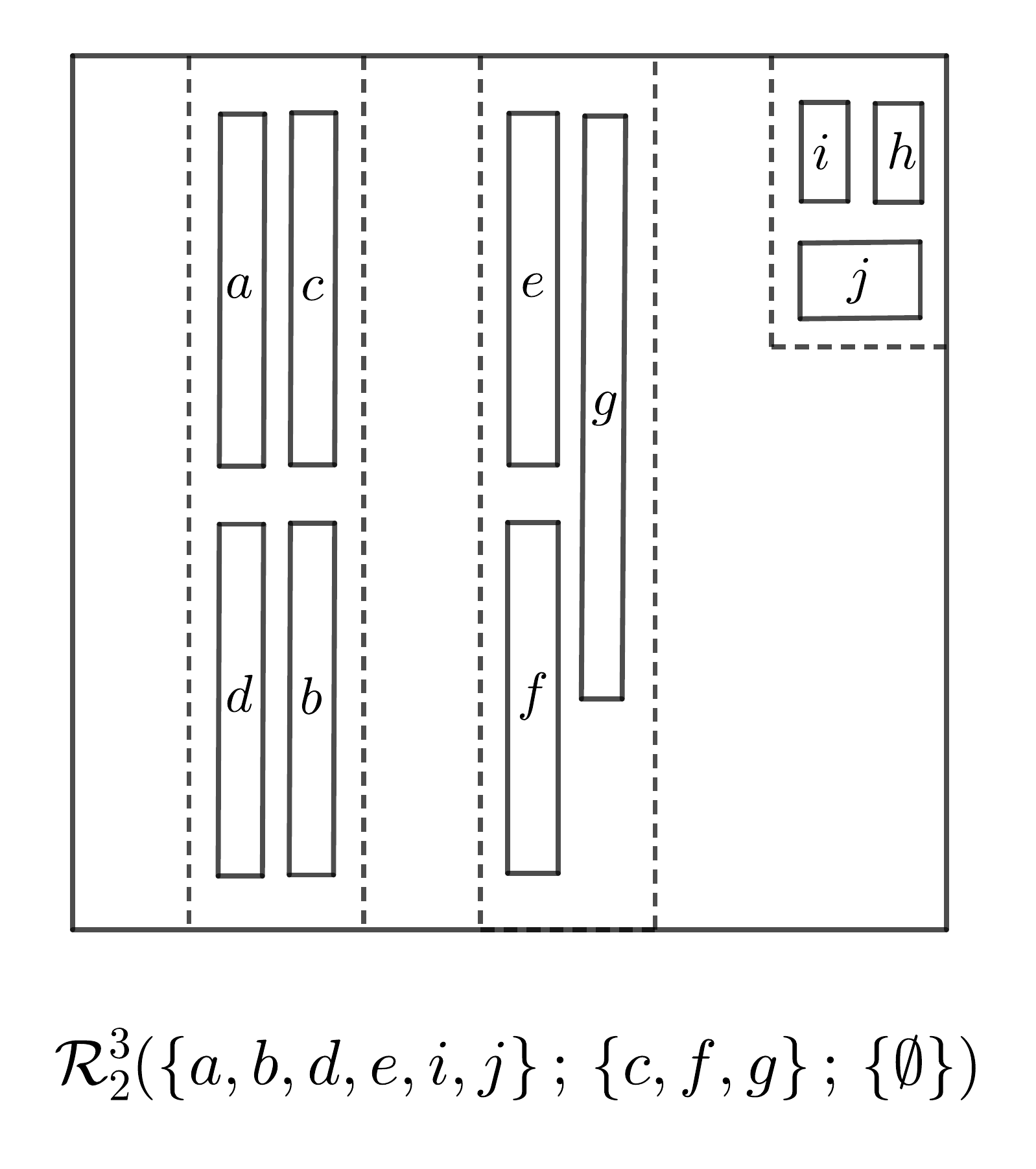}
\end{center}\vspace{-40pt}
\end{figure}

\newpage

If $S_{i}=+$ for some $i\in \{1,\ldots,k\}$, then the construction of the left operations is similar. In that case, one has to consider the element $c_{l}\in \mathcal{C}_{n}(l)$ instead of $c_{k}$ in the definition of $\varepsilon$ where $l$ is the number of sets in $(A_{1},\ldots,A_{k})$ different to the augmented set $+$. We recall that $c_{l}$ is the family of rectangles splitting the unit cube into $l$ equal rectangles along the last coordinate.  Furthermore,  there is a map of $k$-fold bimodules \vspace{3pt}
$$
\begin{array}{rcl}\vspace{5pt}
\eta:\mathbb{O}^{+}(A_{1},\ldots,A_{k}) & \longrightarrow &  \mathcal{R}_{n}^{k}(A_{1},\ldots,A_{k}); \\ 
(x_{1},\ldots, x_{k}) & \longmapsto & c_{l}(\kappa_{i_{1}}(x_{i_{1}}),\ldots,\kappa_{i_{l}}(x_{i_{l}})),
\end{array} \vspace{3pt}
$$  
where $\{i_{1}<\ldots <i_{l}\}$ is the subset of $\{1,\ldots,k\}$ such that $A_{i_{j}}\neq +$ for $j\in \{1,\ldots,l\}$. The $k$-fold infinitesimal bimodule structure induced by the maps $\eta$ and Proposition \ref{D9} coincides with the structure introduced in Example \ref{D7}.\vspace{9pt}
\end{expl}

\begin{rmk}\label{Z2} \textbf{A $k$-fold bimodule as a $1$-fold bimodule over a colored operad}

\noindent From the family of reduced operads $O_{1},\ldots,O_{k}$ relative to the operad $O$, we consider the colored operad with set of colors $S=\{c_{1},\ldots,c_{k+1}\}$ defined as follows:   \vspace{5pt}
$$
\overline{O}(A_{c_{1}},\ldots,A_{c_{k+1}}\,;\,c)=\left\{
\begin{array}{cl}\vspace{5pt}
O_{i}(A_{c_{i}}) & \text{if } c=c_{i} \text{ for } i\leq k \text{ and } A_{c_{j}}=\emptyset \text{ for } j\neq i, \\ \vspace{9pt}
\vec{O}_{S}(S_{1},\ldots,S_{k}) & \text{if } c=c_{k+1}, \\ 
\emptyset & \text{otherwise},
\end{array} 
\right.\vspace{5pt}
$$
where $S=(S_{1},\ldots,S_{k})\in \mathcal{P}_{k}\left(\underset{1\leq i \leq k+1}{\displaystyle\bigsqcup}A_{c_{i}}\right)$ and $S_{i}=A_{c_{i}}\sqcup A_{c_{k+1}}$.\vspace{9pt}

\noindent Finally, there is an equivalence of categories \vspace{3pt}
$$
Bimod_{\vec{O}}\cong Bimod_{\overline{O}}\downarrow M_{\ast}\vspace{1pt}
$$
where $M_{\ast}$ is the following bimodule over the colored operad $\overline{O}$:\vspace{5pt}
$$
M_{\ast}(A_{c_{1}},\ldots,A_{c_{k+1}}\,;\,c)=\left\{
\begin{array}{cl}\vspace{7pt}
\ast & \text{if } A_{c_{k+1}}=\emptyset \text{ and } c=c_{k+1}, \\ 
\emptyset & \text{otherwise}.
\end{array} 
\right.\vspace{7pt}
$$
\end{rmk}

\subsection{The Reedy and projective model category structures on $Bimod_{\vec{O}}$}

The purpose of this section is to define a model category structure on the category of $k$-fold  bimodules over $\vec{O}$. More precisely, we introduce two different model category structures: projective and Reedy. Similarly to the $k$-fold infinitesimal bimodule case, both structures have advantages and inconveniences. In the following, we compare these two structures and we give the properties needed to prove the main results in Section \ref{N4}. 

\subsubsection{The projective model category structure} 

Similarly to Section \ref{C1}, in order to introduce model category structures on the category of $k$-fold bimodules, we build an adjunction\vspace{5pt}
\begin{equation}\label{C0}
\mathcal{F}_{B\,;\,\vec{O}}:Seq_{k}^{\emptyset}\leftrightarrows Bimod_{\vec{O}}:\mathcal{U},\vspace{5pt}
\end{equation}
where $\mathcal{U}$ is the forgetful functor and $Seq_{k}^{\emptyset}$ is the category of $k$-fold augmented sequences $M$ with based points in the spaces of the form $M(n_{1},\ldots,n_{k})$, with $n_{i}\in \{+\,;\,0\}$. As usual, the free $k$-fold bimodule can be described in terms of coproduct indexed by a set of trees.

\begin{defi}\label{C2}\textbf{The set of $k$-fold reduced trees with section}

\noindent A \textit{tree with section} is a pair $T=(T\,;\,V^{p}(T))$ where $T$ is a planar rooted tree and $V^{p}(T)$ is a subset of vertices, called \textit{pearls}, satisfying the following relation: each path from a leaf or univalent vertex to the root passes through a unique pearl. The set of pearls forms a section cutting the tree into two parts. We denote by $V^{u}(T)$ (resp. $V^{d}(T)$) the vertices above the section (resp. below the section). A tree with section is said to be \textit{reduced} if each inner edge is connected to a pearl. \vspace{-2pt}
\begin{figure}[!h]
\begin{center}
\includegraphics[scale=0.22]{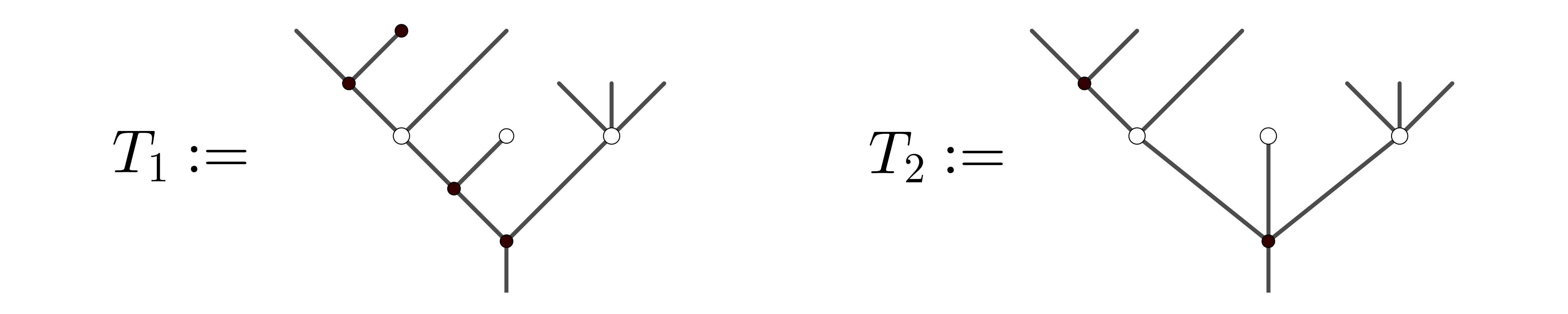}\vspace{-5pt}
\caption{Examples of a tree with section $T_{1}$ and a reduced tree with section $T_{2}$.}\vspace{-8pt}
\end{center}
\end{figure}

For $\vec{n}=(n_{1},\ldots,n_{k})$, with $n_{i}\in \mathbb{N}\sqcup \{+\}$ and $(n_{1},\ldots,n_{k})\neq (+,\ldots, +)$, the set of reduced $k$-fold trees with section $rsTree[\,\vec{n}\,]$ is formed by families $\vec{T}=(T_{1},\ldots,T_{k},f_{1},\ldots,f_{k},\sigma)$ where $T_{i}$ is a reduced tree with section having $n_{i}$ leaves (without leaves if $n_{i}=+$) and $\sigma\in \Sigma_{n_{1}}\times \cdots \times \Sigma_{n_{k}}$ is a permutation labelling the leaves. Furthermore, the sub-trees $T'_{1},\ldots,T'_{k}$ obtained from $T_{1},\ldots,T_{k}$ by removing vertices and edges above the sections are assumed to be the same. Finally, $f_{i}:E(T'_{i})\rightarrow \{internal \,;\, external\}$ is a function labelling the inner edges below the section of $T_{i}$ satisfying the following conditions:\vspace{5pt}
\begin{itemize}
\item[$\blacktriangleright$] a pearl in $T_{i}$ with an $external$ output edge is necessarily univalent,\vspace{5pt}
\item[$\blacktriangleright$] the tree $T_{i}$ is trivial in the sense that all the edges are $external$ if and only if $n_{i}=+$,\vspace{5pt}
\item[$\blacktriangleright$] if the output edge of a vertex below the section of $T_{i}$ is $external$, then its input edges are also $external$,\vspace{5pt}
\item[$\blacktriangleright$] if $e_{1}$ is an inner edge in $T_{1}'$ and $e_{2},\ldots, e_{k}$ are the corresponding edges in $T_{2}',\ldots,T_{k}'$, respectively, then $f_{i}(e_{i})=internal$ for all $i\in \{1,\ldots,k\}$, or there exists a unique $i\in \{1,\ldots,k\}$ such that $f_{i}(e_{i})=internal$ and $f_{j}(e_{j})=external$ for any $j\neq i$.\vspace{7pt}
\end{itemize}

Since the sub-trees $T'_{1},\ldots,T'_{k}$ are the same, if $v^{1}$  is a vertex  of $T_{1}$ below the section, then we denote by $v^{i}$  the corresponding vertex in $V(T_{i})$. As usual, if $v$ is a vertex, then we denote by $|v|$ the number of incoming edges. Nevertheless, if $v$ is a vertex below the section, then we denote by $|v|_{i}$ the number of incoming $internal$ edges. By convention, an $external$ edge is represented by a dotted line and $|p|=+$ or $|v|_{i}=+$ if the output edge of the pearl $p$ or the vertex below the section $v$ is labelled by $external$. 

\begin{figure}[!h]
\begin{center}
\includegraphics[scale=0.2]{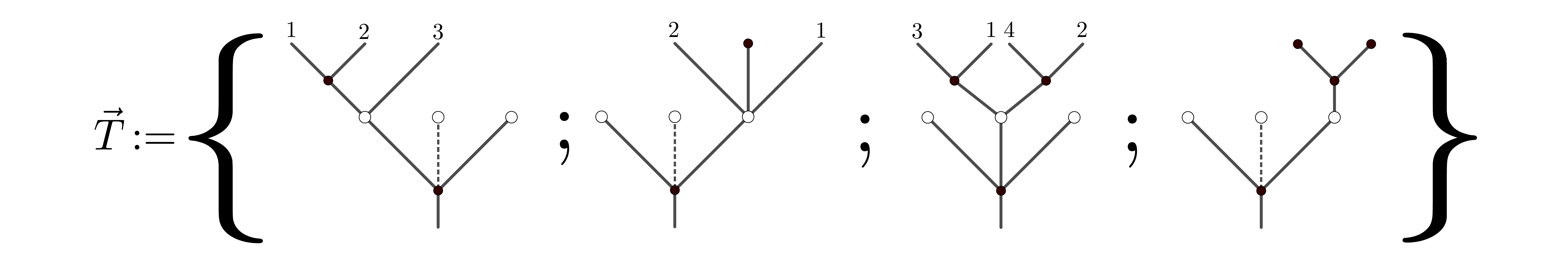}\vspace{-9pt}
\caption{Illustration of a point in $rsTree[3\,;\,2\,;\,4\,;\,0]$}\vspace{-5pt}
\end{center}
\end{figure}
\end{defi}

\begin{const}\label{F9}
Let $M=\{M(n_{1},\ldots,n_{k})\}\in Seq_{k}^{\emptyset}$. Then the free $k$-fold bimodule $\mathcal{F}_{B\,;\,\vec{O}}(M)$, also denoted by $\mathcal{F}_{B}(M)$ when the operads are understood, is defined as follows:\vspace{5pt}
\begin{equation*}
\mathcal{F}_{B}(M)(\,\vec{n}\,):=\left. \left(
\underset{\vec{T}\in rsTree[\vec{n}]}{\coprod}\hspace{5pt} \underset{v^{1}\in V^{d}(T_{1})}{\prod} \vec{O}_{S_{v^{1}}}(|v^{1}|_{i}, \cdots, |v^{k}|_{i})\times \underset{p^{1}\in V^{p}(T_{1})}{\prod} M(|p^{1}|,\cdots,|p^{k}|)\times \underset{\substack{i\in\{1,\ldots,k\}\\ v\in V^{u}(T_{i})}}{\prod} O_{i}(|v|) \right)\,\,
\right/\!\!\sim\vspace{3pt}
\end{equation*}
where $S_{v^{1}}=(S_{v^{1}}^{1},\ldots ,S_{v^{1}}^{k} )\in \mathcal{P}_{k}(\{1,\ldots,|v^{1}|\})$ is given by \vspace{5pt}
$$
S_{v^{1}}^{i}\coloneqq
\left\{
\begin{array}{ll}\vspace{7pt}
\text{the augmented set } + & \text{if the output edge of } v^{i} \text{ is } external, \\ 
\big\{ j\in \{1,\ldots,|v^{1}|\}\,\,\big|\,\, \text{the } j\text{-th incoming input of } v^{i} \text{ is } internal\big\} & \text{otherwise}.
\end{array} 
\right.\vspace{-10pt}
$$

\begin{figure}[!h]
\begin{center}
\includegraphics[scale=0.2]{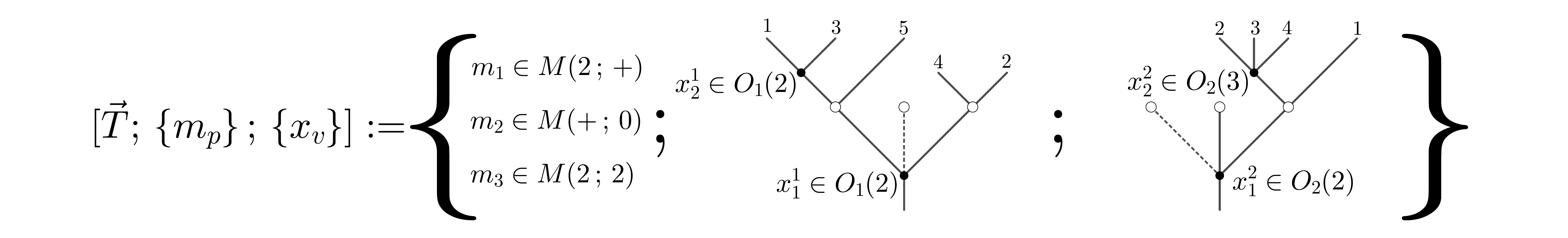}\vspace{-5pt}
\caption{Illustration of a point in $\mathcal{F}_{B}(M)(5\,;\,4)$.}\vspace{-8pt}
\end{center}
\end{figure}

The equivalence relation is generated by the unit axiom and  compatibility relations with the symmetric group action.  Furthermore, if $p^{1}\in V^{p}(T_{1})$ is indexed by the based point in $M(n_{1},\ldots,n_{k})$, with $n_{i}\in \{0\,;\,+\}$, then we contract the corresponding pearls using the operadic structures of $O_{1},\ldots,O_{k}$ as illustrated in Figure \ref{G3}. We denote by $[\vec{T}\,;\,\{m_{p}\}\,;\,\{x_{v}\}]$ a point in the free bimodule. Furthermore, the construction of the free $1$-fold bimodule so obtained is homeomorphic to the usual construction  introduced in \cite{Ducoulombier16}.\vspace{2pt}
\begin{figure}[!h]
\begin{center}
\includegraphics[scale=0.107]{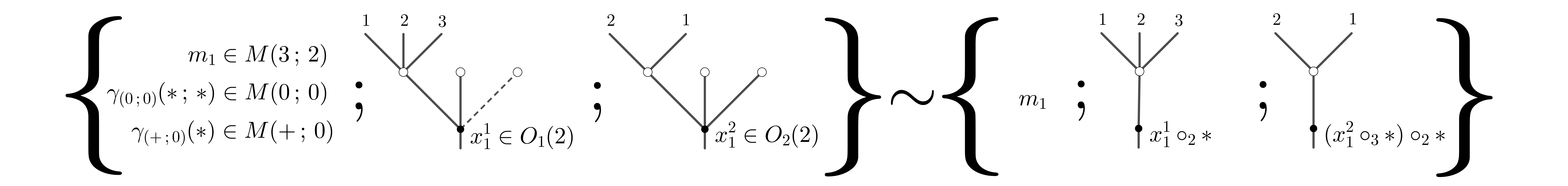}
\caption{Illustration of the equivalence relation.}\label{G3}\vspace{-13pt}
\end{center}
\end{figure}

The right operation $\circ_{i}^{j}$ with an element $x\in O_{i}(m)$ consists in grafting the $m$-corolla whose vertex is indexed by $x$ into the $i$-th leaf of the reduced tree with section $T_{i}$. If the element so obtained contains an inner edge joining two consecutive vertices other than a pearl, then we contract it using the operadic structure of $O_{i}$.\vspace{9pt}

Let $(x_{1},\ldots,x_{k})\in \vec{O}_{S}(n_{1},\ldots,n_{k})$, with $S=(S_{1},\ldots,S_{k})\in \mathcal{P}_{k}(\{1,\ldots,m\})$, and let $[\vec{T}^{i}\,;\,\{m_{p}^{i}\}\,;\,\{x_{v}^{i}\}]$ be a family of points in $\mathcal{F}_{B}(M)$. The left operation is defined as follows: each tree $T_{u}^{i}$, with $1\leq i\leq m$, is grafted from left to right to a leaf of the $m$-corolla whose vertex is indexed by $x_{u}$ and whose $l$-th leaf is $external$ if and only if $l\notin S_{u}$. If the element so obtained contains inner edges joining two consecutive vertices other than pearls, then we contract them using the operations (\ref{D6}). For instance, the left operation between the two points in $\mathcal{F}_{B}(M)(0\,;\,4)$ and $\mathcal{F}_{B}(M)(2\,;\,0)$, respectively,\vspace{9pt}

\hspace{-48pt}\includegraphics[scale=0.185]{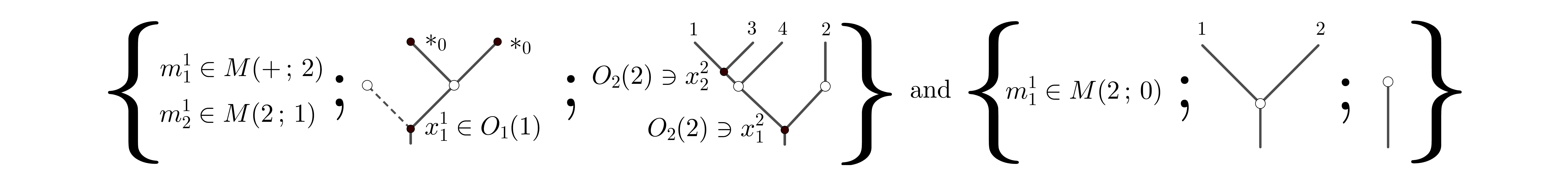}\vspace{9pt}

\noindent and a point $(x_{1}\,;\,x_{2})\in \vec{O}_{S}(2\,;\,2)$, with $S_{1}=S_{2}=\{1\,;\,2\}$, gives rise to\vspace{5pt}

\begin{center}
\includegraphics[scale=0.23]{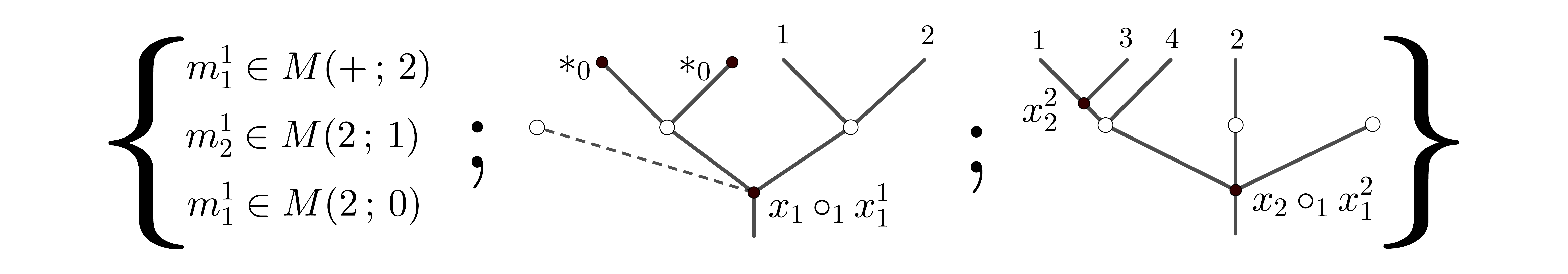}\vspace{-3pt}
\end{center}

\noindent Finally, since the operads $O_{1},\ldots,O_{k}$ are assumed to be reduced, the map \vspace{5pt}
$$
\gamma_{\vec{n}}:\underset{\substack{1\leq i\leq k\\ n_{i}\neq +}}{\prod}O_{i}(0) \longrightarrow \mathcal{F}_{B}(M)(n_{1},\ldots,n_{k}), \hspace{15pt} \text{with } n_{i}\in \{+\,;\,0\},\vspace{-3pt}
$$
sends the unique point in the product to the element $[\vec{T}\,;\,\{m_{p}\}\,;\,\{x_{v}\}]$ where $T_{i}$ is the pearled $0$-corolla with an $external$ output edge if and only if $n_{i}=+$. \vspace{5pt}
\end{const}

\begin{thm}\label{C5}
The pair of functors $(\mathcal{F}_{B}\,;\,\mathcal{U})$ forms an adjunction. Furthermore, the category of $k$-fold bimodules inherits a cofibrantly generated model category structure in which all the objects are fibrant and making the adjunction (\ref{C0}) into a Quillen adjunction. More precisely, a map $f$ is a weak equivalence (resp. a fibration) if the induced map $\mathcal{U}(f)$ is a weak equivalence (resp. a fibration) in the category of $k$-fold sequences with based points. This model category structure is called projective model category structure. \vspace{5pt}
\end{thm}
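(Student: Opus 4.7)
The plan is to parallel the proof of Theorem \ref{A9} step by step. First, the adjunction $(\mathcal{F}_{B},\mathcal{U})$ follows directly from the universal property built into Construction \ref{F9}: a morphism $f:M\rightarrow \mathcal{U}(N)$ in $Seq_{k}^{\emptyset}$ extends uniquely to a morphism of $k$-fold bimodules $\tilde{f}:\mathcal{F}_{B}(M)\rightarrow N$ defined on a representative $[\vec{T}\,;\,\{m_{p}\}\,;\,\{x_{v}\}]$ by applying the $k$-fold right and left operations of $N$ in the order prescribed by the reduced tree with section $\vec{T}$: the vertices in $V^{u}(T_{i})$ produce right operations $\circ_{i}^{j}$, while the vertices in $V^{d}(T_{1})$ indexed by elements of $\vec{O}_{S_{v^{1}}}$ produce the structure maps $\mu_{S_{v^{1}}}$ on the pearl labels. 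Compatibility with the equivalence relation of Construction \ref{F9} (in particular the contraction of pearls indexed by the distinguished base points, which only exist in arities $(n_{1},\ldots,n_{k})$ with $n_{i}\in\{+,0\}$) follows from the reducedness of the $O_{i}$ together with the associativity and unit axioms of a $k$-fold bimodule.

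For the model category structure, the plan is to apply the transfer principle of \cite[Section 2.5]{Berger03} to the adjunction (\ref{C0}), starting from the evident cofibrantly generated projective model structure on $Seq_{k}^{\emptyset}$ (in which weak equivalences and fibrations are arity-wise). Two conditions need verification: functorial fibrant replacement in $Bimod_{\vec{O}}$, and a functorial factorization of the diagonal map through a good path object. Since every object of $Seq_{k}^{\emptyset}$ is fibrant, the identity functor supplies the first condition. For the second, I would set
$$
Path(M)(n_{1},\ldots,n_{k})\coloneqq Top\bigl([0,1]\,;\,M(n_{1},\ldots,n_{k})\bigr),
$$
which inherits a $k$-fold bimodule structure pointwise from $M$ because the operations of Definition \ref{Z3} commute with cotensoring by $[0,1]$. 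The map $M\rightarrow Path(M)$ sending a point to its constant path is a homotopy equivalence, hence a weak equivalence, and the evaluation map $Path(M)\rightarrow M\times M$ induced by the cofibration $\partial[0,1]\hookrightarrow [0,1]$ is a Serre fibration arity-wise, hence a fibration of $k$-fold bimodules by the definition we are trying to establish. Together they factor the diagonal, completing the hypotheses of the transfer theorem.

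The main obstacle is not the abstract machinery but careful bookkeeping of the free functor: one must check that the equivalence relation of Construction \ref{F9}, in particular the rule contracting pearls indexed by base points in arities $n_{i}\in\{+,0\}$, is exactly what makes $\mathcal{F}_{B}$ left adjoint to $\mathcal{U}$ on the slightly nonstandard category $Seq_{k}^{\emptyset}$ rather than on $Seq_{k}^{+}$. A conceptually cleaner alternative, which bypasses most of this bookkeeping, is to invoke Remark \ref{Z2}: the equivalence of categories $Bimod_{\vec{O}}\cong Bimod_{\overline{O}}\downarrow M_{\ast}$ reduces the statement to the already-known projective model structure on $1$-fold bimodules over a colored operad from \cite{Ducoulombier18}, combined with the standard fact that a slice category $\mathcal{C}\downarrow X$ over a fibrant object inherits a cofibrantly generated model structure whose weak equivalences and fibrations are detected by the forgetful functor to $\mathcal{C}$. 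Under this equivalence, the free functor $\mathcal{F}_{B}$ of Construction \ref{F9} is nothing but the colored-operad free bimodule functor translated into the combinatorics of $k$-fold trees with section.
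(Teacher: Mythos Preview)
Your proposal is correct and follows essentially the same approach as the paper, which simply says ``Similar to the proof of Theorem~\ref{A9}.'' That proof gives exactly the two alternatives you outline: the transfer theorem of \cite[Section 2.5]{Berger03} with the identity as fibrant replacement and $Path(M)(n_{1},\ldots,n_{k})=Top([0,1];M(n_{1},\ldots,n_{k}))$ as the path object, and the conceptual shortcut via the colored-operad description (Remark~\ref{Z2} here, Remark~\ref{N6} there).
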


\begin{proof}
Similar to the proof of Theorem \ref{A9}.\vspace{5pt}
\end{proof}

\begin{rmk}
Construction \ref{F9} and Theorem \ref{C5} admit analogue versions for $\vec{r}$-truncated bimodules. In that case, we only need to consider the set $rsTree[\,\vec{n}\leq \vec{r}\,]$ instead of $rsTree[\,\vec{n}\,]$ in Construction \ref{F9} where $rsTree[\,\vec{n}\leq \vec{r}\,]$ is the set of elements $(T_{1},\ldots,T_{k},\vec{\sigma})\in rsTree[\,\vec{n}\,]$ for which the number of leaves plus the number of univalent $internal$ vertices of $T_{i}$ is smaller than $r_{i}$.
\end{rmk}

\subsubsection{The Reedy model category structure}

Let $\Lambda^{\times k}_{+}$ be the category  whose objects are families $(A_{1},\ldots,A_{k})\neq (+,\ldots,+)$ where $A_{i}$ is a finite set or the augmented set and morphisms are families of injective maps. Given an element $\vec{r}=(r_{1},\ldots,r_{k})\in \mathbb{N}^{k}$, we also consider the sub-category $T_{\vec{r}}\,\Lambda^{\times k}_{+}$ whose objects are families of finite sets $(A_{1},\ldots,A_{k})$ with $|A_{i}|\leq r_{i}$. One has the two categories\vspace{3pt}
\begin{itemize}
\item[$\blacktriangleright$] $\Lambda Seq_{k}^{+}$: the category of contravariant functor from $\Lambda^{\times k}_{+}$ to spaces;\vspace{3pt}
\item[$\blacktriangleright$] $T_{\vec{r}}\,\Lambda Seq_{k}^{+}$: the category of contravariant functor from $T_{\vec{r}}\,\Lambda^{\times k}_{+}$ to spaces.\vspace{3pt}
\end{itemize}
Similarly, let $\Lambda^{\times k}_{>0}$ be the subcategory of $\Lambda^{\times k}_{+}$ whose objects are families $(A_{1},\ldots,A_{k})$ different to elements of the form $(B_{1},\ldots,B_{k})$ with $B_{i}\in \{+\,;\,\emptyset\}$. We also consider the sub-category $T_{\vec{r}}\,\Lambda^{\times k}_{>0}$ whose objects are families of finite sets $(A_{1},\ldots,A_{k})$ with $|A_{i}|\leq r_{i}$. One has the two categories\vspace{3pt}
\begin{itemize}
\item[$\blacktriangleright$] $\Lambda Seq_{k}^{>0}$: the category of contravariant functor from $\Lambda^{\times k}_{>0}$ to spaces;\vspace{3pt}
\item[$\blacktriangleright$] $T_{\vec{r}}\,\Lambda Seq_{k}^{>0}$: the category of contravariant functor from $T_{\vec{r}}\,\Lambda^{\times k}_{>0}$ to spaces.\vspace{3pt}
\end{itemize}
Then, we consider the following adjunctions:\vspace{3pt}
$$
\begin{array}{rcl}\vspace{9pt}
(-)_{>0}: \Lambda Seq_{k}^{+} & \leftrightarrows & \Lambda Seq_{k}^{>0}:(-)_{+}, \\ 
(-)_{>0}: T_{\vec{r}}\,\Lambda Seq_{k}^{+} & \leftrightarrows & T_{\vec{r}}\,\Lambda Seq_{k}^{>0}:(-)_{+},
\end{array} \vspace{3pt}
$$
where $M_{>0}$ is obtained from $M$ by forgetting the components of the form  $M(B_{1},\ldots,B_{k})$, with $B_{i}\in \{+\,;\,\emptyset\}$. On the other hand, $N_{+}$ is obtained from $N$ by defining $N_{+}(B_{1},\ldots,B_{k})=\ast$, for any element $(B_{1},\ldots,B_{k})$ with $B_{i}\in \{+\,;\,\emptyset\}$, and keeping all the other components the same. As in Section \ref{G0}, these two categories are endowed with a Reedy model structure.\vspace{7pt}

Let $O_{1},\ldots,O_{k}$ be a family of operads relative to another operad $O$. A reduced $k$-fold bimodules over $\vec{O}$ is a $k$-fold bimodule $M$ satisfying $M(B_{1},\ldots,B_{k})=\ast$ for any elements $(B_{1},\ldots,B_{k})$ with $B_{i}\in \{+\,;\,\emptyset\}$. We denote by the category of reduced $k$-fold bimodules over $\vec{O}$ by $\Lambda Bimod_{\vec{O}}$ and one has a unitarization-inclusion adjunctions\vspace{5pt}
\begin{equation}\label{G6}
\begin{array}{rcl}\vspace{9pt}
\tau:Bimod_{\vec{O}} & \leftrightarrows & \Lambda Bimod_{\vec{O}}:\iota, \\ 
\tau:T_{\vec{r}}\, Bimod_{\vec{O}} & \leftrightarrows & T_{\vec{r}}\, \Lambda Bimod_{\vec{O}}:\iota,
\end{array} \vspace{-17pt}
\end{equation}

\newpage

\noindent where $\iota$ is the inclusion functor and $\tau$ its adjoint which consists in collapsing the arities indexed by elements of the form $(B_{1},\ldots,B_{k})$, with $B_{i}\in \{+\,;\,\emptyset\}$, to a point and adjusting the other components according to the equivalence relation induced by this collapse.  The category $\Lambda Bimod_{\vec{O}}$ and its truncated version admit a Reedy model category structure transferred from the adjunctions \vspace{5pt}
\begin{equation}\label{G5}
\begin{array}{rcl}\vspace{5pt}
\mathcal{F}_{B\,;\,\vec{O}}^{\Lambda}:\Lambda Seq_{k}^{>0} & \leftrightarrows & \Lambda Bimod_{\vec{O}}:\mathcal{U}, \\ 
T_{\vec{r}}\mathcal{F}_{B\,;\,\vec{O}}^{\Lambda}:T_{\vec{r}}\,\Lambda Seq_{k}^{>0}& \leftrightarrows & T_{\vec{r}}\,\Lambda Bimod_{\vec{O}}:\mathcal{U},
\end{array} \vspace{2pt}
\end{equation}
where the free functors, also denoted by $\mathcal{F}_{B}^{\Lambda}$ and $T_{\vec{r}}\mathcal{F}_{B}^{\Lambda}$ when $\vec{O}$ is understood, are obtained from $\mathcal{F}_{B}$ and its truncated version, respectively, by taking the restriction of the coproduct in Construction \ref{F9} to the $k$-fold trees with section without univalent vertices above the sections. According to the notation introduced in Section \ref{G0}, as $k$-fold augmented sequences, one has \vspace{5pt}
$$
\mathcal{F}_{B\,;\,\vec{O}}^{\Lambda}(M)\coloneqq\mathcal{F}_{B\,;\,\vec{O}_{>0}}(M_{>0})_{+}, \hspace{15pt} \text{and}\hspace{15pt} T_{\vec{r}}\mathcal{F}_{B\,;\,\vec{O}}^{\Lambda}(M)\coloneqq T_{\vec{r}}\mathcal{F}_{B\,;\,\vec{O}_{>0}}(M_{>0})_{+}. \vspace{3pt}
$$
By construction, the objects above are equipped with a $k$-fold (truncated) bimodule structures over $\vec{O}_{>0}$. We can extend this structure in order to get $k$-fold (truncated) bimodule over $\vec{O}$ using the operadic structures of $O_{1},\ldots, O_{k}$ and the $\Lambda^{\times k}_{+}$ structure of $M$.\vspace{7pt}

\begin{thm}{\cite{Ducoulombier19}}\label{J0} One has the following properties on the Reedy model category structure:

\begin{itemize}
\item[$(i)$] The categories $\Lambda Bimod_{\vec{O}}$ and $T_{\vec{r}}\,\Lambda Bimod_{\vec{O}}$, with $\vec{n}\in \mathbb{N}^{k}$, admit a cofibrantly generated model category structure, called Reedy model structure, transferred from $\Lambda Seq_{k}^{>0}$ and $T_{\vec{r}}\,\Lambda Seq_{k}^{>0}$, respectively, along the adjunctions $(\ref{G5})$.\vspace{5pt}

\item[$(ii)$] A morphism in the category of $k$-fold (possibly truncated) bimodules over $\vec{O}$ is a cofibration for the Reedy model category structure if and only if it is a cofibration as a morphism of $k$-fold (truncated) bimodules over $\vec{O}_{>0}$ equipped with the projective model category structure.\vspace{5pt}

\item[$(iii)$] In case $O_{1},\ldots,O_{k},O$ are Reedy cofibrant operads, the model structure on $\Lambda Bimod_{\vec{O}}$ is left proper. 
\end{itemize}
\end{thm}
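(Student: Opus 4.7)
The plan is to exploit the equivalence $Bimod_{\vec{O}}\cong Bimod_{\overline{O}}\downarrow M_{\ast}$ of Remark \ref{Z2} so as to reduce all three statements to their known analogues for (reduced) bimodules over the single coloured operad $\overline{O}$ established in \cite{Ducoulombier19}. On both sides, the forgetful functor to (reduced) sequences and its left adjoint $\mathcal{F}^{\Lambda}_{B\,;\,\vec{O}}$ correspond, after the identifications of Remark \ref{Z2}, to the forgetful/free adjunction for bimodules over $\overline{O}$ in the slice over $M_{\ast}$, so checking that the model-theoretic data transport under this equivalence is the main bookkeeping.

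For $(i)$, I would apply the transfer theorem \cite[Section 2.5]{Berger03} along the adjunction (\ref{G5}), mimicking the argument already used for $k$-fold infinitesimal bimodules in Theorem \ref{A9}. The Reedy model structures on $\Lambda Seq^{>0}_{k}$ and on $T_{\vec{r}}\,\Lambda Seq^{>0}_{k}$ have all objects fibrant, so the identity functor provides a functorial fibrant replacement. A functorial path object on $\Lambda Bimod_{\vec{O}}$ is obtained componentwise by $Path(M)(\vec{n})=Map([0\,,\,1]\,;\,M(\vec{n}))$ with the $k$-fold bimodule structure lifted pointwise from $M$; the evaluation at the endpoints factors the diagonal into a weak equivalence followed by a map which, since $\partial[0\,,\,1]\hookrightarrow[0\,,\,1]$ is a cofibration, induces a Reedy fibration on matching objects. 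Cofibrant generation is then inherited from $\Lambda Seq^{>0}_{k}$ via the transfer.

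For $(ii)$, I would observe that the free functor $\mathcal{F}^{\Lambda}_{B\,;\,\vec{O}}$ of (\ref{G5}) agrees, after forgetting down to $k$-fold augmented sequences, with the composite $(\mathcal{F}_{B\,;\,\vec{O}_{>0}}(-))_{+}$ applied to the $(-)_{>0}$-restriction of the input, exactly in the way the Reedy free functor of Section \ref{G0} related to the projective free functor for $k$-fold infinitesimal bimodules. Consequently the generating (trivial) cofibrations produced by the two adjunctions give rise to the \emph{same} cells, and a relative $\mathcal{F}^{\Lambda}_{B\,;\,\vec{O}}(I)$-cell complex in $\Lambda Bimod_{\vec{O}}$ is, forgetting the unitalisation, the same thing as a relative $\mathcal{F}_{B\,;\,\vec{O}_{>0}}(I)$-cell complex in $Bimod_{\vec{O}_{>0}}$ with its projective structure. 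Closing under retracts and transfinite composition then yields the claimed characterisation, by an argument formally parallel to the $\Lambda$-operad/$\Lambda$-bimodule treatment in \cite{Fresse17}.

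Finally, $(iii)$ will be the main obstacle, and is where the Reedy cofibrancy of $O_{1},\ldots,O_{k},O$ enters essentially. By $(ii)$ it suffices to check left properness along generating cofibrations of the form $\mathcal{F}^{\Lambda}_{B\,;\,\vec{O}}(i)$ with $i$ a generating cofibration of $\Lambda Seq^{>0}_{k}$. I would analyse the pushout of such a morphism by an arbitrary map, using a filtration by the number of vertices of the $k$-fold trees with section of Definition \ref{C2} (the exact analogue of the filtration used for $\mathcal{I}b(N)$ in Section \ref{C6}). The successive attaching maps are built as products of the spaces $\vec{O}_{S_{v^{1}}}(|v^{1}|_{i},\ldots,|v^{k}|_{i})$ labelling the below-section vertices and the $O_{i}(|v|)$ labelling the above-section vertices; Reedy cofibrancy of $O_{1},\ldots,O_{k},O$ makes these spaces $\Sigma$-cofibrant, so that each filtration cell is a cofibration of topological spaces and weak equivalences are preserved at each stage. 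Since a transfinite composition of weak equivalences between cofibrant objects is itself a weak equivalence, this gives left properness. The delicate part, which forces the Reedy-cofibrancy hypothesis, is precisely ensuring that the pushout-product behaviour of these cells interacts properly with weak equivalences; modulo this point, the argument is formally identical to the coloured operad case treated in \cite{Ducoulombier19}.
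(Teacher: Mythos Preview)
Your opening paragraph is exactly the paper's proof: the sketch in the paper simply says that the statements are known for bimodules over a coloured operad by \cite{Ducoulombier19}, and then invokes the equivalence $Bimod_{\vec{O}}\cong Bimod_{\overline{O}}\downarrow M_{\ast}$ of Remark~\ref{Z2} to conclude. Nothing more is offered there, so your reduction already matches the paper in full.

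The additional direct arguments you sketch for $(i)$--$(iii)$ go beyond the paper, and the one for $(i)$ contains a genuine slip: in the Reedy model structure on $\Lambda Seq^{>0}_{k}$ described in Section~\ref{G0}, fibrancy of an object $X$ means that every matching map $X(\vec{l})\to \mathcal{M}(X)(\vec{l})$ is a Serre fibration, and this is \emph{not} automatic. You have conflated the Reedy and projective situations; it is only in the projective structure on $Seq_{k}$ that all objects are fibrant. Hence the identity functor is not a functorial fibrant replacement in $\Lambda Bimod_{\vec{O}}$, and the quick path-object verification of the transfer hypotheses, as you wrote it, does not go through. One either needs to produce an honest Reedy fibrant replacement (e.g.\ a coskeleton-type construction, as in \cite{Fresse17}) before running the path-object argument, or---more economically, and in line with both your first paragraph and the paper---simply transport the already-established Reedy structure on $\Lambda Bimod_{\overline{O}}$ across the equivalence of Remark~\ref{Z2}. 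Your sketches for $(ii)$ and $(iii)$ are reasonable elaborations of what happens inside \cite{Ducoulombier19}, but they are not needed once the reduction is in place.
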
\vspace{5pt}

\begin{sproof}
We already know that this properties are trues in the context of bimodules over a colored operad \cite{Ducoulombier19}. So, the theorem is a consequence of the description of $k$-fold  bimodules in terms of $1$-fold bimodules over a colored operad introduced in Remark \ref{Z2}.
\end{sproof}

\subsubsection{Connections between the two structures and properties}

In the previous sections, we introduce two model category structures, projective and Reedy, on the category of $k$-fold (possibly truncated) bimodules over $\vec{O}$. In what follows, we show that these two structures are more or less the same homotopically speaking and induce the same derived mapping space up to a homeomorphism (see the identifications (\ref{G8})). For this reason,  we won't distinguish between the two mapping spaces and we will simply write $Bimod_{\vec{O}}^{h}(-\,;\,-)$ and $T_{\vec{r}}\,Bimod_{\vec{O}}^{h}(-\,;\,-)$.\vspace{5pt}

\begin{thm}{\cite{Ducoulombier19}}\label{H7} One has the following relations between the Reedy and projective model structures:

\begin{itemize}
\item[$(i)$] The adjunctions (\ref{G6}) are Quillen adjunctions.\vspace{3pt}
\item[$(ii)$] For any pair $M$ and $N$ of reduced $k$-fold (truncated) bimodules, one has equivalences of mapping spaces:\vspace{3pt}
\begin{equation}\label{G8}
\begin{array}{rcl}\vspace{5pt}
Bimod_{\vec{O}}^{h}(\iota M\,;\,\iota N) & \cong & \Lambda Bimod_{\vec{O}}^{h}(M\,;\,N), \\ 
T_{\vec{r}}\,Bimod_{\vec{O}}^{h}(\iota M\,;\,\iota N) & \cong & T_{\vec{r}}\,\Lambda Ibimod_{\vec{O}}^{h}(M\,;\,N).
\end{array} 
\end{equation}
\end{itemize}
\end{thm}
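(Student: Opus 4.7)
The plan is to mirror the strategy already used successfully for infinitesimal bimodules (Theorem \ref{H6}) and reduce both assertions to the known results about $1$-fold bimodules over colored operads via Remark \ref{Z2}. Under the equivalence $Bimod_{\vec{O}}\cong Bimod_{\overline{O}}\downarrow M_{\ast}$, the projective and Reedy model structures on the left-hand side correspond to the appropriate slice-category model structures on the right-hand side, where the analogous Quillen adjunction and comparison of derived mapping spaces are established in \cite{Ducoulombier19}.

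For $(i)$, I would identify the unitarization-inclusion adjunction $\tau\dashv\iota$ of $(\ref{G6})$ with the corresponding adjunction $\overline{\tau}\dashv\overline{\iota}$ on the category of bimodules over the colored operad $\overline{O}$, restricted to the slice over $M_{\ast}$. The crucial observation is that the distinguished components $(B_{1},\ldots,B_{k})$ with $B_{i}\in\{+\,,\,\emptyset\}$ on which $\tau$ collapses everything to the point correspond precisely to the arities of color $c_{k+1}$ in the colored-operad description, and $M_{\ast}$ already takes the value $\ast$ on these arities. Since both the projective and the Reedy model structures on a slice category are inherited levelwise from the ambient category, the colored-operad Quillen adjunction descends to give the required Quillen adjunction for $\vec{O}$-bimodules.

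For $(ii)$, the inclusion $\iota$ is fully faithful (reduced bimodules form a full subcategory), so the counit $\tau\iota\Rightarrow\mathrm{id}$ is an isomorphism. Combined with $(i)$, this yields the advertised identifications of derived mapping spaces by the standard adjunction argument: given a cofibrant replacement $X^{c}\to\iota M$ in the projective model structure on $Bimod_{\vec{O}}$, the image $\tau X^{c}\to\tau\iota M=M$ is a cofibrant replacement in the Reedy model structure on $\Lambda Bimod_{\vec{O}}$; naturality, together with $\tau\iota=\mathrm{id}$, produces the homeomorphism of mapping spaces. The argument applies verbatim in the truncated setting.

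The main obstacle I foresee is verifying that the slice equivalence of Remark \ref{Z2} transports the Reedy structure faithfully --- in particular, that the matching object construction on the $\Lambda Bimod_{\vec{O}}$ side corresponds to the one used on the colored-operad side. Rather than checking this directly at the level of matching objects, I would bypass the issue by appealing to the characterization of Reedy cofibrations given in Theorem \ref{J0}$(ii)$: a morphism is a Reedy cofibration on either side if and only if it is a projective cofibration of $\vec{O}_{>0}$- (respectively $\overline{O}_{>0}$-)bimodules. This characterization is manifestly preserved by the slice equivalence, and since the projective structures already correspond under the same identification used in $(i)$, the Reedy structures must correspond as well, at which point both statements fall out of the colored-operad version.
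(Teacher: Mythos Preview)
Your overall strategy of reducing to the colored-operad case via Remark~\ref{Z2} is legitimate and consistent with how the surrounding theorems (\ref{K8}, \ref{J0}, \ref{H6}) are handled. However, the paper's own sketch takes a more direct route that does not go through the colored-operad translation at all: for $(i)$ it simply checks that $\tau$ sends generating cofibrations to cofibrations, and for $(ii)$ it reduces everything to showing that the natural map $\tau M^{c}\to M$ is a weak equivalence, where $M^{c}$ is the explicit Boardman--Vogt resolution of $\iota M$ described in Section~\ref{I1}. The explicit combinatorics of that resolution make this verification concrete.

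The place in your argument that needs more care is precisely the sentence ``the image $\tau X^{c}\to\tau\iota M=M$ is a cofibrant replacement in the Reedy model structure.'' Cofibrancy of $\tau X^{c}$ does follow from $(i)$, but the claim that $\tau X^{c}\to M$ is a \emph{weak equivalence} does not follow from the Quillen adjunction alone: $\tau$ is left Quillen, so it preserves weak equivalences only between cofibrant objects, and $\iota M$ is typically not cofibrant in the projective structure. Since $\tau$ genuinely takes a quotient (it collapses the low arities and propagates the identifications), this step has real content. The paper isolates exactly this step as the crux and dispatches it with the explicit resolution; your ``standard adjunction argument'' skips over it. If you want to stay with the colored-operad reduction, you should make explicit that this weak-equivalence step is being imported from the colored version in \cite{Ducoulombier19}, rather than presenting it as formal.
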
\vspace{-2pt}

\noindent \textit{Sketch of proof.} The proof is similar to the proof in the context of operads \cite{FresseTWsmall}. It is straightforward that $\tau$ sends the generating cofibrations to cofibrations, which implies that the adjunctions (\ref{G6}) are Quillen adjunctions. In order to prove (\ref{G8}), it is enough to show that the natural map $\tau M^{c}\rightarrow M$, where $M^{c}$ is a cofibrant replacement of $\iota M$ (which can be chosen to be the one developed in the next section), is a weak equivalence. \vspace{-20pt}

\newpage

A map $\vec{\alpha}:\vec{O}\rightarrow \vec{O}'$ between two families of operads $O_{1},\ldots O_{k}$ and $O_{1}',\ldots O_{k}'$ relative to topological operads $O$ and $O'$, respectively, is a family of operadic maps $\alpha_{i}:O_{i}\rightarrow O_{i}'$ and  $\alpha:O\rightarrow O'$ such that $f_{i}'\circ \alpha_{i}=\alpha\circ f_{i}$. Such a map $\vec{\alpha}$ is said to be a weak equivalence if the maps $\alpha$, $\alpha_{i}$ and the induced maps \vspace{3pt}
$$
\vec{O}_{S}(S_{1},\ldots,S_{k})\longrightarrow \vec{O}'_{S}(S_{1},\ldots,S_{k}),\hspace{15pt} \forall S\in \mathcal{P}_{k}(A),
$$
are weak homotopy equivalences. In particular, such a map $\vec{\alpha}:\vec{O}\rightarrow \vec{O}'$ produces a map between the families of operads $O_{1},\ldots O_{k}$ and $O_{1}',\ldots O_{k}'$ relative to the topological monoids $O(1)$ and $O'(1)$, respectively, which is a weak equivalence if  $\vec{\alpha}$ is a weak equivalence. The above theorem is true in the context of bimodules over a colored operad \cite{Ducoulombier19} and is a consequence of Remark \ref{Z2}. \vspace{7pt}

\begin{thm}{\cite{Ducoulombier19}}
Let $\vec{\alpha}:\vec{O}\rightarrow \vec{O}'$ be a weak equivalence with cofibrant components between families of reduced operads relative to another one. One has Quillen equivalences
$$
\begin{array}{rcl}\vspace{5pt}
\alpha_{B}^{!}:\Lambda Bimod_{\vec{O}} & \leftrightarrows & \Lambda Bimod_{\vec{O}'}:\alpha^{\ast}_{B}, \\ 
\alpha_{B}^{!}: T_{\vec{r}}\,\Lambda Bimod_{\vec{O}} & \leftrightarrows & T_{\vec{r}}\,\Lambda Bimod_{\vec{O}'}:\alpha^{\ast}_{B},
\end{array} 
$$
where $\alpha_{B}^{\ast}$ is the restriction functor and $\alpha_{B}^{!}$ is the induction one.  
\end{thm}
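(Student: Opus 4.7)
The plan is to reduce the statement to the already-known Quillen equivalence for categories of bimodules over a colored operad along a weak equivalence between colored operads with cofibrant components, exactly as in the sketches for the analogous statements about $k$-fold infinitesimal bimodules and for $\Lambda Bimod_{\vec{O}}$ being cofibrantly generated.

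First, I would invoke the equivalence of categories recorded in Remark \ref{Z2}, which identifies
$$Bimod_{\vec{O}} \;\cong\; Bimod_{\overline{O}} \downarrow M_{\ast}, \qquad Bimod_{\vec{O}'} \;\cong\; Bimod_{\overline{O}'} \downarrow M_{\ast},$$
where $\overline{O}$ and $\overline{O}'$ are the $(k+1)$-colored operads built from $\vec{O}$ and $\vec{O}'$ by the formula preceding Remark \ref{Z2}, and $M_{\ast}$ is the common distinguished bimodule defined there (it depends only on the color set, not on the operadic structure). The same identification is compatible with the Reedy structure (via the adjunctions analogous to \eqref{G6}) and with the truncation functor $T_{\vec{r}}(-)$, so it holds for $T_{\vec{r}}\,\Lambda Bimod_{\vec{O}}$ as well.

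Second, I would promote $\vec{\alpha}$ to a map of colored operads $\tilde{\alpha}:\overline{O}\rightarrow \overline{O}'$. On the one-color inputs $(A_{c_i},\emptyset,\ldots,\emptyset\,;\,c_i)$ this is simply $\alpha_i\colon O_i\to O_i'$, and on the inputs $(A_{c_1},\ldots,A_{c_{k+1}}\,;\,c_{k+1})$ it is the induced map $\vec{O}_S(S_1,\ldots,S_k)\rightarrow \vec{O}'_S(S_1,\ldots,S_k)$ of Definition \ref{Z3}. By the very definition of a weak equivalence of families (given just before the statement), all these component maps are weak homotopy equivalences and all their sources and targets are cofibrant, so $\tilde{\alpha}$ is a weak equivalence of colored operads with cofibrant components.

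Third, I would appeal to the Quillen equivalence theorem for bimodules over a colored operad along such a map, which is the content of the corresponding theorem in \cite{Ducoulombier19}, yielding
$$\tilde{\alpha}_{B}^{!}:\Lambda Bimod_{\overline{O}} \leftrightarrows \Lambda Bimod_{\overline{O}'}:\tilde{\alpha}_{B}^{\ast}$$
and its $T_{\vec{r}}$-truncated analogue. It then remains to descend these equivalences to the slices over $M_{\ast}$. Since $M_{\ast}$ is supported on the component with $A_{c_{k+1}}=\emptyset$ and takes value $\ast$ there, both $\tilde{\alpha}_{B}^{!}(M_{\ast})$ and $\tilde{\alpha}_{B}^{\ast}(M_{\ast})$ are canonically isomorphic to $M_{\ast}$; the slice adjunction over a fixed base is then a Quillen equivalence whenever the underlying adjunction is, and the result follows through the identifications of the first step.

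The main technical point I expect to require care is the compatibility of $\tilde{\alpha}_{B}^{!}$ with the slicing over $M_{\ast}$, i.e.\ that the base change along $\tilde{\alpha}_{B}^{!}M_{\ast}\xrightarrow{\sim} M_{\ast}$ preserves the model structure on the slice and that the derived counit/unit computed on a cofibrant object over $M_{\ast}$ is a weak equivalence in $\Lambda Bimod_{\vec{O}}$, not merely in $\Lambda Bimod_{\overline{O}}$; this is essentially automatic because the forgetful functor from the slice to $\Lambda Bimod_{\overline{O}}$ detects weak equivalences, but it is the only step that is not a direct restatement of the colored-operad result from \cite{Ducoulombier19}.
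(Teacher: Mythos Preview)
Your proposal is correct and takes essentially the same approach as the paper: the paper's argument is the single sentence ``The above theorem is true in the context of bimodules over a colored operad \cite{Ducoulombier19} and is a consequence of Remark \ref{Z2},'' which is exactly your reduction via $Bimod_{\vec{O}}\cong Bimod_{\overline{O}}\downarrow M_{\ast}$ followed by the known Quillen equivalence for bimodules over a weak equivalence of colored operads with cofibrant components. You spell out more than the paper does (the slice-compatibility and the verification that $\tilde{\alpha}$ is a componentwise weak equivalence), but the route is identical.
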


\subsection{The Boardman-Vogt resolution for $k$-fold bimodules}\label{F5}

As explained in the previous section, the category of $k$-fold  bimodules is endowed with a projective model category structure in which all the objects are fibrant. Consequently, in order to compute the derived mapping space \vspace{5pt}
$$
Bimod_{\vec{O}}^{h}(M_{1}\,;\,M_{2})\coloneqq Bimod_{\vec{O}}(M_{1}^{c}\,;\,M_{2}),\vspace{5pt}
$$
we only need an explicit cofibrant replacement $M_{1}^{c}$ for any $k$-fold bimodule $M_{1}$. Similarly to Section \ref{C6}, we use a kind of Boardman-Vogt resolution. Then, we deduce from this resolution another cofibrant replacement of $M_{1}$ in the Reedy model category.\vspace{7pt}

\begin{defi}\textbf{The set of $k$-fold trees with section}

\noindent Let $\vec{n}=(n_{1},\ldots,n_{k})$ with $n_{i}\in \mathbb{N}\sqcup\{+\}$ and $(n_{1},\ldots,n_{k})\neq (+,\ldots,+)$. According to the notation introduced in Definition \ref{C2}, the set $sTree[\,\vec{n}\,]$ of $k$-fold trees with section is formed by families $\vec{T}=(T_{1},\ldots,T_{k},f_{1},\ldots,f_{k},\sigma)$ where $T_{i}$ is a tree with section having $n_{i}$ leaves (without leaves if $n_{i}=+$) and $\sigma\in \Sigma_{n_{1}}\times \cdots \times \Sigma_{n_{k}}$ is a permutation labelling the leaves. Furthermore, we assume that the sub-trees $T'_{1},\ldots,T'_{k}$ obtained from $T_{1},\ldots,T_{k}$, respectively, by removing the vertices and the edges above the sections are the same. Finally, $f_{i}:E(T'_{i})\rightarrow \{internal \,;\, external\}$ is a function labelling the inner edges below the section of $T_{i}$ satisfying the same conditions introduced in Definition \ref{C2}.\vspace{5pt}

\begin{figure}[!h]
\begin{center}
\includegraphics[scale=0.123]{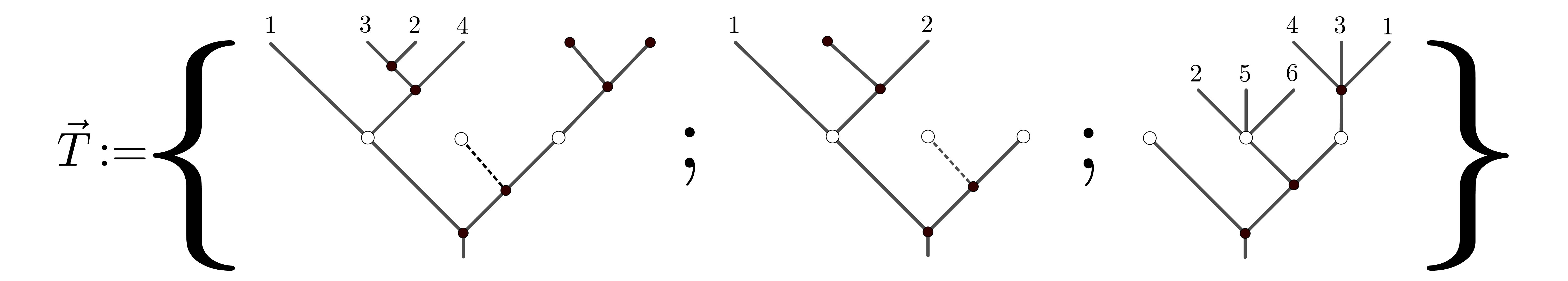}\vspace{-5pt}
\caption{Illustration of an element in $sTree[4\,;\,2\,;\,6]$.}\label{O0}\vspace{-10pt}
\end{center}
\end{figure}

Since the sub-trees $T'_{1},\ldots,T'_{k}$ are the same, if $v^{1}$ is a vertex of $T_{1}$ below the section, then we denote by $v^{j}$ the corresponding vertex of $T_{j}$. Furthermore, we denote by $|v|_{i}$ the number of incoming $internal$ vertices of a vertex $v$ below the section. By convention, an $external$ edge is represented by a dotted line and $|p|=+$ or $|v|_{i}=+$ if the output edge of the pearl $p$ or the vertex below the section $v$ is labelled by $external$.

\newpage

\end{defi}

\begin{const}\label{H1}
Let $M=\{M(n_{1},\ldots,n_{k})\}$ be a $k$-fold bimodule over $\vec{O}$. The resolution $\mathcal{B}_{\vec{O}}(M)$, also denoted by $\mathcal{B}(M)$ when the operads are understood,  consists in labelling the vertices of $k$-fold trees with section by real numbers in the interval $[0\,,\,1]$ and elements in the bimodule $M$ or in the operads $O_{1},\ldots,O_{k}$. For $\vec{n}=(n_{1},\ldots,n_{k})$, with $n_{i}\in\mathbb{N}\sqcup\{+\}$ and $(n_{1},\ldots,n_{k})\neq (+,\ldots,+)$, the space $\mathcal{B}(M)(\vec{n})$ is defined as a quotient of the subspace\vspace{5pt}
$$
\left.\left(
\underset{\vec{T}\in sTree[\vec{n}]}{\coprod}\hspace{5pt} \underset{v^{1}\in V^{d}(T_{1})}{\prod} \big[ \vec{O}_{S_{v^{1}}}(|v^{1}|_{i}, \cdots, |v^{k}|_{i})\times [0\,,\,1]\big]\times \underset{p_{1}\in V^{p}(T_{1})}{\prod} M(|p_{1}|,\cdots,|p_{k}|)\times \underset{\substack{i\in\{1,\ldots,k\}\\ v\in V^{u}(T_{i})}}{\prod} \big[ O_{i}(|v|)\times [0\,,\,1]\big]\right)\,\,
\right/\!\!\sim\vspace{5pt}
$$
with the following condition: if two vertices $v$ and $v'$ above the section (resp. below the section) are connected by an inner edge from $v$ to $v'$ according to the direction toward the root, then the real numbers $t_{v}$ and $t_{v'}$ associated to $v$ and $v'$ satisfy the condition $t_{v}\geq t_{v'}$ (resp. $t_{v}\leq t_{v'}$). By convention the pearls are indexed by $0$ and $S_{v^{1}}=(S_{v^{1}}^{1},\ldots ,S_{v^{1}}^{k} )\in \mathcal{P}_{k}(\{1,\ldots,|v^{1}|\})$ is given by \vspace{5pt}
$$
S_{v^{1}}^{i}\coloneqq
\left\{
\begin{array}{ll}\vspace{7pt}
\text{the augmented set } + & \text{if the output edge of } v^{i} \text{ is } external, \\ 
\big\{ j\in \{1,\ldots,|v^{1}|\}\,\,\big|\,\, \text{the } j\text{-th incoming input of } v^{i} \text{ is } internal\big\} & \text{otherwise}.
\end{array} 
\right.\vspace{-7pt}
$$

\begin{figure}[!h]
\begin{center}
\includegraphics[scale=0.18]{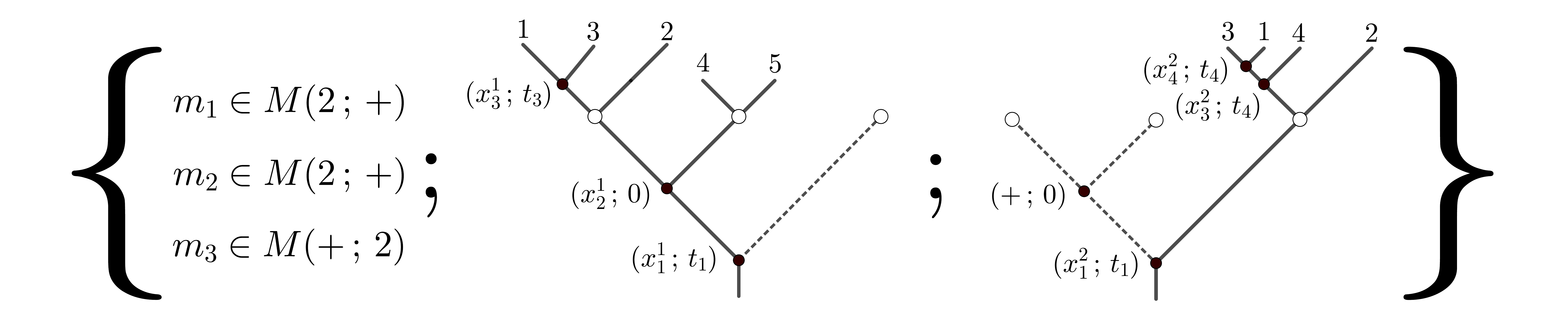}\vspace{-5pt}
\caption{Illustration of a point in $\mathcal{B}(M)(5\,;\,4)$.}\label{C4}\vspace{-7pt}
\end{center}
\end{figure}

The equivalence relation is generated by the unit axiom, the compatibility with the symmetric group action as well as the relation contracting the pearls indexed by the based point in the spaces of the form $M(B_{1},\ldots,B_{k})$, with $B_{i}\in \{+\,;\,\emptyset\}$. Furthermore, if two vertices connected by an inner edge are indexed by the same real number $t$, then we contract the inner edge using the $k$-fold bimodule structure of $M$ or the operadic structures of $O_{1},\ldots,O_{k}$. The new vertex so obtained is indexed by $t$. We denote by $[\vec{T}\,;\,\{m_{p}\}\,;\,\{x_{v}\}\,;\,\{t_{v}\}]$ a point in the above space. For instance, the point represented in Figure \ref{C4} is equivalent to the following one in which $S=\{\{1\,;\,2\}\,,\,+\}\in \mathcal{P}_{2}(\{1\,;\,2\})$: \vspace{-4pt}
\begin{figure}[!h]
\begin{center}
\includegraphics[scale=0.16]{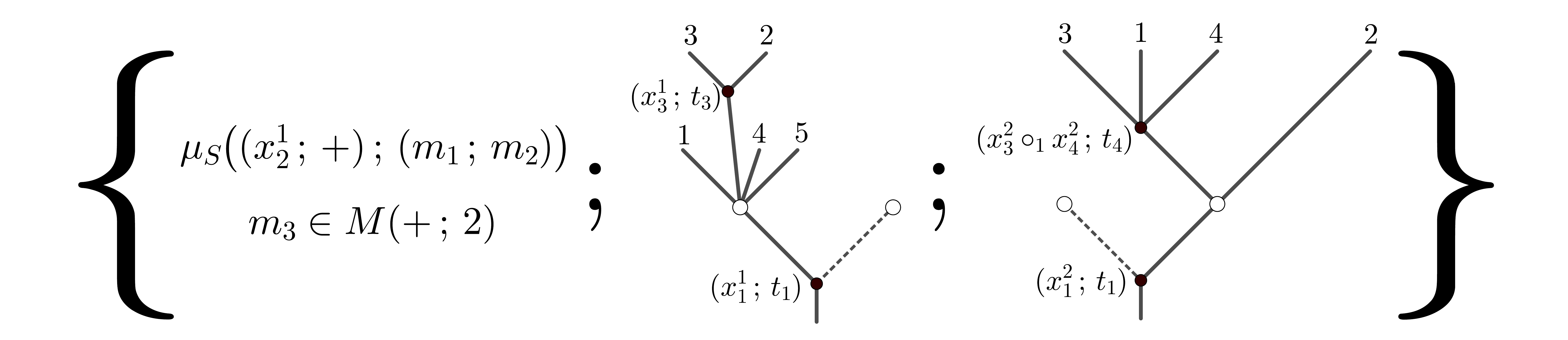}\vspace{-10pt}
\end{center}
\end{figure}

The left and right operations are defined as in Construction \ref{F9}. They consist in grafting corollas labelled by elements in the operads $O_{1},\ldots,O_{k}$ and indexing the new vertices by the real number $1$. Moreover, one has two maps of $k$-fold bimodules over $\vec{O}$\vspace{5pt}
\begin{equation}\label{C7}
\eta':\mathcal{B}(M)\longrightarrow M \hspace{15pt}\text{and}\hspace{15pt} \tau':\mathcal{F}_{B}(M)\longrightarrow \mathcal{B}(M),\vspace{5pt}
\end{equation}
where $\eta'$ is the map sending the real number to $0$ while the map $\tau'$ indexes the vertices other than the pearls by $1$. Furthermore, the above  construction for $1$-fold bimodules is hoemeomorphic to the usual Boardman-Vogt resolution for bimodules introduced in \cite{Ducoulombier16}.
\end{const}

From now on, we introduce a filtration of the resolution $\mathcal{B}(M)$ according to the number of geometrical inputs which is the number of leaves plus the number of univalent vertices other than the pearls. Similarly to Section \ref{C6}, a point in $\mathcal{B}(M)$ is said to be \textit{prime} if the real numbers indexing the vertices are strictly smaller than $1$. Besides, a point is said to be \textit{composite} if one of the real numbers is $1$ and such a point can be associated to prime components. More precisely, the prime components are obtained by cutting the vertices indexed by $1$. For instance, the prime components associated to the point\vspace{-5pt}

\begin{figure}[!h]
\begin{center}
\includegraphics[scale=0.16]{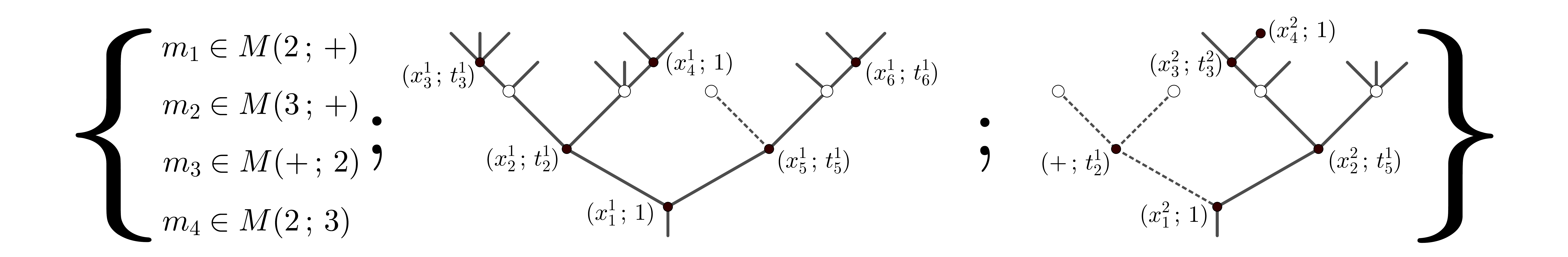}\vspace{-10pt}
\end{center}
\end{figure}

\noindent are the following ones:\vspace{-3pt}

\begin{figure}[!h]
\begin{center}
\includegraphics[scale=0.14]{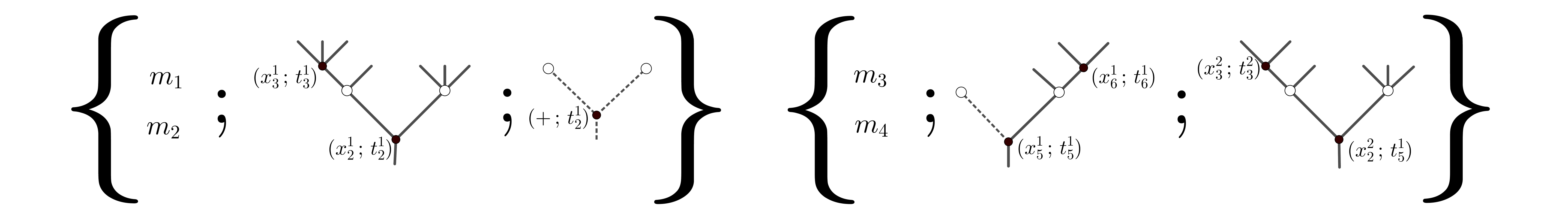}\vspace{-7pt}
\end{center}
\end{figure}

For $\vec{n}=(n_{1},\ldots,n_{k})\in \mathbb{N}^{k}$, a prime point, indexed by an element $\vec{T}\in sTree[\vec{n}]$, is in the $\vec{n}$-th filtration term $\mathcal{B}(M)[\vec{n}]$ if $T_{i}$ has at most $n_{i}$ geometrical inputs. Then, a composite point is in the $\vec{n}$-th filtration term if its prime components are in $\mathcal{B}(M)[\vec{n}]$. For instance the composite point in the example above is in $\mathcal{B}(M)[7\,;\,6]$.  By construction, $\mathcal{B}(M)[\vec{n}]$ is a $k$-fold bimodule and, for each pair $\vec{m}\leq \vec{n}$, there is a map of $k$-fold bimodules induced by the inclusion\vspace{5pt}
\begin{equation}\label{G9}
\iota[\vec{m}\leq \vec{n}]:\mathcal{B}(M)[\vec{m}]\longrightarrow \mathcal{B}(M)[\vec{n}].\vspace{7pt}
\end{equation}

\begin{thm}\label{L0}
Let $M$ be a $k$-fold bimodule over the family of reduced operads $O_{1},\ldots,O_{k}$ relative to $O$. If $\vec{O}$ and $M$ are cofibrant as $k$-fold sequences and the maps $\gamma_{\vec{n}}$, with $n_{i}\in \{+\,;\,0\}$, are cofibrations, then $\mathcal{B}(M)$ and $T_{\vec{r}}\,(\mathcal{B}(M)[\,\vec{r}\,])$ are cofibrant replacements of $M$ and $T_{\vec{r}\,}(M)$, respectively. In particular, the maps $\eta'$ and $\tau'$ (see (\ref{C7})) are, respectively, a weak equivalence and a cofibration in the projective model category $Bimod_{\vec{O}}$.\vspace{3pt}
\end{thm}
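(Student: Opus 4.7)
The plan is to mirror the proof strategy of Theorem \ref{K9}, with two essentially equivalent routes available. The first and cleanest route uses Remark \ref{Z2}: since $Bimod_{\vec{O}}$ is equivalent to the slice category $Bimod_{\overline{O}} \downarrow M_{\ast}$ over the colored operad $\overline{O}$, and since the space $\mathcal{B}(M)(\vec{n})$ built from $k$-fold trees with section is tautologically a reindexing of the standard Boardman--Vogt resolution in $Bimod_{\overline{O}}$ evaluated on the image of $M$ under the forgetful-inclusion functor to the slice, the statement reduces directly to the corresponding result for bimodules over a colored operad established in \cite{Ducoulombier18}. The hypotheses on $\vec{O}$, on $M$, and on the maps $\gamma_{\vec{n}}$ are precisely what is needed to ensure that the image of $M$ is a cofibrant object of $Bimod_{\overline{O}} \downarrow M_{\ast}$ in the auxiliary sense required there.

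For a hands-on verification, I would proceed as follows. To show $\eta' \colon \mathcal{B}(M) \to M$ is a weak equivalence in the untruncated case, I would exhibit an explicit deformation retract at the level of $k$-fold sequences: the homotopy $H_{s}$ sends $[\vec{T}; \{m_{p}\}; \{x_{v}\}; \{t_{v}\}]$ to the point obtained by rescaling all parameters $t_{v}$ to $s \cdot t_{v}$, so $H_{0}$ factors through $\eta'$ and $H_{1}$ is the identity. Compatibility with the edge-contraction identification is immediate because scaling preserves coincidences of parameters. For the truncated version $T_{\vec{r}}(\mathcal{B}(M)[\vec{r}])$, the naive rescaling is insufficient, because when parameters reach $0$ the resulting edge contractions can alter the number of univalent vertices and thus the geometrical input count; to fix this I would first perform a preliminary homotopy contracting only those inner edges that are not connected to a leaf or a univalent pearl, a move that does preserve the geometrical input count, and then rescale.

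For the cofibrancy of $\tau' \colon \mathcal{F}_{B}(M) \to \mathcal{B}(M)$ and of the filtration maps $\iota[\vec{m} \leq \vec{n}]$, I would filter $\mathcal{B}(M)$ by the number of non-pearl vertices (equivalently, by the number of real parameters) and exhibit each filtration step as a pushout
\[
\xymatrix@R=14pt{
\mathcal{F}_{B}(\partial \mathcal{K}_{n}) \ar[r]\ar[d] & \mathcal{B}(M)_{n-1} \ar[d] \\
\mathcal{F}_{B}(\mathcal{K}_{n}) \ar[r] & \mathcal{B}(M)_{n}
}
\]
where $\mathcal{K}_{n}$ is the $k$-fold sequence assembled from products of cubes indexed by trees with $n$ non-pearl vertices, decorated by $M$ at the pearls and by elements of $\vec{O}_{S_{v^{1}}}$ or $O_{i}$ at the other vertices, and $\partial \mathcal{K}_{n}$ is its subspace where at least one parameter equals $0$ or $1$. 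The cofibrancy of $\partial \mathcal{K}_{n} \hookrightarrow \mathcal{K}_{n}$ in $Seq_{k}^{\emptyset}$ follows from the pushout-product axiom applied to the cube inclusions $\partial I^{n} \hookrightarrow I^{n}$ together with the assumed cofibrancy of $\vec{O}$, of $M$, and of the base-point insertions $\gamma_{\vec{n}}$; passing through the left Quillen functor $\mathcal{F}_{B}$ then gives a cofibration at each filtration step.

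The main obstacle I anticipate in the direct approach is the truncated case, specifically ensuring that the deformation defining $\eta'$ remains inside the filtration piece $\mathcal{B}(M)[\vec{r}]$; the preliminary edge-contraction homotopy described above is what resolves it. The extra combinatorial bookkeeping introduced by the $k$ sections, the internal/external labels $f_{i} \colon E(T_{i}') \to \{internal; external\}$, and the spaces $\vec{O}_{S}$ is notationally heavier than in the one-variable case but involves no new conceptual ingredient, which is precisely why reducing to the colored-operad statement via Remark \ref{Z2} is the preferred route.
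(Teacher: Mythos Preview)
Your proposal is correct and follows essentially the same approach as the paper's own sketch of proof: both offer the reduction via Remark~\ref{Z2} to the Boardman--Vogt resolution for bimodules over the colored operad $\overline{O}$, and both outline the hands-on verification via the rescaling homotopy $t_{v}\mapsto s\cdot t_{v}$ (with the preliminary contraction of inner edges not connected to leaves in the truncated case) together with an induction on the number of non-pearl vertices for the cofibrancy of $\tau'$ and the filtration maps. Your pushout description of the filtration steps is a reasonable expansion of what the paper leaves implicit.
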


\begin{sproof}There are many ways to prove this theorem. One of them consists in using Remark \ref{Z2} and the fact that the Boardman-Vogt resolution introduced in Construction \ref{H1} is homeomorphic to the Boardman-Vogt resolution  introduced \cite{Ducoulombier16} in the category of bimodules over the appropriated colored operad. \vspace{7pt}

We can also check by hand the theorem. Without going into details, the map $\eta'$ induces a homotopy equivalence in the category of $k$-fold augmented sequences in which the homotopy consists in bringing the real numbers indexing the vertices to $0$. For the truncated case, we need first to contract the inner edges which are not connected to a leaf because the homotopy sending the real numbers to $0$ doesn't necessarily preserve the number of geometrical inputs. On the other hand, the maps $\mu'$ and $\iota[\vec{m}\leq \vec{n}]$ can be proved to be cofibrations in the category of $k$-fold bimodules by induction on the number of vertices of $k$-fold trees with sections.
\end{sproof}

\subsubsection{Boardman-Vogt resolution in the $\Lambda^{\times k}_{>0}$ setting.}\label{I1}

From now on, we assume that $O_{1},\ldots,O_{k}$ is a family of reduced operads relative to an operad $O$ and we adapt the construction in the previous section to produce cofibrant replacements in the category of reduced $k$-fold bimodules equipped with the Reedy model category structure. According to the notation introduced in Section \ref{G0}, we set \vspace{7pt}
$$
\mathcal{B}_{\vec{O}}^{\Lambda}(M)\coloneqq \mathcal{B}_{\vec{O}_{>0}}(M_{>0})_{+}.
$$
In other words, $\mathcal{B}_{\vec{O}}^{\Lambda}(M)$, also denoted by $\mathcal{B}^{\Lambda}(M)$ when $\vec{O}$ is understood, is obtained from the restriction of the coproduct in Construction \ref{H1} to the $k$-fold  trees with section without univalent vertices other than the pearls. By construction, the $k$-fold augmented sequence so obtained is a $k$-fold bimodule over $\vec{O}_{>0}$. Similarly to the free functor in the previous section, we can extend this structure in order to get a $k$-fold bimodule over $\vec{O}$ using the operadic structure of the operads $O_{1},\ldots,O_{k}$ and the $\Lambda^{\times k}_{+}$ structure of $M$. Furthermore, the $\vec{O}_{>0}$-bimodule maps $(\ref{C7})$ induce the maps\vspace{6pt}
\begin{equation}\label{H0}
\eta':\mathcal{B}^{\Lambda}(M)\longrightarrow M \hspace{15pt}\text{and}\hspace{15pt} \tau':\mathcal{F}_{B}^{\Lambda}(M)\longmapsto \mathcal{B}^{\Lambda}(M),\vspace{5pt}
\end{equation}
 which respect the $\Lambda^{\times k}_{+}$ structure and thus are $k$-fold bimodule maps over $\vec{O}$. Finally, the filtration (\ref{G9}) gives rise to a filtration of $k$-fold bimodules over $\vec{O}_{>0}$  \vspace{4pt}
$$
\iota[\vec{m}\leq \vec{n}]:\mathcal{B}^{\Lambda}(M)[\,\vec{m}\,]\longrightarrow \mathcal{B}^{\Lambda}(M)[\,\vec{n}\,],\vspace{4pt}
$$
compatible with the right action by $O_{1}(0),\ldots,O_{k}(0)$. So, this is also a filtration of $k$-fold bimodules over $\vec{O}$. \vspace{1pt}

\begin{thm}\label{L9}
Let $M$ be a $k$-fold bimodule over $\vec{O}$. If $\vec{O}$ and $M$ are cofibrant as $k$-fold sequences and the map $\gamma_{\vec{n}}$, with $n_{i}\in \{+\,;\,0\}$, are cofibrations, then $\mathcal{B}^{\Lambda}(M)$ and $T_{\vec{r}\,}(\mathcal{B}^{\Lambda}(M)[\,\vec{r}\,])$ are cofibrant replacements of $M$ and $T_{\vec{r}}\,(M)$ in the Reedy model categories $\Lambda Bimod_{\vec{O}}$ and $T_{\vec{r}}\,\Lambda Bimod_{\vec{O}}$,respectively. In particular, the maps $\eta'$ and $\tau'$ (see (\ref{H0})) are, respectively, a weak equivalence and a cofibration.\vspace{5pt}
\end{thm}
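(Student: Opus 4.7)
The plan is to derive the theorem from its projective-model counterpart, Theorem \ref{L0}, via the comparison between the Reedy structure on $\Lambda Bimod_{\vec{O}}$ and the projective structure on $Bimod_{\vec{O}_{>0}}$ furnished by Theorem \ref{J0}. By definition (Section \ref{I1}), one has $\mathcal{B}^{\Lambda}(M) = \mathcal{B}_{\vec{O}_{>0}}(M_{>0})_{+}$, so that the restriction $\mathcal{B}^{\Lambda}(M)_{>0}$ coincides with the projective resolution $\mathcal{B}_{\vec{O}_{>0}}(M_{>0})$ to which Theorem \ref{L0} applies; the same remark holds for $T_{\vec{r}}(\mathcal{B}^{\Lambda}(M)[\vec{r}])$.

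First I would transfer the cofibration statement. Applied to the family $\vec{O}_{>0}$ and to the $k$-fold bimodule $M_{>0}$, both cofibrant as $k$-fold sequences since $\vec{O}$ and $M$ are, Theorem \ref{L0} asserts that $\tau'\colon \mathcal{F}_{B\,;\,\vec{O}_{>0}}(M_{>0}) \to \mathcal{B}_{\vec{O}_{>0}}(M_{>0})$ is a projective cofibration; note that the hypothesis on the maps $\gamma_{\vec{n}}$ becomes vacuous in the $>0$ setting, since the arities $(n_1,\ldots,n_k)$ with $n_i \in \{+\,;\,0\}$ have been discarded. Theorem \ref{J0}(ii) then immediately promotes $\tau'\colon \mathcal{F}_{B}^{\Lambda}(M) \to \mathcal{B}^{\Lambda}(M)$ to a Reedy cofibration in $\Lambda Bimod_{\vec{O}}$, and likewise for the $\iota[\vec{m}\leq \vec{n}]$ in the truncated case. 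For the weak equivalence statement, I would use that a morphism in the Reedy model category on $\Lambda Bimod_{\vec{O}}$ is a weak equivalence if and only if it is objectwise a weak equivalence on arities in $\Lambda^{\times k}_{>0}$: the remaining arities $(B_1,\ldots,B_k)$ with $B_i \in \{+\,;\,\emptyset\}$ are automatically points on both sides by construction of the $(-)_{+}$ extension. Hence $\eta'\colon \mathcal{B}^{\Lambda}(M) \to M$ is a Reedy weak equivalence as soon as its $>0$-part is a projective weak equivalence over $\vec{O}_{>0}$, which is exactly what Theorem \ref{L0} gives; the same reasoning yields the truncated statement.

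The only delicate point is to verify that the $\Lambda^{\times k}_{+}$-structure used to promote $\mathcal{B}_{\vec{O}_{>0}}(M_{>0})_{+}$ to a $k$-fold bimodule over the full $\vec{O}$ is compatible with the operadic structures of $O_1,\ldots,O_k$ and with the maps $\eta'$, $\tau'$ and $\iota[\vec{m}\leq \vec{n}]$. This should be a direct check from the definitions in Section \ref{I1}, exactly parallel to the analogous verification made at the end of Section \ref{C6} for $k$-fold infinitesimal bimodules. As a safety net, one could instead give a hands-on proof mirroring the sketch for Theorem \ref{L0}: build an explicit deformation retract of $\mathcal{B}^{\Lambda}(M)$ onto $M$ by sliding to $0$ the real parameters labelling the vertices (with a preliminary edge-contraction in the truncated case to preserve the number of geometric inputs), and verify inductively on the number of vertices of the underlying $k$-fold trees with section that the filtration maps $\iota[\vec{m}\leq \vec{n}]$ are Reedy cofibrations.
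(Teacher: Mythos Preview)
Your proposal is correct and takes essentially the same approach as the paper: the paper's proof opens with ``It is a consequence of Theorems \ref{J0} and \ref{L0}'', which is precisely the transfer argument you describe. The paper then spells out, for later notational purposes (Section \ref{D5}), the explicit decomposition of $\iota[\vec m\leq\vec n]$ as a sequence of pushouts along free $\vec{O}_{>0}$-bimodule maps on cofibrations of sequences---this is the hands-on route you mention as a safety net, but it is recorded there for its side effect (the notation $\mathcal{B}^\Lambda_i(M)[\vec n]$) rather than because the short argument is insufficient.
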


\begin{proof}
It is a consequence of Theorems \ref{J0} and \ref{L0}. Nevertheless, we recall the arguments showing that $\mathcal{B}^{\Lambda}(M)$ and its truncated versions are cofibrant in order to introduced some notation used in Section \ref{D5}. The idea is to check that each map $\iota[\vec{m}\leq \vec{n}]$ is a cofibration in the projective model category of $k$-fold bimodules over $\vec{O}_{>0}$. Without loss of generality, we assume that $k=2$, $\vec{m}=(m_{1}\,;\,m_{2})$ and $\vec{n}=(m_{1}+1\,;\,m_{2})$. In that case the map   $\iota[\vec{m}\leq \vec{n}]$ can be obtained as a sequence \vspace{3pt}
\begin{equation}\label{O3}
\xymatrix{
\mathcal{B}^{\Lambda}(M)[\,\vec{m}\,] \ar[r] & \mathcal{B}^{\Lambda}_{+}(M)[\,\vec{n}\,] \ar[r] & \mathcal{B}_{0}^{\Lambda}(M)[\,\vec{n}\,]\ar[r] & \cdots \ar[r] & \mathcal{B}_{m_{2}}^{\Lambda}(M)[\,\vec{n}\,]=\mathcal{B}^{\Lambda}(M)[\,\vec{n}\,]
}\vspace{3pt}
\end{equation}
where the $k$-fold bimodule $\mathcal{B}_{+}^{\Lambda}(M)[\,\vec{n}\,]$ is defined using the pushout diagram \vspace{5pt}
$$
\xymatrix{
\mathcal{F}_{B\,;\,\vec{O}_{>0}}\big(\,\partial\mathcal{B}^{\Lambda}(M)(m_{1}+1\,;\,+)\,\big)\ar[r] \ar[d] & \mathcal{F}_{B\,;\,\vec{O}_{>0}}\big(\,\mathcal{B}^{\Lambda}(M)(m_{1}+1\,;\,+)\,\big)\ar[d]\\
\mathcal{B}^{\Lambda}(M)[\,\vec{m}\,] \ar[r] & \mathcal{B}_{+}^{\Lambda}(M)[\,\vec{n}\,]
}\vspace{5pt}
$$
%
The $k$-fold bimodules $\mathcal{B}_{i}^{\Lambda}(M)[\,\vec{n}\,]$, with $0\leq i\leq m_{2}$ and $\mathcal{B}_{-1}^{\Lambda}(M)[\,\vec{n}\,]=\mathcal{B}_{+}^{\Lambda}(M)[\,\vec{n}\,]$, are built by induction using pushout diagrams of the form\vspace{5pt}
$$
\xymatrix{
\mathcal{F}_{B\,;\,\vec{O}_{>0}}\big(\,\partial\mathcal{B}^{\Lambda}(M)(m_{1}+1\,;\,i)\,\big)\ar[r] \ar[d] & \mathcal{F}_{B\,;\,\vec{O}_{>0}}\big(\,\mathcal{B}^{\Lambda}(M)(m_{1}+1\,;\,i)\,\big)\ar[d]\\
\mathcal{B}^{\Lambda}_{i-1}(M)[\,\vec{n}\,] \ar[r] & \mathcal{B}_{i}^{\Lambda}(M)[\,\vec{n}\,]
}\vspace{5pt}
$$
The space $\mathcal{B}^{\Lambda}(M)(m_{1}+1\,;\,i)$ is seen as an augmented sequence concentrated in arity $(m_{1}+1\,;\,i)$. Similarly, $\partial\mathcal{B}^{\Lambda}(M)(m_{1}+1\,;\,i)$ is the augmented sequence formed by points in $\mathcal{B}^{\Lambda}(M)(m_{1}+1\,;\,i)$  having a vertex indexed by $1$. By induction on the number of vertices, the inclusion from $\partial\mathcal{B}^{\Lambda}(M)(m_{1}+1\,;\,i)$  to $\mathcal{B}^{\Lambda}(M)(m_{1}+1\,;\,i)$  is a cofibration in the category of augmented sequences. Since the free $k$-fold bimodule functor and pushout diagrams preserve cofibrations, the horizontal maps in the above diagrams are cofibrations in the projective model category of $k$-fold bimodules over $\vec{O}_{>0}$. Thus proves that (\ref{O3}) is a sequence of cofibrations and $\iota[\vec{m}\leq \vec{n}]$ is a cofibration in the Reedy model category of $k$-fold bimodules over $\vec{O}$.
\end{proof}

\newpage

\subsubsection{Properties of the Boardman-Vogt resolution associated to $\mathbb{O}^{+}$}

For $\vec{n}=(n_{1},\ldots,n_{k})$, with $n_{i}\in \mathbb{N}\sqcup \{+\}$ and $(n_{1},\ldots,n_{k})\neq (+,\ldots, +)$, we denote by $\overline{\mathbb{O}}(n_{1},\ldots,n_{k})$ the subspace of $\mathcal{B}^{\Lambda}(\mathbb{O}^{+})(n_{1},\ldots,n_{k})$ formed by points indexed by $k$-fold trees with section $(T_{1},\ldots,T_{k},f_{1},\ldots,f_{k},\sigma)$ satisfying the following conditions: 
\begin{itemize}
\item[$\blacktriangleright$] the roots are the only vertices which are not pearls and they are indexed by $1$,
\item[$\blacktriangleright$] the roots have exactly $\underset{\substack{1\leq i\leq k\\ n_{i}\neq +}}{\sum} n_{i}$ incoming edges,
\item[$\blacktriangleright$] vertices other than the roots are univalent pearls or bivalent pearls labelled by the units,
\item[$\blacktriangleright$] $f_{j}:E(T'_{i})\rightarrow \{internal\,;\,internal\}$ labels the $l$-th incoming edge of the root by $internal$ if and only if
$$l\in \left\{\,\,\,\,\left(\underset{\substack{1\leq i\leq j-1\\ n_{i}\neq +}}{\sum}n_{i} \right)+1,\ldots, \underset{\substack{1\leq i\leq j\\ n_{i}\neq +}}{\sum} n_{i}\right\}.$$ 
\end{itemize}\vspace{-10pt}
 
\begin{figure}[!h]
\begin{center}
\includegraphics[scale=0.17]{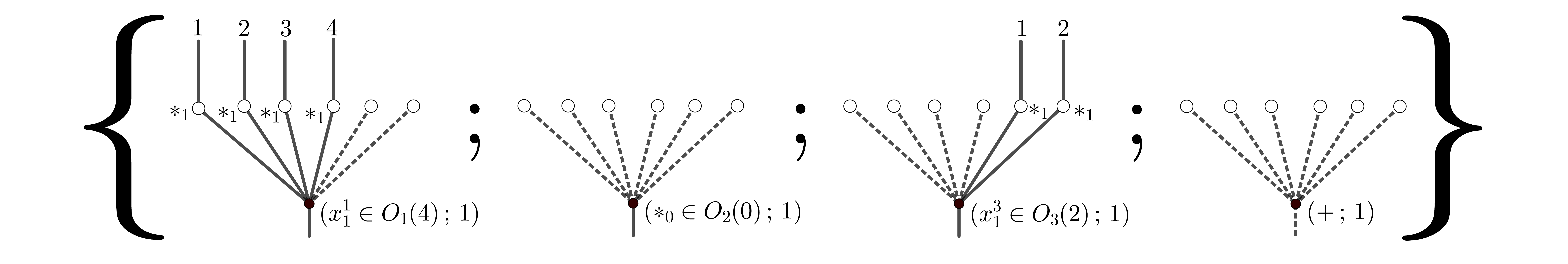}\vspace{-5pt}
\caption{Illustration of a point in $\overline{\mathbb{O}}(4\,;\,0\,;\,2\,;\,+)$.}\vspace{-11pt}
\end{center}
\end{figure}

Let us remark that the $k$-fold augmented sequence $\overline{\mathbb{O}}$ is homeomorphic to the $k$-fold bimodule $\mathbb{O}^{+}$. The map from $\overline{\mathbb{O}}$ to $\mathbb{O}^{+}$ is the projection on the parameters indexing the roots of the $k$-fold trees with section. From this identification and the weak equivalence of $k$-fold bimodules $(\ref{C7})$, we deduce a map of $k$-fold augmented sequences\vspace{5pt}
\begin{equation}\label{E3}
\lambda:\mathcal{B}^{\Lambda}(\mathbb{O}^{+})\longrightarrow \overline{\mathbb{O}}\cong\mathbb{O}^{+}.\vspace{7pt}
\end{equation}

\begin{lmm}\label{E9}
The map $(\ref{E3})$ is a homotopy equivalence.\vspace{3pt}
\end{lmm}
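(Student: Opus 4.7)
The plan is to show that $\lambda$ factors as a homotopy equivalence followed by a homeomorphism of $k$-fold augmented sequences, and hence is itself a homotopy equivalence.

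By construction, $\lambda$ decomposes as the composite
$$
\mathcal{B}^{\Lambda}(\mathbb{O}^{+}) \xrightarrow{\;\eta'\;} \mathbb{O}^{+} \xrightarrow{\;\cong\;} \overline{\mathbb{O}},
$$
where $\eta'$ is the natural augmentation of \eqref{H0} and the second arrow is the inverse of the identification $\overline{\mathbb{O}} \cong \mathbb{O}^{+}$ obtained by projecting onto the label of the root; this second arrow is a homeomorphism by the very definition of $\overline{\mathbb{O}}$. So the statement reduces to the assertion that $\eta' \colon \mathcal{B}^{\Lambda}(\mathbb{O}^{+}) \to \mathbb{O}^{+}$ is a homotopy equivalence of $k$-fold augmented sequences, i.e., admits a section together with a deformation of the source onto the image of that section.

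For the section, I would take $s \colon \mathbb{O}^{+} \to \mathcal{B}^{\Lambda}(\mathbb{O}^{+})$ sending each element to the single-pearl tree carrying it as its label; the identity $\eta' \circ s = \operatorname{id}$ is immediate. The deformation $s \circ \eta' \simeq \operatorname{id}$ is the by-hand argument sketched in the proof of Theorem \ref{L9}: define the homotopy $H \colon \mathcal{B}^{\Lambda}(\mathbb{O}^{+}) \times [0,1] \to \mathcal{B}^{\Lambda}(\mathbb{O}^{+})$ by the uniform rescaling $t_{v} \mapsto (1-u)\,t_{v}$ applied to every real parameter attached to a non-pearl vertex. At $u = 0$ we recover the identity; at $u = 1$ every non-pearl vertex has parameter $0$, coinciding with the pearls, so the contraction relation of Construction \ref{H1} collapses all inner edges, iteratively using the right and left operations of the $k$-fold bimodule $\mathbb{O}^{+}$, and produces exactly $s \circ \eta'$.

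The main point to verify is the well-definedness of $H$ on the quotient space, i.e., its compatibility with the unit axiom, the symmetric group action, the $\Lambda^{\times k}_{+}$-structure, and the collapse relation for pearls labelled by basepoints. This is routine: the rescaling is a linear operation that uniformly preserves both the below-section condition $t_{v} \leq t_{v'}$ and the above-section condition $t_{v} \geq t_{v'}$ along inner edges, and commutes with each of the generating equivalences. I expect the only mildly delicate verification to be that the various collapses performed at $u = 1$ assemble into the operadic computation carried out by $\eta'$, which follows by induction on the number of non-pearl vertices, using associativity and the compatibility built into $\vec{O}_{S}$ with the operadic structures of $O_{1}, \ldots, O_{k}$. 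The lemma then follows by composition with the homeomorphism $\mathbb{O}^{+} \cong \overline{\mathbb{O}}$.
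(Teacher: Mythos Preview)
Your argument is correct: the factorization $\lambda = (\,\overline{\mathbb{O}}\cong\mathbb{O}^{+}\,)^{-1}\circ\eta'$ is exactly how $\lambda$ is defined, and the uniform rescaling $t_{v}\mapsto (1-u)\,t_{v}$ does give a well-defined deformation retraction of $\mathcal{B}^{\Lambda}(\mathbb{O}^{+})$ onto the image of the single-pearl section. Your compatibility checks are the right ones, and none of them causes trouble here (in particular, since we are in the $\Lambda$-setting there are no univalent non-pearl vertices, so the issue about geometrical inputs that arises in the truncated case does not appear).

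The paper's own proof takes a genuinely different route. Rather than sending all parameters to~$0$, it proceeds in three stages: first shrink the parameters \emph{above} the section to~$0$ (landing in an intermediate sequence~$A$); then use the operadic identity $\theta=\gamma_{l}(\theta;\ast_{1},\ldots,\ast_{1})$ to rewrite every pearl as a bivalent unit pearl sitting below a new vertex carrying the original label (a homeomorphism $A\cong B$); finally push the parameters \emph{below} the section to~$1$, so that everything collapses into the single root vertex that defines $\overline{\mathbb{O}}$. The point of this more elaborate construction is not the lemma itself but what comes after: the specific three-stage homotopy is invoked verbatim in the proof of Theorem~\ref{M4} to show $\overline{\mathcal{I}b}(\mathbb{O})\simeq A$, where one needs a deformation compatible with the relation~(\ref{F0}) equalizing the right $\vec{O}$-action on~$\mathcal{I}$ with the left action on $\mathcal{B}^{\Lambda}(\mathbb{O}^{+})$. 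Your simpler homotopy proves Lemma~\ref{E9} cleanly but would not slot directly into that later argument; the paper's version is tailored so that the intermediate identifications (Figure~\ref{O1} in particular) can be reused downstream.
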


\begin{proof}
First, we consider the $k$-fold augmented sequence $A$ formed by points in $\mathcal{B}^{\Lambda}(\mathbb{O}^{+})$ without vertices above the sections. The two $k$-fold augmented sequences $\mathcal{B}^{\Lambda}(\mathbb{O}^{+})$ and $A$ are homotopically equivalent and the homotopy consists in bringing the real numbers indexing the vertices above the sections to $0$. For instance, for the element in $\mathcal{B}^{\Lambda}(\mathbb{O}^{+})(8\,;\,4)$ represented by (the points in $(m_{1},\ldots,m_{k})\in \mathbb{O}^{+}$ indexing the pearls are directly represented on the trees)\vspace{-5pt}
\begin{figure}[!h]
\begin{center}
\includegraphics[scale=0.17]{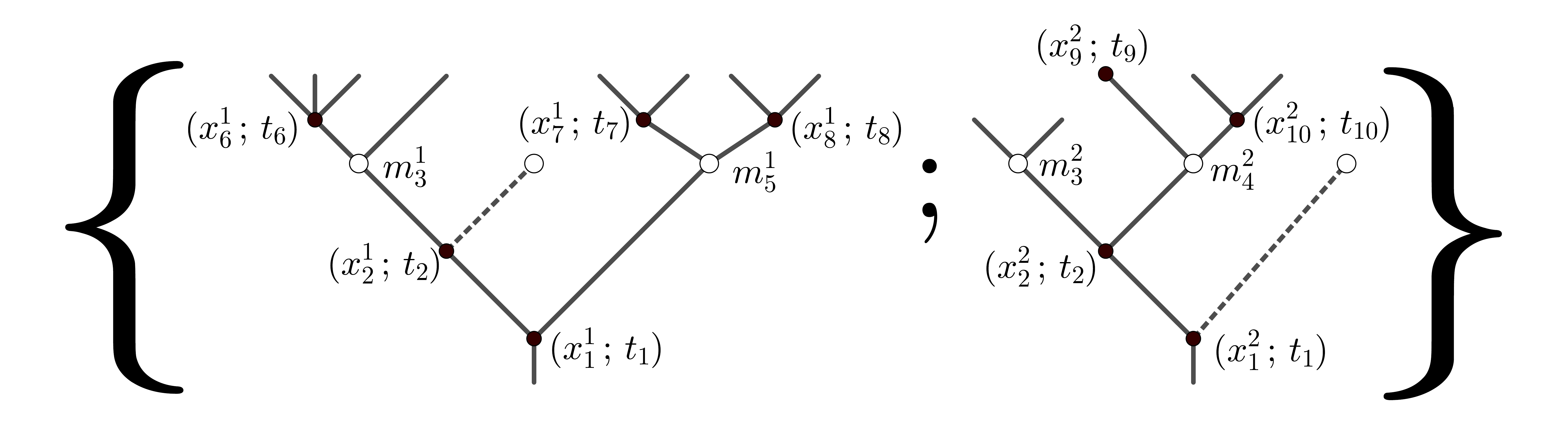}\vspace{-15pt}
\end{center}
\end{figure}

\noindent the homotopy sends the parameters $t_{5}$, $t_{6}$, $t_{7}$, $t_{12}$ and $t_{13}$ to $0$ and one gets the following point in $A$:
\begin{figure}[!h]
\begin{center}
\includegraphics[scale=0.17]{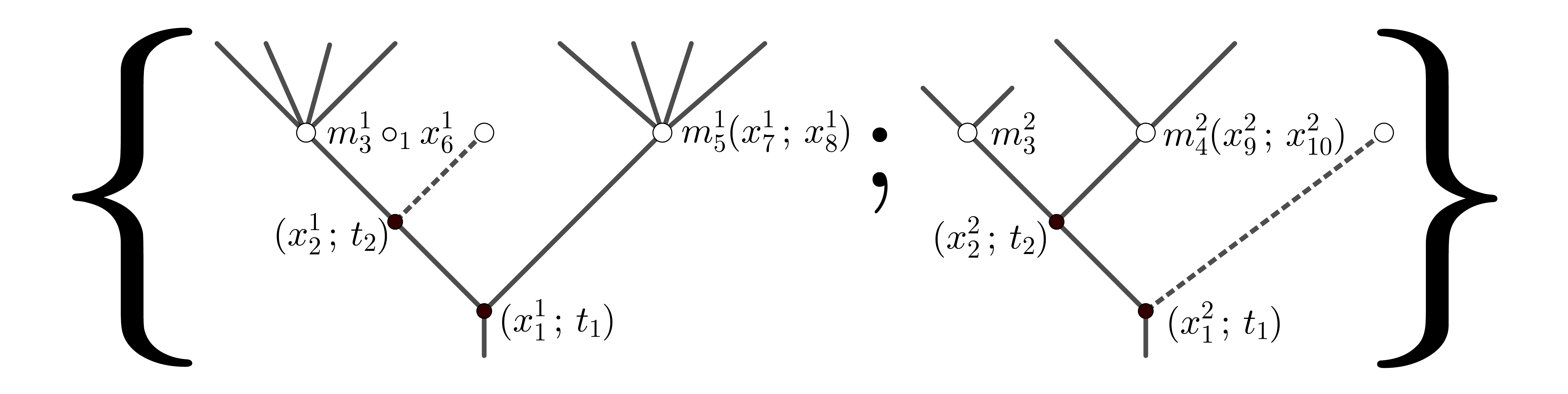}\vspace{-40pt}
\end{center}
\end{figure}

\newpage

Let $B$ be the $k$-fold augmented sequence formed by points in $A$ for which the pearls are $external$ pearls or bivalent pearls indexed by the unit of the operads $O_{1},\ldots,O_{k}$. Using the identification $\theta=\gamma_{l}(\theta;\ast_{1},\ldots ,\ast_{1})$, for any $\theta\in O_{i}(n)$, and the compatibility with the symmetric group axiom, we are able to check that the $k$-fold augmented sequences $A$ and $B$ are homeomorphic. Locally, the identifications used are of the form\vspace{1pt}
\begin{figure}[!h]
\begin{center}
\includegraphics[scale=0.13]{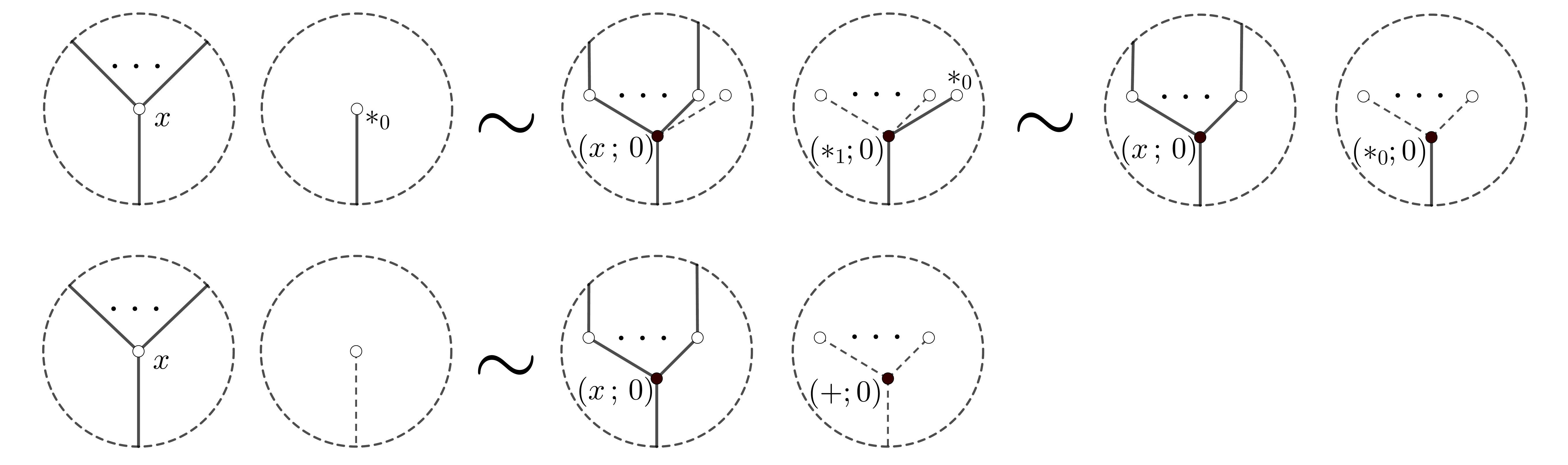}\vspace{-11pt}
\end{center}
\end{figure}

\noindent For instance, the point in $B$ associated to the example above is the following one:\vspace{-1pt}
\begin{figure}[!h]
\begin{center}
\includegraphics[scale=0.17]{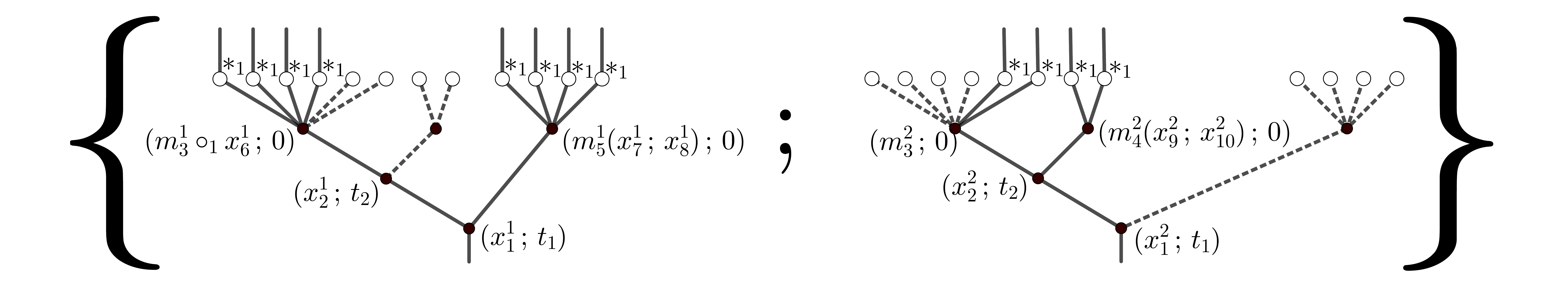}
\end{center}\vspace{-10pt}
\end{figure}

Finally, the homotopy between $B$ and $\overline{\mathbb{O}}$ consists in bringing the real numbers indexing the vertices below the sections to $1$. Then, using the action of the symmetric group, such a point can be identified with an element in $\overline{\mathbb{O}}$. In particular, the point in $\overline{\mathbb{O}}$ associated to the example above is the following one:
\begin{figure}[!h]
\begin{center}
\includegraphics[scale=0.22]{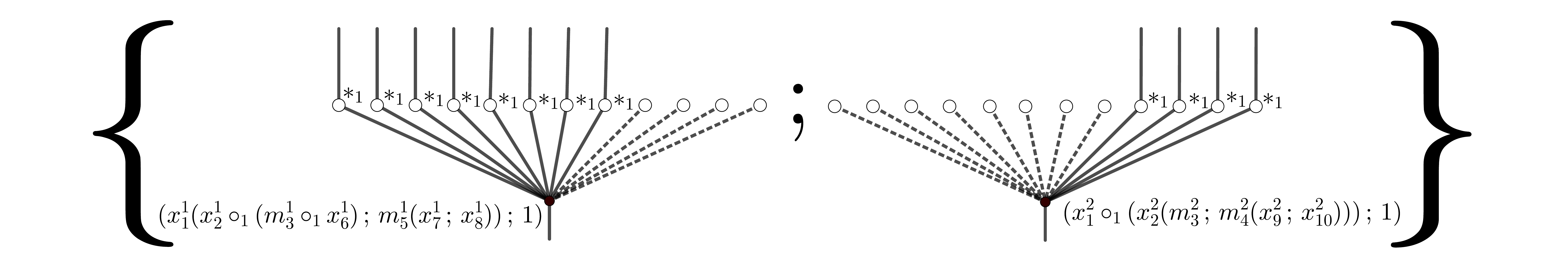}\vspace{-6pt}
\caption{Illustration of the identification.}\label{O1}
\end{center}\vspace{-20pt}
\end{figure}

\end{proof}

As explained in Example \ref{M1}, if $M_{1},\ldots,M_{k}$ are bimodules over $O_{1},\ldots,O_{k}$, respectively, then the $k$-fold augmented sequence $\mathbb{M}^{+}(n_{1},\ldots,n_{k})$ inherits a $k$-fold bimodule structure over the family $O_{1},\ldots,O_{k}$ relative to any reduced operad $O$. In particular, we can consider the following $k$-fold bimodule obtained from the Boardman-Vogt resolution of each operads $O_{i}$ as a $1$-fold bimodule over itself:\vspace{6pt}
\begin{equation}\label{H3}
\mathbb{B}^{\Lambda}(O)(n_{1},\ldots,n_{k})=\underset{\substack{1\leq i\leq k\\ n_{i}\neq +}}{\displaystyle \prod}\mathcal{B}_{O_{i}}^{\Lambda}(O_{i})(n_{i}), \hspace{15pt} \text{with } n_{i}\in \mathbb{N}\sqcup\{+\} \text{ and } (n_{1},\ldots,n_{k})\neq (+,\ldots,+).
\end{equation}
By construction $\mathbb{B}^{\Lambda}(O)$ is equipped with a map to $\mathbb{O}^{+}$ which is a weak equivalence of $k$-fold bimodules. The homotopy consists in bringing the parameters to $0$. Unfortunately, (\ref{H3}) is not cofibrant in the Reedy (or projective) model category of $k$-fold bimodules. However, there is a map of $k$-fold bimodules\vspace{5pt}
\begin{equation}\label{H4}
\xi:\mathcal{B}^{\Lambda}(\mathbb{O}^{+})\longrightarrow \mathbb{B}^{\Lambda}(O),\vspace{5pt}
\end{equation}
which consists in removing the $external$ edges. The map $\xi$ is obviously a weak equivalence since the two $k$-fold bimodules are both weakly equivalent to $\mathbb{O}^{+}$. \vspace{-10pt}

\newpage

\begin{lmm}\label{L2}
The map $(\ref{H4})$ induces a homotopy retract in the category of $k$-fold augmented sequences.\vspace{4pt}
\end{lmm}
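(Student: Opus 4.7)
The plan is to produce a section $s\colon \mathbb{B}^{\Lambda}(O) \to \mathcal{B}^{\Lambda}(\mathbb{O}^{+})$ in the category of $k$-fold augmented sequences such that $\xi \circ s = \mathrm{id}$, together with an explicit continuous homotopy from $\mathrm{id}$ to $s\circ\xi$.

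First I would construct $s$. Given a family $(x_{1},\ldots, x_{k})$ with $x_{i}\in \mathcal{B}^{\Lambda}_{O_{i}}(O_{i})(n_{i})$ indexed by a tree with section $T_{i}$, the idea is to assemble the trees $T_{i}$ into a single $k$-fold tree with section by taking a common below-section trunk obtained by arranging the below-section vertices of the individual $T_{i}$ in a fixed order (e.g.\ by increasing variable index, breaking ties by the planar order within each $T_{i}$). Each such below-section vertex originating from $T_{j}$ is labelled by the corresponding element of $O_{j}$, regarded as an element of $\vec{O}_{S}$ with $S_{i}=+$ for $i\neq j$; the remaining variable-$i$ edges at that vertex are thus \emph{external}. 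The pearls and above-section vertices of each $T_{i}$ are retained with their original labels and parameters, and the pearl labels are packaged into elements of $\mathbb{O}^{+}$ in the obvious way. Compatibility with the symmetric group action and with the $\Lambda^{\times k}_{+}$-structure is routine, and the identity $\xi \circ s = \mathrm{id}$ is immediate, since stripping the external edges of $s((x_{1},\ldots,x_{k}))$ returns each $T_{i}$ variable by variable.

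The main technical step is constructing the homotopy $H\colon \mathcal{B}^{\Lambda}(\mathbb{O}^{+})\times[0,1]\to \mathcal{B}^{\Lambda}(\mathbb{O}^{+})$ from $\mathrm{id}$ to $s\circ\xi$. A general point $p$ agrees with $s(\xi(p))$ on pearls and above-section data, and differs only on below-section vertices: those of $p$ may carry labels $(y_{1},\ldots,y_{k})\in \vec{O}_{S}(S_{1},\ldots,S_{k})$ with several $S_{i}\neq +$, whereas those of $s(\xi(p))$ have exactly one $S_{i}\neq +$. I would define $H$ by continuously splitting each multi-variable below-section vertex into an ordered chain of single-variable vertices: at homotopy time $0$, we have $p$; for positive times, the common parameter $t_{v}$ is interpolated linearly into a strictly increasing sequence of distinct values placed in the canonical order used by $s$, and each new vertex receives as label the single-variable component $y_{i}\in O_{i}$ of the original mixed label.

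The main obstacle will be showing that $H$ descends continuously to the quotient defining $\mathcal{B}^{\Lambda}(\mathbb{O}^{+})$. When the separating parameters coincide (at homotopy time $0$), the equivalence relation forces contraction of the split vertices via the composition (\ref{D6}); the crucial point is that this composition is precisely what reassembles a family of single-variable elements $y_{i}\in O_{i}$ into an element of $\vec{O}_{S}$, so the split configuration at time $0$ is identified with the original configuration of $p$, and $H_{0}=\mathrm{id}$ as required. Continuity, compatibility with the symmetric action, and functoriality in the $\Lambda^{\times k}_{+}$-structure then follow by induction on the number of below-section vertices, mirroring the filtration arguments used in the proof of Theorem \ref{L9} in Section \ref{F5}.
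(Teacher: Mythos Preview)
Your construction of the section $s$ is not well-defined. Recall from Definition~\ref{C2} that if the output edge of a below-section vertex is \emph{external} in variable $i$, then all of its incoming edges are external in variable $i$ as well; consequently the set of below-section vertices whose output is \emph{internal} in variable $i$ forms a downward-closed subtree containing the root (whenever $n_i\neq +$). In particular, when at least two of the $n_i$ are $\neq +$, the root must carry a label in some $\vec{O}_{S}$ with several $S_i\neq +$. Your proposal labels every below-section vertex by a single-variable element ($y_j\in O_j$ viewed in $\vec{O}_{S}$ with $S_i=+$ for $i\neq j$), which violates this constraint at the root. The same obstruction kills the splitting step in your homotopy $H$: a chain of single-variable vertices for two distinct variables cannot be glued along an inner edge, because that edge would have to be simultaneously internal (for the upper vertex) and external (forced by the lower vertex) in one of the variables. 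So the contraction via (\ref{D6}) that you invoke at time $0$ never takes place---those split configurations simply do not live in $\mathcal{B}^{\Lambda}(\mathbb{O}^{+})$.

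There is also a coarser problem: the below-section parts of the individual $T_i$ are trees, not chains, so ``arranging the below-section vertices in a fixed order'' to form a trunk discards their branching and does not yield a tree with section at all.

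The paper's section $\phi$ sidesteps both issues by \emph{adding} a single new root rather than reorganising the existing vertices: a $k$-corolla labelled by the tuple of units with parameter $1$, whose $j$-th branch carries $T_j$ with all its edges internal only in variable $j$. The root is then genuinely multi-variable (each $S_i=\{i\}$), and the original tree structures are preserved intact. For the homotopy $\phi\circ\xi\simeq \mathrm{id}$ the paper does not split below-section vertices at all; it works at the pearls, rewriting each pearl label $(m_1,\ldots,m_k)\in\mathbb{O}^{+}$ as a fan of single-variable pearls attached through bivalent ``marked'' vertices labelled by units at parameter $0$, and then slides those marked parameters to $1$.
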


\begin{proof}
We give an explicit description of the homotopy retract between the two objects. For this purpose, we consider the map of $k$-fold augmented sequences \vspace{3pt}
\begin{equation}\label{H5}
\phi:\mathbb{B}^{\Lambda}(O)\longrightarrow \mathcal{B}^{\Lambda}(\mathbb{O}^{+}),\vspace{1pt}
\end{equation}
sending a family $\{y_{i}\}_{1\leq i \leq k}$, with $y_{i}=[T_{i}\,;\,\{m_{p}^{i}\}\,;\,\{x_{v}^{i}\}\,;\,\{t_{v}^{i}\}]\in \mathcal{B}^{\Lambda}(O_{i})$, to the element indexed be the $k$-fold tree with section $(T_{1}^{+},\ldots,T_{k}^{+},f_{1},\ldots,f_{k},\sigma)$. The tree with section $T_{i}^{+}$ is obtained by grafting from left to right to a leaf of the $k$-corolla the trees with section  $T_{1}',\ldots,T_{i-1}',T_{i},T_{i+1}',\ldots,T_{k}'$ where $T'_{u}$ is the tree with section obtained from $T_{u}$ by removing the edges above the section. The application $f_{i}$ labels the edges associated to $T_{i}$ by $internal$ and the other edges by $external$. The new vertex produced by the $k$-corolla is indexed by $(\ast_{1}\,;\,1)$ while the other vertices are indexed by real numbers in the interval and points in the operads according to the parameters of $\{y_{i}\}$. For instance, the point in $\mathbb{B}^{\Lambda}(O)(6\,;\,4)$\vspace{-3pt}

\begin{figure}[!h]
\begin{center}
\includegraphics[scale=0.15]{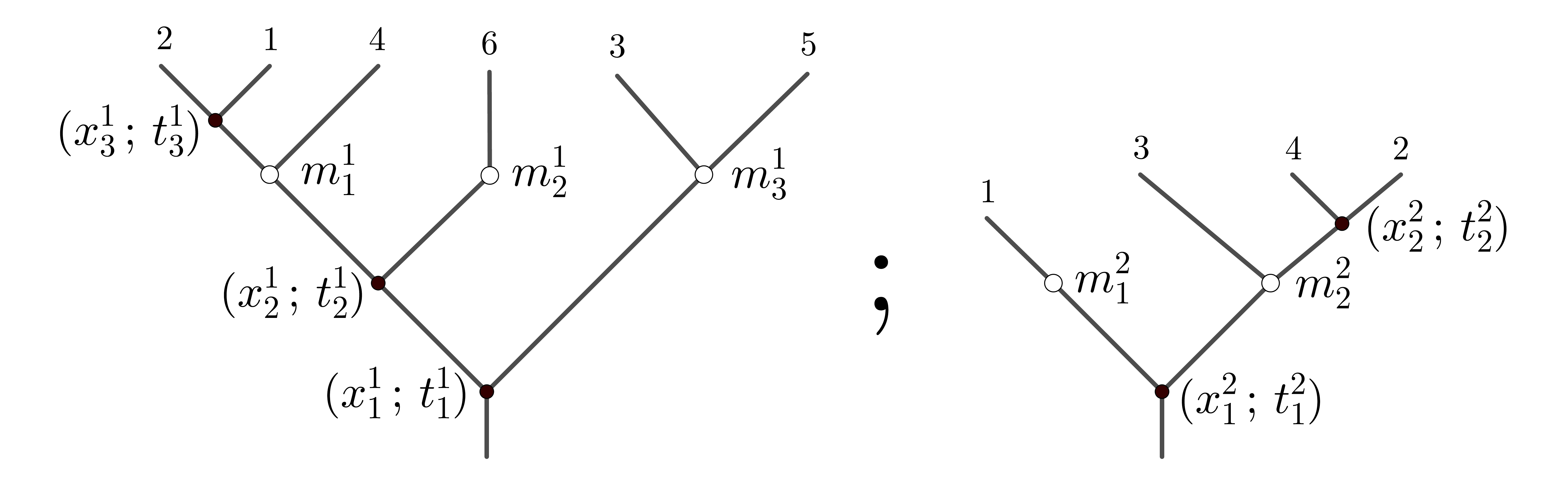}
\end{center}\vspace{-13pt}
\end{figure}

\noindent is sent to the following element in $\mathcal{B}^{\Lambda}(\mathbb{O}^{+})(6\,;\,4)$:

\begin{figure}[!h]
\begin{center}
\includegraphics[scale=0.083]{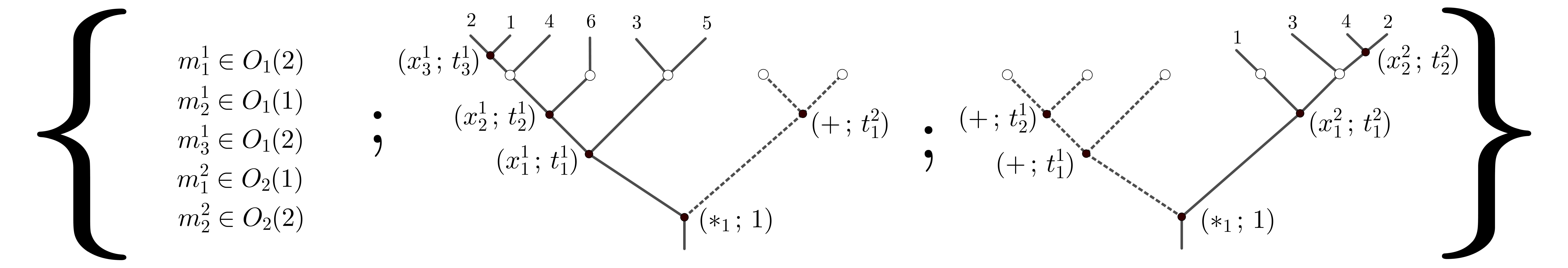}
\end{center}\vspace{-10pt}
\end{figure}

Now, one has to check that the maps (\ref{H4}) and (\ref{H5}) give rise to a deformation retract in the category of $k$-fold augmented sequences. The identification $\xi\circ\phi = id$ is clear. The homotopy $\phi\circ\xi\simeq id$ is obtained by using the identification defined locally as follows:
\begin{figure}[!h]
\begin{center}
\includegraphics[scale=0.14]{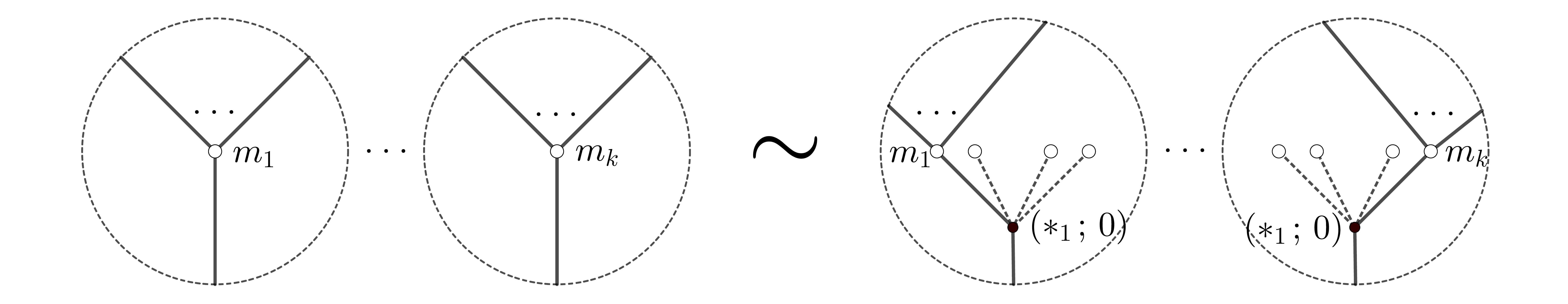}
\end{center}\vspace{-13pt}
\end{figure} 

\noindent  where $(m_{1},\ldots,m_{k})\in \mathbb{O}^{+}(n_{1},\ldots,n_{k})$. The vertices just below the section indexed by $(\ast_{1}\,;\,0)$ are called marked vertices. So, the homotopy sends the real numbers indexing the marked vertices to $1$. Each time marked vertices meet another vertices below the section, we use the identification defined locally as follows:\vspace{-5pt}
\begin{figure}[!h]
\begin{center}
\includegraphics[scale=0.13]{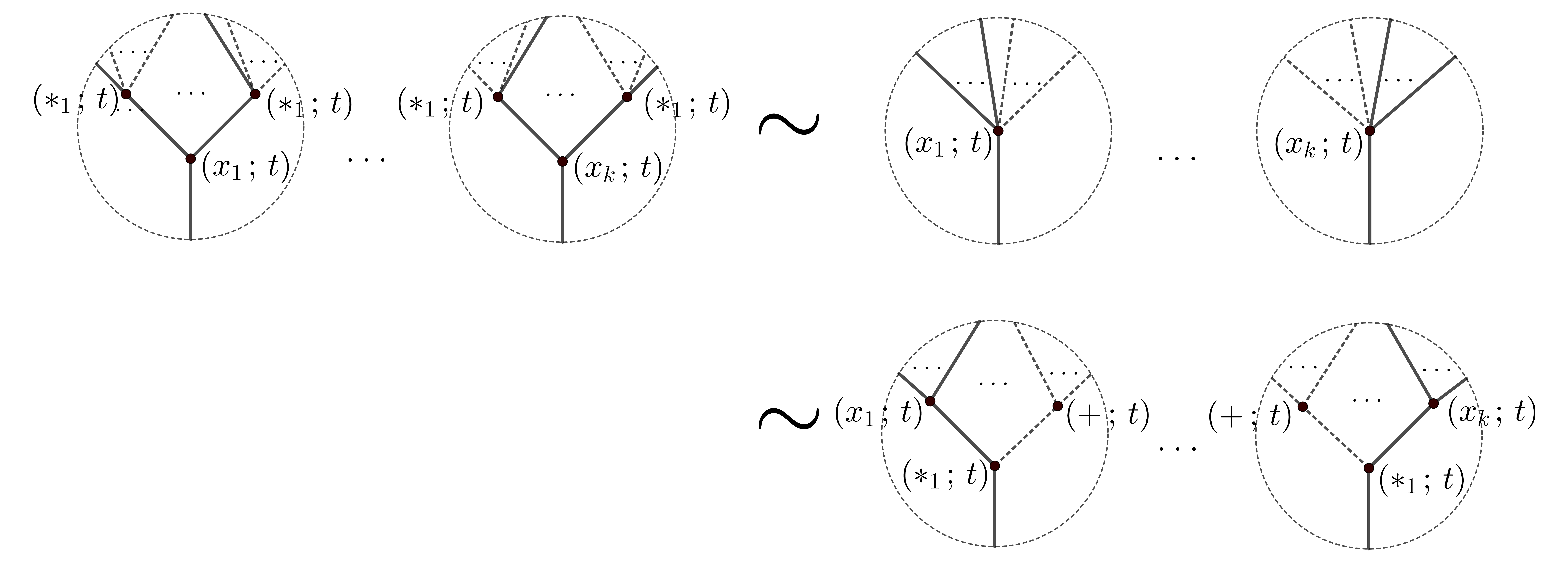}\vspace{-45pt}
\end{center}
\end{figure} 

\end{proof}

\newpage

\section{Delooping of multivariable manifold calculus functors}\label{D5}

In \cite{Ducoulombier18}, we explain the relation between the usual notions of bimodule and infinitesimal bimodule over an operad. More precisely, from a map of bimodules $\eta:O\rightarrow M$ over a $2$-reduced operad $O$, we build explicit maps\vspace{5pt}
\begin{equation}\label{O4}
\begin{array}{lcl}\vspace{7pt}
\xi:Map_{\ast}\big(\,\Sigma O(2)\,;\, Bimod^{h}_{O}(O\,;\,M)\,\big) & \longrightarrow & Ibimod_{O}^{h}(O\,;\,M), \\ 
\xi_{r}:Map_{\ast}\big(\,\Sigma O(2)\,;\, T_{r}\,Bimod^{h}_{O}(O\,;\,M)\,\big)  & \longrightarrow & T_{r}\,Ibimod_{O}^{h}(O\,;\,M),
\end{array} \vspace{5pt}
\end{equation}
where the spaces $Ibimod_{O}^{h}(O\,;\,M)$ and $T_{r}\,Ibimod_{O}^{h}(O\,;\,M)$ are pointed by the equivalence classes of the maps $\eta$ and $T_{r}(\eta)$, respectively, and are equipped with the Reedy model category structure (see Theorems \ref{H6} and \ref{H7}).\vspace{9pt}

The maps $\xi$ and $\xi_{r}$ always exist and they are weak equivalences under some conditions on the operad $O$. In \cite{Ducoulombier18}, we introduce the notion of \textit{coherent operad} (resp. $r$\textit{-coherent operad}) which are operads with exactly the properties needed to get weak equivalences (see Section \ref{F1}). The rest of this section is devoted to extend the above result to the context of $k$-fold bimodules and $k$-fold infinitesimal bimodules. More precisely, we prove the following statement: \vspace{9pt}

\begin{thm}\label{D3}
Let $O_{1},\ldots,O_{k}$ be a family of well-pointed $\Sigma$-cofibrant $2$-reduced operads relative to a reduced operad $O$ and let $\eta:\mathbb{O}^{+}\rightarrow M$ be a map of $k$-fold bimodules such that $M(A_{1},\ldots,A_{k})=\ast$ for any elements $(A_{1},\ldots,A_{k})$ with $A_{i}\in \{+\,;\,\emptyset\}$. We also assume the maps of the form\vspace{5pt}
\begin{equation}\label{P9}
\iota_{i}:M(+,\ldots,+,A_{i},+,\ldots,+)\longrightarrow M(\emptyset,\ldots,\emptyset,A_{i},\emptyset,\ldots,\emptyset),\vspace{5pt}
\end{equation}
induced by the $k$-fold bimodule structure, are weak equivalences. If $M$ is $\Sigma$-cofibrant, then there exist explicit continuous maps \vspace{5pt}
\begin{equation}\label{D1}
\begin{array}{rll}
\gamma:\mathbb{F}_{\vec{O}}(M) &  \longrightarrow &  Ibimod_{\vec{O}}^{h}(\mathbb{O}\,;\,M^{-}),
\end{array}
\end{equation}
\begin{equation}\label{D0}
\begin{array}{rll}
 \gamma_{\vec{r}}: T_{\vec{r}}\,\mathbb{F}_{\vec{O}}(M) & \longrightarrow  &  T_{\vec{r}}\,Ibimod_{\vec{O}}^{h}(\mathbb{O}\,;\,M^{-}),
\end{array} \vspace{5pt}
\end{equation}
with $\vec{r}=(r_{1},\ldots,r_{k})\in \mathbb{N}^{k}$. Furthermore, if the operads $O_{1},\ldots,O_{k}$ are $j_{1}$-coherent,..., $j_{k}$-coherent, respectively, then the map (\ref{D0}) is a weak equivalence for any $\vec{r}\leq \vec{j}=(j_{1},\ldots,j_{k})$. In particular, if the operads are coherent, then the map (\ref{D1}) is a weak equivalence.\vspace{7pt}
\end{thm}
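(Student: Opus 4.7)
The plan is to define $\gamma$ and $\gamma_{\vec{r}}$ through an alternative cofibrant replacement $\mathcal{R}$ of $\mathbb{O}$ in $\Lambda Ibimod_{\vec{O}}$ tailored to the $k$-fold bimodule resolution $\mathcal{B}^\Lambda(\mathbb{O}^+)$ of Section \ref{F5}. By Proposition \ref{D9}, the restriction $\mathcal{B}^\Lambda(\mathbb{O}^+)^-$ already carries a $k$-fold infinitesimal bimodule structure, but it is not cofibrant as such. The idea is to form a semi-direct product $\mathcal{R} = \mathcal{B}^\Lambda(\mathbb{O}^+)^- \rtimes \mathcal{I}$, where the auxiliary $k$-fold sequence $\mathcal{I}$ is built by freely attaching, for every nonempty $A\subset\{1,\ldots,k\}$, a cell of shape $\Sigma\bigl(\prod_{i\in A}O_i(2)\bigr)$ along a matching datum reconciling the ``intrinsic'' infinitesimal left action on $\mathcal{B}^\Lambda(\mathbb{O}^+)^-$ with the one induced by the map of $k$-fold bimodules $\mathbb{O}^+\to M$. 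The hypothesis that the maps $\iota_i$ of (\ref{P9}) are weak equivalences is precisely what makes these two actions compatible up to controlled homotopy and allows the attachment to be done coherently.

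Once $\mathcal{R}$ is constructed, the maps (\ref{D1}) and (\ref{D0}) are defined by adjunction. Unfolding, a morphism $\mathcal{R}\to M^-$ of $k$-fold infinitesimal bimodules decomposes as: a $k$-fold bimodule map $\mathcal{B}^\Lambda(\mathbb{O}^+)\to M$, together with pointed maps $\Sigma(\prod_{i\in A}O_i(2))\to Bimod_{\vec{O}}^h(\mathbb{O}^+;M)$ for each $A$ with $|A|\geq 2$ and $\Sigma O_i(2)\to Bimod_{O_i}^h(O_i;M_i)$ for each singleton $A=\{i\}$, all compatible under the inclusions $A'\subset A$. This is exactly the limit presentation of $\mathbb{F}_{\vec{O}}(M)$ described in Section \ref{P1}, so $\gamma$ is tautological; $\gamma_{\vec{r}}$ is obtained by performing the same construction in the truncated category $T_{\vec{r}}\Lambda Ibimod_{\vec{O}}$.

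For the weak-equivalence statements, I would filter the construction of $\mathcal{R}$ by $|A|$ and by arity, and argue by induction that each cell attachment is a weak equivalence in $\Lambda Ibimod_{\vec{O}}$. The case $|A|=1$ reduces, via restriction to the $i$-th variable and use of the product-over-$O(1)$ structure from Definition \ref{Z3}, to the single-variable cell attachment analyzed in \cite{Ducoulombier18}; the one-variable statement is a weak equivalence precisely when the morphism $\rho_{|A|}$ of Definition \ref{N1} is, i.e.\ under $j_i$-coherence of $O_i$. For $|A|\geq 2$ the cells live over fiber products of the one-variable data, and the statement follows from the $|A|=1$ case by a cube/pushout argument combining Reedy cofibrancy (Theorem \ref{L9}), the $\Sigma$-cofibrancy of $M$ and the $O_i$, and the weak equivalences (\ref{P9}). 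Running the induction up to arity $\vec{r}\leq \vec{j}$ yields that $\gamma_{\vec{r}}$ is a weak equivalence, and passing to $\vec{r}=\vec{\infty}$ under full coherence gives the same for $\gamma$.

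The main obstacle will be the construction of $\mathcal{I}$ and the verification that $\mathcal{R}$ is simultaneously cofibrant and weakly equivalent to $\mathbb{O}$ in $\Lambda Ibimod_{\vec{O}}$. The attaching maps have to interpolate between the two a priori different left actions on $\mathcal{B}^\Lambda(\mathbb{O}^+)^-$, and the bookkeeping for the $\Sigma$-cofibrancy conditions needed to preserve cofibrations under the successive pushouts is essentially the multivariable analogue of the one-variable argument in \cite{Ducoulombier18}. The key input that carries this through, beyond coherence itself, is the hypothesis (\ref{P9}), which turns a genuine $k$-variable compatibility problem into a compatible family of one-variable ones, allowing the inductive reduction to go through.
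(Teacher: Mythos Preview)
Your overall architecture matches the paper's: build an alternative cofibrant replacement of $\mathbb{O}$ in $\Lambda Ibimod_{\vec{O}}$ as a semi-direct product of $\mathcal{B}^\Lambda(\mathbb{O}^+)$ with an auxiliary sequence $\mathcal{I}$ (the paper denotes the result $\overline{\mathcal{I}b}(\mathbb{O})$, Construction \ref{L1}), use it to define $\gamma$, and prove the weak-equivalence claim by induction with coherence as the key input. So the strategy is right.

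However, several of your descriptions do not match what the paper actually does, and at least one is a genuine gap. First, $\mathbb{F}_{\vec{O}}(M)$ is \emph{not} indexed by all nonempty $A\subset\{1,\ldots,k\}$: it is the limit over the small category $C_k$ with $2k+1$ objects (Section \ref{P1}), involving only $\Sigma\vec{O}(2)$ and the individual $\Sigma O_i(2)$, not all partial products. Correspondingly, $\mathcal{I}$ is not built by attaching cells $\Sigma(\prod_{i\in A}O_i(2))$; it is defined in Definition \ref{E4} via pearled trees with $internal$/$external$ edge labels, and the suspensions only appear a posteriori through the projection maps $\tau_i$ of (\ref{F3}). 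Second, and more seriously, the map $\gamma$ is \emph{not} obtained by adjunction or by ``unfolding'' a morphism $\mathcal{R}\to M^-$: the paper defines it by the explicit formula (\ref{Q0}), which uses the applications $\varrho_i$, $\tau_i$ and $\eta''$ simultaneously; verifying that this formula respects the $k$-fold infinitesimal bimodule structure is precisely where hypothesis (\ref{P9}) is consumed. Your claim that $\gamma$ is ``tautological'' skips the actual content.

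Finally, the induction in the paper runs over the arity $\vec{r}$ (not over $|A|$), via a tower of fibrations between $T_{\vec{m}}$ and $T_{\vec{n}}$ built from the filtrations (\ref{O3}) and (\ref{O6}). At each stage the fibers $F_1^i(g)$ and $F_2^i(g_\ast)$ are identified with spaces of section extensions, and the comparison reduces to showing that the inclusion $\partial\overline{\mathcal{I}b}(\mathbb{O})(m_1{+}1;i)\hookrightarrow\partial'\overline{\mathcal{I}b}(\mathbb{O})(m_1{+}1;i)$ is a weak equivalence. The reduction to the one-variable case is done via the homotopy retract of Lemma \ref{L2}, which splits $\overline{\mathcal{I}b}(\mathbb{O})$ as a product $\overline{\mathcal{I}b}_{O_1}(O_1)\times\overline{\mathcal{I}b}_{O_2}(O_2)$ compatibly with both boundaries; coherence of each $O_i$ then identifies the relevant boundary inclusions with the maps $\hocolim_{\partial\Psi_\bullet}\to\hocolim_{\Psi'_\bullet}$ of Definition \ref{N1}. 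Your ``cube/pushout argument'' points in the right direction but misses this section-extension mechanism, which is the technical heart of the proof.
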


In the next sub-section we give a description of the spaces $\mathbb{F}_{\vec{O}}(M)$ and $T_{\vec{r}}\,\mathbb{F}_{\vec{O}}(M)$ in terms of derived mapping spaces of $k$-fold bimodules. Then we introduce an alternative cofibrant resolution of $\mathbb{O}$ as a $k$-fold infinitesimal bimodule in order to build explicitly the maps (\ref{D1}) and (\ref{D0}). In Section \ref{O5}, we recall the notion of \textit{coherent operad}. Finally, the last subsections are devoted to the proof of Theorem \ref{D3}. As a direct consequence of Theorem \ref{D3} and the definitions of the spaces $\mathbb{F}_{\vec{O}}(M)$ and $T_{\vec{r}}\,\mathbb{F}_{\vec{O}}(M)$, one has also the following corollary:\vspace{7pt}

\begin{cor}
If $O_{1}=\cdots=O_{k}=O$ is a well pointed $\Sigma$-cofibrant operad and $\eta:\mathbb{O}^{+}\rightarrow M$ is a map of $k$-fold bimodules satisfying the condition (\ref{P9}) and such that $M(A_{1},\ldots,A_{k})=\ast$ for any elements $(A_{1},\ldots,A_{k})$ with $A_{i}\in \{+\,;\,\emptyset\}$. If $M$ is $\Sigma$-cofibrant, then there exist explicit maps\vspace{3pt}
$$
\begin{array}{rll}\vspace{9pt}
 \gamma: Map_{\ast}\big( \Sigma O(2) \,;\, ,Bimod_{\vec{O}}^{h}(\mathbb{O}^{+}\,;\,M)\big) & \longrightarrow  &  Ibimod_{\vec{O}}^{h}(\mathbb{O}\,;\,M^{-}),\\
 \gamma_{\vec{r}}: Map_{\ast}\big( \Sigma O(2) \,;\, T_{\vec{r}}\,Bimod_{\vec{O}}^{h}(\mathbb{O}^{+}\,;\,M)\big) & \longrightarrow  &  T_{\vec{r}}\,Ibimod_{\vec{O}}^{h}(\mathbb{O}\,;\,M^{-}),
\end{array} \vspace{3pt}
$$
with $\vec{r}=(r_{1},\ldots,r_{k})\in \mathbb{N}^{k}$. Furthermore, if the operad $O$ is $j$-coherent, then the map $\gamma_{\vec{r}}$ is a weak equivalence for any $\vec{r}\leq \vec{j}=(j,\ldots,j)$. In particular, if the operad is coherent, then the map $\gamma$ is a weak equivalence.
\end{cor}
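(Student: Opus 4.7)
The plan is to deduce this corollary by direct specialization of Theorem \ref{D3} to the constant family $O_1=\cdots=O_k=O$, coupled with the reduction of the limits defining $\mathbb{F}_{\vec{O}}(M)$ and $T_{\vec{r}}\,\mathbb{F}_{\vec{O}}(M)$ to the single mapping spaces appearing in the corollary. No new homotopical input is required; only a careful bookkeeping of the limit diagram is needed.

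First I would check that every hypothesis of Theorem \ref{D3} is implied by the hypotheses of the corollary. When the family is constant, well-pointedness and $\Sigma$-cofibrancy of the single operad $O$ pass to every $O_i=O$. The structural maps $f_i\colon O_i\to O$ can be taken to be identities, so the reference operad in Theorem \ref{D3} can be taken to be $O$ itself (which is reduced by assumption). The conditions on $M$ -- triviality in arities $(A_1,\ldots,A_k)$ with $A_i\in\{+,\emptyset\}$, $\Sigma$-cofibrancy, and the weak equivalences (\ref{P9}) -- are word-for-word those of Theorem \ref{D3}. Finally, $j$-coherence of $O$ produces $j$-coherence of each $O_i=O$, so the hypothesis of Theorem \ref{D3} with $\vec{j}=(j,\ldots,j)$ is met whenever one wants to conclude a weak equivalence for $\vec{r}\leq\vec{j}$.

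Next I would identify $\mathbb{F}_{\vec{O}}(M)$ and $T_{\vec{r}}\,\mathbb{F}_{\vec{O}}(M)$, defined in Section \ref{P1} as a limit of mapping spaces, with $Map_{\ast}\big(\Sigma O(2)\,;\,Bimod_{\vec{O}}^{h}(\mathbb{O}^{+}\,;\,M)\big)$ and its truncated analogue. Since all the $O_i$ coincide, the product $\prod_{i\in A}O_{i}(2)$ that appears in the defining diagram collapses: the diagonal $O(2)\to O(2)^{|A|}$ is compatible with the compatibility relations cutting out $\mathbb{F}_{\vec{O}}(M)$ inside the ambient product, and the mapping spaces over the various factors $\prod_{i\in A}O_{i}(2)$ are all pulled back from mapping spaces over the single space $O(2)$. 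At the same time, the second factor $\prod_{i=1}^{k}Map_{\ast}(\Sigma O_{i}(2);Bimod_{O_{i}}^{h}(O_i;M_i))$ is subsumed by the first by applying the restriction $M\mapsto M_i$ to the $i$-th coordinate and invoking the weak equivalences (\ref{P9}) to identify $M_i$ with the corresponding component of $M^{-}$ after restriction. This simplification is already announced in the introduction of the paper and produces exactly the simpler form appearing in the corollary.

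With the two identifications in place, the maps $\gamma$ and $\gamma_{\vec{r}}$ of the corollary are the specializations of the maps (\ref{D1}) and (\ref{D0}) constructed in Theorem \ref{D3}, and the statements about weak equivalence under $j$-coherence (respectively coherence) follow immediately. The only delicate point I would expect in the write-up is verifying that the reduction in the previous paragraph is a genuine homeomorphism (or at worst a weak equivalence) of mapping spaces rather than merely a zigzag; this amounts to checking that the diagonal-type factorizations are compatible with the semi-direct product structure of the cofibrant replacement of $\mathbb{O}$ used in Section \ref{D5}, but all the necessary compatibilities are transparent once one unwinds the constancy of the family $\vec{O}$.
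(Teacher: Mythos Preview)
Your approach is correct and is exactly the one the paper uses: the corollary is an immediate specialization of Theorem~\ref{D3} once the limit defining $\mathbb{F}_{\vec{O}}(M)$ collapses to the single term $Map_{\ast}\big(\Sigma O(2)\,;\,Bimod_{\vec{O}}^{h}(\mathbb{O}^{+}\,;\,M)\big)$.

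One small correction to your explanation of that collapse. The simplification does not use the weak equivalences~(\ref{P9}); it is purely diagrammatic. When $O_{1}=\cdots=O_{k}=O$ and the structural maps $f_{i}$ are identities, the compatibility relations defining $\vec{O}(2)=\vec{O}_{S}(\{1,2\},\ldots,\{1,2\})$ cut out the diagonal in $O(2)^{k}$, so $\vec{O}(2)=O(2)$ and the maps $\delta_{i}\colon\Sigma\vec{O}(2)\to\Sigma O_{i}(2)$ in~(\ref{P3}) are the identity. Consequently, in the $C_{k}$-diagram of Section~\ref{P1}, each arrow $F_{\vec{O};M}(i)\to F_{\vec{O};M}(0,i)$ is an identity, and the limit over $C_{k}$ reduces on the nose to the single vertex $F_{\vec{O};M}(0)$. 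The factors $Map_{\ast}(\Sigma O_{i}(2);Bimod_{O_{i}}^{h}(O_{i};M_{i}))$ are absorbed because they are determined by the restriction maps from $F_{\vec{O};M}(0)$, not because of any equivalence involving $M_{i}$ and $M^{-}$. Condition~(\ref{P9}) is still needed, but only as a hypothesis of Theorem~\ref{D3} (it enters the construction of $\gamma$ in~(\ref{Q0})), not for the identification of the limit.
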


\subsection{Construction of the spaces $\mathbb{F}_{\vec{O}}(M)$ and $T_{\vec{r}}\,\mathbb{F}_{\vec{O}}(M)$}\label{P1}

Let $O_{1},\ldots,O_{k}$ be a family of $2$-reduced operads relative to a reduced operad $O$ and let $\eta:\mathbb{O}^{+}\rightarrow M$ be a map of $k$-fold bimodules satisfying the conditions of Theorem \ref{D3}. For any $i\in \{1,\ldots,k\}$, the $k$-fold sequences given by \vspace{5pt}
\begin{equation*}\label{P5}
M_{i}(A)= M( \underset{i-1}{\underbrace{+,\ldots,+}},\,A,\,\underset{k-i}{\underbrace{+,\ldots,+}}),\hspace{15pt} \forall A\in \Sigma,
\vspace{6pt}
\end{equation*}
inherits a $1$-fold bimodule structure over $O_{i}$ from $M$. Furthermore, the map $\eta$ provides a bimodule map\vspace{5pt}
$$
\eta_{i}:O_{i}\longrightarrow M_{i}.\vspace{5pt}
$$
We also introduce the following notation:\vspace{5pt}
$$
\vec{O}(2):= \vec{O}_{S}(\{1\,;\,2\},\ldots,\{1\,;\,2\}). \vspace{4pt}
$$
Moreover, for any $i\in \{1,\ldots,k\}$, there is a map between suspension spaces \vspace{5pt}
\begin{equation}\label{P3}
\begin{array}{clcl}\vspace{5pt}
 \delta_{i}: & \textstyle\sum \vec{O}(2) & \longrightarrow & \textstyle\sum O_{i}(2); \\ 
 & [x_{1},\ldots,x_{k}\,;\,t] & \longmapsto & [x_{i}\,;\,t].
\end{array} 
\vspace{5pt}
\end{equation}

Let $C_{k}$ be the category whose objects are the integers $0,1,\ldots, k$ and the pairs of the form $(0\,,\,i)$ with $i\in \{1,\ldots,k\}$. The set of morphism $C_{k}(\alpha\,;\,\beta)$ is reduced to one element if $\alpha=\beta$ or $\alpha\in \{0\,;\,i\}$ and $\beta=(0\,,\,i)$. Otherwise it is the empty set. Then we consider the functor $F_{\vec{O}\,;\,M}$ from the category $C_{k}$ to spaces given by \vspace{5pt}
\begin{equation}\label{F2}
\begin{array}{rcl}\vspace{12pt}
F_{\vec{O}\,;\,M}(0) & := & Map_{\ast}\left( 
\textstyle\sum \vec{O}(2) \,;\, Bimod_{\vec{O}}^{h}(\mathbb{O}^{+}\,;\, M)\right), \\ \vspace{12pt}
F_{\vec{O}\,;\,M}(i) & := & Map_{\ast}\left( 
\textstyle\sum O_{i}(2) \,;\, Bimod_{O_{i}}^{h}(O_{i}\,;\, M_{i})\right), \hspace{15pt} \text{with } i\in \{1,\ldots,k\}, \\ 
F_{\vec{O}\,;\,M}(0\,,\,i) & := &  Map_{\ast}\left( 
\textstyle\sum \vec{O}(2) \,;\, Bimod_{O_{i}}^{h}(O_{i}\,;\, M_{i})\right),
\end{array} \vspace{5pt}
\end{equation}
where the derived mapping spaces of bimodules are pointed by the equivalence class of the maps $\eta$ and $\eta_{i}$. On morphisms, there are two cases to consider. If the morphism is of the form $f\in C_{k}(0\,;\,(0\,,\,i))$, then $F_{\vec{O}\,;\,M}(f)$ is obtained from the map  (\ref{P3}).  If the morphism is of the form $f\in C_{k}(i\,;\,(0\,,\,i))$, then $F_{\vec{O}\,;\,M}(f)$ is obtained from the restriction map\vspace{5pt}
$$
rest_{i}:Bimod_{\vec{O}}^{h}(\mathbb{O}^{\,+}\,;\, M) \longrightarrow Bimod_{O_{i}}^{h}(O_{i}\,;\, M_{i}).\vspace{5pt}
$$

\noindent In particular, if $k=2$, then one has the diagram\vspace{5pt}
$$
\xymatrix@R=10pt@C=90pt{
 Map_{\ast}\left( 
 \sum O_{1}(2)\,;\, Bimod_{O_{1}}^{h}(O_{1}\,;\, M_{1})\right)\ar[rd]
&  \\
&  Map_{\ast}\left( 
\sum  \vec{O}(2)\,;\, Bimod_{O_{1}}^{h}(O_{1}\,;\, M_{1})\right)\\
 Map_{\ast}\left( 
\sum \vec{O}(2)\,;\, Bimod_{\vec{O}}^{h}(\mathbb{O}^{+}\,;\, M)\right) \ar[ru]\ar[rd]
 & \\
 &  Map_{\ast}\left( 
\sum \vec{O}(2) \,;\, Bimod_{O_{2}}^{h}(O_{2}\,;\, M_{2})\right)\\
  Map_{\ast}\left( 
\sum O_{2}(2)\,;\, Bimod_{O_{2}}^{h}(O_{2}\,;\, M_{2})\right)\ar[ru] &
}\vspace{-5pt}
$$

\newpage

Similarly, for $\vec{r}=(r_{1},\ldots,r_{k})\in \mathbb{N}^{k}$, we define the truncated functor $T_{\vec{r}}\,F_{\vec{O}\,;\,M}$ from the category $C_{k}$ to spaces by considering derived mapping spaces of $\vec{r}$-truncated $k$-fold bimodules instead of $k$-fold bimodules in the definition (\ref{F2}). Finally, the two spaces researched are the following ones:\vspace{5pt}
$$
\mathbb{F}_{\vec{O}}(M)=\hspace{-7pt}\underset{\hspace{20pt}C_{k}}{lim}\,F_{\vec{O}\,;\,M} \hspace{15pt} \text{and}\hspace{15pt} 
T_{\vec{r}}\,\mathbb{F}_{\vec{O}}(M)=\hspace{-7pt}\underset{\hspace{20pt}C_{k}}{lim}\,T_{\vec{r}}\,F_{\vec{O}\,;\,M}.\vspace{-3pt}
$$   

In particular, if $O_{1}=\cdots=O_{k}=O$, then, for any $i\in \{1,\ldots,k\}$, the spaces $\vec{O}(2)=O(2)$ and the maps (\ref{P3}) are the identity maps. Consequently, given a morphism of the form $f\in C_{k}(i\,;\,(0\,,\,i))$, the continuous map $F_{\vec{O}\,;\,M}(f)$ is also the identity map and the above limits can be simplified and take the following form:
$$
\begin{array}{rcl}\vspace{9pt}
\mathbb{F}_{\vec{O}}(M) & = & Map_{\ast}\big( \Sigma O(2)\,;\, Bimod_{\vec{O}}^{h}(\mathbb{O}^{+}\,;\,M)\big), \\ 
T_{\vec{r}}\,\mathbb{F}_{\vec{O}}(M) & = & Map_{\ast}\big( \Sigma O(2)\,;\, T_{\vec{r}}\,Bimod_{\vec{O}}^{h}(\mathbb{O}^{+}\,;\,M)\big).
\end{array} 
$$

\subsection{An alternative resolution for the $k$-fold infinitesimal bimodule $\mathbb{O}$}

In order to give an explicit description of the maps $(\ref{D1})$ and $(\ref{D0})$, we need to change slightly the cofibrant resolution of $\mathbb{O}$ in the category of $k$-fold infinitesimal bimodules introduced in Construction \ref{E2}. This alternative resolution is obtained as a semi-direct product of the resolution $\mathcal{B}^{\Lambda}(\mathbb{O}^{+})$ and a sequence $\mathcal{I}$.  \vspace{7pt}

\begin{defi}\label{E4}\textbf{The intermediate sequence $\mathcal{I}$}

\noindent The space $\mathcal{I}_{\vec{O}}(n)$, also denoted by $\mathcal{I}(n)$ when $\vec{O}$ is understood, consists in labelling by elements in the operads $O_{1},\ldots,O_{k}$ and points in the interval $[0\,,\,1]$ the vertices of an alternative version of the set of $k$-fold pearled trees (see Definition \ref{C8}). More precisely, we denote by $pTree'[n]$ the set formed by families $(T,f_{1},\ldots,f_{k})$ where $T$ is a pearled tree having $n$ leaves and without univalent vertices other than the pearl. The application $f_{i}:E(T)\rightarrow\{external \,;\, internal\}$  satisfies the following conditions:\vspace{3pt}
\begin{itemize}
\item[$\blacktriangleright$] the output edge of the pearl is necessarily $internal$,\vspace{3pt}
\item[$\blacktriangleright$] if the output edge of a vertex is $external$, then its incoming edges are also $external$,\vspace{3pt}
\item[$\blacktriangleright$] if $f_{i}(e)=internal$, then $f_{j}(e)=internal$ for all $j\neq i$ or  $f_{j}(e)=external$ for all $j\neq i$.\vspace{3pt}
\end{itemize}
For any $v\in V(T)$, we denote by $S_{v}=(S_{v}^{1},\ldots,S_{v}^{k})\in \mathcal{P}_{k}(\{1,\ldots,|v|\})$ the element given by the formula\vspace{5pt}
$$
S_{v}^{i}\coloneqq \left\{
\begin{array}{l}\vspace{7pt}
\text{the augmented set } + \text{ if the image of the output edge of } v \text{ by } f_{i} \text{ is } external, \\ 
\big\{ j\in \{1,\ldots,|v|\}\,\,\big|\,\, \text{the image of the } j\text{-th incoming edge of } v \text{ by } f_{i} \text{ is }internal\,\big\} \text{ otherwise}.
\end{array} 
\right.\vspace{5pt}
$$
Finally, the space $\mathcal{I}(n)$ is given by the quotient of the subspace of\vspace{5pt}
$$
\left.\left( \,\,\, \underset{pTree'[n]}{\coprod} \vec{O}_{S_{p}}(S_{p}^{1},\ldots,S_{p}^{k})\times \underset{v\in V(T)\setminus \{p\}}{\prod} \big[ \vec{O}_{S_{v}}(S_{v}^{1},\ldots,S_{v}^{k})\times [0\,,\,1]\big]\,\,\,\right)\,\, \right/\!\!\sim\vspace{5pt}
$$
with the following condition: if two vertices $v$ and $v'$ are connected by an inner edge from $v$ to $v'$ according to the direction toward the pearl, then the real numbers $t_{v}$ and $t_{v'}$ associated to $v$ and $v'$,respectively, satisfy the relation $t_{v}\geq t_{v'}$. By convention, the pearl is indexed by $0$ and the $external$ edges are represented by dotted lines.\vspace{-9pt}
\begin{figure}[!h]
\begin{center}
\includegraphics[scale=0.215]{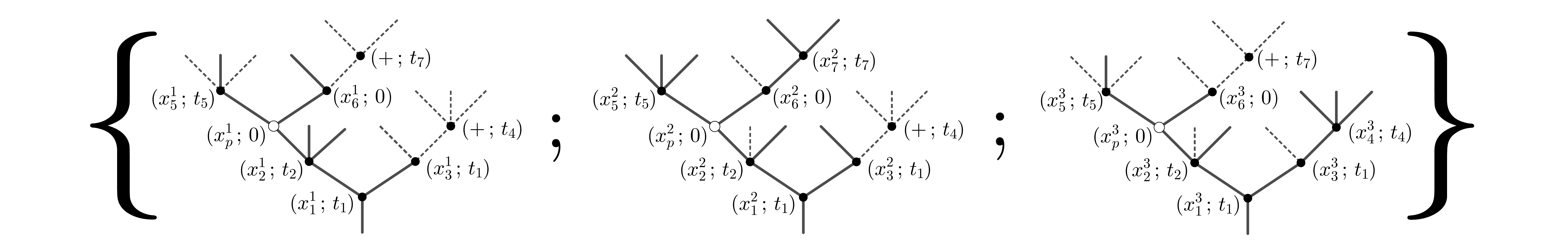}\vspace{-9pt}
\caption{Illustration of a point in $\mathcal{I}(12)$ with $k=3$.}\vspace{-40pt}\label{C9}
\end{center}
\end{figure}

\newpage

The equivalence relation is generated by the unit axiom and the compatibility with the symmetric group action preserving the position of the pearl. Furthermore, if two vertices $v$ and $v'$ are connected by an inner edge $e$ (from $v$ to $v'$ according to the orientation toward the pearl) and if they are indexed by the same real number $t$, then we contract $e$ using the operations (\ref{D6}). The vertex obtained from the contraction is indexed by $t$. We denote by $[T\,;\,\{x_{v}\}\,;\,\{t_{v}\}]$ a point in $\mathcal{I}(n)$. For instance, the point represented in Figure \ref{C9} is equivalent to the following one: \vspace{-5pt}
\begin{figure}[!h]
\begin{center}
\includegraphics[scale=0.215]{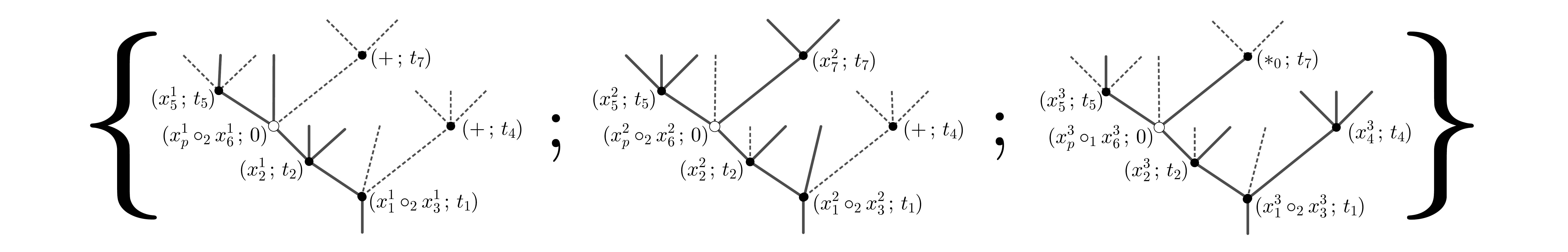}.\vspace{-10pt}
\end{center}
\end{figure}
\end{defi}

\noindent \textbf{The $k$-fold right operation over $\vec{O}_{S}$:} 

\noindent Let $x=[T\,;\,\{x_{v}\}\,;\,\{t_{v}\}]$ be a point in $\mathcal{I}(n)$. For any $i\in \{1,\ldots,n\}$ and $S=\{S_{1},\ldots,S_{k}\}\in\mathcal{P}_{k}(\{1,\ldots,m\})$ such that $S_{j}=+$ if the image of $i$-th leaf of $T$ by the application $f_{j}$ is $external$, there exists a map\vspace{5pt}
$$
\circ^{i}[x]: \vec{O}_{S}(S_{1},\ldots,S_{k})\longrightarrow \mathcal{I}(n+m-1),\vspace{5pt}
$$
which consists in grafting the $m$-corolla indexed by the point in $\vec{O}_{S}(S_{1},\ldots,S_{k})$ and the real number $1$ into the $i$-th leaf of $T$. Then, we extend the application $f_{i}:E(T)\rightarrow \{external\,;\,internal\}$ by labelling the $j$-th leaf the corolla by $internal$ if and only if $j\in S_{i}$. For instance, the composition $\circ^{2}[x]$, where $x$ is the point represented in Figure \ref{C9}, with an element $(\theta_{1},\theta_{2},\theta_{3})\in \vec{O}_{S}(S_{1},S_{2},S_{3})$ associated to the elements $S_{1}=\{1\,;\,3\}$, $S_{2}=\{2\}$ and $S_{3}=\emptyset$ in $\mathcal{P}_{3}(\{1,2,3\})$ gives rise to

\begin{figure}[!h]
\begin{center}
\includegraphics[scale=0.215]{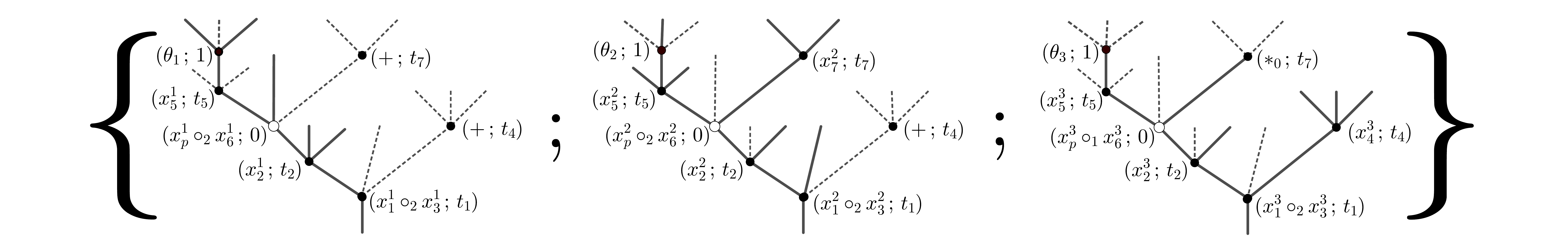}\vspace{-10pt}
\end{center}
\end{figure}

\noindent \textbf{The left $k$-fold infinitesimal operations over $\vec{O}$:} 

\noindent Given $(n_{1},\ldots,n_{k})\in \mathbb{N}^{k}$, there is a map of the form\vspace{5pt}
$$
\mu:\vec{O}(n_{1},\ldots,n_{k}) \times \mathcal{I}(m)\longrightarrow \mathcal{I}(m+n_{1}+\cdots + n_{k}-k),\vspace{5pt}
$$
which consists in grafting the point in $\mathcal{I}(m)$ into the first leaf of the $(n_{1}+\cdots+n_{k}-k+1)$-corolla indexed by the point in $\vec{O}(n_{1},\ldots,n_{k})$ and the real number $1$. Then, we extend the map $f_{i}:E(T)\rightarrow \{internal \,;\, external\}$, associated to the point in $\mathcal{I}(m)$, by labelling the $j$-th leaf of the corolla by $internal$ if and only if one has $j\in\{1,n_{1}+\cdots + n_{i-1}-i+3,\ldots,n_{1}+\cdots+ n_{i}-i+1\}$. For instance, the left $k$-fold infinitesimal  operation, applied to the point in $\mathcal{I}(12)$ represented in Figure \ref{C9} and the point $(\theta_{1},\theta_{2},\theta_{3})\in \vec{O}(2;3;1)$, gives rise to the following element in $\mathcal{I}(15)$:\vspace{0pt}
\begin{figure}[!h]
\begin{center}
\includegraphics[scale=0.215]{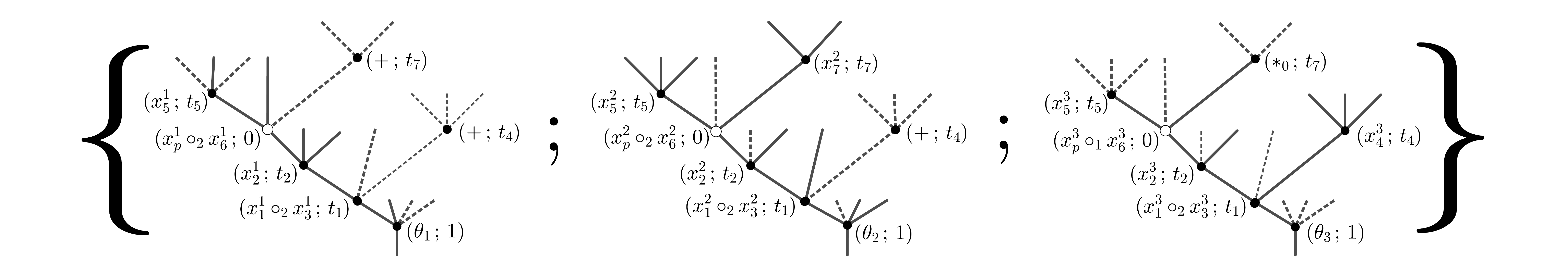}
\end{center}\vspace{-25pt}
\end{figure}

\newpage

\noindent Finally, there is a map \vspace{5pt}
\begin{equation}\label{E8}
\eta'':\mathcal{I}(n)\longrightarrow \underset{S\in \mathcal{P}_{k}(\{1,\ldots,n\})}{\displaystyle \coprod} \vec{O}_{S}(S_{1},\ldots,S_{k}),\vspace{5pt}
\end{equation}
sending a point $[T\,;\,\{\theta_{v}\}\,;\,\{t_{v}\}]$ to the element obtained by fixing the parameters $\{t_{v}\}$ to $0$ and  indexed by the family of power sets $S=(S_{1},\ldots,S_{k})\in \mathcal{P}_{k}(\{1,\ldots,n\})$ with \vspace{3pt}
$$
S_{i}=\big\{j\in \{1,\ldots,n\}\,\big|\, \text{the image of the } j \text{-th leaf of } T \text{ by } f_{i} \text{ is } internal\big\}.\vspace{7pt}
$$

\begin{const} \label{L1}
The aim of the following construction is to introduce an alternative cofibrant resolution of the $k$-fold infinitesimal bimodule $\mathbb{O}$ by using the $k$-fold bimodule structure of $\mathbb{O}^{+}$. The $k$-fold sequence $\overline{\mathcal{I}b}(\mathbb{O})$ is obtained as a combination of the intermediate sequence $\mathcal{I}$ introduced in Definition \ref{E4} and the Boardman-Vogt resolution $\mathcal{B}^{\Lambda}(\mathbb{O}^{+})$. More precisely, one has\vspace{5pt}
$$
\left.\overline{\mathcal{I}b}(\mathbb{O})(n_{1},\ldots,n_{k})\subset \left( \underset{\substack{l>0,\,\{m_{u}^{i}\}_{1\leq u \leq k}^{1\leq i \leq l}}}{\displaystyle\coprod} \hspace{-5pt}\Sigma_{\vec{n}}\times\mathcal{I}(l)\times \underset{1\leq i\leq l}{\displaystyle \prod} \mathcal{B}^{\Lambda}(\mathbb{O}^{+})(m_{1}^{i},\ldots,m_{k}^{i})\right)\,\,\right/ \!\!\sim , \hspace{10pt} \text{with } \underset{\substack{1\leq i \leq l\\ m_{u}^{i}\neq +}}{\sum} m_{u}^{i}=n_{u},\vspace{5pt}
$$
such that $m_{u}^{i}\in \mathbb{N}\sqcup\{+\}$ and $(m_{1}^{i},\ldots,m_{k}^{i})\neq (+,\ldots, +)$. A point in $\overline{\mathcal{I}b}(\mathbb{O})$ is denoted by $[x\,;\,y^{1},\ldots,y^{l};\sigma]$ where $x\in \mathcal{I}(l)$, $y^{i}=(y_{1}^{i},\ldots,y_{k}^{i})\in \mathcal{B}^{\Lambda}(\mathbb{O}^{+})$ and $\sigma\in \Sigma_{n_{1}}\times \cdots \times \Sigma_{n_{k}}=\Sigma_{\vec{n}}$. Furthermore, such a point satisfies the following condition:\vspace{5pt} 
\begin{itemize}
\item[$\blacktriangleright$] the image of the $i$-th leaf of $x$ by $f_{u}$ is $external$ if and only if $m_{u}^{i}=+$.\vspace{1pt}
\end{itemize}
 
\begin{figure}[!h]
\begin{center}
\includegraphics[scale=0.25]{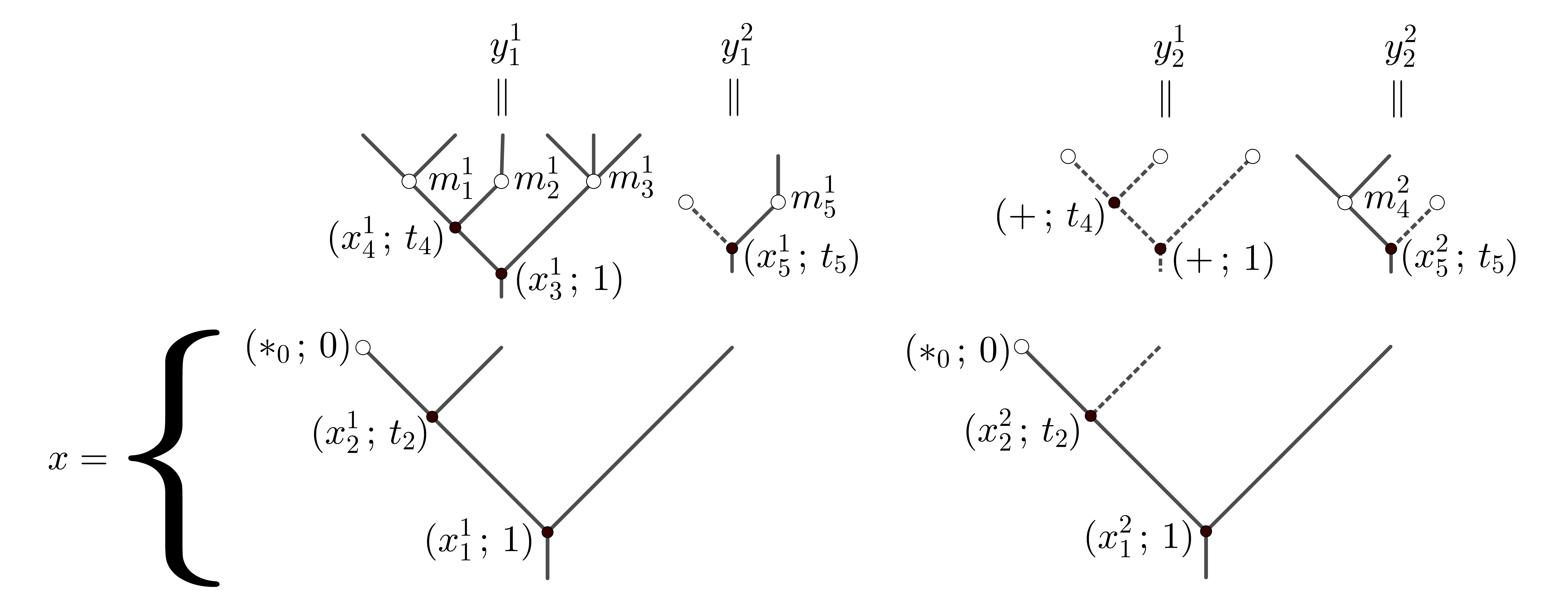}\vspace{-1pt}
\caption{Illustration of a point in $\overline{\mathcal{I}b}(\mathbb{O})(7;2)$.}\label{E5}
\end{center}\vspace{-5pt}
\end{figure}

The equivalence relation is generated by the compatibility with the symmetric group action (preserving the position of the pearl for the pearled trees) as well as the unit axiom. We also extend the relation illustrated in Figure \ref{G3}. If one the leaf of $x$ is associated to a based point in $\mathbb{O}^{+}(n_{1},\ldots,n_{k})$, with $n_{i}\in \{+\,;\,0\}$, then we remove the based point and we contract the corresponding leaf of $x$ using the operadic structures of $O_{1},\ldots,O_{k}$. Furthermore, the equivalence relation is also generated by the following conditions: \vspace{3pt}
\begin{itemize}
\item[$\blacktriangleright$] the relation equalizing the right $\vec{O}$-action on $\mathcal{I}$ and the left $\vec{O}$-action on $\mathcal{B}^{\Lambda}(\mathbb{O}^{+})$: for any element $\vec{\theta}=\vec{O}_{S}(S_{1},\ldots, S_{k})$, one has the identification\vspace{5pt}
\begin{equation}\label{F0}
\big[ x,y^{1},\ldots,\vec{\theta}(y^{i\,;\,1},\ldots,y^{i\,;\,m}),\ldots,y^{l}\big]\sim \big[ \circ^{i}[x](\vec{\theta}),y^{1},\ldots,y^{i\,;\,1},\ldots,y^{i\,;\,m},\ldots,y^{l}\big].\vspace{5pt}
\end{equation}

For instance, the element represented in Figure \ref{E5} is equivalent to the following one:
\begin{figure}[!h]
\begin{center}
\includegraphics[scale=0.25]{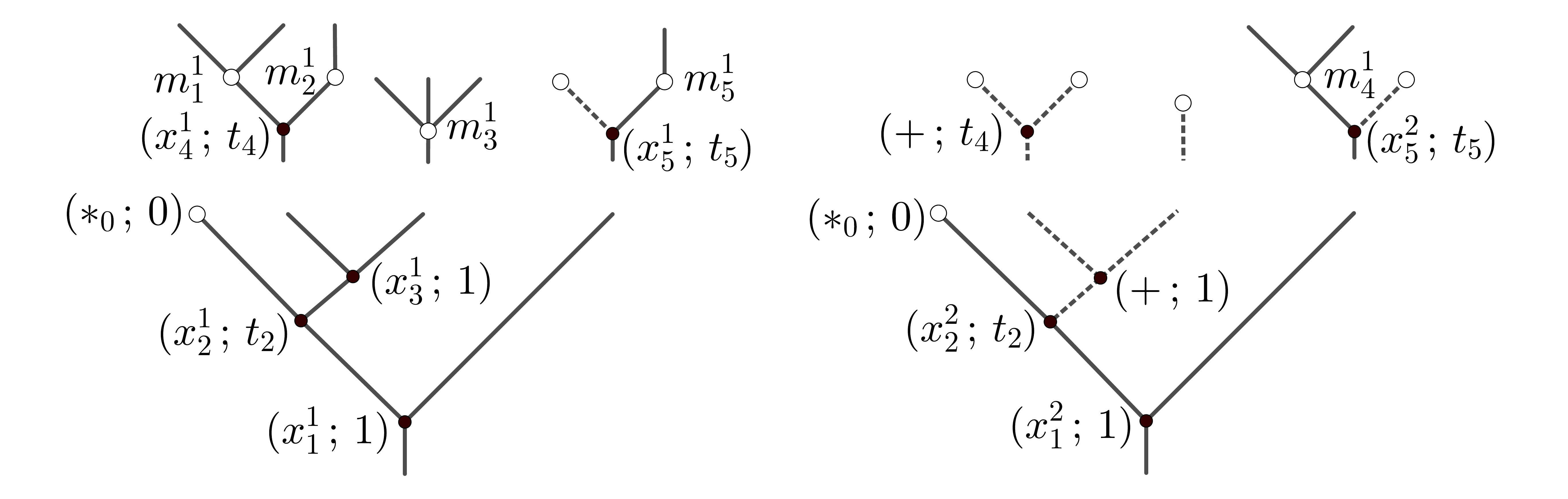}
\end{center}\vspace{-10pt}
\end{figure}

\item[$\blacktriangleright$] the relation used to get a $k$-fold infinitesimal bimodule: if the root of $x\in \mathcal{I}$ is indexed by $1$, then the elements $y_{u}^{i}$ associated to a leaf directly connected to the root of $x$ are identified with their image through the map $(\ref{E3})$. For instance, the point represented in Figure \ref{E5} is equivalent to 
\begin{figure}[!h]
\begin{center}
\includegraphics[scale=0.25]{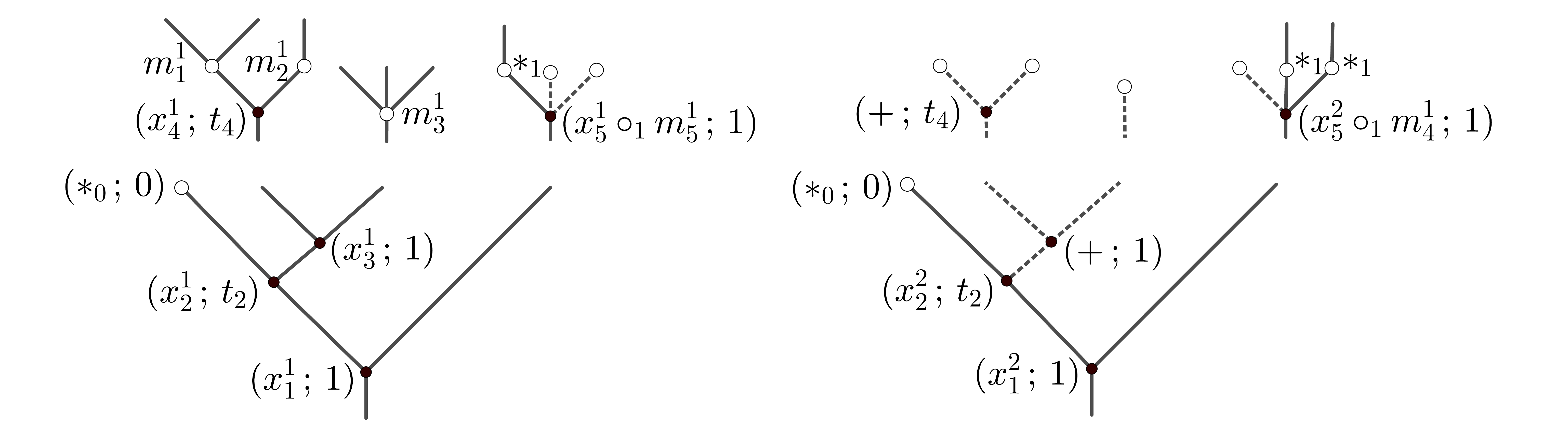}
\end{center}\vspace{-10pt}
\end{figure}
\end{itemize}

The $k$-fold right infinitesimal operations are obtained by using the $k$-fold right bimodule structure of $\mathcal{B}^{\Lambda}(\mathbb{O}^{+})$. The $k$-fold infinitesimal left operations are defined using the left structure of the intermediate sequence $\mathcal{I}$. More precisely, one has \vspace{5pt}
$$
\begin{array}{ccl}\vspace{7pt}
\mu:\vec{O}(n_{1}+1,\ldots,n_{k}+1)\times \overline{\mathcal{I}b}(\mathbb{O})(m_{1},\ldots,m_{k})  & \longrightarrow  & \overline{\mathcal{I}b}(\mathbb{O})(n_{1}+m_{1},\ldots,n_{k}+m_{k}); \\ 
\vec{\theta}\,;\, \big[ x;y^{1},\ldots,y^{l};\sigma\big] & \longmapsto & \big[ \mu(\vec{\theta}\,;\,x)\,;\,y^{1},\ldots,y^{l},z^{1},\ldots,z^{n_{1}+\cdots+n_{k}};\sigma\big],
\end{array} \vspace{5pt}
$$
with $z^{i}=(z^{i}_{1},\ldots,z^{i}_{k})$ given by $z_{l}^{i}=
\left\{
\begin{array}{cl}\vspace{5pt}
\includegraphics[scale=0.09]{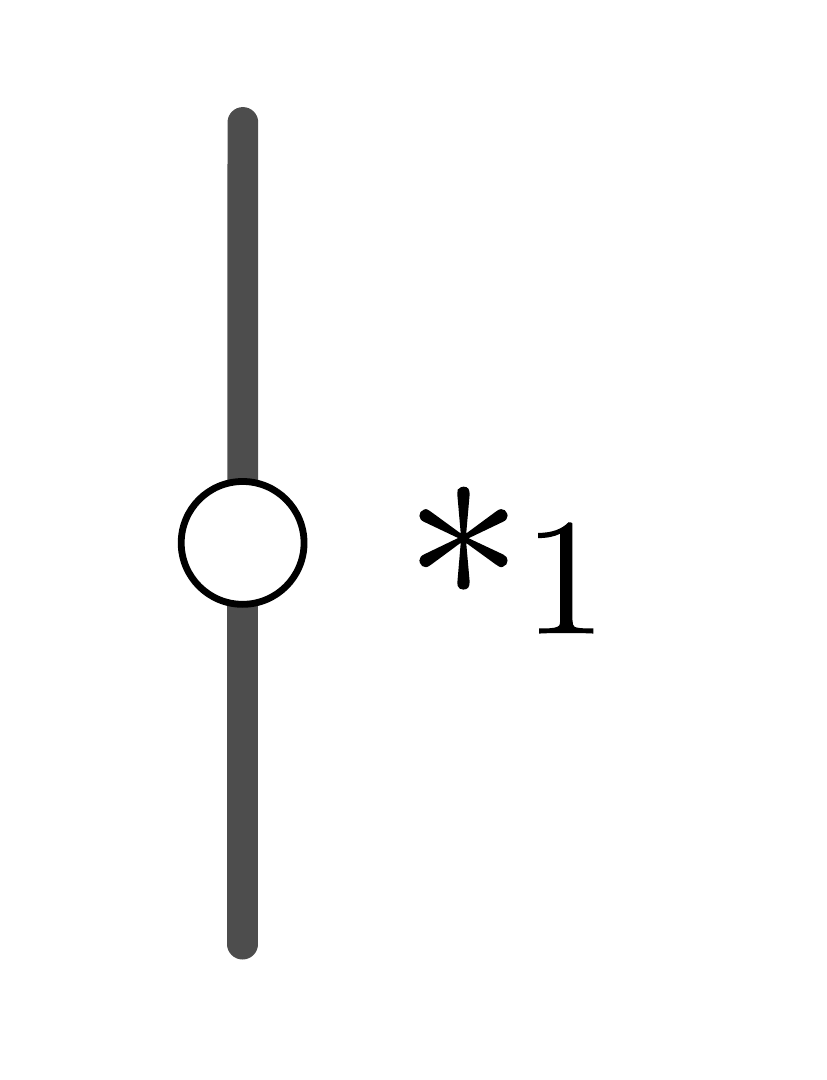}&  \text{if } i\in\{n_{1}+\cdots + n_{l-1}+1,\ldots,n_{1}+\cdots+n_{l}\}, \\ 
\includegraphics[scale=0.09]{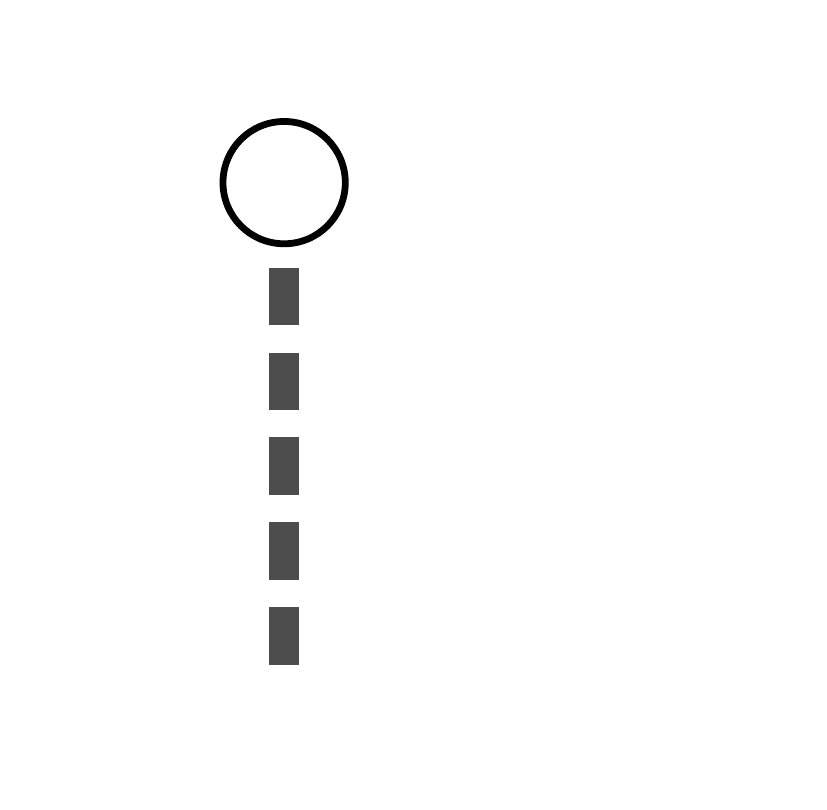} & \text{otherwise}.
\end{array} 
\right.\vspace{9pt}
$

Finally, we consider a map of $k$-fold infinitesimal bimodules from $\overline{\mathcal{I}b}(\mathbb{O})$ to $\mathbb{O}$ using the map of $k$-fold bimodules (\ref{C7}) and the map of $k$-fold sequences (\ref{E8}):\vspace{5pt}
\begin{equation}\label{E6}
\begin{array}{rcl}\vspace{7pt}
\overline{\eta}:\overline{\mathcal{I}b}(\mathbb{O})(n_{1},\ldots,n_{k}) & \longrightarrow & \mathbb{O}(n_{1},\ldots,n_{k}); \\ 
\left[ x\,;\,y^{1},\ldots,y^{l}\,;\,\sigma\right] & \longmapsto & \big( \eta''(x)(\eta'(y^{1}),\ldots,\eta'(y^{l}))\big)\cdot\sigma.
\end{array} 
\end{equation}\vspace{1pt}
\end{const}

Similarly to the previous sections, we introduce a filtration of $\overline{\mathcal{I}b}(\mathbb{O})$ according to the number of  inputs. We say that an element is prime if it is not obtained as a result of the infinitesimal bimodule structure. In other words, the root of $x$ as well as the vertices above the sections of $y^{1},\ldots y^{l}$ are indexed by real numbers strictly smaller than $1$. Otherwise, the element is said to be composite and can be assigned to a prime component by removing the root of $x$ (if the letter one is indexed by $1$) together with all $y^{i}$ such that the $i$-th leaf of $x$ is directly connected to the root; and, by removing all vertices from the element $y^{j}$ that are above the sections and indexed by $1$. For instance, the prime component associated to the composite point represented in Figure \ref{E5} is the following one:
\begin{figure}[!h]
\begin{center}
\includegraphics[scale=0.32]{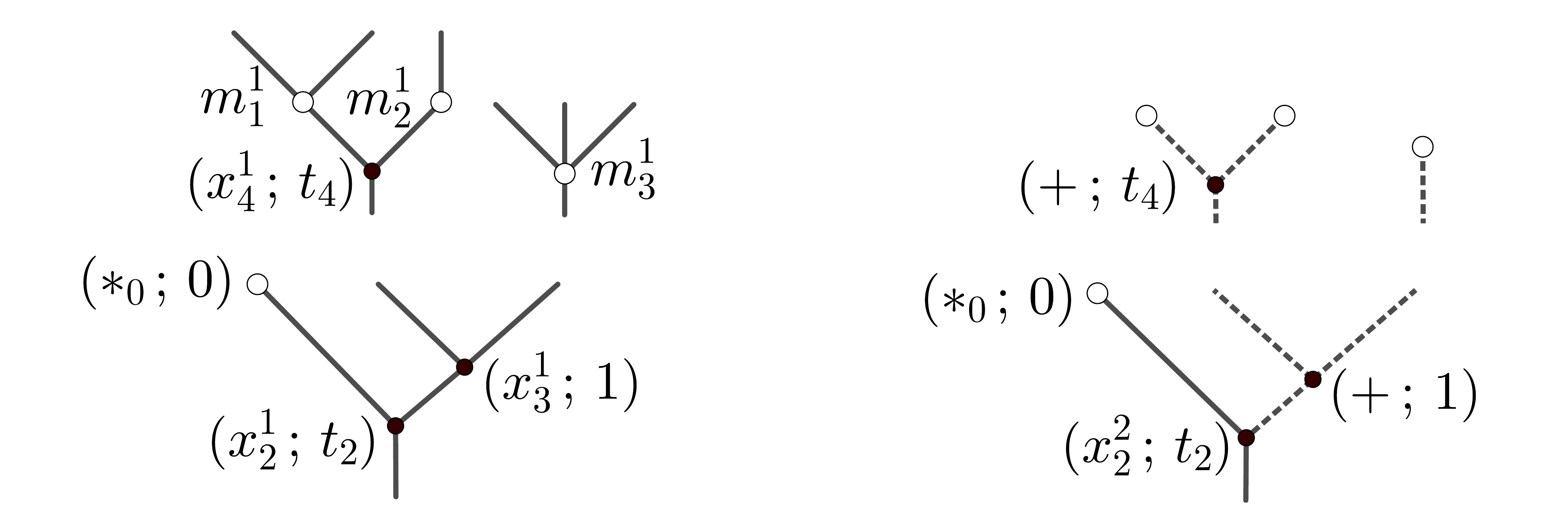}\vspace{-5pt}
\end{center}
\end{figure}

Let $\vec{r}=(r_{1},\ldots,r_{k})\in \mathbb{N}^{k}$ and $[x\,;\,y^{1},\ldots,y^{l}\,;\,\sigma]$ be a prime point where $y_{u}^{i}$ is indexed by the $k$-fold tree with section $(T^{i}_{1},\ldots,T^{i}_{k})$. The prime point is in the $\vec{r}$-th filtration term $\overline{\mathcal{I}b}(\mathbb{O})[\,\vec{r}\,]$ if, for each $u\in\{1,\ldots,k\}$, the sum of the inputs of the trees with section $T^{1}_{u},\ldots,T^{l}_{u}$ is smaller than $r_{u}$. Besides, a composite point is in the $\vec{r}$-th filtration term if its prime component is in $\overline{\mathcal{I}b}(\mathbb{O})[\,\vec{r}\,]$. For instance, the composite point represented in Figure \ref{E5} is in the filtration term $\overline{\mathcal{I}b}(\mathbb{O})[\,3\,;\,3\,]$. By construction, $\overline{\mathcal{I}b}(\mathbb{O})[\,\vec{r}\,]$ is a $k$-fold infinitesimal bimodule and, for each pair $\vec{m}\leq \vec{n}$, there is a map \vspace{5pt}
\begin{equation}\label{P6}
\iota[\,\vec{m}\leq \vec{n}\,]:\overline{\mathcal{I}b}(\mathbb{O})[\,\vec{m}\,]\longrightarrow \overline{\mathcal{I}b}(\mathbb{O})[\,\vec{n}\,].\vspace{10pt}
\end{equation}

\begin{thm}\label{M4}
If the operads $O_{1},\ldots,O_{k}$ are well pointed and $\Sigma$-cofibrant, then the $k$-fold (truncated) infinitesimal bimodules $\overline{\mathcal{I}b}(\mathbb{O})$ and $T_{\vec{r}}\,(\overline{\mathcal{I}b}(\mathbb{O})[\,\vec{r}\,])$ are cofibrant replacements of $\mathbb{O}$ and $T_{\vec{r}}\,\mathbb{O}$ in the Reedy model categories $Ibimod_{\vec{O}}$ and $T_{\vec{r}}\,Ibimod_{\vec{O}}$, respectively. In particular, the map (\ref{E6}) is a weak equivalence and the maps of the form (\ref{P6}) are cofibrations.  \vspace{5pt}
\end{thm}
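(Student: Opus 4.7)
The plan is to establish the two assertions separately: first that $\overline{\eta}$ is a weak equivalence, and second that $\overline{\mathcal{I}b}(\mathbb{O})$ (respectively its truncation) is cofibrant in the Reedy model structure; cofibrancy of the filtration inclusions $\iota[\vec{m}\leq\vec{n}]$ is a by-product of the inductive argument for the second claim.

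For the weak equivalence, I would exploit Lemma \ref{E9}: the homotopy equivalence $\lambda:\mathcal{B}^{\Lambda}(\mathbb{O}^{+})\to \mathbb{O}^{+}$ of $k$-fold augmented sequences can be upgraded, via the identification (\ref{F0}), to a homotopy equivalence of $k$-fold sequences between $\overline{\mathcal{I}b}(\mathbb{O})$ and the $k$-fold sequence $\mathcal{I}'$ obtained by replacing each factor $\mathcal{B}^{\Lambda}(\mathbb{O}^{+})(m_{1}^{i},\ldots,m_{k}^{i})$ by $\mathbb{O}^{+}(m_{1}^{i},\ldots,m_{k}^{i})$. But $\mathcal{I}'$ is manifestly isomorphic to the Reedy Boardman-Vogt resolution $\mathcal{I}b^{\Lambda}(\mathbb{O})$ of Section \ref{C6}: the left $\vec{O}$-action encoded on the bivalent vertices above the section in $\mathcal{B}^{\Lambda}(\mathbb{O}^{+})$ gets absorbed into the pearls of the underlying pearled tree in $\mathcal{I}$. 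Under this identification, the map $\overline{\eta}$ corresponds to the standard augmentation $\eta:\mathcal{I}b^{\Lambda}(\mathbb{O})\to\mathbb{O}$, which is a weak equivalence by Theorem \ref{K9} since $\mathbb{O}$ and $\vec{O}$ are $\Sigma$-cofibrant (this uses the $\Sigma$-cofibrancy and well-pointedness hypotheses on each $O_{i}$).

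For cofibrancy, by Theorem \ref{K8}(ii) it suffices to check that $\overline{\mathcal{I}b}(\mathbb{O})$ (resp.\ its $\vec{r}$-truncation) is cofibrant in the projective model category of $k$-fold infinitesimal bimodules over $\vec{O}_{>0}$. I would proceed by double induction on the filtration: first on $\vec{r}$, using the maps (\ref{P6}), and within each filtration step by the number of non-pearl vertices in the intermediate tree $x\in\mathcal{I}$ and in the factors $y^{i}\in \mathcal{B}^{\Lambda}(\mathbb{O}^{+})$. Following the template of the proof of Theorem \ref{L9}, each successive inclusion $\overline{\mathcal{I}b}(\mathbb{O})[\vec{m}]\hookrightarrow \overline{\mathcal{I}b}(\mathbb{O})[\vec{n}]$ should fit into a sequence of pushouts along free functor maps of the form $\mathcal{F}_{Ib;\vec{O}_{>0}}(\partial X)\to\mathcal{F}_{Ib;\vec{O}_{>0}}(X)$, where $X$ isolates the new prime components added at that filtration step and $\partial X$ records the boundary condition that some vertex is indexed by $1$. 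The well-pointed and $\Sigma$-cofibrant hypotheses on the $O_{i}$'s, together with the absence of vertices indexed by $1$ in the prime stratum, guarantee that these boundary inclusions are cofibrations of $k$-fold sequences, hence that their images under the free functor are projective cofibrations, which is preserved by pushout.

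The delicate point, and the main obstacle, will be to correctly identify the boundary $\partial X$ at each induction step because of the identification (\ref{F0}) equalizing the right $\vec{O}$-action on $\mathcal{I}$ with the left $\vec{O}$-action on $\mathcal{B}^{\Lambda}(\mathbb{O}^{+})$: the same geometric configuration can often be presented either by grafting a corolla onto a leaf of $x\in\mathcal{I}$ or by grafting it below the section of some $y^{i}$, and the filtration must be compatible with this redundancy. I would handle this by choosing a canonical representative for each equivalence class (for instance, by always pushing composable operations as far up into $\mathcal{I}$ as possible, consistent with the requirement that a vertex there have all $f_{u}$-labels on its input edges determined by the top pearls of the corresponding $y^{i}$'s), and then verifying that the boundary inclusions extracted from this normal form are indeed free cofibrations. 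With this taken care of, the final claim about $\overline{\eta}$ and $\iota[\vec{m}\leq\vec{n}]$ is immediate: $\overline{\eta}$ is a weak equivalence between cofibrant objects and each $\iota[\vec{m}\leq\vec{n}]$ is a cofibration since it occurs as a transfinite composition of pushouts of generating cofibrations in the inductive construction.
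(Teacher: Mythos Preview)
Your cofibrancy argument follows the paper's: reduce to the projective model structure over $\vec{O}_{>0}$ via Theorem~\ref{K8}(ii), then express each filtration inclusion $\iota[\vec m\leq\vec n]$ as a composite of pushouts along free maps $\mathcal{F}_{Ib;\vec{O}_{>0}}(\partial X)\to\mathcal{F}_{Ib;\vec{O}_{>0}}(X)$. The paper makes this explicit by writing down the intermediate $k$-fold infinitesimal bimodules $\overline{\mathcal{I}b}_{+}(\mathbb{O})[\vec n],\overline{\mathcal{I}b}_{0}(\mathbb{O})[\vec n],\ldots$ and specifying $\partial\overline{\mathcal{I}b}(\mathbb{O})(m_1+1;i)$ as the subspace of points admitting a decomposition via the infinitesimal bimodule structure; your ``canonical representative'' idea is the same strategy, less explicitly spelled out.

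For the weak equivalence, however, your route diverges from the paper's and the step you treat as trivial is not. The paper does \emph{not} pass through $\mathcal{I}b^{\Lambda}(\mathbb{O})$. Instead it writes down an explicit section $\iota:\mathbb{O}\to\overline{\mathcal{I}b}(\mathbb{O})$ and shows $(\overline{\eta},\iota)$ is a deformation retract of $k$-fold sequences: the homotopy of Lemma~\ref{E9}, combined with relation~(\ref{F0}), collapses the $\mathcal{B}^{\Lambda}(\mathbb{O}^{+})$-factors to a subspace $A$ in which each $y^i$ has a single unit $1$-corolla coordinate and the rest are external $0$-corollas; then the real numbers in the $\mathcal{I}$-part are sent to $0$, landing directly in the image of $\iota$. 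Your intermediate object $\mathcal{I}'$ (replacing each $\mathcal{B}^{\Lambda}(\mathbb{O}^+)$-factor by $\mathbb{O}^{+}$) is not the paper's $A$, and your assertion that $\mathcal{I}'$ is ``manifestly isomorphic'' to $\mathcal{I}b^{\Lambda}(\mathbb{O})$ is the gap. The two resolutions are built on different combinatorics: $\mathcal{I}b^{\Lambda}(\mathbb{O})$ uses $k$-tuples of pearled trees $(T_1,\ldots,T_k)$ with vertices on the root--pearl path labelled by $\vec{O}(|v^1|,\ldots,|v^k|)$ and the remaining vertices labelled by individual $O_i$'s, whereas $\mathcal{I}$ uses a \emph{single} pearled tree with edge-colouring functions $f_1,\ldots,f_k$ and vertices labelled by the pullback spaces $\vec{O}_{S_v}$. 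Passing from one to the other, and then absorbing the residual $\mathbb{O}^{+}$-factors via~(\ref{F0}), is not automatic: an element $(\theta_1,\ldots,\theta_k)\in\mathbb{O}^{+}(m_1,\ldots,m_k)=\prod_{m_j\neq +}O_j(m_j)$ is not in general an element of any $\vec{O}_{S}$, so it cannot simply be pushed up into $\mathcal{I}$. Your one-line justification (about ``bivalent vertices above the section'' being ``absorbed into the pearls'') does not address this. Either carry out the comparison carefully, or switch to the paper's direct homotopy, which sidesteps the issue entirely.
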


\begin{proof}
First, we show that the map (\ref{E6}) is a weak equivalence in the category of $k$-fold infinitesimal bimodules. By construction of the Reedy model category structure, we only need to check that the map (\ref{E6}) is a weak equivalence as a map of $k$-fold sequences. For this purpose, we consider the map of $k$-fold sequences\vspace{3pt}
\begin{equation}\label{E7}
\begin{array}{rcl}\vspace{7pt}
\iota:\mathbb{O}(n_{1},\ldots,n_{k}) & \longrightarrow & \overline{\mathcal{I}b}(\mathbb{O})(n_{1},\ldots,n_{k}); \\ 
(x_{1},\ldots,x_{k}) & \longmapsto & \left[ x\,;\,y^{1},\ldots,y^{n_{1}+\cdots+n_{k}}\,;\,id\right].
\end{array} \vspace{3pt}
\end{equation}
The point $x$ is the pearled $(n_{1}+\cdots+n_{k})$-corolla indexed by  $(x_{1},\ldots,x_{k})$ seen as a point in $\vec{O}_{S}(S_{1},\ldots,S_{k})$ where $S=(S_{1},\ldots,S_{k})$ is the element in $\mathcal{P}_{k}(n_{1}+\cdots+n_{k})$ given by \vspace{3pt}
$$
S_{i}=\{n_{1}+\cdots+n_{i-1}+1,\ldots,n_{1}+\cdots+n_{i}\}.\vspace{3pt}
$$
The point $y^{i}=(y_{1}^{i},\ldots,y_{k}^{i})\in \mathcal{B}^{\Lambda}(\mathbb{O}^{+})$ is defined as follows:\vspace{3pt}
$$
y_{u}^{i}=
\left\{
\begin{array}{cl}\vspace{5pt}
\includegraphics[scale=0.1]{dessin41.pdf}&  \text{if } i\in\{n_{1}+\cdots + n_{u-1}+1,\ldots,n_{1}+\cdots+n_{u}\}, \\ 
\includegraphics[scale=0.1]{dessin42.pdf} & \text{otherwise}.
\end{array} 
\right.\vspace{3pt}
$$

The maps (\ref{E6}) and (\ref{E7}) give rise to a homotopy retract. Indeed, let $A$ be the $k$-fold sequence formed by points $[x\,;\,y^{1},\ldots,y^{l}\,;\,id]\in \overline{\mathcal{I}b}(\mathbb{O})$ where $y^{i}=(y^{i}_{1},\ldots,y^{i}_{k})$ has only one parameter which is a $1$-corolla indexed by the unit of the operads whereas the other parameters are $0$-corollas with an $external$ output. Furthermore, if $y^{i}_{u}$ is a $1$-corolla, then the other elements $y^{i'}_{u}$ , with $i'\neq i$, are necessarily $0$-corollas. As a consequence of the homotopy introduced in the proof of Lemma \ref{E9} and the relation $(\ref{F0})$, the $k$-fold sequences $\overline{\mathcal{I}b}(\mathbb{O})$ and $A$ are homotopically equivalent. For instance, the homotopy sends the point represented in Figure \ref{E5} to the following element in $A$:\vspace{-3pt}

\begin{figure}[!h]
\begin{center}
\includegraphics[scale=0.27]{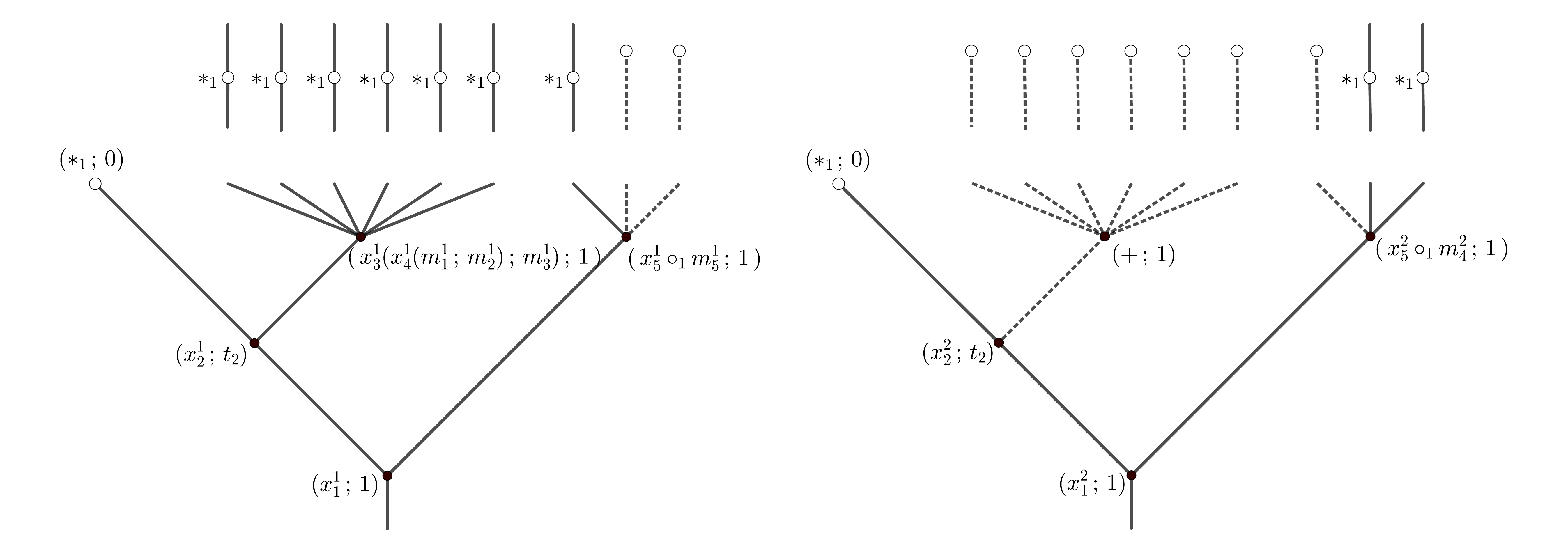}\vspace{-10pt}
\end{center}
\end{figure}

Then, the second part of the homotopy consists in bringing the real numbers indexing the point in $A$ to $0$. Such a point can be identified with an element in the image of the application $(\ref{E7})$. In particular, the homotopy sends the point represented in Figure \ref{E5} to the following element:

\begin{figure}[!h]
\begin{center}
\includegraphics[scale=0.2]{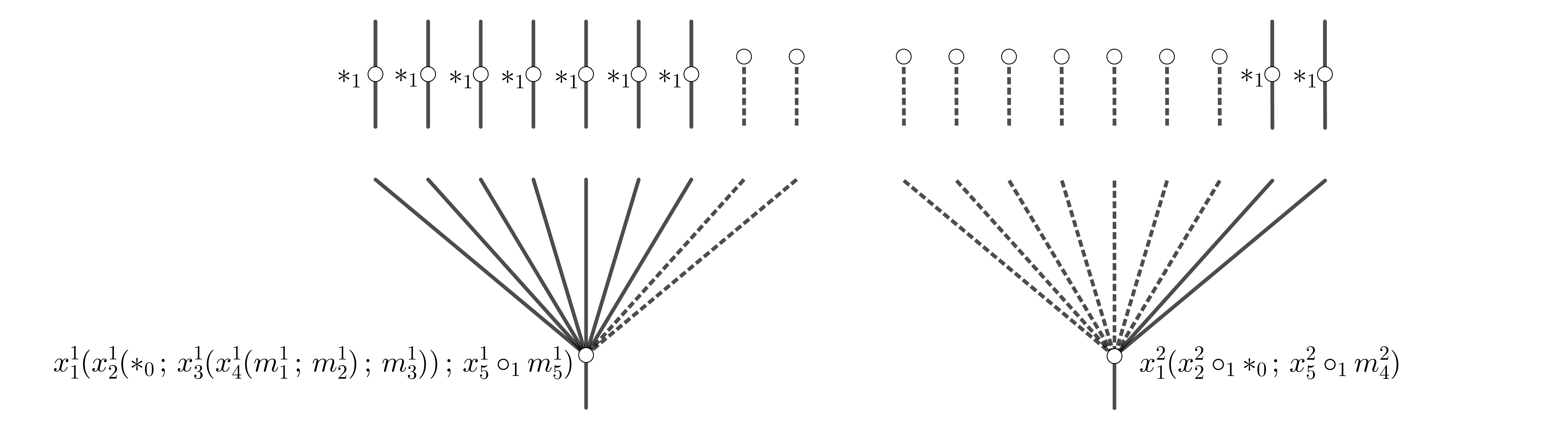}\vspace{-10pt}
\end{center}
\end{figure}

We refer the reader to \cite{Ducoulombier18} for a detailed account of the fact that $\overline{\mathcal{I}b}(\mathbb{O})$ and its truncated version are cofibrant in the appropriate model category. The idea is to check that the map $\iota[\,\vec{m}\leq \vec{n}\,]$ is a cofibration in the category of $k$-fold infinitesimal bimodules over $\vec{O}_{>0}$ (see Theorem \ref{K8}). Similarly to the proof of Theorem \ref{L9}, we assume that $k=2$, $\vec{m}=(m_{1}\,;\,m_{2})$ and $\vec{n}=(m_{1}+1\,;\,m_{2})$. In that case, the map $\iota[\,\vec{m}\leq \vec{n}\,]$ is obtained as a sequence of $k$-fold infinitesimal bimodule maps\vspace{3pt}
\begin{equation}\label{O6}
\xymatrix{
\overline{\mathcal{I}b}(\mathbb{O})[\,\vec{m}\,] \ar[r] & \overline{\mathcal{I}b}_{+}(\mathbb{O})[\,\vec{n}\,] \ar[r] & \overline{\mathcal{I}b}_{0}(\mathbb{O})[\,\vec{n}\,]\ar[r] & \cdots \ar[r] & \overline{\mathcal{I}b}_{m_{2}}(\mathbb{O})[\,\vec{n}\,]=\overline{\mathcal{I}b}(\mathbb{O})[\,\vec{n}\,]
}
\end{equation}
where the $k$-fold infinitesimal bimodule $\overline{\mathcal{I}b}_{+}(\mathbb{O})[\,\vec{n}\,]$ is defined using the pushout diagram
\begin{equation*}
\xymatrix{
\mathcal{F}_{\mathcal{I}b\,;\,\vec{O}_{>0}}\big(\, \partial\overline{\mathcal{I}b}(\mathbb{O})(\,m_{1}+1\,;\,+\,)\,\big) \ar[r] \ar[d] & \mathcal{F}_{Ib\,;\,\vec{O}_{>0}}\big(\, \overline{\mathcal{I}b}(\mathbb{O})(\,m_{1}+1\,;\,+\,)\big) \ar[d]\\
\overline{\mathcal{I}b}(\mathbb{O})[\,\vec{m}\,] \ar[r] & \overline{\mathcal{I}b}_{+}(\mathbb{O})[\,\vec{n}\,]
}
\end{equation*}
where the space $\overline{\mathcal{I}b}(\mathbb{O})(\,m_{1}+1\,;\,+\,)$, seen as a sequence concentrated in arity $(m_{1}+1\,;\,0)$, is formed by points $[x\,;\,y^{1},\ldots,y^{l}\,;\,\sigma]$ in $\overline{\mathcal{I}b}(\mathbb{O})(\,m_{1}+1\,;\,0\,)$ for which the application $f_{2}$ associated to $x$ labels by $external$ the edges other than the edges composing the path joining the pearl to the root. Then, $\partial\overline{\mathcal{I}b}(\mathbb{O})(\,m_{1}+1\,;\,+\,)$ is the sub-sequence formed by points $[x\,;\,y^{1},\ldots,y^{l}\,;\,\sigma]$ having a decomposition of the form \vspace{5pt}
\begin{equation*}
[x\,;\,y^{1},\ldots,y^{l}\,;\,\sigma]=\left\{
\begin{array}{ll}\vspace{5pt}
\mu\big(\theta\,;\,[\tilde{x}\,;\,\tilde{y}^{1},\ldots,\tilde{y}^{l}\,;\,id]\big)\cdot \sigma & \text{with } \theta\in \vec{O}, \\ \vspace{5pt}
\text{or} \\ 
\left[\tilde{x}\,;\,\tilde{y}^{1},\ldots,\tilde{y}^{l}\,;\,id\right]\circ_{i}^{j}
\theta & \text{with } \theta\in O_{i},
\end{array} 
\right.\vspace{5pt}
\end{equation*}
where $[\tilde{x}\,;\,\tilde{y}^{1},\ldots,\tilde{y}^{l}\,;\,id]\in \overline{\mathcal{I}b}(\mathbb{O})(\,l_{1}\,;\,l_{2}\,)$ with $l_{1}\leq m_{1}$. \vspace{9pt}

The $k$-fold infinitesimal bimodule $\overline{\mathcal{I}b}_{0}(\mathbb{O})[\,\vec{n}\,]$ is built using the pushout diagram\vspace{5pt}
\begin{equation*}
\xymatrix{
\mathcal{F}_{\mathcal{I}b\,;\,\vec{O}_{>0}}\big(\, \partial\overline{\mathcal{I}b}(\mathbb{O})(\,m_{1}+1\,;\,0\,)\,\big) \ar[r] \ar[d] & \mathcal{F}_{Ib\,;\,\vec{O}_{>0}}\big(\, \overline{\mathcal{I}b}(\mathbb{O})(\,m_{1}+1\,;\,0\,)\big) \ar[d]\\
\overline{\mathcal{I}b}_{+}(\mathbb{O})[\,\vec{n}\,] \ar[r] & \overline{\mathcal{I}b}_{0}(\mathbb{O})[\,\vec{n}\,]
}\vspace{3pt}
\end{equation*} 
where the space $\partial\overline{\mathcal{I}b}(\mathbb{O})(\,m_{1}+1\,;\,0\,)$ is the sub-sequence of $\overline{\mathcal{I}b}(\mathbb{O})(\,m_{1}+1\,;\,0\,)$ formed by points $[x\,;\,y^{1},\ldots,y^{l}\,;\,\sigma]$ in $\overline{\mathcal{I}b}(\mathbb{O})(\,m_{1}+1\,;\,+\,)$ or having a decomposition of the form \begin{equation*}
[x\,;\,y^{1},\ldots,y^{l}\,;\,\sigma]=\left\{
\begin{array}{ll}\vspace{5pt}
\mu\big(\theta\,;\,[\tilde{x}\,;\,\tilde{y}^{1},\ldots,\tilde{y}^{l}\,;\,id]\big)\cdot \sigma & \text{with } \theta\in \vec{O}, \\ \vspace{5pt}
\text{or} \\ 
\left[\tilde{x}\,;\,\tilde{y}^{1},\ldots,\tilde{y}^{l}\,;\,id\right]\circ_{i}^{j}
\theta & \text{with } \theta\in O_{i},
\end{array} 
\right.\vspace{5pt}
\end{equation*}
where $[\tilde{x}\,;\,\tilde{y}^{1},\ldots,\tilde{y}^{l}\,;\,id]\in \overline{\mathcal{I}b}(\mathbb{O})(\,l_{1}\,;\,l_{2}\,)$ with $l_{1}\leq m_{1}$ or $l_{1}= m_{1}+1$ and $l_{2}=+$. \vspace{5pt}

Finally, the $k$-fold infinitesimal bimodules $\overline{\mathcal{I}b}_{i}(\mathbb{O})[\,\vec{n}\,]$ are obtained by induction using pushout diagrams of the form \vspace{5pt}
\begin{equation*}
\xymatrix{
\mathcal{F}_{\mathcal{I}b\,;\,\vec{O}_{>0}}\big(\, \partial\overline{\mathcal{I}b}(\mathbb{O})(\,m_{1}+1\,;\,i\,)\,\big) \ar[r] \ar[d] & \mathcal{F}_{Ib\,;\,\vec{O}_{>0}}\big(\, \overline{\mathcal{I}b}(\mathbb{O})(\,m_{1}+1\,;\,i\,)\big) \ar[d]\\
\overline{\mathcal{I}b}_{i-1}(\mathbb{O})[\,\vec{n}\,] \ar[r] & \overline{\mathcal{I}b}_{i}(\mathbb{O})[\,\vec{n}\,]
}\vspace{5pt}
\end{equation*} 
where the space $\overline{\mathcal{I}b}(\mathbb{O})(\,m_{1}+1\,;\,i\,)$ is seen as a sequence concentrated in arity $(m_{1}+1\,;\,i)$ and $\partial\overline{\mathcal{I}b}(\mathbb{O})(\,m_{1}+1\,;\,i\,)$ is the sub-sequence formed by points $[x\,;\,y^{1},\ldots,y^{l}\,;\,\sigma]$ having a decomposition of the form \vspace{5pt}
\begin{equation*}
[x\,;\,y^{1},\ldots,y^{l}\,;\,\sigma]=\left\{
\begin{array}{ll}\vspace{5pt}
\mu\big(\theta\,;\,[\tilde{x}\,;\,\tilde{y}^{1},\ldots,\tilde{y}^{l}\,;\,id]\big)\cdot \sigma & \text{with } \theta\in \vec{O}, \\ \vspace{5pt}
\text{or} \\ 
\left[\tilde{x}\,;\,\tilde{y}^{1},\ldots,\tilde{y}^{l}\,;\,id\right]\circ_{i}^{j}
\theta & \text{with } \theta\in O_{i},
\end{array} 
\right.\vspace{5pt}
\end{equation*}
where $[\tilde{x}\,;\,\tilde{y}^{1},\ldots,\tilde{y}^{l}\,;\,id]\in \overline{\mathcal{I}b}(\mathbb{O})(\,l_{1}\,;\,l_{2}\,)$ with $l_{1}\leq m_{1}$ or $l_{1}= m_{1}+1$ and $l_{2}<i$. By induction on the number of vertices, the inclusion from $\partial\overline{\mathcal{I}b}(\mathbb{O})(m_{1}+1\,;\,i)$  to $\overline{\mathcal{I}b}(\mathbb{O})(m_{1}+1\,;\,i)$  is a cofibration in the category of sequences. Since the free $k$-fold infinitesimal bimodule functor and pushout diagrams preserve cofibrations, the horizontal maps in the above diagrams are cofibrations in the projective model category of $k$-fold infinitesimal bimodules over $\vec{O}_{>0}$. Thus proves that (\ref{O6}) is a sequence of cofibrations and $\iota[\vec{m}\leq \vec{n}]$ is a cofibration in the Reedy model category.
\end{proof}

\newpage

\subsection{Relation between $k$-fold infinitesimal bimodules and $k$-fold bimodules}\label{F1}

As a consequence of the explicit resolution $\mathcal{B}^{\Lambda}(\mathbb{O}^{+})$ of $\mathbb{O}^{+}$ as a $k$-fold bimodule, introduced in Section \ref{F5}, as well as the alternative resolution $\overline{\mathcal{I}b}(\mathbb{O})$ of $\mathbb{O}$ as a $k$-fold infinitesimal bimodule, introduced in the previous section, we are able to describe the map \vspace{3pt}
\begin{equation}\label{Z6}
\gamma:\mathbb{F}_{\vec{O}}(M)\longrightarrow Ibimod_{\vec{O}}\big( \, \overline{\mathcal{I}b}(\mathbb{O}) \,;\, M^{-}\,\big). \vspace{3pt}
\end{equation}
By using the cofibrant replacement $\mathcal{B}^{\Lambda}(\mathbb{O}^{+})$, we can make the space $\mathbb{F}_{\vec{O}}(M)$ more explicit. A point is a family of continuous maps of the form\vspace{3pt}
\begin{equation}\label{P7}
\begin{array}{ll}\vspace{9pt}
f_{0}[\,\vec{n}\,]: \textstyle \sum \vec{O}(2)\times \mathcal{B}^{\Lambda}(\mathbb{O}^{+})(\,\vec{n}\,) \longrightarrow M(\,\vec{n}\,), & \text{with } \vec{n}\neq (+,\ldots,+), \\ 
f_{i}[\,n\,]: \textstyle \sum O_{i}(2)\times \mathcal{B}^{\Lambda}(O_{i})(\,n\,) \longrightarrow M_{i}(\,n\,), & \text{with } i\in \{1,\ldots,k\} \text{ and } n\geq 0,
\end{array} 
\end{equation}
satisfying some relations related to the $k$-fold right operations:\vspace{5pt}
\begin{itemize}[leftmargin=12pt]
\item[$\blacktriangleright$] For  $\vec{\theta}\in \vec{O}_{S}(\,\vec{m}\,)$, $S\in \mathcal{P}_{k}(\{1,\ldots,l\})$ and $y_{i}\in \mathcal{B}^{\Lambda}(\mathbb{O}^{+})(\,\vec{m_{i}}\,)$ such that $S_{u}=+$ and $m_{i}^{u}=+$ if $u\notin A$, one has\vspace{3pt}
$$
f_{0}[\,\vec{n}\,]\big( \,[x_{1},\ldots,x_{k}\,;\,t]\,;\,\vec{\theta}(y_{1},\ldots,y_{l})\,\big)=\vec{\theta}\big(\, f_{0}[\,\vec{m_{1}}\,]([x_{1},\ldots,x_{k}\,;\,t]\,;\,y_{1}),\cdots,f_{0}[\,\vec{m_{l}}]([x_{1},\ldots,x_{k}\,;\,t]\,;\,y_{l})\,\big),\vspace{3pt}
$$ 
\item[$\blacktriangleright$] For  $\theta\in O_{i}(\,l\,)$ and $y_{i}\in \mathcal{B}^{\Lambda}(O_{i})(\,m_{i}\,)$, one has\vspace{3pt}
$$
f_{i}[\,n\,]\big( \,[x\,;\,t]\,;\,\theta(y_{1},\ldots,y_{l})\,\big)=\theta\big(\, f_{i}[\,m_{1}\,]([x\,;\,t]\,;\,y_{1}),\cdots,f_{i}[\,m_{l}]([x\,;\,t]\,;\,y_{l})\,\big),\vspace{3pt}
$$ 

\end{itemize}

\noindent satisfying a relation related to the $k$-fold left operations:\vspace{5pt}

\begin{itemize}[leftmargin=12pt]

\item[$\blacktriangleright$] For $x\in \mathcal{B}^{\Lambda}(\mathbb{O}^{+})(n_{1},\ldots,n_{k})$ and $\theta\in O_{j}(l)$, one has\vspace{3pt}
$$
f_{0}[n_{1},\ldots ,n_{j}+l-1,\ldots,n_{k}]\big(\, [x_{1},\ldots,x_{k}\,;\,t]\,;\, x\circ_{j}^{i}\theta\,\big)=\big(\, f_{0}[n_{1},\ldots ,n_{j},\ldots,n_{k}]([x_{1},\ldots,x_{k}\,;\,t]\,;\, x)\,\big) \circ_{j}^{i}\theta,\vspace{3pt}
$$

\item[$\blacktriangleright$] For $y\in \mathcal{B}^{\Lambda}(O_{j})(n)$ and $\theta\in O_{j}(l)$, one has\vspace{3pt}
$$
f_{j}[n+l-1]\big(\, [x\,;\,t]\,;\, y\circ^{i}\theta\,\big)=\big(\, f_{j}[n]([x\,;\,t]\,;\, y)\,\big) \circ^{i}\theta,\vspace{3pt}
$$

\end{itemize}

\noindent satisfying a relation related to the based point $\eta$:

\begin{itemize}[leftmargin=12pt]
\item[$\blacktriangleright$] For $y\in \mathcal{B}^{\Lambda}(\mathbb{O}^{+})(\,\vec{n}\,)$ and $(x_{1},\ldots,x_{k})\in \vec{O}(2)$, one has\vspace{3pt}
$$
f_{0}[\,\vec{n}\,]\big(\, [x_{1},\ldots,x_{k}\,;\,0]\,;\, y\,\big)=\eta\circ\mu' (y),\vspace{3pt}
$$

\item[$\blacktriangleright$] For $y\in \mathcal{B}^{\Lambda}(O_{i}^{+})(\,n\,)$ and $x\in O_{i}(2)$, one has\vspace{3pt}
$$
f_{i}[\,n\,]\big(\, [x\,;\,0]\,;\, y\,\big)=\eta_{i}\circ\mu' (y),\vspace{3pt}
$$
\end{itemize}
satisfying a relation related to the limit where $\delta_{i}$ is the map (\ref{P3}):\vspace{3pt}

\begin{itemize}[leftmargin=12pt]
\item[$\blacktriangleright$] For $i\in \{1,\ldots, k\}$, $y\in \mathcal{B}^{\Lambda}(O_{i})(\,n\,)=\mathcal{B}^{\Lambda}(\mathbb{O}^{+})(+,\ldots,+,n,+,\ldots,+)$ and $[x_{1},\cdots,x_{k}\,;\,t]\in \Sigma\vec{O}(2)$, one has \vspace{3pt}
$$
f_{0}[+,\ldots,+,n,+,\ldots,+]\big(\, [x_{1},\cdots,x_{k}\,;\,t]\,;\,y\,\big)= f_{i}[\,n\,]\big(\, [x_{i}\,;\,t]\,;\,y\,\big).\vspace{3pt}
$$
\end{itemize}
Similarly, there is a description of the truncated space $T_{\vec{r}}\,\mathbb{F}_{\vec{O}}(M)$ in which the family of continuous maps (\ref{P7}) is indexed by elements $\vec{n}=(n_{1},\ldots,n_{k})$ such that $n_{i}\leq r_{i}$.\vspace{-5pt} 

\newpage

For $i\in \{1,\ldots, n\}$ and $x=[T\,;\,\{x_{v}\}\,;\,\{t_{v}\}]$ a point in $\mathcal{I}(n)$, we denote by $j_{i}(x)$ the first common vertex in $T$ shared by the path joining the $i$-th leaf to the root and the path joining the pearl to the root. We also denote by $e_{i}(x)$ the incoming edge of the vertex $j_{i}(x)$ composing the path joining the $i$-th leaf to the root. We consider the application\vspace{5pt}
$$
\begin{array}{rcl}\vspace{5pt}
\varrho_{i}:\mathcal{I}(n) & \longrightarrow & \{0,\ldots,k\}; \\ 
x & \longmapsto & \left\{
\begin{array}{ll}\vspace{5pt}
0 & \text{if } f_{l}(e_{i}(x))=internal \,\, \forall l\in \{1,\ldots,k\}, \\ 
l & \text{if } f_{l}(e_{i}(x))=internal \text{ and } f_{l'}(e_{i}(x))=external\,\, \forall l'\neq l.
\end{array} 
\right.
\end{array} 
$$
We also consider the map \vspace{3pt}
\begin{equation}\label{F3}
\xymatrix{
\tau_{i}: \mathcal{I}(n) \ar[r]^{\hspace{-47pt}p_{i}} & Cone\,\big( \,\vec{O}(2)\,\big) \sqcup \underset{1\leq i \leq k}{\displaystyle\coprod}\,Cone\,\big(\,  O_{i}(2)\,\big) \ar[r]^{\hspace{10pt}q} & \sum\,\vec{O}(2) \sqcup \underset{1\leq i \leq k}{\displaystyle\coprod}\,\sum\,O_{i}(2),
} \vspace{3pt}
\end{equation}
where $q$ is the quotient map from the cone to the suspension. Let $(x_{1},\ldots,x_{k}\,;\,t)$ be the parameters associated to the vertex $j_{i}(x)$. In order to define $p_{i}(x)$, we consider two cases:\vspace{5pt}
\begin{itemize}
\item[$i)$] if $\varrho_{i}(x)=0$, then $p_{i}(x)$ is the point in $Cone\,\big(\, \vec{O}(2)\,\big)$ obtained from $(x_{1},\ldots,x_{k}\,;\,t)$ by composing the inputs other than the first ones and the inputs corresponding to $e_{i}(x)$ with the unique points in $O_{1}(0),\ldots,O_{k}(0)$.\vspace{5pt} 
\item[$ii)$] if $\varrho_{i}(x)=l>0$, then $p_{i}(x)$ is the point in $Cone\,\big(\, O_{i}(2)\,\big)$ obtained from $(x_{i}\,;\,t)$ by composing the inputs other than the first one and the input corresponding to $e_{i}(x)$ with the unique point in $O_{i}(0)$. 
\end{itemize}
\begin{figure}[!h]
\begin{center}
\includegraphics[scale=0.135]{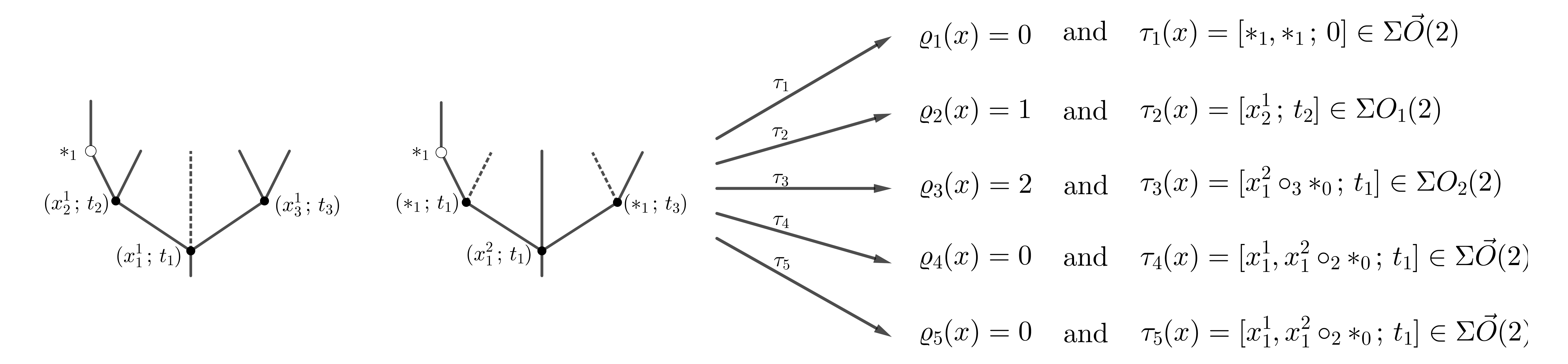}
\caption{Illustration of the applications $\varrho$ and $\tau$.}
\end{center}
\end{figure}

From now on, we use the above notation in order to describe the map  (\ref{Z6}). By abuse of notation, if $y\in \mathcal{B}^{\Lambda}(O_{i})(n)$, then we also denote by $y$ the corresponding point in  $\mathcal{B}^{\Lambda}(\mathbb{O}^{+})(+,\ldots,+,n,+,\ldots,+)=\mathcal{B}^{\Lambda}(O_{i})(n)$. Let $\{f_{i}\}_{0\leq i\leq k}$ be a point of the form (\ref{P7}). Then, one has  \vspace{3pt}
\begin{equation}\label{Q0}
\begin{array}{rcl}\vspace{7pt}
\gamma\big(\,\{f_{i}\}\,\big): \overline{\mathcal{I}b}(\mathbb{O})(n_{1},\ldots,n_{k}) & \longrightarrow & M^{-}(n_{1},\ldots,n_{k}); \\ 
\left[ x\,;\,y^{1},\ldots,y^{l}\,;\,\sigma\right] & \longmapsto & \eta''(x)\big(\, f_{\varrho_{1}(x)}[\,\vec{m_{1}}\,](\tau_{1}(x)\,;\, y^{1}),\cdots, f_{\varrho_{l}(x)}[\,\vec{m_{l}}\,](\tau_{l}(x)\,;\, y^{l})\,\big)\cdot \sigma.\vspace{5pt}
\end{array} 
\end{equation}
The reader can check that the condition (\ref{P9}) on $M$ makes (\ref{Q0}) into  a $k$-fold infinitesimal bimodule map.

\subsection{The notion of coherent operad}\label{O5}

In this subsection we introduce the notion of  {\it coherent operad} used in Theorem \ref{D3}. We refer the reader to \cite[Section 4.1]{Ducoulombier18} for more details and illustrations of this notion. In what follows, an object $c$ in a category $C$ is said to be \textit{partially terminal} if there is no morphism from $c$ to $c'$ for any objects $c'\neq c$. Given a category $C$, we denote by $\partial C$ its subset of non-partially terminal objects. \vspace{7pt}

\begin{defi}\textbf{The partial ordered sets $\Psi_{k}$ and $\Psi_{k}'$ }\vspace{3pt}
\begin{itemize}
\item[$(i)$] For any $k\geq 0$, the category $\Psi_k$ has for  objects non-planar pearled trees with $k$ leaves labelled bijectively by the set $\{1,\ldots,k\}$, whose pearl can have any arity $\geq 0$, and the other vertices are of arity $\geq 2$. The morphisms in $\Psi_k$ are inner edge contractions.\vspace{3pt}

\item[$(ii)$] For each $\Psi_k$, we denote by $c_k$ its terminal object, which is the pearled $k$-corolla, and by $c'_k$, $k\geq 2$, the tree with 2 vertices: a pearled root of arity one, whose only outgoing edge is attached to the other vertex of arity $k$.\vspace{3pt}

\item[$(iii)$] We denote by $\Psi'_k$, $k\geq 2$, the subcategory of  $\Psi_k$ of all morphisms except the morphism $c'_k\to c_k$.\vspace{-5pt}
\end{itemize}
\end{defi}

\begin{figure}[!h]
\begin{center}
\includegraphics[scale=0.11]{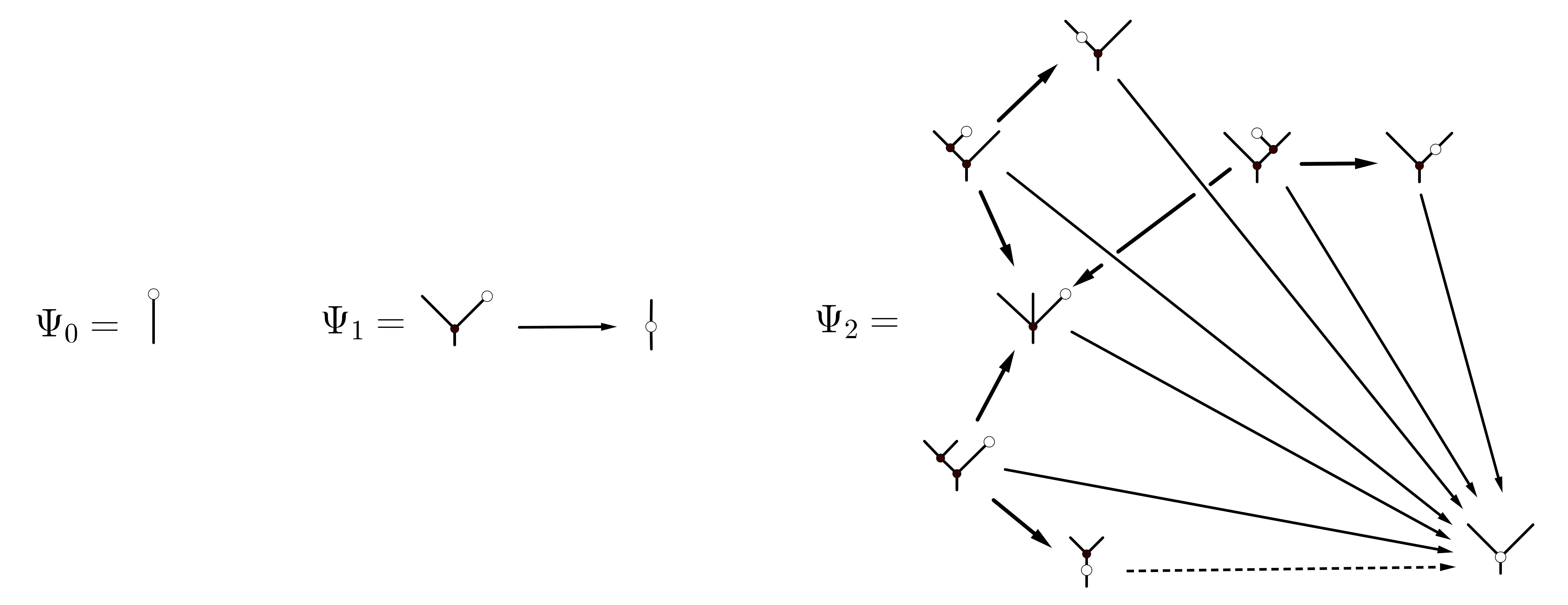}\vspace{-5pt}

\begin{minipage}{420pt}
\caption{Categories $\Psi_0$, $\Psi_1$, $\Psi_2$.  Morphism $c'_2\to c_2$ is shown by a dotted arrow. }
\end{minipage}\vspace{-12pt}
\end{center}
\end{figure}

\begin{defi}
For a topological operad $O$, an $O$-Ibimodule $M$, and $k\geq 0$, define a $\Psi_k$ shaped diagram\vspace{3pt}
$$
\begin{array}{rcl}
\rho_k^M\colon \Psi_k&\longrightarrow&\Top;\\
T&\mapsto&M\left(|p|\right)\times\displaystyle\prod_{v\in V(T)\setminus\{p\}}O\bigl(|v|\bigr).
\end{array} 
$$
On morphism it is defined by choosing a planar representative of each pearled tree, and then using the operadic composition and $O$-action on $M$ for each edge contraction. The choice of planar representatives won\rq{}t matter in the sense that the obtained diagrams are objectwise homeomorphic.\vspace{7pt}
\end{defi}

\begin{defi}\label{N1}\textbf{Coherent diagrams and operads}

\begin{itemize}
\item[$(i)$] A diagram $\rho\colon\Psi_k\to\Top$, $k\geq 2$, is called {\it coherent}, if the natural map below is a weak equivalence:\vspace{3pt}
\begin{equation}\label{I2}
\underset {\partial\Psi_k}{\hocolim}\,\rho\, \longrightarrow\, \underset{\Psi'_k}{\hocolim}\, {\rho}.\vspace{-1pt}
\end{equation}

\item[$(ii)$] A topological operad $O$ is called {\it coherent}, respectively, {\it $k$-coherent}, if the diagrams $\rho_i^O$ are coherent
for all $i\geq 2$, respectively, for all $i$ in the range $2\leq i\leq k$.
\end{itemize}\vspace{-10pt}
\end{defi}

\subsection{Proof of the main Theorem \ref{D3} }

Without loss of generality, we assume that $M$ is Reedy fibrant in the category of $k$-fold bimodules equipped with the Reedy model category structure. Indeed, if $M$ is not Reedy fibrant, then we  substitute $M$ by a fibrant replacement $M\rightarrow M^{f}$ which necessarily satisfies the conditions (\ref{P9}). Such a replacement is endowed with a map of $k$-fold bimodules $\mathbb{O}^{+}\rightarrow M \rightarrow M^{f}$. Furthermore, we assume that $k=2$. \vspace{7pt}

Let $O_{1}$ and $O_{2}$ be a pair of $2$-reduced operads relative to $O$. The operad $O_{1}$ is $j_{1}$-coherent whereas the operad $O_{2}$ is $j_{2}$-coherent. First, let us remark that the map\vspace{5pt}
$$
\gamma_{\vec{0}}:T_{\vec{0}}\,\mathbb{F}_{\vec{O}}(M)\longrightarrow T_{\vec{0}}\, Ibimod_{\vec{O}}\big( \, \overline{\mathcal{I}b}(\mathbb{O}) \,;\, M^{-}\,\big)\vspace{5pt}
$$
is an isomorphism since the $2$-fold bimodule $M$ satisfies the relations $M(0,0)=M(0,+)=M(+,0)=\ast$.\vspace{7pt}

From now on, we assume that the $\gamma_{\vec{m}}$ is a weak equivalence for some $\vec{m}=(m_{1}\,;\,m_{2})\in \mathbb{N}^{\times 2}$ such that $(0\,;\,0)< \vec{m}< (j_{1}\,;\,j_{2})$. We prove by induction that $\gamma_{\vec{n}}$, with $\vec{n}=(m_{1}+1\,;\, m_{2})$ is also a weak equivalence. Indeed, according to the notation introduced in the proofs of Theorem \ref{L9} and Theorem \ref{M4}, $\gamma_{\vec{n}}$ induces a map between two towers of fibrations\vspace{5pt}
$$
\xymatrix{
T_{\vec{n}}\mathbb{F}_{\vec{O}}(M)\ar@{=}[d]\ar[r]^{\hspace{-30pt}\gamma_{\vec{n}}} & T_{\vec{n}} Ibimod_{\vec{O}}\big( \, \overline{\mathcal{I}b}(\mathbb{O}) \,;\, M^{-}\,\big) \ar@{=}[d] &  \\
\mathbb{F}_{\vec{O}}(M)[m_{2}] \ar[d]\ar[r]^{\hspace{-30pt}\gamma_{m_{2}}} & Ibimod_{\vec{O}}\big( \, \overline{\mathcal{I}b}_{m_{2}}(\mathbb{O})[\vec{n}] \,;\, M^{-}\,\big)\ar[d] & Ibimod_{\vec{O}}\big( \, \overline{\mathcal{I}b}(\mathbb{O})[\vec{n}] \,;\, M^{-}\,\big) \ar@{=}[l]\\
\vdots \ar[d] & \vdots \ar[d] \\
\mathbb{F}_{\vec{O}}(M)[0]\ar[d]\ar[r]^{\hspace{-30pt}\gamma_{0}} & Ibimod_{\vec{O}}\big( \, \overline{\mathcal{I}b}_{0}(\mathbb{O})[\vec{n}] \,;\, M^{-}\,\big) \ar[d] & \\
\mathbb{F}_{\vec{O}}(M)[+]\ar[d]\ar[r]^{\hspace{-30pt}\gamma_{+}} & Ibimod_{\vec{O}}\big( \, \overline{\mathcal{I}b}_{+}(\mathbb{O})[\vec{n}] \,;\, M^{-}\,\big)\ar[d] &  \\
T_{\vec{m}}\mathbb{F}_{\vec{O}}(M)\ar[r]^{\hspace{-30pt}\gamma_{\vec{m}}}_{\hspace{-30pt}\simeq} & T_{\vec{m}} Ibimod_{\vec{O}}\big( \, \overline{\mathcal{I}b}(\mathbb{O}) \,;\, M^{-}\,\big) & Ibimod_{\vec{O}}\big( \, \overline{\mathcal{I}b}(\mathbb{O})[\vec{m}] \,;\, M^{-}\,\big) \ar@{=}[l]
} \vspace{5pt}
$$

\noindent where $\mathbb{F}_{\vec{O}}(M)[i]$, with $i\in\{+,0,\ldots,k\}$, is the limit of the  diagram\vspace{5pt}
$$
\xymatrix@R=9pt@C=-1pt{
 Map_{\ast}\left( 
 \sum O_{1}(2)\,;\, Bimod_{O_{1}}(\mathcal{B}^{\Lambda}(O_{1})[m_{1}+1]\,;\, M_{1})\right)\ar[rd]
&  \\
&  Map_{\ast}\left( 
\sum  \vec{O}(2)\,;\, Bimod_{O_{1}}(\mathcal{B}^{\Lambda}(O_{1})[m_{1}+1]\,;\, M_{1})\right)\\
 Map_{\ast}\left( 
\sum \vec{O}(2)\,;\, Bimod_{\vec{O}}(\mathcal{B}^{\Lambda}_{i}(\mathbb{O}^{+})[\vec{n}]\,;\, M)\right) \ar[ru]\ar[rd]
 & \\
 &  Map_{\ast}\left( 
\sum \vec{O}(2) \,;\, Bimod_{O_{2}}(\mathcal{B}^{\Lambda}(O_{2})[m_{2}]\,;\, M_{2})\right)\\
  Map_{\ast}\left( 
\sum O_{2}(2)\,;\, Bimod_{O_{2}}(\mathcal{B}^{\Lambda}(O_{2})[m_{2}]\,;\, M_{2})\right)\ar[ru] &
}\vspace{-1pt}
$$

We prove by induction that the map $\gamma_{\vec{n}}$ is a weak equivalence. For this purpose, we assume that $\gamma_{i-1}$ is a weak equivalence and we consider the following commutative diagram:\vspace{3pt}
$$
\xymatrix{
\mathbb{F}_{\vec{O}}(M)[i-1] \ar[d]^{\gamma_{i-1}}_{\simeq} & \mathbb{F}_{\vec{O}}(M)[i] \ar[l]\ar[d]^{\gamma_{i}} & F_{1}^{i}(g)\ar[l] \ar[d]^{\gamma_{g}}\\
Ibimod_{\vec{O}}\big( \, \overline{\mathcal{I}b}_{i-1}(\mathbb{O}) \,;\, M^{-}\,\big) & Ibimod_{\vec{O}}\big( \, \overline{\mathcal{I}b}_{i}(\mathbb{O}) \,;\, M^{-}\,\big) \ar[l] & F_{2}^{i}(g_{\ast})\ar[l]
}\vspace{3pt}
$$ 
where $F_{1}^{i}(g)$ is the fiber over an element $g\in \mathbb{F}_{\vec{O}}(M)[i-1]$ and $F_{2}^{i}(g_{\ast})$ is the fiber over $g_{\ast}=\gamma_{i-1}(g)$. Since the left horizontal maps are fibrations, $\gamma_{i}$ is a weak equivalence if the map $\gamma_{g}$ between the fibers is a weak equivalence. 

The first step is to describe the fibers $F_{1}^{i}(g)$ and $F_{2}^{i}(g_{\ast})$ as certain spaces of section extensions. As explained in \cite[Section 7.1]{Ducoulombier18}, for a Serre fibration $\pi:E\rightarrow B$, we denote by $\Gamma(\pi\,;\,B)$ the space of global sections of $\pi$. For $A\subset B$ and a partial section $s:A\rightarrow E$, we denote by \vspace{5pt}
$$
\Gamma_{s}(\pi\,;\,B\,;\,A)\vspace{5pt}
$$ 
the space of sections $f:B\rightarrow E$ extending $s$, i.e. $f_{|A}=s$. In order to avoid heavy notation, every map induced by $g$ will be denoted by $g_{\ast}$.

\subsubsection*{Description of the fiber $F_{2}^{i}(g_{\ast})$ as a space of section extensions}\vspace{5pt}

Let $i\in \{+,0,\ldots,m_{2}\}$. The fiber $F_{2}^{i}(g_{\ast})$ is a space of $\Sigma=(\Sigma_{m_{1}+1}\times \Sigma_{i})$-equivariant maps\vspace{5pt}
\begin{equation}\label{I0}
\begin{array}{ll}\vspace{9pt}
f:\overline{\mathcal{I}b}(\mathbb{O})(m_{1}+1\,;\,+)\longrightarrow M(m_{1}+1\,;\,0) & \text{if } i=+, \\ 
f:\overline{\mathcal{I}b}(\mathbb{O})(m_{1}+1\,;\,i)\longrightarrow M(m_{1}+1\,;\,i) & \text{otherwise},
\end{array}\vspace{3pt}
\end{equation}
satisfying two conditions. First, it is determined by $g$ on all composite point. In other words, it is determined on the subspace $\partial\overline{\mathcal{I}b}(\mathbb{O})(m_{1}+1\,;\,i)$ (see the proof of Theorem \ref{M4}). Indeed, we can apply $g_{\ast}$ on its prime component and then the $k$-fold infinitesimal bimodule structure. Secondly, it should respect the $\Lambda^{\times k}$-structure, i.e. the right actions by $O_{1}(0)$ and $O_{2}(0)$. \vspace{7pt}

The first condition means that the upper triangle below must commute\vspace{3pt}
\begin{equation}\label{H8}
\xymatrix{
\partial\overline{\mathcal{I}b}(\mathbb{O})(m_{1}+1\,;\,i) \ar[r] \ar[d] & M(m_{1}+1\,;\,i) \ar[d] \\
\overline{\mathcal{I}b}(\mathbb{O})(m_{1}+1\,;\,i) \ar[r] \ar@{-->}[ru]_{f} & \mathcal{M}(M)(m_{1}+1\,;\,i)
}\vspace{3pt}
\end{equation}
while the second condition means that the lower triangle commutes. Here the right arrow is the matching map (see Section \ref{G0}), which is a fibration as we assumed that $M$ is Reedy fibrant. The lower map is the composition\vspace{5pt}
$$
\overline{\mathcal{I}b}(\mathbb{O})(m_{1}+1\,;\,i) \longrightarrow \mathcal{M}(\overline{\mathcal{I}b}(\mathbb{O}))(m_{1}+1\,;\,i)\longrightarrow  \mathcal{M}(M)(m_{1}+1\,;\,i)\vspace{5pt}
$$ 
where the first map is the matching map and the second one is $\mathcal{M}(g)(m_{1}+1\,;\,i)$. \vspace{7pt}

Now we consider the pullback $E$ of the right arrow of (\ref{H8}) along its lower one. We get a fibration of $\Sigma$-spaces\vspace{5pt}
$$
\pi:E\longrightarrow \overline{\mathcal{I}b}(\mathbb{O})(m_{1}+1\,;\,i).\vspace{5pt}
$$
Since $\overline{\mathcal{I}b}(\mathbb{O})(m_{1}+1\,;\,i)$ is $\Sigma$-cofibrant and the property of being a Serre fibration is local, the map \vspace{5pt}
$$
\pi_{/\Sigma}:E_{/\Sigma}\longrightarrow \overline{\mathcal{I}b}(\mathbb{O})(m_{1}+1\,;\,i)_{/\Sigma}\vspace{5pt}
$$
is still a Serre fibration. \vspace{7pt}

Finally, the fiber $F_{2}(g_{\ast})$ is the space of section extensions\vspace{5pt}
$$
F_{2}^{i}(g_{\ast})\cong \Gamma_{g_{\ast}}\big( \, \pi_{/\Sigma} \,;\, \overline{\mathcal{I}b}(\mathbb{O})(m_{1}+1\,;\,i)_{/\Sigma}\,;\, \partial\overline{\mathcal{I}b}(\mathbb{O})(m_{1}+1\,;\,i)_{/\Sigma}\,\big)\vspace{5pt}
$$
where $g_{\ast}$ is the upper arrow in Diagram (\ref{H8}).

\subsubsection*{Description of the fiber $F_{1}^{i}(g_{\ast})$ as a space of section extensions}

There are two cases to consider. First, we assume that $i=+$. Then the points in the fiber $F_{1}^{+}(g)$ can be described as the space of $\Sigma=\Sigma_{m_{1}+1}$-equivariant maps \vspace{5pt}
$$
\begin{array}{lcl}\vspace{9pt}
f':\Sigma \vec{O}(2)\times \mathcal{B}^{\Lambda}(\mathbb{O}^{+})(m_{1}+1\,;\,+) & \longrightarrow &  M(m_{1}+1\,;\,+),   \\ 
f'':\Sigma O_{1}(2)\times \mathcal{B}^{\Lambda}(O_{1})(m_{1}+1) & \longrightarrow & M(m_{1}+1\,;\,+), 
\end{array} \vspace{5pt}
$$ 
satisfying in particular the relation\vspace{5pt}
$$
f'\big(\, [\alpha_{1}\,,\,\alpha_{2}\,;\,t]\,;\,x\,\big)=f''\big(\, [\alpha_{1}\,;\,t]\,;\,x\,\big),\hspace{15pt} \text{with } \left\{
\begin{array}{l}\vspace{7pt}
[\alpha_{1}\,,\,\alpha_{2}\,;\,t]\in \Sigma \vec{O}(2),  \\ 
x\in \mathcal{B}^{\Lambda}(\mathbb{O}^{+})(m_{1}+1\,;\,+).
\end{array} 
\right.\vspace{5pt}
$$
Consequently, such a point is entirely determined by the application $f''$. Similarly to the previous subsection, $f''$ satisfies two conditions. First, it is determined by $g$ on the subspace $\partial\mathcal{B}^{\Lambda}(O_{1})(m_{1}+1\,;\,+)$ and it is also determined by $\eta$ on the based point of the suspension. Secondly, it should respect the $\Lambda^{\times k}_{+}$-structure. These conditions can be represented by a commutative diagram\vspace{5pt}
\begin{equation}\label{H9}
\xymatrix{
\partial\big(\,\Sigma O_{1}(2) \times \mathcal{B}^{\Lambda}(O_{1})(m_{1}+1)\,\big) \ar[r]\ar[d] & M(m_{1}+1\,;\,+)\ar[d] \\
\Sigma O_{1}(2) \times \mathcal{B}^{\Lambda}(O_{1})(m_{1}+1) \ar@{-->}[ru]_{f''} \ar[r] & \mathcal{M}(M)(m_{1}+1\,;\,+)
}\vspace{5pt}
\end{equation}
The second condition coincides with the commutativity of the lower triangle. The first condition coincides with the commutativity of the upper triangle in which the boundary space is given by the pushout product\vspace{5pt}
$$
\big(\, \Sigma O_{1}(2)\times \partial\mathcal{B}^{\Lambda}(O_{1})(m_{1}+1)\,\big)
\underset{\ast \times \partial\mathcal{B}^{\Lambda}(O_{1})(m_{1}+1)}{\coprod}
\big(\, \ast \times \mathcal{B}^{\Lambda}(O_{1})(m_{1}+1)\,\big).\vspace{3pt}
$$ 
On the first part of the boundary, the upper map of (\ref{H9}) is determined by the application $g$ while the second part is given by the composition
$$
\xymatrix{
\ast \times \mathcal{B}^{\Lambda}(O_{1})(m_{1}+1) \ar[r]_{\hspace{10pt}pr_{2}} & \mathcal{B}^{\Lambda}(O_{1})(m_{1}+1) \ar[r]_{\hspace{10pt}\mu''} & O_{1}(m_{1}+1) \ar[r]_{\eta} & M(m_{1}+1\,;\,+).
}
$$

Now we consider the pullback $E$ of the right arrow of (\ref{H9}) along its lower one. We get a fibration of $\Sigma$-spaces \vspace{5pt}
$$
\pi:E\longrightarrow \Sigma O_{1}(2) \times \mathcal{B}^{\Lambda}(O_{1})(m_{1}+1).\vspace{5pt}
$$
Since $\Sigma O_{1}(2) \times \mathcal{B}^{\Lambda}(O_{1})(m_{1}+1)$ is $\Sigma$-cofibrant and the property of being a Serre fibration is local, the map \vspace{5pt}
$$
\pi_{/\Sigma}:E_{/\Sigma}\longrightarrow \Sigma O_{1}(2) \times \mathcal{B}^{\Lambda}(O_{1})(m_{1}+1)_{/\Sigma}\vspace{5pt}
$$
is still a Serre fibration. \vspace{7pt}

Finally, the $F_{1}(g_{\ast})$ is the space of section extensions\vspace{5pt}
$$
F_{1}^{+}(g_{\ast})\cong \Gamma_{g_{\ast}}\big( \, \pi_{/\Sigma} \,;\, \Sigma O_{1}(2) \times \mathcal{B}^{\Lambda}(O_{1})(m_{1}+1)_{/\Sigma}\,;\, \partial\big(\Sigma O_{1}(2) \times \mathcal{B}^{\Lambda}(O_{1})(m_{1}+1)\big)_{/\Sigma}\,\big)\vspace{5pt}
$$
where $g_{\ast}$ is the upper arrow in Diagram (\ref{H8}).\vspace{7pt}

From now on, we assume that $i\in \{0,\ldots,m_{2}\}$. Then the points in the fiber $F_{1}^{i}(g)$ is the space of $\Sigma=(\Sigma_{m_{1}+1}\times \Sigma_{i})$-equivariant maps \vspace{5pt}
$$
\begin{array}{lcl}
f':\Sigma \vec{O}(2)\times \mathcal{B}^{\Lambda}(\mathbb{O}^{+})(m_{1}+1\,;\,i) & \longrightarrow &  M(m_{1}+1\,;\,i),   
\end{array}\vspace{5pt} 
$$ 
satisfying two conditions. First, it is determined by $g$ on the subspace $\partial\mathcal{B}^{\Lambda}(\mathbb{O}^{+})(m_{1}+1\,;\,i)$ and it is also determined by $\eta$ on the based point of the suspension. Secondly, it should respect the $\Lambda^{\times k}_{+}$-structure. These conditions can be represented by a commutative diagram\vspace{3pt}
\begin{equation}\label{Q1}
\xymatrix{
\partial\big(\,\Sigma \vec{O}(2) \times \mathcal{B}^{\Lambda}(\mathbb{O}^{+})(m_{1}+1\,;\,i)\,\big) \ar[r]\ar[d] & M(m_{1}+1\,;\,i)\ar[d] \\
\Sigma \vec{O}(2) \times \mathcal{B}^{\Lambda}(\mathbb{O}^{+})(m_{1}+1\,;\,i) \ar@{-->}[ru]_{f'} \ar[r] & \mathcal{M}(M)(m_{1}+1\,;\,i)
}\vspace{3pt}
\end{equation}
The second condition coincides with the commutativity of the lower triangle. The first condition coincides with the commutativity of the upper triangle in which the boundary space in the above diagram is given by the pushout product\vspace{5pt}
$$
\big(\, \Sigma \vec{O}(2)\times \partial\mathcal{B}^{\Lambda}(\mathbb{O}^{+})(m_{1}+1\,;\,i)\,\big)
\underset{\ast \times \partial\mathcal{B}^{\Lambda}(\mathbb{O}^{+})(m_{1}+1\,;\,i)}{\coprod}
\big(\, \ast \times \mathcal{B}^{\Lambda}(\mathbb{O}^{+})(m_{1}+1\,;\,i)\,\big).\vspace{3pt}
$$ 
On the first part of the boundary, the upper map of (\ref{Q1}) is determined by the application $g$ while the second part is given by the composition\vspace{3pt}
$$
\xymatrix{
\ast \times \mathcal{B}^{\Lambda}(\mathbb{O}^{+})(m_{1}+1\,;\,i) \ar[r]_{\hspace{10pt}pr_{2}} & \mathcal{B}^{\Lambda}(\mathbb{O}^{+})(m_{1}+1\,;\,i) \ar[r]_{\hspace{10pt}\mu''} & \mathbb{O}^{+}(m_{1}+1\,;\,i) \ar[r]_{\eta} & M(m_{1}+1\,;\,i).
}
$$

Now we consider the pullback $E$ of the right arrow of (\ref{Q1}) along its lower one. We get a fibration of $\Sigma$-spaces\vspace{5pt}
$$
\pi:E\longrightarrow  \Sigma \vec{O}(2) \times \mathcal{B}^{\Lambda}(\mathbb{O}^{+})(m_{1}+1\,;\,i).\vspace{5pt}
$$
Since $\Sigma \vec{O}(2) \times \mathcal{B}^{\Lambda}(\mathbb{O}^{+})(m_{1}+1\,;\,i)$ is $\Sigma$-cofibrant and the property of being a Serre fibration is local, the map\vspace{5pt} 
$$
\pi_{/\Sigma}:E_{/\Sigma}\longrightarrow \Sigma \vec{O}(2) \times \mathcal{B}^{\Lambda}(\mathbb{O}^{+})(m_{1}+1\,;\,i)_{/\Sigma}\vspace{5pt}
$$
is still a Serre fibration. The fiber $F_{1}(g_{\ast})$ is the space of section extensions\vspace{3pt}
$$
F_{1}^{i}(g_{\ast})\cong \Gamma_{g_{\ast}}\big( \, \pi_{/\Sigma} \,;\, \Sigma \vec{O}(\{1\,;\,2\}) \times \mathcal{B}^{\Lambda}(\mathbb{O}^{+})(m_{1}+1\,;\,i)_{/\Sigma}\,;\, \partial\big(\Sigma \vec{O}(2) \times \mathcal{B}^{\Lambda}(\mathbb{O}^{+})(m_{1}+1\,;\,i)\big)_{/\Sigma}\,\big).
$$

\subsubsection*{Alternative description the spaces of section extensions associated to $F_{1}^{i}(g_{\ast})$}

In order to identify the fiber $F_{1}^{i}(g)$ with a subspace of $F_{2}^{i}(g_{\ast})$, we consider the subspace\vspace{5pt}
$$
\partial\overline{\mathcal{I}b}(\mathbb{O})(m_{1}+1\,;\,i)\subset \partial'\overline{\mathcal{I}b}(\mathbb{O})(m_{1}+1\,;\,i) \subset \overline{\mathcal{I}b}(\mathbb{O})(m_{1}+1\,;\,i)\vspace{5pt}
$$
containing $\partial\overline{\mathcal{I}b}(\mathbb{O})(m_{1}+1\,;\,i)$ together with all the elements of the form $[x\,;\,y^{1},\ldots,y^{l}\,;\,\sigma]$ where $l\geq 2$. So, we claim that the map $\gamma_{g}$ between the fibers sends a point in the fiber $F_{1}^{i}(g)$ to a $\Sigma$-equivariant map $f$ making the following diagram commutes:\vspace{3pt}
$$
\xymatrix{
\partial'\overline{\mathcal{I}b}(\mathbb{O})(m_{1}+1\,;\,i) \ar[r] \ar[d] & M(m_{1}+1\,;\,i) \ar[d] \\
\overline{\mathcal{I}b}(\mathbb{O})(m_{1}+1\,;\,i) \ar[r] \ar@{-->}[ru]_{f} & \mathcal{M}(M)(m_{1}+1\,;\,i)
}\vspace{3pt}
$$

\begin{pro}
The fiber $F_{1}^{i}(g)$ is weakly equivalent to the space of section extensions\vspace{3pt}
\begin{equation}\label{Z7}
\tilde{F}_{1}^{i}(g):=\Gamma_{g_{\ast}}\big( \, \pi_{/\Sigma} \,;\, \overline{\mathcal{I}b}(\mathbb{O})(m_{1}+1\,;\,i)_{/\Sigma}\,;\, \partial'\overline{\mathcal{I}b}(\mathbb{O})(m_{1}+1\,;\,i)_{/\Sigma}\,\big)\vspace{3pt}
\end{equation}
\end{pro}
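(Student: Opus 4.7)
The strategy is to rewrite both $\tilde{F}_{1}^{i}(g)$ and $F_{1}^{i}(g)$ as section-extension problems over a common combinatorial model and then identify them. The crucial observation is that the infinitesimal bimodule structure, together with the relation~(\ref{F0}) equalizing the right $\vec{O}$-action on $\mathcal{I}$ with the left $\vec{O}$-action on $\mathcal{B}^{\Lambda}(\mathbb{O}^{+})$, forces any extension $f$ of $g_{\ast}$ from $\partial'\overline{\mathcal{I}b}(\mathbb{O})(m_{1}+1\,;\,i)$ to $\overline{\mathcal{I}b}(\mathbb{O})(m_{1}+1\,;\,i)$ to be determined by its restriction to the ``primitive'' part, namely the elements $[x\,;\,y^{1}\,;\,\sigma]$ with $l=1$.

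First, I will unwind $\tilde{F}_{1}^{i}(g)$. Since the subspace $\partial'\overline{\mathcal{I}b}(\mathbb{O})(m_{1}+1\,;\,i)$ by definition contains every element with $l\geq 2$, an extension of $g_{\ast}$ is determined by its values on the $l=1$ locus, which is indexed, modulo the equivalence relation, by $\mathcal{I}(1) \times \mathcal{B}^{\Lambda}(\mathbb{O}^{+})(m_{1}+1\,;\,i)$. A homotopy analogous to that used in the proof of Lemma~\ref{E9}, bringing the real numbers indexing the vertices below the pearl to $1$ and contracting unit corollas, collapses $\mathcal{I}(1)$ onto its image through the map $\tau_{1}$ from~(\ref{F3}). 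That image is the wedge $Cone\,\big(\vec{O}(2)\big) \sqcup \bigsqcup_{j=1}^{k} Cone\,\big(O_{j}(2)\big)$, and the suspension structure arises naturally: the cone apex at parameter $1$ is collapsed by the extension relation at the root vertex, while the base point at parameter $0$ is forced by the pearl identification.

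Second, I match this with $F_{1}^{i}(g)$. By construction, $\mathbb{F}_{\vec{O}}(M)[i]$ is the limit of the diagram~(\ref{F2}) applied to $\mathcal{B}^{\Lambda}_{i}(\mathbb{O}^{+})[\vec{n}]$, so $F_{1}^{i}(g)$ is a space of compatible pairs $(f',f'')$ satisfying the boundary, base-point, and $\Lambda^{\times k}_{+}$ conditions spelled out in Diagram~(\ref{Q1}) and its counterpart for $i=+$. The partition of $\mathcal{I}(1)$ induced by the application $\varrho_{1}$ matches precisely the two summands $\Sigma\vec{O}(2)$ and $\Sigma O_{j}(2)$ parametrizing $f'$ and $f''$ in the limit, and the glueing condition along the map $\delta_{j}$ of~(\ref{P3}) arises from the relation~(\ref{F0}) on the boundary between these components in $\mathcal{I}(1)$. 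Under the comparison map $\gamma$ from~(\ref{Q0}), the base-point condition $f'([-\,;\,0]\,;\,-)=\eta\circ\mu'$ corresponds to the partial section at the cone apex, while the boundary condition on $\partial\mathcal{B}^{\Lambda}(\mathbb{O}^{+})$ becomes the partial section given by $g_{\ast}$ on $\partial\overline{\mathcal{I}b}(\mathbb{O})$ via~(\ref{Q0}).

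The main obstacle will be the careful bookkeeping of the equivalence relations in $\overline{\mathcal{I}b}(\mathbb{O})$: one must verify that the compatibility condition at the intersection of the $\Sigma O_{j}(2)$ and $\Sigma\vec{O}(2)$ factors in the limit defining $\mathbb{F}_{\vec{O}}(M)[i]$ coincides with the identification imposed by~(\ref{F0}) on the boundary between the components of $\mathcal{I}(1)$ classified by $\varrho_{1}$, and that the $l\geq 2$ case is uniquely forced by the data on $\partial'$. Once these matchings are in place, the homotopy collapsing $\mathcal{I}(1)$ onto its $\tau_{1}$-image lifts to a weak equivalence between $\tilde{F}_{1}^{i}(g)$ and $F_{1}^{i}(g)$, because it takes place inside spaces of sections over $\Sigma$-cofibrant bases for the Serre fibration $\pi_{/\Sigma}$, and the partial section $g_{\ast}$ is preserved throughout by the construction of $\gamma$.
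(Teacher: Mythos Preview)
Your approach has the right spirit but contains a genuine gap and some confusion about the case structure.

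First, the case distinction. For a fixed $i\in\{0,\ldots,m_{2}\}$, any point $[x;y^{1};\sigma]$ in the $l=1$ locus has $y^{1}\in\mathcal{B}^{\Lambda}(\mathbb{O}^{+})(m_{1}+1;i)$ with both components non-augmented, which forces the single leaf of $x\in\mathcal{I}(1)$ to be \emph{internal} for both $f_{1}$ and $f_{2}$; hence $\varrho_{1}(x)=0$ always, and only the summand $\Sigma\vec{O}(2)$ appears. Conversely, for $i=+$ one has $\varrho_{1}(x)=1$ always and only $\Sigma O_{1}(2)$ appears. There is no ``partition of $\mathcal{I}(1)$ by $\varrho_{1}$'' to match against a pair $(f',f'')$: for each fixed $i$ exactly one component is present, which is why the paper treats the two cases separately and obtains a \emph{homeomorphism} (not merely a weak equivalence) for $i\neq +$ directly from the formula~(\ref{Q0}), with inverse given by evaluating on the specific pearled $2$-corolla in $\mathcal{I}(1)$.

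Second, and more seriously, the case $i=+$ cannot be handled without invoking hypothesis~(\ref{P9}). The maps comprising $F_{1}^{+}(g)$ have target $M(m_{1}+1;+)$, whereas the section-extension space $\tilde{F}_{1}^{+}(g)$ is built from the fibration $\pi$ over $\overline{\mathcal{I}b}(\mathbb{O})(m_{1}+1;+)$, whose fibre involves $M(m_{1}+1;0)$ (see~(\ref{I0})). These targets differ, and no amount of collapsing $\mathcal{I}(1)$ will bridge that mismatch. The paper resolves this by postcomposing with the weak equivalence $\iota_{1}\colon M(m_{1}+1;+)\to M(m_{1}+1;0)$ from~(\ref{P9}), passing through an intermediate section space $F_{1}'(g_{\ast})$ built from the modified diagram~(\ref{Z8}); the step $F_{1}^{+}(g)\to F_{1}'(g)$ is a weak equivalence precisely because $\iota_{1}$ is, and $M$ is Reedy fibrant. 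Only then does one obtain the homeomorphism $F_{1}'(g)\cong\tilde{F}_{1}^{+}(g)$ via the identification $\overline{\mathcal{I}b}(\mathbb{O})(m_{1}+1;+)\cong\overline{\mathcal{I}b}(O_{1})(m_{1}+1)$. Your proposal omits this step entirely, so the argument for $i=+$ does not go through.
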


\begin{proof}
The proof is similar to \cite[Section 7.1]{Ducoulombier18}. More precisely, for $i\in \{0,\ldots,m_{2}\}$, the fiber $F_{1}^{i}(g)$ is homeomorphic to to the space of section extensions (\ref{Z7}). The homeomorphism is obtained from the map (\ref{Q0}). Its inverse is given by the image of the points $[x\,;\,y^{1}\,;\,id]$ where $y^{1}\in \mathcal{B}^{\Lambda}(\mathbb{O}^{+})(m_{1}+1\,;\,i)$ and  $x=[(T\,;\,f_{1}\,,\,f_{2})\,;\,\theta_{r}\,;\,t_{r}]\in \mathcal{I}(1)$ with $[\theta_{r}\,;\,t_{r}]\in \Sigma \vec{O}(2)$, $T$ the pearled $2$-corolla, $f_{1}$ and $f_{2}$ the applications indexing the edges by $internal$. \vspace{5pt}

For $i=+$, the fiber $F_{1}^{i}(g)$ is weakly equivalent to to the space of section extensions (\ref{Z7}). Indeed, let us consider the following commutative diagram in which the right horizontal maps are induced by the weak equivalence $\iota_{1}:M(m_{1}+1\,;\,+)\rightarrow M(m_{1}+1\,;\,0)$:
\begin{equation}\label{Z8}
\xymatrix{
\partial\big(\,\Sigma O_{1}(2) \times \mathcal{B}^{\Lambda}(O_{1})(m_{1}+1)\,\big) \ar[r]^{\hspace{25pt}g_{\ast}}\ar[d] & M(m_{1}+1\,;\,+)\ar[d]\ar[r]^{\simeq} & M(m_{1}+1\,;\,0) \ar[d]\\
\Sigma O_{1}(2) \times \mathcal{B}^{\Lambda}(O_{1})(m_{1}+1) \ar@{-->}[rru]_{f''} \ar[r] & \mathcal{M}(M)(m_{1}+1\,;\,+) \ar[r]^{\simeq} & \mathcal{M}(M)(m_{1}+1\,;\,0)
}\vspace{5pt}
\end{equation}
Let $E'$ be the pullback of the right arrow of (\ref{Z8}) along its lower one. We get a fibration of $\Sigma$-spaces \vspace{5pt}
$$
\pi':E'\longrightarrow \Sigma O_{1}(2) \times \mathcal{B}^{\Lambda}(O_{1})(m_{1}+1).\vspace{5pt}
$$
Since $\Sigma O_{1}(2) \times \mathcal{B}^{\Lambda}(O_{1})(m_{1}+1)$ is $\Sigma$-cofibrant and the property of being a Serre fibration is local, the map \vspace{5pt}
$$
\pi'_{/\Sigma}:E'_{/\Sigma}\longrightarrow \Sigma O_{1}(2) \times \mathcal{B}^{\Lambda}(O_{1})(m_{1}+1)_{/\Sigma}\vspace{5pt}
$$
is still a Serre fibration. Finally, we denote by $F_{1}'(g_{\ast})$ is the space of section extensions\vspace{5pt}
$$
F_{1}'(g_{\ast}):= \Gamma_{\iota_{1}\circ g_{\ast}}\big( \, \pi'_{/\Sigma} \,;\, \Sigma O_{1}(2) \times \mathcal{B}^{\Lambda}(O_{1})(m_{1}+1)_{/\Sigma}\,;\, \partial\big(\Sigma O_{1}(2) \times \mathcal{B}^{\Lambda}(O_{1})(m_{1}+1)\big)_{/\Sigma}\,\big).\vspace{5pt}
$$

Since $M$ is Reedy fibrant and $\Sigma O_{1}(2) \times \mathcal{B}^{\Lambda}(O_{1})(m_{1}+1)$ is $\Sigma$-cofibrant, the weak equivalence $\iota_{1}$ induces a weak equivalence between the spaces of section extensions $F_{1}^{+}(g)\rightarrow F_{1}'(g)$. Furthermore, there is a homeomorphism between $F_{1}'(g)$ and $\tilde{F}_{1}^{+}(g)$. By using the identification \vspace{5pt}
$$
\overline{\mathcal{I}b}(\mathbb{O})(m_{1}+1\,;\,+)\cong \overline{\mathcal{I}b}(O_{1})(m_{1}+1),\vspace{5pt}
$$
the map $\phi: F_{1}'(g)\rightarrow \tilde{F}_{1}^{+}(g)$ sends an application \vspace{5pt}
$$
f'':\Sigma O_{1}(2) \times \mathcal{B}^{\Lambda}(O_{1})(m_{1}+1) \longrightarrow M(m_{1}+1\,;\,0),\vspace{5pt}
$$
making the diagram (\ref{Z8}) into a commutative diagram, to the point in $\tilde{F}_{1}^{+}(g)$  defined as follows:
$$
\begin{array}{rcl}\vspace{7pt}
\phi(f''):\overline{\mathcal{I}b}(O_{1})(m_{1}+1) & \longrightarrow & M(m_{1}+1\,;\,0); \\ 
\left[ x\,;\,y^{1},\ldots,y^{l}\,;\,\sigma \right] & \longmapsto & \left\{
\begin{array}{ll}\vspace{7pt}
f''(y^{1})\cdot \sigma & \text{if } l=1, \\ 
\iota_{1}\circ g_{\ast}\big(\left[ x\,;\,y^{1},\ldots,y^{l}\,;\,\sigma\right]\big) & \text{otherwise}.
\end{array} 
\right.
\end{array} 
$$
Finally, one has the commutative diagram
$$
\xymatrix{
F_{1}^{+}(g) \ar[r]^{\simeq} \ar[d] & F_{1}'(g) \ar[dl]^{\simeq} \\
\tilde{F}_{1}^{+}(g) & 
}
$$
where the left vertical map is induced by the map (\ref{Q0}). Thus finishes the proof of the proposition.
\end{proof}

\subsubsection*{Weak equivalence between the spaces of section extensions}

According to the identifications obtained in the previous subsections, one has the commutative diagram:
$$
\xymatrix{
F_{1}^{i}(g) \ar[d] & \Gamma_{g_{\ast}}\big( \, \pi_{/\Sigma} \,;\, \overline{\mathcal{I}b}(\mathbb{O})(m_{1}+1\,;\,i)_{/\Sigma}\,;\, \partial'\overline{\mathcal{I}b}(\mathbb{O})(m_{1}+1\,;\,i)_{/\Sigma}\,\big) \ar[l]^{\hspace{-80pt}\simeq} \ar[d] \\
F_{2}^{i}(g) & \Gamma_{g_{\ast}}\big( \, \pi_{/\Sigma} \,;\, \overline{\mathcal{I}b}(\mathbb{O})(m_{1}+1\,;\,i)_{/\Sigma}\,;\, \partial\overline{\mathcal{I}b}(\mathbb{O})(m_{1}+1\,;\,i)_{/\Sigma}\,\big) \ar[l]^{\hspace{-80pt}\cong}
}
$$  

\begin{lmm}{\cite[Lemma 7.2]{Ducoulombier18}}
Let $\pi:E\rightarrow Z$ be a Serre fibration and let  $X\subset Y \subset Z$ be cofibrant inclusion such that the inclusion $X\subset Y$ is a weak equivalence and $X$ is cofibrant. For any section $s:Y\rightarrow E$, the map 
$$\Gamma_{s}(\pi\,;\, Z\,;\,Y)\rightarrow \Gamma_{s}(\pi\,;\, Z\,;\,X)$$
is a weak equivalence.\vspace{5pt}
\end{lmm}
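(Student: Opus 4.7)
The plan is to factor the restriction map through the intermediate section space $\Gamma(\pi;Y)$ and to invoke the pushout-product (SM7) axiom for topological spaces together with a standard fiber-sequence argument. Writing $r_Y\colon\Gamma(\pi;Z)\to\Gamma(\pi;Y)$, $r_X\colon\Gamma(\pi;Z)\to\Gamma(\pi;X)$, and $r_{XY}\colon\Gamma(\pi;Y)\to\Gamma(\pi;X)$ for the three restriction maps, one has $r_X = r_{XY}\circ r_Y$. By definition $\Gamma_s(\pi;Z;Y) = r_Y^{-1}(s)$ and $\Gamma_s(\pi;Z;X) = r_X^{-1}(s|_X)$, and the map to be analyzed is the induced inclusion of one fiber into the other.

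The crucial step is to show that $r_{XY}$ is a trivial Serre fibration. Given any cofibration $K\hookrightarrow L$ of spaces, a lifting problem of $K\hookrightarrow L$ against $r_{XY}$ translates, by the exponential adjunction, into a lifting problem of the pushout-product inclusion
\[
K\times Y\cup_{K\times X}L\times X\hookrightarrow L\times Y
\]
against $\pi\colon E\to Z$, where the bottom edge $L\times Y\to Z$ is the composite of the projection onto $Y$ followed by $Y\hookrightarrow Z$. Since $X\hookrightarrow Y$ is a trivial cofibration of topological spaces by hypothesis, the pushout-product is itself a trivial cofibration, and so the required lift against the Serre fibration $\pi$ exists. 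The same adjunction argument applied to the cofibration $Y\hookrightarrow Z$ shows that $r_Y$ is a Serre fibration.

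Restricting $r_Y$ to $\Gamma_s(\pi;Z;X) = r_X^{-1}(s|_X)$ then produces a Serre fibration
\[
\bar r\colon \Gamma_s(\pi;Z;X)\longrightarrow r_{XY}^{-1}(s|_X),
\]
whose fiber over $s\in r_{XY}^{-1}(s|_X)$ is precisely $\Gamma_s(\pi;Z;Y)$, the inclusion of the fiber being the map under consideration. Because $r_{XY}$ is a trivial Serre fibration, the base $r_{XY}^{-1}(s|_X)$ is weakly contractible. The long exact sequence of homotopy groups of $\bar r$ therefore forces the inclusion of the fiber over $s$ to induce isomorphisms on $\pi_n$ for $n\geq 1$ and a bijection on $\pi_0$, giving the desired weak equivalence.

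The main subtlety lies in the bookkeeping for the adjunction: one must check that a lift $L\times Y\to E$ of the pushout-product diagram automatically projects to the prescribed map $L\times Y\to Z$, so that it defines a genuine map $L\to\Gamma(\pi;Y)$. This is built into the lifting problem itself via the commuting bottom triangle. No further cofibrancy hypothesis on $Z$ or on the section $s$ is needed, and the argument reduces to a direct combination of SM7 with the long exact sequence of a Serre fibration.
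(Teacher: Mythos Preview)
Your argument is correct. The paper does not give its own proof of this lemma; it is quoted verbatim from \cite[Lemma 7.2]{Ducoulombier18}, so there is no in-paper argument to compare against. What you have written is the standard proof: translate lifting problems for the restriction maps into pushout-product lifting problems via adjunction, invoke SM7 to see that $r_{XY}$ is a trivial fibration and $r_Y$ is a fibration, and then read off the result from the fiber sequence $\Gamma_s(\pi;Z;Y)\to\Gamma_s(\pi;Z;X)\to r_{XY}^{-1}(s|_X)$ with weakly contractible base. The only cosmetic point is that the $\pi_0$ bijectivity deserves one more sentence (surjectivity from path-lifting over a connected base, injectivity from nullhomotoping the projected path using $\pi_1=0$ and lifting rel the three specified sides of the square), but this is routine and you clearly have it in mind.
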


\begin{lmm}
The inclusion $\partial\overline{\mathcal{I}b}(\mathbb{O})(m_{1}+1\,;\,i)\rightarrow \partial'\overline{\mathcal{I}b}(\mathbb{O})(m_{1}+1\,;\,i)$ is a weak equivalence.
\end{lmm}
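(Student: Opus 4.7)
The strategy is to realize the inclusion $\partial\overline{\mathcal{I}b}(\mathbb{O})(m_{1}+1\,;\,i) \subset \partial'\overline{\mathcal{I}b}(\mathbb{O})(m_{1}+1\,;\,i)$ as a finite composition of pushouts of $\Sigma$-equivariant cell attachments, and to show, using the $j_{1}$- and $j_{2}$-coherence of $O_{1}$ and $O_{2}$, that each such attachment is a trivial cofibration. This is the multi-variable analogue of \cite[Lemma 7.3]{Ducoulombier18}.

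First, a point of $\partial' \setminus \partial$ has the form $[x\,;\,y^{1},\ldots,y^{l}\,;\,\sigma]$ with $l \geq 2$, with every non-pearl vertex of $x \in \mathcal{I}(l)$ indexed by a parameter in $[0,1)$, and every vertex above the sections of $y^{j}$ also indexed by a parameter in $[0,1)$. I filter $\partial'$ first by the number of non-pearl vertices of $x$, refined by the combinatorial type of $x$ (the underlying pearled tree $T$ with its labellings $f_{1},\ldots,f_{k}$), and then by the combinatorial complexity of each $y^{j}$. Adding a stratum of this filtration to the preceding stages amounts to a pushout attaching a $\Sigma$-equivariant cell whose boundary corresponds to parameters reaching $1$. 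By the contraction relations of Construction \ref{L1} and of $\mathcal{B}^{\Lambda}(\mathbb{O}^{+})$, such boundary points either lie in $\partial$ (when the root parameter of $x$, or a top parameter of some $y^{j}$, reaches $1$ and the $k$-fold infinitesimal bimodule or bimodule structure map is activated) or in a previously treated stratum (when an interior parameter reaches $1$, contracting an inner edge).

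Second, for each such stratum the attached cell admits, after the usual parameter rescaling, a natural projection onto the combinatorial data of $T$, the operadic decorations at the non-pearl vertices, and the constituent subtrees of the $y^{j}$. Using the applications $\varrho_{i}$ and $\tau_{i}$ from Section \ref{F1} to isolate each strand, the fiber of this projection is identified with a product, indexed by the strands $i \in \{1,\ldots,k\}$, of homotopy colimits of the form $\underset{\Psi'_{l_{i}}}{\hocolim}\,\rho_{l_{i}}^{O_{i}}$ relative to their sub-homotopy-colimits $\underset{\partial\Psi_{l_{i}}}{\hocolim}\,\rho_{l_{i}}^{O_{i}}$ from Definition \ref{N1}, where $l_{i}$ is the number of leaves of the $i$-th component of $T$. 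Showing that the attachment is a trivial cofibration therefore reduces, strand by strand, to the natural maps
\begin{equation*}
\underset{\partial\Psi_{l_{i}}}{\hocolim}\, \rho_{l_{i}}^{O_{i}} \longrightarrow \underset{\Psi'_{l_{i}}}{\hocolim}\, \rho_{l_{i}}^{O_{i}}
\end{equation*}
being weak equivalences, which is exactly the $l_{i}$-coherence of $O_{i}$. Since on the first strand $l_{1} \leq m_{1}+1 \leq j_{1}$ and on the second strand $l_{2} \leq m_{2} \leq j_{2}$, the coherence hypotheses of Theorem \ref{D3} cover every cell that appears. Assembling the attachments and using that trivial cofibrations are stable under pushout and finite composition, one concludes that $\partial \hookrightarrow \partial'$ is a trivial cofibration, hence a weak equivalence.

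\paragraph{Main obstacle.} The hard part is the combinatorial identification in the second step: matching the multi-colored decorated-tree data describing a cell of $\partial' / \partial$ with a product of the single-variable coherence diagrams $\rho_{l_{i}}^{O_{i}}$. The presence of $external$ edges in both $\mathcal{I}$ and $\mathcal{B}^{\Lambda}(\mathbb{O}^{+})$ introduces extra degrees of freedom that must be collapsed before the single-variable coherence can be invoked. The relation (\ref{F0}) equalizing the $\vec{O}$-action on $\mathcal{I}$ with the left $\vec{O}$-action on $\mathcal{B}^{\Lambda}(\mathbb{O}^{+})$, together with the homotopy retract of Lemma \ref{L2} identifying $\mathcal{B}^{\Lambda}(\mathbb{O}^{+})$ with $\mathbb{B}^{\Lambda}(O)$ up to $k$-fold sequences, should allow this collapse to be performed one strand at a time. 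Once the reduction is done, the argument runs exactly in parallel with \cite[Section 7.1]{Ducoulombier18}.
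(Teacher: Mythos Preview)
Your strategy --- reduce the inclusion to the single-variable coherence maps $\hocolim_{\partial\Psi_{l_i}}\rho^{O_i}_{l_i}\to\hocolim_{\Psi'_{l_i}}\rho^{O_i}_{l_i}$ --- is the right target, and is what the paper does as well. However, the paper organises the argument differently, and your route has a gap at the step you yourself flag as the ``main obstacle''.

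\textbf{The gap.} In your second step you assert that the cell attached at a given combinatorial type has fibre a \emph{product over strands} of the relevant single-variable pieces, and you invoke the maps $\varrho_i,\tau_i$ to justify this. But $\varrho_i,\tau_i$ are only projections $\mathcal{I}(l)\to\Sigma\vec{O}(2)\sqcup\coprod_j\Sigma O_j(2)$ used to build the map $\gamma$; they do not split $\mathcal{I}(l)$ or $\mathcal{B}^\Lambda(\mathbb{O}^+)$ as a product over colours. Indeed the vertex labels live in the fibre products $\vec{O}_S(S_1,\ldots,S_k)$, which are genuinely coupled (they are pullbacks over $O(S_i\cap S_j)$, not products), so the cell you describe is \emph{not} a product before some decoupling argument is made. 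Your proposed filtration by ``number of non-pearl vertices of $x$'' is also not obviously well-defined modulo the relation~(\ref{F0}), which lets vertices migrate between $x$ and the $y^j$.

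\textbf{How the paper proceeds.} The paper makes the decoupling the \emph{first} step: it extends the homotopy retract of Lemma~\ref{L2} to a retract
\[
\overline{\mathcal{I}b}(\mathbb{O})(m_1{+}1\,;\,i)\ \simeq\ \overline{\mathcal{I}b}_{O_1}(O_1)(m_1{+}1)\times\overline{\mathcal{I}b}_{O_2}(O_2)(i)
\]
(the map $\Upsilon$ deletes all \textit{external} edges; its homotopy inverse $\Psi$ glues the two single-colour pearled trees along the root-to-pearl path). This retract is checked to carry $\partial$ and $\partial'$ to the two pushout products of the single-variable boundaries. One is then reduced to showing that the map $\nu$ between these pushout products is a weak equivalence, which follows because the single-variable inclusions $\partial\overline{\mathcal{I}b}_{O_i}(O_i)(n)\hookrightarrow\partial'\overline{\mathcal{I}b}_{O_i}(O_i)(n)$ are identified in \cite[Theorem 4.14]{Ducoulombier18} with the coherence maps $\hocolim_{\partial\Psi_n}\rho^{O_i}_n\to\hocolim_{\Psi'_n}\rho^{O_i}_n$, and these are cofibrations between cofibrant objects so that the induced map on pushout products is again a weak equivalence.

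In short: apply the retract of Lemma~\ref{L2} globally \emph{before} any filtration. Once the space is a product, your cell-by-cell programme collapses to the pushout-product argument the paper gives, and no separate stratum-by-stratum analysis is needed.
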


\begin{proof}
The homotopy retract introduced in Lemma \ref{L2} can be extended in order to get a homotopy retract 
$$
\xymatrix{
\overline{\mathcal{I}b}_{O_{1}}(O_{1})(m_{1}+1)\times  \overline{\mathcal{I}b}_{O_{2}}(O_{2})(i) \ar@<0.5ex>[r]^{\hspace{50pt}\Psi} & 
\overline{\mathcal{I}b}(\mathbb{O})(m_{1}+1\,;\,i)\ar@<0.5ex>[l]^{\hspace{50pt}\Upsilon}\hspace{-30pt} &  \ar@(dr,ur)_{h} 
}
$$
where the map $\Upsilon$ removes the $external$ edges while the map $\Psi$ consists in gluing together the elements along the path joining the pearls to the roots of the pearled trees. For instance, the map $\Psi$ sends the element in the product space $\overline{\mathcal{I}b}_{O_{1}}(O_{1})(4)\times \overline{\mathcal{I}b}_{O_{2}}(O_{2})(5)$\vspace{-5pt}
\begin{figure}[!h]
\begin{center}
\includegraphics[scale=0.18]{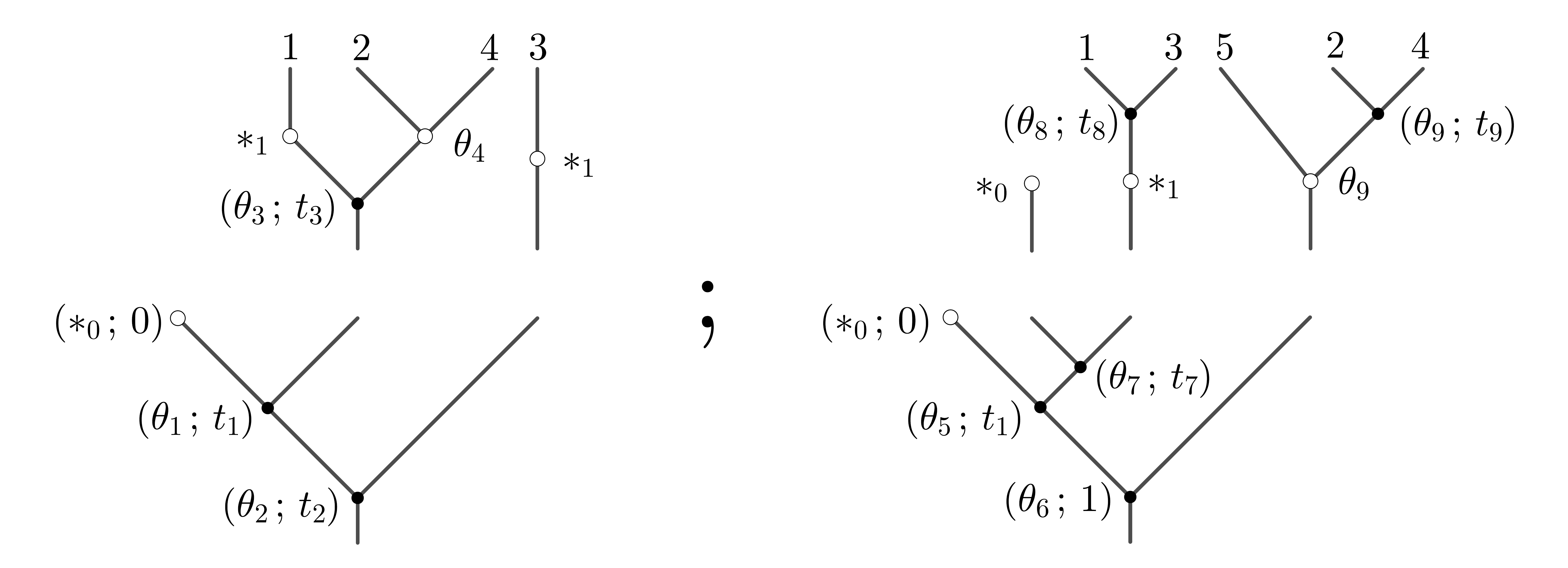}\vspace{-10pt}
\end{center}
\end{figure}

\noindent to the following point in $\overline{\mathcal{I}b}(\mathbb{O})(4\,;\,5)$:
\begin{figure}[!h]
\begin{center}
\includegraphics[scale=0.18]{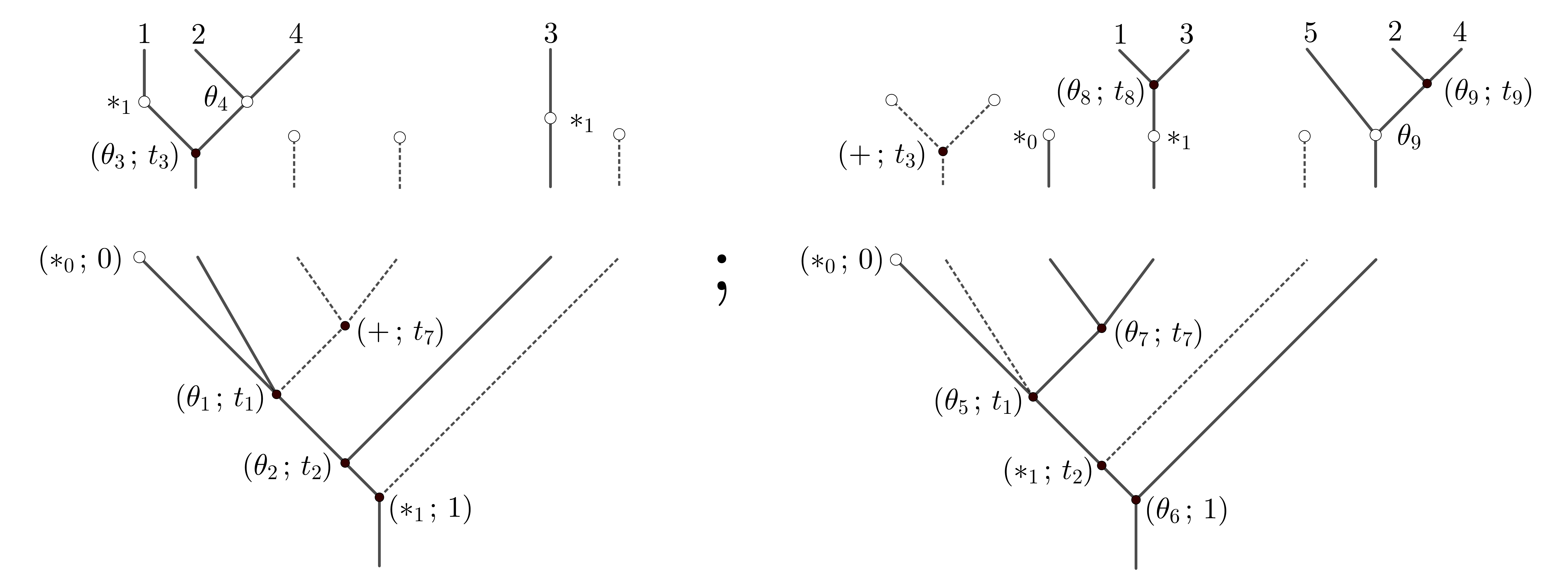}\vspace{-15pt}
\end{center}
\end{figure}

 Furthermore, the homotopy so obtained preserves the boundaries $\partial$ and $\partial'$ in the sense that the horizontal maps (again obtained by removing the $external$ edges) of the diagram below are homotopy equivalences \vspace{3pt}
$$
\xymatrix{
\partial\overline{\mathcal{I}b}(\mathbb{O})(m_{1}+1\,;\,i) \ar[r]_{\hspace{-130pt}\simeq} \ar[d] & \big(\partial\overline{\mathcal{I}b}_{O_{1}}(O_{1})(m_{1}+1)\times \overline{\mathcal{I}b}_{O_{2}}(O_{2})(i)\big) \hspace{-40pt} \underset{\substack{\vspace{3pt}\\ \partial\overline{\mathcal{I}b}_{O_{1}}(O_{1})(m_{1}+1)\times \partial\overline{\mathcal{I}b}_{O_{2}}(O_{2})(i)}}{\displaystyle \coprod} \hspace{-40pt}\big(\overline{\mathcal{I}b}_{O_{1}}(O_{1})(m_{1}+1)\times \partial\overline{\mathcal{I}b}_{O_{2}}(O_{2})(i)\big) \ar[d]^{\nu}\\
\partial'\overline{\mathcal{I}b}(\mathbb{O})(m_{1}+1\,;\,i) \ar[r]_{\hspace{-130pt}\simeq} \ar[d] & \big(\partial' \overline{\mathcal{I}b}_{O_{1}}(O_{1})(m_{1}+1)\times \overline{\mathcal{I}b}_{O_{2}}(O_{2})(i)\big) \hspace{-40pt} \underset{\partial'\overline{\mathcal{I}b}_{O_{1}}(O_{1})(m_{1}+1)\times \partial'\overline{\mathcal{I}b}_{O_{2}}(O_{2})(i)}{\displaystyle \coprod} \hspace{-40pt}\big(\overline{\mathcal{I}b}_{O_{1}}(O_{1})(m_{1}+1)\times \partial'\overline{\mathcal{I}b}_{O_{2}}(O_{2})(i)\big) \ar[d] \\
\overline{\mathcal{I}b}(\mathbb{O})(m_{1}+1\,;\,i) \ar[r]_{\hspace{-130pt}\simeq}  & \overline{\mathcal{I}b}_{O_{1}}(O_{1})(m_{1}+1)\times \overline{\mathcal{I}b}_{O_{2}}(O_{2})(i)  
}\vspace{5pt}
$$
In \cite{Ducoulombier18}, we show the connection between the boundary spaces above and the homotopy colimits (\ref{I2}). More precisely, one has the following diagrams in which the horizontal maps are homeomorphisms \cite[Theorem 4.14]{Ducoulombier18} and the vertical maps are cofibrations (can be proved by induction on the number of vertices): 
$$
\xymatrix@R=15pt{
\partial\overline{\mathcal{I}b}_{O_{1}}(O_{1})(m_{1}+1)\ar[d]\ar[r]^{\cong} & \underset {\partial\Psi_{m_{1}+1}}{\hocolim}\,\rho_{m_{1}+1}^{O_{1}}\ar[d]\\
\partial'\overline{\mathcal{I}b}_{O_{1}}(O_{1})(m_{1}+1)\ar[d]\ar[r]^{\cong} & \underset {\Psi'_{m_{1}+1}}{\hocolim}\,\rho_{m_{1}+1}^{O_{1}}\ar[d] \\
\overline{\mathcal{I}b}_{O_{1}}(O_{1})(m_{1}+1)\ar[r]^{\cong} & \underset {\Psi_{m_{1}+1}}{\hocolim}\,\rho_{m_{1}+1}^{O_{1}}
}
\hspace{40pt}
\xymatrix@R=15pt{
\partial\overline{\mathcal{I}b}_{O_{2}}(O_{2})(i)\ar[d]\ar[r]^{\cong} & \underset {\partial\Psi_{i}}{\hocolim}\,\rho_{i}^{O_{2}}\ar[d]\\
\partial'\overline{\mathcal{I}b}_{O_{2}}(O_{2})(i)\ar[d]\ar[r]^{\cong} & \underset {\Psi'_{i}}{\hocolim}\,\rho_{i}^{O_{2}}\ar[d] \\
\overline{\mathcal{I}b}_{O_{2}}(O_{2})(i)\ar[r]^{\cong} & \underset {\Psi_{i}}{\hocolim}\,\rho_{i}^{O_{2}}
}
$$
Since the operads $O_{1}$ are $O_{2}$ are assumed to be $j_{1}$-coherent and $j_{2}$-coherent,respectively, then the upper vertical maps in the above diagrams are weak equivalences.  Consequently, the map $\nu$ between the pushout product (which are cofibrant in the category of diagram in spaces) is a weak equivalence.Thus proves that the map from $\partial\overline{\mathcal{I}b}(\mathbb{O})(m_{1}+1\,;\,i)$ to $\partial'\overline{\mathcal{I}b}(\mathbb{O})(m_{1}+1\,;\,i)$ is a weak equivalence too.
\end{proof}

\section{Generalization and applications to the little cubes operads}\label{N4}

Unfortunately, we can not apply Theorem \ref{D3} to the family of operads $\mathcal{C}_{d_{1}},\ldots,\mathcal{C}_{d_{k}}$ relative to $\mathcal{C}_{n}$, since the little cubes operad is not $2$-reduced. We conjecture that there exists an analogue of Theorem \ref{D3} for families of operads $O_{1},\ldots,O_{k}$ relative to $O$ in which $O_{i}$ is a weakly $2$-reduced operad (i.e. $O_{i}(0)=\ast$ and $O_{i}(1)\simeq \ast$). More precisely, we expect the following statement:\vspace{7pt}

\begin{conj}\label{J3}
Let $O_{1},\ldots,O_{k}$ be a family of well-pointed $\Sigma$-cofibrant weakly $2$-reduced operads relative to another weakly  $2$-reduced operad $O$ and let $\eta:\mathbb{O}^{+}\rightarrow M$ be a map of $k$-fold bimodules such that $M(A_{1},\ldots,A_{k})=\ast$ for any elements $(A_{1},\ldots,A_{k})$ with $A_{i}\in \{+\,;\,\emptyset\}$. We assume that the applications \vspace{3pt}
$$
M(+,\ldots,+,A_{i},+,\ldots , +)\longrightarrow M(\emptyset,\ldots,\emptyset,A_{i},\emptyset,\ldots , \emptyset)\vspace{3pt}
$$
induced by the $k$-fold bimodule structure are weak equivalences. If $M$ is $\Sigma$-cofibrant and the operads $O_{1},\ldots,O_{k}$ are $j_{1}$-coherent, $\ldots$, $j_{k}$-coherent, respectively, then  one has\vspace{5pt}
\begin{equation*}
T_{\vec{r}}\,\mathbb{F}_{\vec{O}}(M)\simeq T_{\vec{r}}\,Ibimod_{\vec{O}}^{h}(\mathbb{O}\,;\,M^{-}),\hspace{15pt} \forall \vec{r}\leq \vec{j}=(j_{1},\ldots,j_{k}).\vspace{4pt}
\end{equation*}
In particular, if $M$ is $\Sigma$-cofibrant and the operads $O_{1},\ldots,O_{k}$ are coherent, then one has\vspace{2pt}
\begin{equation*}
\mathbb{F}_{\vec{O}}(M)\simeq Ibimod_{\vec{O}}^{h}(\mathbb{O}\,;\,M^{-}).\vspace{-15pt}
\end{equation*}
\end{conj}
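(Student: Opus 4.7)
The plan is to reduce the statement to Theorem \ref{D3} by constructing a zig-zag of weak equivalences of families of operads connecting $\vec{O}$ to a family $\vec{O}'$ of strictly $2$-reduced operads, and then invoking the Quillen equivalences of the categories $Bimod_{\vec{O}}$ and $Ibimod_{\vec{O}}$ along such a zig-zag. First, I would produce a family of well-pointed $\Sigma$-cofibrant $2$-reduced operads $\vec{O}'=(O'_{1},\ldots,O'_{k})$ relative to a $2$-reduced operad $O'$, together with a zig-zag of weak equivalences $\vec{O} \xleftarrow{\vec{\alpha}} \vec{O}'' \xrightarrow{\vec{\beta}} \vec{O}'$ having cofibrant components. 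For the motivating case $\vec{O}=(\mathcal{C}_{d_{1}},\ldots,\mathcal{C}_{d_{k}})$ relative to $\mathcal{C}_{n}$, the Fulton--MacPherson compactification provides such a replacement, since it is $2$-reduced and connected to the little cubes operad by a zig-zag of weak equivalences. In the general weakly $2$-reduced case, the construction amounts to taking a cofibrant replacement of each $O_{i}$ compatible with the map $f_{i}:O_{i}\rightarrow O$, and then collapsing the contractible arity-one component by exploiting the fact that $\ast \to O_{i}(1)$ is already a weak equivalence.

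Second, I would transport the hypotheses along the zig-zag. Using the Quillen equivalences
$$\alpha^{!}_{B}: \Lambda Bimod_{\vec{O}''} \leftrightarrows \Lambda Bimod_{\vec{O}}: \alpha^{\ast}_{B} \quad\text{and}\quad \beta^{!}_{B}: \Lambda Bimod_{\vec{O}''} \leftrightarrows \Lambda Bimod_{\vec{O}'}: \beta^{\ast}_{B}$$
recalled in Section \ref{N3}, together with their analogues for $k$-fold infinitesimal bimodules, I would build from the map $\eta:\mathbb{O}^{+}\rightarrow M$ a corresponding map $\eta':\mathbb{O}'^{+}\rightarrow M'$ of $k$-fold bimodules over $\vec{O}'$ (passing through $\vec{O}''$ and taking fibrant replacements as needed). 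The conditions that $M(A_{1},\ldots,A_{k})=\ast$ for $A_{i}\in\{+,\emptyset\}$, that the maps $\iota_{i}$ are weak equivalences, and that $M$ is $\Sigma$-cofibrant are pointwise homotopical properties and are therefore inherited by $M'$ up to weak equivalence.

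Third, I would verify that $j_{i}$-coherence is invariant under weak equivalences of well-pointed $\Sigma$-cofibrant operads. The coherence condition of Definition \ref{N1} is the statement that the canonical map (\ref{I2}) of homotopy colimits is a weak equivalence; since a weak equivalence $O_{i}\to O'_{i}$ produces objectwise weak equivalences of the diagrams $\rho_{n}^{O_{i}}\to\rho_{n}^{O'_{i}}$ between well-pointed $\Sigma$-cofibrant (hence Reedy cofibrant) diagrams, it induces weak equivalences on both sides of (\ref{I2}). In particular $O'_{i}$ remains $j_{i}$-coherent. Then Theorem \ref{D3} applies to $\vec{O}'$ and $M'$, yielding the weak equivalences $\mathbb{F}_{\vec{O}'}(M') \simeq Ibimod_{\vec{O}'}^{h}(\mathbb{O}'; M'^{-})$ and their truncated analogues. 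Transporting back along the zig-zag gives the statement for $\vec{O}$ and $M$, since $\mathbb{F}_{\vec{O}}(M)$ is defined as a limit of derived mapping spaces of $k$-fold bimodules and pointed mapping spaces out of suspensions of $\vec{O}(2)$ and $O_{i}(2)$, all of which are preserved by the Quillen equivalences and by the weak equivalences on suspension spaces.

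The main obstacle will be the construction of the $2$-reduced replacement $\vec{O}'$ in full generality and the verification that it remains compatible with the structural map to the ambient operad $O$. In the geometric case of the little cubes this is well understood via the Fulton--MacPherson operad, but for a general weakly $2$-reduced family one must collapse the contractible arity-one component in a way compatible with both the operadic composition on each $O_{i}$ and the maps $f_{i}:O_{i}\to O$, while maintaining $\Sigma$-cofibrancy and well-pointedness. A secondary subtlety is to ensure that the spaces $\vec{O}_{S}(S_{1},\ldots,S_{k})$ entering the $k$-fold bimodule structure behave well along the zig-zag; this is where the cofibrancy of all components becomes essential, so that the induced maps of fibered products $\vec{O}_{S}\to\vec{O}'_{S}$ remain weak equivalences.
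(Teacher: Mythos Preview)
The statement you are attempting to prove is stated in the paper as a \emph{conjecture}, not a theorem. The paper explicitly says ``We conjecture that there exists an analogue of Theorem \ref{D3} for families of operads $O_{1},\ldots,O_{k}$ relative to $O$ in which $O_{i}$ is a weakly $2$-reduced operad'' and then immediately restricts attention: ``In this section, we prove the above conjecture for the family of weakly $2$-reduced operads $\mathcal{C}_{d_{1}},\ldots,\mathcal{C}_{d_{k}}$ relative to $\mathcal{C}_{n}$.'' So there is no proof of the general statement in the paper to compare against.

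Your outline does match what the paper does in the special case it actually proves (Theorem \ref{R1}): replace the little cubes operads by the Fulton--MacPherson operads via the zig-zag $\mathcal{C}_{d}\leftarrow \mathcal{W}(\mathcal{C}_{d})\rightarrow \mathcal{F}_{d}$, check in Proposition \ref{Q8} that this zig-zag extends to a weak equivalence of families $\vec{\mathcal{C}}\leftarrow\vec{\mathcal{W}(\mathcal{C})}\rightarrow\vec{\mathcal{F}}$ (in particular on the spaces $\vec{O}_{S}$), use Proposition \ref{J4} to transport the $k$-fold bimodule along the zig-zag, and then invoke Theorem \ref{D3} for the $2$-reduced family $\vec{\mathcal{F}}$. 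In this restricted setting the replacement exists for geometric reasons and all the compatibility checks go through.

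For the general statement, the obstacle you name at the end is precisely why the paper leaves it as a conjecture. Your sentence ``the construction amounts to taking a cofibrant replacement of each $O_{i}$ compatible with the map $f_{i}:O_{i}\rightarrow O$, and then collapsing the contractible arity-one component'' is the heart of the matter and is not justified: collapsing $O_{i}(1)$ to a point does not in general yield an operad (the composition maps need not descend), and producing a strictly $2$-reduced model of a weakly $2$-reduced operad together with a compatible map to a $2$-reduced model of $O$, while preserving $\Sigma$-cofibrancy, well-pointedness, coherence, and the weak homotopy type of the fibered products $\vec{O}_{S}$, is a genuine problem that the paper does not claim to solve. Your proposal is therefore not a proof of the conjecture but a restatement of what would be needed, and you have correctly identified the gap.
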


\newpage

In this section, we prove the above conjecture for the family of weakly $2$-reduced operads $\mathcal{C}_{d_{1}},\ldots,\mathcal{C}_{d_{k}}$ relative to $\mathcal{C}_{n}$. As a consequence of this result together with Theorem \ref{K2}, we are able to give an explicit description of the iterated loop spaces associated to high-dimensional spaces of string links and their polynomial approximations. The last subsection provides another application of Theorem \ref{D3} to polynomial approximations of high-dimensional spaces of string links with singularities.

\subsection{Generalization to the family of little cubes operads}

In the rest of the paper, by $\vec{O}$, $\mathbb{O}$ and $\mathbb{O}^{+}$ we mean the objects introduced in Definition \ref{Z3}, Example \ref{E1} and Example \ref{M1}, respectively, associated to the family of operads $\mathcal{C}_{d_{1}},\ldots,\mathcal{C}_{d_{k}}$ relative to $\mathcal{C}_{n}$. This section is devoted to prove the following statement:\vspace{5pt}

\begin{thm}\label{R1}
Let $\eta:\mathbb{O}^{+}\rightarrow M$ be a $k$-fold bimodule map satisfying the conditions of Conjecture  \ref{J3}. Then, for any $\vec{r}\in \mathbb{N}^{k}$, one has the weak equivalences
$$
\begin{array}{rcl}\vspace{5pt}
\mathbb{F}_{\vec{O}}(M) & \simeq &  Ibimod_{\vec{O}}^{h}(\mathbb{O}\,;\,M^{-}), \\ 
T_{\vec{r}}\,\mathbb{F}_{\vec{O}}(M) & \simeq & T_{\vec{r}}\,Ibimod_{\vec{O}}^{h}(\mathbb{O}\,;\,M^{-}).
\end{array}
$$
In particular, if $d_{1}=\cdots = d_{k}=d$, then one has 
$$
\begin{array}{rcl}\vspace{5pt}
Map_{\ast}\big( \Sigma O(2)\,;\,  Bimod_{\vec{O}}^{h}(\mathbb{O}^{+}\,;\,M)\big) & \simeq &  Ibimod_{\vec{O}}^{h}(\mathbb{O}\,;\,M^{-}), \\ 
Map_{\ast}\big( \Sigma O(2)\,;\, T_{\vec{r}}\, Bimod_{\vec{O}}^{h}(\mathbb{O}^{+}\,;\,M)\big)& \simeq & T_{\vec{r}}\,Ibimod_{\vec{O}}^{h}(\mathbb{O}\,;\,M^{-}).
\end{array}
$$
\end{thm}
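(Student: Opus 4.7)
The plan is to reduce to Theorem \ref{D3} by replacing the family of little cubes operads by a weakly equivalent family of \emph{genuinely} $2$-reduced operads, so that the coherence hypothesis can be applied. Concretely, let $\mathcal{F}_{d}$ denote the Fulton-MacPherson operad, which is $2$-reduced, well-pointed and $\Sigma$-cofibrant, and which is connected to the little cubes operad $\mathcal{C}_{d}$ by a zig-zag of weak equivalences of operads. From this zig-zag we build a zig-zag of weak equivalences of families of operads $\vec{\mathcal{F}} \leftarrow \cdot \rightarrow \vec{\mathcal{C}}$, where $\vec{\mathcal{F}}$ is associated to $\mathcal{F}_{d_{1}},\ldots,\mathcal{F}_{d_{k}}$ relative to $\mathcal{F}_{n}$. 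The point of this substitution is that Fulton-MacPherson operads are known to be coherent (this is established in \cite{Ducoulombier18} at the $1$-variable level as a consequence of the coherence of $\mathcal{C}_{d}$, and then extended arity by arity), so that Theorem \ref{D3} applies to $\vec{\mathcal{F}}$.

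First I would transfer the $k$-fold bimodule $M$ and the $k$-fold infinitesimal bimodule $M^{-}$ along the zig-zag using the Quillen equivalences $\alpha_{B}^{!} \dashv \alpha_{B}^{\ast}$ and $\alpha_{Ib}^{!} \dashv \alpha_{Ib}^{\ast}$ (see the theorems at the end of Sections \ref{K6} and \ref{G0} in the bimodule setting), yielding weakly equivalent objects $\tilde{M}$ over $\vec{\mathcal{F}}$ together with a map $\eta:\vec{\mathcal{F}}^{+}\rightarrow \tilde{M}$ satisfying the same technical hypotheses. Since these Quillen equivalences preserve derived mapping spaces, and since the spaces $\mathbb{F}_{\vec{O}}(M)$ and $T_{\vec{r}}\,\mathbb{F}_{\vec{O}}(M)$ are defined as limits of such derived mapping spaces together with the suspensions $\Sigma \vec{O}(2)$ and $\Sigma O_{i}(2)$ (which are preserved up to weak equivalence since $\vec{\mathcal{F}}(2)\simeq \vec{\mathcal{C}}(2)$ and $\mathcal{F}_{d_{i}}(2)\simeq \mathcal{C}_{d_{i}}(2)$), I expect both sides of the desired equivalence to match the corresponding sides computed over $\vec{\mathcal{F}}$.

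Next, I would apply Theorem \ref{D3} to $\vec{\mathcal{F}}$ and the transferred map $\eta:\vec{\mathcal{F}}^{+}\rightarrow \tilde{M}$. Because the Fulton-MacPherson operads $\mathcal{F}_{d_{i}}$ are coherent, the map $\gamma$ (and each $\gamma_{\vec{r}}$) is a weak equivalence:
\begin{equation*}
\mathbb{F}_{\vec{\mathcal{F}}}(\tilde{M}) \simeq Ibimod_{\vec{\mathcal{F}}}^{h}(\vec{\mathcal{F}}\text{-analogue of }\mathbb{O}\,;\,\tilde{M}^{-}).
\end{equation*}
Combined with the previous step, this will yield the theorem, with the case $d_{1}=\cdots =d_{k}=d$ being a direct specialization where the limit defining $\mathbb{F}_{\vec{O}}(M)$ collapses to the pointed mapping space $Map_{\ast}(\Sigma O(2)\,;\,Bimod_{\vec{O}}^{h}(\mathbb{O}^{+}\,;\,M))$, as already noted in Section \ref{P1}.

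The main obstacle I anticipate is bookkeeping the compatibility of the map $\gamma$ with the zig-zag. Specifically, one must verify that under the restriction-induction adjunctions, the explicit resolution $\overline{\mathcal{I}b}(\mathbb{O})$ and the resolution $\mathcal{B}^{\Lambda}(\mathbb{O}^{+})$ get sent to objects weakly equivalent to the corresponding cofibrant resolutions over $\vec{\mathcal{F}}$, and that the formula \eqref{Q0} defining $\gamma$ intertwines with the restriction functor. Equivalently, one must show that coherence is an invariant of weak equivalences of operads with cofibrant components in the appropriate sense, so that the coherence of $\mathcal{F}_{d_{i}}$ (which is what actually gets used in the Reedy fibrant-replacement-free argument of Theorem \ref{D3}) propagates back to the statement about $\mathcal{C}_{d_{i}}$. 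This is essentially a diagram-chase, but the care required is that the base point and the suspension factors be tracked through both legs of the zig-zag simultaneously, so that the two instances of Theorem \ref{D3} (one for the projective, one for the Reedy structure) assemble consistently.
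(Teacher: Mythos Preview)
Your proposal is correct and follows essentially the same route as the paper: pass through the zig-zag $\vec{\mathcal{C}}\leftarrow\vec{\mathcal{W}(\mathcal{C})}\rightarrow\vec{\mathcal{F}}$ (the middle term being the Boardman-Vogt resolution, see Proposition~\ref{Q8}), transfer $M$ together with the map $\eta$ using the undercategory Quillen equivalence of Proposition~\ref{J4}, and apply Theorem~\ref{D3} on the Fulton-MacPherson side where the operads are genuinely $2$-reduced and coherent.

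One clarification: the obstacle you flag at the end---tracking the explicit map $\gamma$ through the zig-zag---is not actually confronted in the paper and need not be. The paper identifies the \emph{source} and \emph{target} of $\gamma$ separately: the chain of Quillen equivalences for $k$-fold bimodules gives $\mathbb{F}_{\vec{\mathcal{C}}}(M)\simeq\mathbb{F}_{\vec{\mathcal{F}}}(M_{1}^{c})$, and the parallel chain for $k$-fold infinitesimal bimodules gives $Ibimod^{h}_{\vec{\mathcal{C}}}(\mathbb{O};M^{-})\simeq Ibimod^{h}_{\vec{\mathcal{F}}}(\mathbb{F}^{-};(M_{1}^{c})^{-})$. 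Theorem~\ref{D3} is then invoked only once, over $\vec{\mathcal{F}}$, to link the two. No naturality of $\gamma$ across the zig-zag is required, so the bookkeeping you worry about can be dropped.
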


In order to prove the above statement, we use the Fulton-MacPherson operad which is a $2$-reduced operad weakly equivalent to the little cubes operad. We also need to recall some general facts about the homotopy theory of undercategories which are explained with more details in \cite[Section 7.2]{Ducoulombier18}. 

\subsubsection{Connection between the Fulton-MacPherson and the little cubes operads}

The Fulton-MacPherson operad $\mathcal{F}_d$ is a $2$-reduced operad weakly equivalent to the little cubes operad $\mathcal{C}_d$. It was introduced simultaneously by several people, in particular by Kontsevich and Getzler-Jones, since then it had a countless number of applications in mathematical physics and topology \cite{Getzler94,Kontsevich99,Salvatore99,Sinha04,Sinha09,Turchin13,
Volic14}. In this section we recall briefly the definition of the Fulton-MacPherson operad and its connection with the little cubes operads. Let $Conf(n\,;\,\mathbb{R}^{d})$ be the space of configurations of points in $\mathbb{R}^{d}$\vspace{5pt}
$$
Conf(n\,;\,\mathbb{R}^{d}):=\left\{
(x_{i})\in (\mathbb{R}^{d})^{n}\,\big|\, \forall i\neq j,\,\, x_{i}\neq x_{j}
\right\}.\vspace{5pt}
$$
Let $Conf_{0}(n\,;\,\mathbb{R}^{d})=Conf(n\,;\,\mathbb{R}^{d})/\sim$ be the quotient by the group of affine translations in $\mathbb{R}^{d}$ generated by translation and scalar multiplication. Then, we introduce the following map where $[0\,,\,+\infty]$ is the one point compactification of $[0\,,\,+\infty[$:
$$
\begin{array}{rcl}\vspace{7pt}
\iota_{n}:Conf_{0}(n\,;\,\mathbb{R}^{d}) & \longrightarrow & \left( S^{d-1}\right)^{\left(\hspace{-4pt} \begin{array}{c}\vspace{-2pt}
n \\ 
2
\end{array}\hspace{-4pt} \right)} \times \big( [0\,,\,+\infty]\big)^{\left(\hspace{-4pt} \begin{array}{c}\vspace{-2pt}
n \\ 
3
\end{array}\hspace{-4pt} \right)};\\ 
(x_{i})_{1\leq i\leq n} & \longmapsto & \left( \frac{x_{j}-x_{i}}{|x_{j}-x_{i}|} \right)_{(i\,;\,j)}\,\,;\,\, \left( \frac{|x_{i}-x_{j}|}{|x_{i}-x_{k}|} \right)_{(i\,;\,j\,;\,k)}.
\end{array} \vspace{5pt}
$$

\begin{defi}\textbf{The Fulton-MacPherson operad $\mathcal{F}_{d}$}

\noindent The $d$-dimensional Fulton-MacPherson operad $\mathcal{F}_{d}$ is defined by $\mathcal{F}_{d}(n)=\overline{Im(\iota_{n})}$. For instance, the point resulting from the operadic composition in Figure \ref{Q4} represents an infinitesimal configuration in $\mathcal{F}_{d}(6)$ in which the points $1$, $5$ and $6$ collided together, and so did $2$, $3$ and $4$. But the distance between the points $2$ and $4$ is infinitesimally small compared to the distance between $2$ and $3$. In the following, we recall the main properties of $\mathcal{F}_{d}$.\vspace{3pt}
\begin{itemize}
\item[$\blacktriangleright$] $\mathcal{F}_{d}(0)=\mathcal{F}_{d}(1)=\ast$ and $\mathcal{F}_{d}(n)$, with $n\geq 2$, is a manifold with corners whose interior is $Conf_{0}(n\,;\,\mathbb{R}^{d})$.\vspace{3pt}
\item[$\blacktriangleright$] $\mathcal{F}_{d}$ is an operad in compact semi-algebraic sets and the maps $\lambda_{i}^{\ast}:\mathcal{F}_{d}(n)\rightarrow \mathcal{F}_{d}(n-1)$, induced by the $\Lambda$-structure, are semi-algebraic fibrations. \vspace{3pt}
\item[$\blacktriangleright$] $\mathcal{F}_{d}$ is cofibrant in the Reedy model category of reduced operads. \vspace{-10pt}
\end{itemize}
\end{defi}

\begin{figure}[!h]
\begin{center}
\includegraphics[scale=0.18]{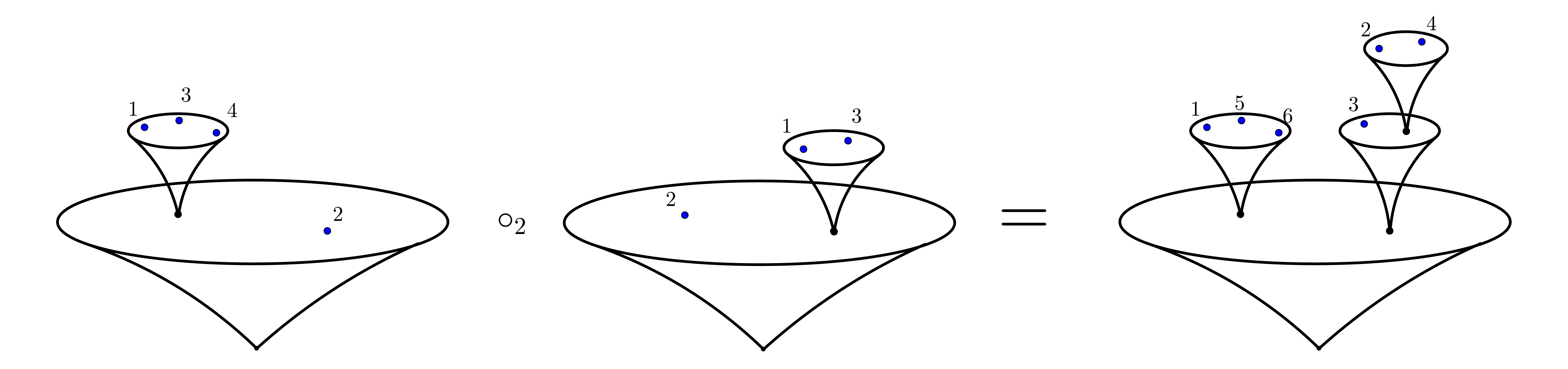}\vspace{-5pt}
\caption{Illustration of the operadic composition $\circ_{2}:\mathcal{F}_{d}(4)\times \mathcal{F}_{d}(3)\rightarrow \mathcal{F}_{d}(6)$.}\label{Q4}\vspace{-10pt}
\end{center}
\end{figure}

Unfortunately, there is no direct operadic map between the Fulton-MacPherson operad $\mathcal{F}_{d}$ and the little cubes operad $\mathcal{C}_{d}$. Nevertheless,  Salvatore builds in \cite[Proposition 3.9]{Salvatore99} a zig-zag of weak equivalences of reduced operads\vspace{3pt}
\begin{equation}\label{Q5}
\xymatrix{
\mathcal{C}_{d} & \ar[l]_{\mu''} \mathcal{W}(\mathcal{C}_{d}) \ar[r]^{g} & \mathcal{F}_{d}
}\vspace{5pt}
\end{equation}
where $\mathcal{W}(\mathcal{C}_{d})$ is the Boardman-Vogt resolution of the little cubes operad in the Reedy category of reduced operads. More precisely, if $O$ is a reduced operad, then \vspace{5pt}
$$
\begin{array}{rl}\vspace{7pt}
\mathcal{W}(O)(0)= & \ast, \\ 
\mathcal{W}(O)(n)= & \left.\left(\underset{T\in Tree_{n}}{\displaystyle\coprod} \,\,\underset{v\in V(T)}{\displaystyle\prod} O(|v|) \times \underset{e\in E^{int}(T)}{\displaystyle\prod} [0\,,\,1]\right)\,\, \right/ \sim,
\end{array} \vspace{5pt}
$$
where $Tree_{n}$ is the set of trees having $n$ leaves and without univalent vertices. The equivalence relation is generated by the compatibility with the symmetric group action and the unit axiom. Furthermore, if an inner edge is indexed by $0$, then we contract it using the operadic structure of $O$. There exists a map of operads\vspace{5pt}
$$
\mu'': \mathcal{W}(O)\longrightarrow O,\vspace{5pt}
$$
sending the real numbers indexing the inner edges to $0$. This map is a weak equivalence in the category of operads and it is a homotopy equivalence in the category of sequences. In the latter case, the homotopy consists in bringing the real numbers to $0$.\vspace{7pt}

On the other hand, Salvatore \cite{Salvatore99} gives an explicit description description of the operadic map $g$ in (\ref{Q5}). For the present work, we recall that the argument making $g$ into a weak equivalence is the commutativity of the diagram\vspace{5pt}
\begin{equation}\label{Q6}
\xymatrix{
\mathcal{W}(\mathcal{C}_{d})(n) \ar[rr]_{g} & & \mathcal{F}_{d}(n)  \\
\mathcal{C}_{d}(n)\ar[u]^{\simeq} \ar[r]^{\hspace{-35pt}\simeq}_{\hspace{-35pt}\gamma_{1}} & Conf_{0}(n\,;\,]0\,,\,1[^{d}) \ar[r]^{\simeq}_{\gamma_{2}} &  Conf_{0}(n\,;\,\mathbb{R}^{d}) \ar[u]^{\simeq}_{i}
}\vspace{3pt}
\end{equation}
The left horizontal map is the homotopy inverse of $\mu''$, the map $\gamma_{1}$ sends a configuration of rectangles to their centers, the map $\gamma_{2}$ is induced by the homeomorphism $]0\,,\,1[^{d}\hookrightarrow \mathbb{R}^{d}$ and the map $i$ is the h into the interior of $\mathcal{F}_{d}(n)$.\vspace{7pt}

\begin{notat}\label{Q7} 
For $d_{1}\leq \cdots\leq d_{k}<n$, we use the following notation:\vspace{3pt}
\begin{itemize}
\item[$\blacktriangleright$] $\vec{\mathcal{C}}$ is associated with the family of operads $\mathcal{C}_{d_{1}},\ldots,\mathcal{C}_{d_{k}}$ relative to $\mathcal{C}_{n}$;\vspace{3pt}
\item[$\blacktriangleright$] $\vec{\mathcal{W}(\mathcal{C})}$ is associated with the family of operads $\mathcal{W}(\mathcal{C}_{d_{1}}),\ldots,\mathcal{W}(\mathcal{C}_{d_{k}})$ relative to $\mathcal{W}(\mathcal{C}_{n})$;\vspace{3pt}
\item[$\blacktriangleright$] $\vec{\mathcal{F}}$ is associated with the family of operads $\mathcal{F}_{d_{1}},\ldots,\mathcal{F}_{d_{k}}$ relative to $\mathcal{F}_{n}$;
\end{itemize}\vspace{7pt}
\end{notat}

\begin{pro}\label{Q8}
The weak equivalences (\ref{Q5}) extends to weak equivalences 
$$
\xymatrix{
\vec{\mathcal{C}} & \ar[l]_{\vec{\mu''}} \vec{\mathcal{W}(\mathcal{C})} \ar[r]^{\vec{g}} & \vec{\mathcal{F}}.
}\vspace{5pt}
$$
\end{pro}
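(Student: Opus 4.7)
According to Section 3.3, a map $\vec{\alpha}$ of families of operads is a weak equivalence precisely when both the constituent operadic maps are weak equivalences and the induced maps on the fibered products $\vec{O}_{S}(S_{1},\ldots,S_{k}) \to \vec{O}'_{S}(S_{1},\ldots,S_{k})$ are weak homotopy equivalences for every $S \in \mathcal{P}_{k}(A)$. For the zig-zag (\ref{Q5}), the maps $\mu''$ and $g$ are already known to be weak equivalences in every arity, so the content of the proposition is the weak equivalence on the fibered products. Before tackling that, I would first check naturality of Salvatore's zig-zag in the dimension: the constructions $\mathcal{W}$, $\mathcal{C}$, $\mathcal{F}$ are all functorial with respect to the inclusion $\mathbb{R}^{d_{i}} \hookrightarrow \mathbb{R}^{n}$ (this is precisely what diagram (\ref{Q6}) encodes), so the zig-zag produces operadic maps $\mu''$ and $g$ that commute with the operadic inclusions $f_{i}:O_{i}\to O$, $f_{i}':O_{i}'\to O'$, as required for $\vec{\mu''}$ and $\vec{g}$ to be defined as maps of families.

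For the fibered products, the strategy is to realise $\vec{O}_{S}(S_{1},\ldots,S_{k})$ as a homotopy limit of a finite diagram. The diagram in question has vertices $O_{i}(S_{i})$ and $O(S_{i}\cap S_{j})$, and its arrows are the maps $f_{i}[S_{i};S_{j}]: O_{i}(S_{i}) \to O(S_{i}\cap S_{j})$, each of which factors as a $\Lambda$-restriction followed by an operadic inclusion. The central observation is that the $\Lambda$-restriction maps $O_{i}(S_{i}) \to O_{i}(S_{i}\cap S_{j})$ are Serre fibrations for each of the three operads involved: this is classical for $\mathcal{C}_{d}$, is stated in the paper for $\mathcal{F}_{d}$ (the maps $\lambda_{i}^{\ast}$ are semi-algebraic fibrations), and can be verified for $\mathcal{W}(\mathcal{C}_{d})$ by an induction on the number of vertices of the underlying trees, using the fact that $\mathcal{W}(\mathcal{C}_{d})$ is Reedy cofibrant as a reduced operad.

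Granted this, I would organise the limit defining $\vec{O}_{S}$ as an iterated pullback along fibrations: at each stage one first applies the $\Lambda$-restriction $O_{i}(S_{i}) \to O_{i}(S_{i}\cap S_{j})$ (a fibration), pulls back in the category of $O_{i}(S_{i}\cap S_{j})$-spaces, and only afterwards records the compatibility with the common ambient $O(S_{i}\cap S_{j})$ through the $f_{i}$. By the pasting lemma for homotopy pullbacks, this realises $\vec{O}_{S}$ as a homotopy limit. Since the maps (\ref{Q5}) induce pointwise weak equivalences between the corresponding diagrams for $\vec{\mathcal{C}}$, $\vec{\mathcal{W}(\mathcal{C})}$, $\vec{\mathcal{F}}$ and the whole rearrangement is natural, the standard homotopy invariance of holim gives weak equivalences on the homotopy limits, hence on $\vec{O}_{S}$ itself.

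The main obstacle is precisely the fact that the operadic inclusions $f_{i}$ (for example, the inclusion $\mathcal{C}_{d_{i}}(n) \hookrightarrow \mathcal{C}_{n}(n)$ obtained by lifting a lower-dimensional cube to an $n$-dimensional one) are closed inclusions onto lower-dimensional subspaces rather than Serre fibrations; consequently the naïve pullback over $O(S_{i}\cap S_{j})$ is not automatically a homotopy pullback. The workaround indicated above (separating the $\Lambda$-restrictions, which are genuine fibrations, from the operadic inclusions) is the technical heart of the argument, and it relies crucially on the compatibility $f_{i}'\circ \alpha_{i}=\alpha\circ f_{i}$ established in the first step. Once this is carried out, the proposition follows from the componentwise weak equivalences of (\ref{Q5}).
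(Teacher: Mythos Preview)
Your overall framing is right: by the definition in Section~4.2.3 one must check that $\mu''$ and $g$ are weak equivalences componentwise (which is (\ref{Q5})) and on each $\vec{O}_{S}(S_{1},\ldots,S_{k})$. The gap is in your handling of the fibered products.

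Your workaround---peel off the $\Lambda$-restrictions first, then deal with the operadic inclusions---does not actually resolve the obstacle you correctly identify. After factoring out the fibrations $O_{i}(S_{i})\to O_{i}(S_{i}\cap S_{j})$ via the pasting lemma, you are left with a core pullback $O_{i}(m)\times_{O(m)}O_{j}(m)$ along the operadic inclusions $f_{i},f_{j}$, and this is \emph{not} a homotopy pullback: already for $\mathcal{C}_{1}(2)\to\mathcal{C}_{2}(2)$ one has $S^{0}\hookrightarrow S^{1}$ up to homotopy, the strict fibre product is $S^{0}$, and the homotopy fibre product has infinitely many components. So the strict limit $\vec{O}_{S}$ is not a homotopy limit of your diagram, and you cannot invoke homotopy invariance of $\operatorname{holim}$ at that step. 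Your description of what happens ``in the category of $O_{i}(S_{i}\cap S_{j})$-spaces'' is too vague to cover this.

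The paper's fix is specific to the situation $d_{1}\leq\cdots\leq d_{k}<n$: the inclusion $\mathcal{C}_{d_{1}}\to\mathcal{C}_{n}$ factors through $\mathcal{C}_{d_{2}}$, and the inclusion $\mathcal{C}_{d_{2}}\to\mathcal{C}_{n}$ is injective (likewise for $\mathcal{W}(\mathcal{C})$ and $\mathcal{F}$). Hence the pullback over $\mathcal{C}_{n}(S_{1}\cap S_{2})$ is literally equal to the pullback over $\mathcal{C}_{d_{2}}(S_{1}\cap S_{2})$, and now the right leg $O_{2}(S_{2})\to O_{2}(S_{1}\cap S_{2})$ is a pure $\Lambda$-restriction, hence a fibration. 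This is the missing observation. For $\vec{\mu''}$ the paper does not even need this: the Boardman--Vogt contraction (send all edge parameters to $0$) is compatible with every map in the pullback diagram, so $\mu''_{S}$ is a deformation retraction outright. The fibration argument is used only for $\vec{g}$, where the paper runs the chain (\ref{Q6}) on the rewritten pullbacks and invokes the known fibrancy of the $\Lambda$-restrictions for $\mathrm{Conf}_{0}$ and $\mathcal{F}_{d}$; your claim that the $\Lambda$-restrictions on $\mathcal{W}(\mathcal{C}_{d})$ are Serre fibrations is neither proved nor needed.
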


\begin{proof}
Without loss of generality, we prove the proposition for $k=2$. Let $S=(S_{1}\,;\,S_{2})$ be an element in $\mathcal{P}_{2}(\{1,\ldots,n\})$. If $S_{1}=+$ (the same arguments work in the case where $S_{2}=+$), then the following two continuous maps: \vspace{5pt}
$$
\mu''_{S}:\vec{\mathcal{W}(\mathcal{C})}_{S}(S_{1}\,;\, S_{2})\longrightarrow \vec{\mathcal{C}}_{S}(S_{1}\,;\,S_{2})\hspace{15pt} \text{and}\hspace{15pt} g_{S}:\vec{\mathcal{W}(\mathcal{C})}_{S}(S_{1}\,;\, S_{2})\longrightarrow \vec{\mathcal{F}}_{S}(S_{1}\,;\,S_{2})\vspace{5pt}$$ 
coincide by definition with the continuous maps
$$
\mu'':\mathcal{W}(\mathcal{C}_{d_{2}})(S_{2})\longrightarrow \mathcal{C}_{d_{2}}(S_{2})\hspace{15pt} \text{and}\hspace{15pt} g:\mathcal{W}(\mathcal{C}_{d_{2}})( S_{2})\longrightarrow \mathcal{F}_{d_{2}}(S_{2}),
$$ 
respectively, which are already weak equivalences. \vspace{7pt}

From now on, we assume that $S_{1}\neq +$ and $S_{2}\neq +$. Since the inclusion from $\mathcal{C}_{d_{1}}$ to $\mathcal{C}_{n}$ can be factorized through $\mathcal{C}_{d_{2}}$, one has\vspace{5pt}
$$
\mu''_{S}:\mathcal{W}(\mathcal{C}_{d_{1}})(S_{1})\underset{\mathcal{W}(\mathcal{C}_{d_{2}})(S_{1}\cap S_{2})}{\displaystyle \prod} \mathcal{W}(\mathcal{C}_{d_{2}})(S_{2})\longrightarrow \mathcal{C}_{d_{1}}(S_{1})\underset{\mathcal{C}_{d_{2}}(S_{1}\cap S_{2})}{\displaystyle \prod} \mathcal{C}_{d_{2}}(S_{2}).\vspace{5pt}
$$ 
The homotopy bringing the real numbers to $0$ is still well defined in the pullback product and the map $\mu''_{S}$ is a weak equivalence. \vspace{5pt}

In order to prove that the map $g_{S}$ is also a weak equivalence, we adapt the diagram (\ref{Q6}) introduced by Salvatore. First, the homotopy inverse of $\mu''_{S}$\vspace{5pt}
$$
\mathcal{C}_{d_{1}}(S_{1})\underset{\mathcal{C}_{d_{2}}(S_{1}\cap S_{2})}{\displaystyle \prod} \mathcal{C}_{d_{2}}(S_{2}) \longrightarrow \mathcal{W}(\mathcal{C}_{d_{1}})(S_{1})\underset{\mathcal{W}(\mathcal{C}_{d_{2}})(S_{1}\cap S_{2})}{\displaystyle \prod} \mathcal{W}(\mathcal{C}_{d_{2}})(S_{2})\vspace{5pt}
$$
is a weak equivalence. Then, the map sending a rectangle to its center provides a weak equivalence\vspace{5pt}
$$
\mathcal{C}_{d_{1}}(S_{1})\underset{\mathcal{C}_{d_{2}}(S_{1}\cap S_{2})}{\displaystyle \prod} \mathcal{C}_{d_{2}}(S_{2}) \longrightarrow Conf_{0}(S_{1}\,;\, ]0\,,\,1[^{d_{1}})\hspace{-20pt} \underset{Conf_{0}(S_{1}\cap S_{2}\,;\, ]0\,,\,1[^{d_{2}})}{\displaystyle \prod}\hspace{-20pt} Conf_{0}(S_{2}\,;\, ]0\,,\,1[^{d_{2}}),\vspace{5pt}
$$
where the map \vspace{5pt}
$$Conf_{0}(S_{1}\,;\, ]0\,,\,1[^{d_{1}})\longrightarrow Conf_{0}(S_{1}\cap S_{2}\,;\, ]0\,,\,1[^{d_{2}})\vspace{5pt}$$
is induced by the map forgetting the points in $S_{1}\setminus (S_{1}\cap S_{2})$ followed by the inclusion $]0\,,\,1[^{d_{1}}\hookrightarrow ]0\,,\,1[^{d_{2}}$. The homeomorphisms $]0\,,\,1[^{d_{1}}\rightarrow \mathbb{R}^{d_{1}}$ and $]0\,,\,1[^{d_{2}}\rightarrow \mathbb{R}^{d_{2}}$ give rise to a weak equivalence\vspace{5pt}
$$
Conf_{0}(S_{1}\,;\, ]0\,,\,1[^{d_{1}}) \hspace{-20pt}\underset{Conf_{0}(S_{1}\cap S_{2}\,;\, ]0\,,\,1[^{d_{2}})}{\displaystyle \prod} \hspace{-20pt} Conf_{0}(S_{2}\,;\, ]0\,,\,1[^{d_{2}})  \longrightarrow 
Conf_{0}(S_{1}\,;\, \mathbb{R}^{d_{1}}) \hspace{-20pt} \underset{Conf_{0}(S_{1}\cap S_{2}\,;\, \mathbb{R}^{d_{2}})}{\displaystyle \prod} \hspace{-20pt} Conf_{0}(S_{2}\,;\, \mathbb{R}^{d_{2}}).\vspace{5pt}
$$ 
Finally, the map between the limits\vspace{5pt}
$$
\xymatrix@C=13pt{
Conf_{0}(S_{1}\,;\, \mathbb{R}^{d_{1}}) \hspace{-20pt} \underset{Conf_{0}(S_{1}\cap S_{2}\,;\, \mathbb{R}^{d_{2}})}{\displaystyle \prod} \hspace{-20pt} Conf_{0}(S_{2}\,;\, \mathbb{R}^{d_{2}})
\ar[d] \ar@{=}[r] & lim\big(\hspace{-20pt} & Conf_{0}(S_{1}\,;\, \mathbb{R}^{d_{1}})\ar[d]_{t_{1}}\ar[r] &\ar[d]_{t_{2}} Conf_{0}(S_{1}\cap S_{2}\,;\, \mathbb{R}^{d_{2}}) & \ar[l]\ar[d]_{t_{3}} Conf_{0}(S_{2}\,;\, \mathbb{R}^{d_{2}})\big)\\
\mathcal{F}_{d_{1}}(S_{1})\underset{\mathcal{F}_{d_{2}}(S_{1}\cap S_{2})}{\displaystyle \prod} \mathcal{F}_{d_{2}}(S_{2}) \ar@{=}[r] &  lim\big(\hspace{-31pt} & \mathcal{F}_{d_{1}}(S_{1})\ar[r] & \mathcal{F}_{d_{2}}(S_{1}\cap S_{2}) &\ar[l] \mathcal{F}_{d_{2}}(S_{2})\big)
}
$$
is a weak equivalence. Indeed, the right horizontal maps in the limits are fibrations and all the spaces are fibrant. Consequently the limits are homotopically invariant. Furthermore, the vertical maps $t_{1}$, $t_{2}$ and $t_{3}$ are weak equivalences and they provide a weak equivalence between the limit. Thus finishes the proof of the proposition for $k=2$. The general case can be proved by induction.
\end{proof}

\subsubsection{General facts about homotopy theory}

Given a model category structure $\mathcal{C}$ and an object $c\in\mathcal{C}$ , the undercategory $(c\downarrow \mathcal{C})$ inherits a model category structure in which a map is a weak equivalence, fibration or a cofibration if the corresponding map in $\mathcal{C}$ is a weak equivalence, fibration or a cofibration, respectively. From a Quillen adjunction between two model categories\vspace{5pt}
\begin{equation}\label{I6}
\mathcal{L}:\mathcal{C}\leftrightarrows \mathcal{D}:\mathcal{R}\vspace{5pt}
\end{equation} 
as well as two objects $c\in \mathcal{C}$ and $d\in \mathcal{D}$ together with a map\vspace{5pt}
\begin{equation}\label{I7}
\phi:c\rightarrow \mathcal{R}(d),\vspace{5pt}
\end{equation}
then there is an adjunction \vspace{5pt}
\begin{equation}\label{I8}
\hat{\mathcal{L}}:(c\downarrow \mathcal{C})\leftrightarrows (d\downarrow \mathcal{D}):\hat{\mathcal{R}},\vspace{5pt}
\end{equation}
where $\hat{\mathcal{R}}$  sends $d\rightarrow d_{1}$ to the composition $c\rightarrow \mathcal{R}(d)\rightarrow \mathcal{R}(d_{1})$ and $\hat{\mathcal{L}}$ sends $c\rightarrow c_{1}$ to the pushout map $d\rightarrow d \sqcup _{\mathcal{L}(c)}\mathcal{L}(c_{1})$ in which the map $\mathcal{L}(c)\rightarrow d$ is the left adjunct of $\phi$.\vspace{5pt}


\begin{lmm}[\cite{Ducoulombier18}]\label{J1} 
In case (\ref{I6}) is a Quillen equivalence between left proper model categories, the functor $\mathcal{C}$ preserves weak equivalences and (\ref{I7}) is a weak equivalence, then (\ref{I8}) is a Quillen equivalence.  \vspace{5pt}
\end{lmm}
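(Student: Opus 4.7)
The plan is to first check that $(\hat{\mathcal{L}},\hat{\mathcal{R}})$ is a Quillen adjunction and then to deduce the Quillen equivalence property from the corresponding property of $(\mathcal{L},\mathcal{R})$, using left properness to identify the homotopy type of $\hat{\mathcal{L}}(c\to c_{1})$ for cofibrant objects. The first point is immediate: cofibrations, fibrations and weak equivalences in the undercategories are created by the forgetful functors to $\mathcal{C}$ and $\mathcal{D}$, and since $\mathcal{L}$ preserves cofibrations and trivial cofibrations so does $\hat{\mathcal{L}}$, as the pushout of a (trivial) cofibration is a (trivial) cofibration.

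Next I would extend the usual characterization of Quillen equivalences, using the hypothesis that $\mathcal{L}$ preserves all weak equivalences (the symbol $\mathcal{C}$ in the statement being a misprint for $\mathcal{L}$). For any $c'\in\mathcal{C}$ and any fibrant $d'\in\mathcal{D}$, a map $\mathcal{L}(c')\to d'$ is a weak equivalence if and only if its adjoint $c'\to\mathcal{R}(d')$ is: taking a cofibrant replacement $(c')^{c}\to c'$, the map $\mathcal{L}((c')^{c})\to\mathcal{L}(c')$ is a weak equivalence by hypothesis, and applying the usual characterization to $(c')^{c}$, together with two-out-of-three, yields the claim. In particular, the adjoint $\psi:\mathcal{L}(c)\to d$ of the given weak equivalence $\phi:c\to\mathcal{R}(d)$ is itself a weak equivalence.

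The key step is then to show that, for any cofibrant $(c\to c_{1})$ in $(c\downarrow\mathcal{C})$, the canonical map $\mathcal{L}(c_{1})\to d\sqcup_{\mathcal{L}(c)}\mathcal{L}(c_{1})$ underlying $\hat{\mathcal{L}}(c\to c_{1})$ is a weak equivalence. This follows from left properness of $\mathcal{D}$, since $\mathcal{L}(c)\to\mathcal{L}(c_{1})$ is a cofibration (being the image of a cofibration by a left Quillen functor) and $\psi:\mathcal{L}(c)\to d$ is a weak equivalence. From this the Quillen equivalence follows formally: given cofibrant $(c\to c_{1})$ and fibrant $(d\to d_{1})$, a morphism $f:\hat{\mathcal{L}}(c\to c_{1})\to (d\to d_{1})$ amounts to a map $f_{0}:d\sqcup_{\mathcal{L}(c)}\mathcal{L}(c_{1})\to d_{1}$ under $d$, and two-out-of-three combined with the extended characterization above gives that $f_{0}$ is a weak equivalence if and only if the adjoint $c_{1}\to\mathcal{R}(d_{1})$ is, which is exactly the condition for $\tilde{f}$ to be a weak equivalence in $(c\downarrow\mathcal{C})$. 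The main delicate point is that $c$, and hence $\mathcal{L}(c)$, is not assumed cofibrant, so left properness of $\mathcal{D}$ is genuinely needed for the pushout to model the homotopy pushout; correspondingly, one needs $\mathcal{L}$ to preserve \emph{all} weak equivalences rather than only those between cofibrant objects.
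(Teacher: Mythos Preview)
The paper does not prove this lemma; it is quoted from \cite{Ducoulombier18} with no argument given. Your proof is therefore a genuine addition rather than a reproduction, and the approach you take is the standard one: verify the Quillen adjunction on undercategories via the pushout description of $\hat{\mathcal{L}}$, use left properness of $\mathcal{D}$ together with the fact that $\psi:\mathcal{L}(c)\to d$ is a weak equivalence to identify $\hat{\mathcal{L}}(c\to c_{1})$ with $\mathcal{L}(c_{1})$ up to weak equivalence for cofibrant $(c\to c_{1})$, and reduce the Quillen equivalence criterion to that of $(\mathcal{L},\mathcal{R})$. Your diagnosis of the misprint (``$\mathcal{C}$'' should be the left adjoint $\mathcal{L}$) is consistent with how the lemma is used in the proof of Proposition~\ref{J4}.

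One small point worth making explicit: when you deduce that $\psi:\mathcal{L}(c)\to d$ is a weak equivalence from the extended characterization, you are implicitly using that $d$ is fibrant (your extension relaxes cofibrancy of the source but keeps fibrancy of the target). The statement as written in this paper does not assume this, so strictly speaking either an extra hypothesis on $d$, or the dual hypothesis that $\mathcal{R}$ also preserves weak equivalences, is needed at that step. In the cited source and in the application here (restriction functors between Reedy categories of bimodules) this is harmless, but it is the one place where your argument leans on something not stated.
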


\begin{pro}\label{J4}
Let $\vec{\alpha}:\vec{O}\rightarrow \vec{O}'$ be a weak equivalence between two families of Reedy cofibrant operads and let $\eta:\mathbb{O}\rightarrow M$ be a cofibration in the Reedy model category of $k$-fold bimodules over $\vec{O}$. Then, there exists a $k$-fold bimodule $M_{1}$ over $\vec{O}'$ endowed with a map $\eta_{1}:\mathbb{O}'\rightarrow M_{1}$ of $k$-fold bimodules over $\vec{O'}$ and a weak equivalence $M\rightarrow M_{1}$ of $k$-fold bimodules over $\vec{O}$ such that the following square commutes:\vspace{5pt}
\begin{equation}\label{J2}
\xymatrix{
\mathbb{O} \ar[r]^{\eta} \ar[d]_{\simeq}^{\alpha_{\ast}} & M \ar[d]^{\simeq} \\
\mathbb{O}'\ar[r]^{\eta_{1}} & M_{1}
}
\end{equation}
\end{pro}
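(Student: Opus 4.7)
The plan is to lift the Quillen equivalence between the categories of $k$-fold bimodules to a Quillen equivalence between suitable undercategories via Lemma~\ref{J1}, and then apply it to the cofibrant object $\eta$.

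First, I would observe that since each $O_i$ and $O_i'$ is Reedy cofibrant, the components of $\vec{\alpha}$ are cofibrant. The restriction-induction adjunction
$$\alpha_{B}^{!}:\Lambda Bimod_{\vec{O}} \leftrightarrows \Lambda Bimod_{\vec{O}'}:\alpha_{B}^{\ast}$$
recalled at the end of the subsection containing Theorem~\ref{H7} is therefore a Quillen equivalence. By Theorem~\ref{J0}(iii), both sides are left proper. Moreover, the restriction functor $\alpha_{B}^{\ast}$ preserves \emph{all} weak equivalences (not merely those between fibrant objects), since weak equivalences in the Reedy model structure are defined objectwise and $\alpha_{B}^{\ast}$ does not alter the underlying $k$-fold augmented sequences.

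Next, the operadic maps $\alpha_{i}:O_{i}\rightarrow O_{i}'$ induce, componentwise, a natural comparison map $\phi:\mathbb{O}\rightarrow \alpha_{B}^{\ast}(\mathbb{O}')$ in $\Lambda Bimod_{\vec{O}}$; this is a weak equivalence because each $\alpha_{i}$ is. All the hypotheses of Lemma~\ref{J1} are therefore met, and applying it to the pair $(\alpha_{B}^{!},\alpha_{B}^{\ast})$ together with $\phi$ yields a Quillen equivalence of undercategories
$$\hat{\alpha}^{!}:(\mathbb{O}\downarrow \Lambda Bimod_{\vec{O}}) \leftrightarrows (\mathbb{O}'\downarrow \Lambda Bimod_{\vec{O}'}):\hat{\alpha}^{\ast}.$$

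Finally, since $\eta:\mathbb{O}\rightarrow M$ is a cofibration in $\Lambda Bimod_{\vec{O}}$, it represents a cofibrant object in the source undercategory. I would apply $\hat{\alpha}^{!}$ and then take a fibrant replacement in $\Lambda Bimod_{\vec{O}'}$ to produce the morphism $\eta_{1}:\mathbb{O}'\rightarrow M_{1}$; the derived unit of the undercategory Quillen equivalence then supplies the required weak equivalence $M\rightarrow \alpha_{B}^{\ast}(M_{1})$ in $\Lambda Bimod_{\vec{O}}$, and the commutativity of the square (\ref{J2}) follows automatically from naturality of the unit. The main technical point to check carefully is the preservation of all weak equivalences by $\alpha_{B}^{\ast}$, which is needed to invoke Lemma~\ref{J1}; this reduces, as indicated above, to the objectwise nature of Reedy weak equivalences and is the only place where the Reedy cofibrancy hypothesis on the operads is genuinely used.
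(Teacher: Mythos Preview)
Your argument is essentially identical to the paper's: apply Lemma~\ref{J1} to the Quillen equivalence $(\alpha_B^!,\alpha_B^\ast)$ using left properness (Theorem~\ref{J0}(iii)) and the weak equivalence $\mathbb{O}\to\alpha_B^\ast(\mathbb{O}')$, then evaluate the resulting undercategory Quillen equivalence on the cofibrant object $\eta$. The only cosmetic difference is that the paper sets $M_1\coloneqq\hat{\alpha}_B^!(M)$ directly with no fibrant replacement (your own observation that $\alpha_B^\ast$ preserves all weak equivalences makes this sufficient); note also that your closing remark is slightly off---Reedy cofibrancy of the operads is used for left properness and for the Quillen equivalence, not for the preservation of weak equivalences by $\alpha_B^\ast$, which holds trivially since restriction does not change the underlying spaces.
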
\vspace{2pt}

\begin{proof}
The proof is similar to \cite[Proposition 7.8]{Ducoulombier18}. By Theorem \ref{J0}, the categories $\Lambda Bimod_{\vec{O}}$ and $\Lambda Bimod_{\vec{O}'}$ are left proper. As a consequence of Lemma \ref{J1} applied to the adjunction\vspace{5pt}
$$
\alpha^{!}_{B}:\Lambda Bimod_{\vec{O}} \leftrightarrows \Lambda Bimod_{\vec{O}'}: \alpha^{\ast}_{B}\vspace{5pt}
$$
and the weak equivalence of $k$-fold bimodules over $\vec{O}$\vspace{5pt}
$$
\alpha_{\ast}:\mathbb{O}\longrightarrow \mathbb{O}'\vspace{-9pt}
$$

\newpage

\noindent we obtain a Quillen equivalence \vspace{5pt}
$$
\hat{\alpha}^{!}_{B}:\big( \mathbb{O}\downarrow\Lambda Bimod_{\vec{O}}\big) \leftrightarrows \big( \mathbb{O}'\downarrow \Lambda Bimod_{\vec{O}'}\big): \hat{\alpha}^{\ast}_{B}.\vspace{3pt}
$$
Since $\eta:\mathbb{O}\rightarrow M$ is a cofibrant object in the category under $\mathbb{O}$, one can take $M_{1}\coloneqq\hat{\alpha}^{!}_{B}(M)$. The square (\ref{J2}) commutes because the natural map $M\rightarrow M_{1}$ is a morphism in the undercategory  $\mathbb{O}\downarrow\Lambda Bimod_{\vec{O}}$.
\end{proof}

\subsubsection{Proof of Theorem \ref{R1}}

According to Notation \ref{Q7}, we denote by $\mathbb{W}(\mathcal{C})$ and $\mathbb{F}$ the $k$-fold bimodules over $\vec{\mathcal{W}(\mathcal{C})}$ and $\vec{\mathcal{F}}$, respectively, defined as follows:\vspace{5pt}
$$
\begin{array}{rcll}\vspace{9pt}
\mathbb{W}(\mathcal{C})(n_{1},\ldots,n_{k}) & \coloneqq & \underset{\substack{1\leq i \leq k \\ n_{i}\neq +}}{\displaystyle \prod}\mathcal{W}(\mathcal{C}_{d_{i}})(n_{i}), & \text{for } n_{i}\in \mathbb{N}\sqcup\{+\} \text{ and } (n_{1},\ldots, n_{k})\neq (+,\ldots,+), \\ 
\mathbb{F}(n_{1},\ldots,n_{k}) & \coloneqq & \underset{\substack{1\leq i \leq k \\ n_{i}\neq +}}{\displaystyle \prod}\mathcal{F}_{d_{i}}(n_{i}), & \text{for } n_{i}\in \mathbb{N}\sqcup\{+\} \text{ and } (n_{1},\ldots, n_{k})\neq (+,\ldots,+).
\end{array}\vspace{5pt}
$$
Since $\vec{\mu''}:\vec{\mathcal{W}(\mathcal{C})}\rightarrow \vec{\mathcal{C}}$ and $\mu'':\mathbb{W}(\mathcal{C})\rightarrow \mathbb{O}^{+}$ are weak equivalences, one has the identifications\vspace{5pt}
$$
\begin{array}{lllll}
Bimod^{h}_{\vec{\mathcal{C}}}(\mathbb{O}^{+}\,;\,M) & \cong & Bimod_{\vec{\mathcal{W}(\mathcal{C})}}^{h}(\mathbb{O}^{+}\,;\,M) 
& \cong & Bimod_{\vec{\mathcal{W}(\mathcal{C})}}^{h}(\mathbb{W}(\mathcal{C})\,;\,M).
\end{array} \vspace{3pt}
$$ 
Let $M^{c}$ be the $k$-fold sequence obtained by taking a factorization $\mathbb{W}(\mathcal{C})\rightarrow M^{c}\rightarrow M$ into a cofibration and a trivial fibration of $\mathbb{W}(\mathcal{C})\rightarrow M$ in the category of $k$-fold bimodules over $\vec{\mathcal{W}(\mathcal{C})}$. Then, we denote by $M^{c}_{1}$ the $k$-fold bimodule over $\vec{\mathcal{F}}$ obtained from Proposition \ref{J4} applied to the cofibration $\mathbb{W}(\mathcal{C})\rightarrow M^{c}$ and the weak equivalence $\vec{g}:\vec{\mathcal{W}(\mathcal{C})} \rightarrow \vec{\mathcal{F}}$. Consequently, one has\vspace{5pt}
$$
\begin{array}{lllll}
Bimod^{h}_{\vec{\mathcal{C}}}(\mathbb{O}^{+}\,;\,M)  & \cong &   Bimod_{\vec{\mathcal{W}(\mathcal{C})}}^{h}(\mathbb{W}(\mathcal{C})\,;\,M^{c})  & \cong &  Bimod_{\vec{\mathcal{W}(\mathcal{C})}}^{h}(\mathbb{F}\,;\,M^{c}_{1}).
\end{array} \vspace{3pt}
$$  
Finally, the Quillen equivalence induced by the weak equivalence $\vec{g}:\vec{\mathcal{W}(\mathcal{C})} \rightarrow \vec{\mathcal{F}}$ gives rise to \vspace{5pt}
$$
\begin{array}{llll}
Bimod^{h}_{\vec{\mathcal{C}}}(\mathbb{O}^{+}\,;\,M) & \cong &   Bimod_{\vec{\mathcal{F}}}^{h}(\mathbb{F}\,;\,M^{c}_{1}).
\end{array}\vspace{3pt}
$$
Similarly, one has the same identifications in the context of $k$-fold infinitesimal bimodules: \vspace{5pt}
$$
\begin{array}{llll}\vspace{5pt}
Ibimod^{h}_{\vec{\mathcal{C}}}(\mathbb{O}\,;\,M^{-}) & \simeq  Ibimod_{\vec{\mathcal{W}(\mathcal{C})}}^{h}(\mathbb{O}^{+}\,;\,M^{-}) & \\ \vspace{5pt} 
& \simeq  Ibimod_{\vec{\mathcal{W}(\mathcal{C})}}^{h}(\mathbb{W}(\mathcal{C})^{-}\,;\,M^{-}) &  \\ \vspace{5pt}
 & \simeq  Ibimod_{\vec{\mathcal{W}(\mathcal{C})}}^{h}(\mathbb{W}(\mathcal{C})^{-}\,;\,(M^{c})^{-}) & \\ \vspace{5pt}
 & \simeq  Ibimod_{\vec{\mathcal{W}(\mathcal{C})}}^{h}(\mathbb{F}^{-}\,;\,(M^{c}_{1})^{-})  & \simeq  Ibimod_{\vec{\mathcal{F}}}^{h}(\mathbb{F}^{-}\,;\,(M^{c}_{1})^{-}),
\end{array} \vspace{5pt}
$$  
Finally, the theorem follows from the identifications\vspace{5pt}
$$
\mathbb{F}_{\vec{\mathcal{C}}}(M)\hspace{3pt}\cong \hspace{3pt}\mathbb{F}_{\vec{\mathcal{F}}}(M_{1}^{c}) \hspace{3pt}\simeq \hspace{3pt} Ibimod_{\vec{\mathcal{F}}}^{h}(\mathbb{F}^{-}\,;\,(M^{c}_{1})^{-})\hspace{3pt} \cong \hspace{3pt} Ibimod^{h}_{\vec{\mathcal{C}}}(\mathbb{O}\,;\,M^{-}),\vspace{5pt}
$$
where the weak equivalence is a consequence of Theorem \ref{D3}. The reader can easily check that the same arguments work for the truncated case. Thus finishes the proof of the theorem.

\newpage

\subsection{Iterated loop spaces associated to embedding spaces}\label{M8}

In this section, $\vec{O}$, $\mathbb{O}$ and $\mathbb{O}^{+}$ are the objects introduced in Definition \ref{Z3}, Example \ref{E1} and Example \ref{M1}, respectively, associated to the family of weakly $2$-reduced operads $\mathcal{C}_{d_{1}},\ldots,\mathcal{C}_{d_{k}}$ relative to $\mathcal{C}_{n}$. Without loss of generality, we assume that $d_{1}\leq \cdots \leq d_{k} < n$. Let $\eta:\mathbb{O}^{+}\rightarrow M$ be a map of $k$-fold bimodules satisfying the conditions of Conjecture \ref{J3}. According to Theorem \ref{R1}, one has\vspace{5pt}
$$
\begin{array}{rcl}\vspace{7pt}
\mathbb{F}_{\vec{O}}(M) & \simeq &  Ibimod_{\vec{O}}^{h}(\mathbb{O}\,;\,M^{-}), \\ 
T_{\vec{r}}\,\mathbb{F}_{\vec{O}}(M) & \simeq & T_{\vec{r}}\,Ibimod_{\vec{O}}^{h}(\mathbb{O}\,;\,M^{-}).
\end{array}\vspace{3pt}
$$

By construction, the spaces $\Sigma \vec{O}(2)$ and $\Sigma \mathcal{C}_{d_{i}}(2)$ are homotopically equivalent to the spheres $\Sigma S^{d_{1}-1}\cong S^{d_{1}}$ and $\Sigma S^{d_{i}-1}\cong S^{d_{i}}$, respectively. Then, according to the notation introduced in Section \ref{D5}, we introduce the covariant functor $F'_{\vec{O}\,;\,M}$ from the category $C_{k}$  to spaces given by \vspace{5pt}
\begin{equation}\label{L4}
\begin{array}{rl}\vspace{9pt}
F'_{\vec{O}\,;\,M}(0)= & Map_{\ast}\left( 
S^{0}\,;\, Bimod_{\vec{O}}^{h}(\mathbb{O}^{+}\,;\, M)\right), \\ \vspace{9pt}
F'_{\vec{O}\,;\,M}(i)= & Map_{\ast}\left( 
S^{d_{i}-d_{1}}\,;\, Bimod_{\mathcal{C}_{d_{i}}}^{h}(\mathcal{C}_{d_{i}}\,;\, M_{i})\right),\hspace{15pt} \text{for } i\in \{1,\ldots, k\}, \\ 
F'_{\vec{O}\,;\,M}(0\,,\,i)= & Map_{\ast}\left( 
S^{0}\,;\, Bimod_{\mathcal{C}_{d_{i}}}^{h}(\mathcal{C}_{d_{i}}\,;\, M_{i})\right),
\end{array}\vspace{3pt}
\end{equation}
On morphisms, there are two cases to consider. If the morphism corresponds to an element of the form $C_{k}(i\,;\,(0\,,\,i))$, then the functor is defined using the inclusion from the sphere of dimension $0$ to the sphere of dimension $d_{i}-d_{1}$. On the other hand, if the morphism corresponds to an element of the form $C_{k}(0\,;\,(0\,,\,i))$, then the functor $F'_{\vec{O}\,;\,M}$ is defined using the restriction map\vspace{1pt}
$$
rest_{i}: Bimod_{\vec{O}}^{h}(\mathbb{O}^{+}\,;\, M) \longrightarrow Bimod_{\mathcal{C}_{d_{i}}}^{h}(\mathcal{C}_{d_{i}}\,;\, M_{i}).\vspace{3pt}
$$
By definition, one has the following relation between the functors $F'_{\vec{O}\,;\,M}$ and $F_{\vec{O}\,;\,M}$:\vspace{1pt}
$$
\begin{array}{rll}\vspace{10pt}
F_{\vec{O}\,;\,M}(i)\simeq &  Map_{\ast}\big(\,S^{d_{1}}\,;\,F'_{\vec{O}\,;\,M}(i)\,\big) &  = \Omega^{d_{1}}F'_{\vec{O}\,;\,M}(i),\\ 
F_{\vec{O}\,;\,M}(0\,,\,i)\simeq &  Map_{\ast}\big(\,S^{d_{1}}\,;\,F'_{\vec{O}\,;\,M}(0\,,\,i)\,\big) &  = \Omega^{d_{1}}F'_{\vec{O}\,;\,M}(0\,,\,i).
\end{array} \vspace{3pt}
$$
So it defines a natural homotopy equivalence between the two functors.  Consequently, there is a isomorphism between the limits in which the last identification is due to the fact the loop space functor (seen as a limit) commute with the limit indexed by the category $C_{k}$:\vspace{2pt}
$$
\mathbb{F}_{\vec{O}}(M)=\hspace{-7pt}\underset{\hspace{20pt}C_{k}}{lim}\,F_{\vec{O}\,;\,M} \simeq \hspace{-7pt}\underset{\hspace{20pt}C_{k}}{lim}\,\Omega^{d_{1}}F'_{\vec{O}\,;\,M}\cong \Omega^{d_{1}}\big(\hspace{-7pt}\underset{\hspace{20pt}C_{k}}{lim}\,F'_{\vec{O}\,;\,M}\big)=:\Omega^{d_{1}}\mathbb{F}'_{\vec{O}}(M).\vspace{3pt}
$$
Similarly, one has the same description in the context of truncated bimodules. We only need to consider in the definition (\ref{L4}) the truncated spaces $T_{\vec{r}}\,Bimod_{\vec{O}}^{h}(\mathbb{O}\,;\,M)$ and $T_{\vec{r}}\,Bimod_{O_{i}}^{h}(O_{i}\,;\,M_{i})$ instead of $Bimod_{\vec{O}}^{h}(\mathbb{O}\,;\,M)$\vspace{-5pt} and $Bimod_{O_{i}}^{h}(O_{i}\,;\,M_{i})$, respectively. \vspace{5pt}

\begin{thm}\label{L6}
For any $\vec{r}\in \mathbb{N}^{k}$, one has \vspace{5pt}
$$
Ibimod_{\vec{O}}^{h}(\mathbb{O}\,;\,M^{-})\simeq \Omega^{d_{1}}\mathbb{F}'_{\vec{O}}(M)\hspace{15pt}\text{and}\hspace{15pt} Ibimod_{\vec{O}}^{h}(\mathbb{O}\,;\,M^{-})\simeq \Omega^{d_{1}}\big(\,T_{\vec{r}}\,\mathbb{F}'_{\vec{O}}(M)\,\big).\vspace{2pt}
$$
In particular, if $d_{1}=\cdots = d_{k}=d$, then one has\vspace{5pt}
$$
Ibimod_{\vec{O}}^{h}(\mathbb{O}\,;\,M^{-})\simeq\Omega^{d} Bimod_{\vec{O}}^{h}(\mathbb{O}^{+}\,;\,M) \hspace{15pt}\text{and}\hspace{15pt}
T_{\vec{r}}\,Ibimod_{\vec{O}}^{h}(\mathbb{O}\,;\,M^{-})\simeq
\Omega^{d}\big( T_{\vec{r}}\, Bimod_{\vec{O}}^{h}(\mathbb{O}^{+}\,;\,M)\big).\vspace{-13pt}
$$
\end{thm}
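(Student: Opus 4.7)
The proof will be an almost direct consequence of the material set up in the paragraphs immediately preceding the theorem, combined with Theorem \ref{R1}. My plan is as follows.

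The paragraph before the theorem already introduces the functor $F'_{\vec{O}\,;\,M}:C_{k}\to Top$ and establishes pointwise weak equivalences $F_{\vec{O}\,;\,M}(i)\simeq \Omega^{d_{1}}F'_{\vec{O}\,;\,M}(i)$ and $F_{\vec{O}\,;\,M}(0,i)\simeq \Omega^{d_{1}}F'_{\vec{O}\,;\,M}(0,i)$, using the homotopy equivalences $\Sigma\mathcal{C}_{d_{i}}(2)\simeq S^{d_{i}}\simeq S^{d_{1}}\wedge S^{d_{i}-d_{1}}$ together with the suspension-loop adjunction. I would first check that these equivalences assemble into a natural weak equivalence of $C_{k}$-diagrams $F_{\vec{O}\,;\,M}\simeq \Omega^{d_{1}}F'_{\vec{O}\,;\,M}$, paying particular attention to the object $0$: here $\Sigma\vec{O}(2)$ is not itself a sphere, so one must use that the limit condition over $C_{k}$ together with the projections $\delta_{i}:\Sigma\vec{O}(2)\to \Sigma\mathcal{C}_{d_{i}}(2)\simeq S^{d_{i}}$ forces the relevant mapping space out of $F_{\vec{O}\,;\,M}(0)$ to be controlled, up to homotopy, by maps out of $S^{d_{1}}$. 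This is the only nontrivial piece of naturality checking.

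Having obtained the natural equivalence of functors, I would take limits over $C_{k}$ on both sides. Because $\Omega^{d_{1}}$ is a right adjoint, it commutes with this (finite) limit, yielding
$$\mathbb{F}_{\vec{O}}(M)=\underset{C_{k}}{\mathrm{lim}}\,F_{\vec{O}\,;\,M}\simeq \underset{C_{k}}{\mathrm{lim}}\,\Omega^{d_{1}}F'_{\vec{O}\,;\,M}\cong \Omega^{d_{1}}\bigl(\underset{C_{k}}{\mathrm{lim}}\,F'_{\vec{O}\,;\,M}\bigr)=\Omega^{d_{1}}\mathbb{F}'_{\vec{O}}(M).$$
Applying Theorem \ref{R1}, which identifies $\mathbb{F}_{\vec{O}}(M)$ with $Ibimod_{\vec{O}}^{h}(\mathbb{O}\,;\,M^{-})$, gives the first asserted equivalence. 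The truncated version is proved by the same argument performed uniformly in $\vec{r}$, replacing every occurrence of $Bimod_{\vec{O}}^{h}$ and $Ibimod_{\vec{O}}^{h}$ by its $\vec{r}$-truncated analogue and invoking the truncated part of Theorem \ref{R1}.

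For the specialization $d_{1}=\cdots=d_{k}=d$, I would simply observe that each sphere $S^{d_{i}-d}$ reduces to $S^{0}$, so every factor $Map_{\ast}(S^{d_{i}-d};-)$ in the definition of $\mathbb{F}'_{\vec{O}}(M)$ collapses to the target space itself. The compatibility conditions in the subspace defining $\mathbb{F}'_{\vec{O}}(M)$ become the statement that restriction along $\vec{O}\to O_{i}$ agrees with the given maps, which is automatic, so the limit over $C_{k}$ degenerates to the single factor $Bimod_{\vec{O}}^{h}(\mathbb{O}^{+}\,;\,M)$, and similarly in the truncated case. The main obstacle is really only the diagram-naturality check in the first step; the rest is formal manipulation via adjunctions and the commutation of $\Omega^{d_{1}}$ with limits.
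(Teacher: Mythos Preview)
Your proposal is correct and follows exactly the same argument as the paper: establish the natural equivalence $F_{\vec{O};M}\simeq\Omega^{d_1}F'_{\vec{O};M}$, take limits over $C_k$, commute $\Omega^{d_1}$ past the limit, and invoke Theorem~\ref{R1}. The one place where you overcomplicate things is at the object $0$. You worry that $\Sigma\vec{O}(2)$ is ``not itself a sphere'' and propose to control it through the limit conditions and the projections $\delta_i$; but in fact the paper observes directly that $\Sigma\vec{O}(2)\simeq S^{d_1}$. The reason is that $\vec{O}(2)=\vec{O}_S(\{1,2\},\ldots,\{1,2\})$ is the subspace of $\prod_i\mathcal{C}_{d_i}(2)$ where all images in $\mathcal{C}_n(2)$ agree; since the inclusions $\mathcal{C}_{d_1}(2)\hookrightarrow\cdots\hookrightarrow\mathcal{C}_n(2)$ are nested injections, this fiber product is homeomorphic to $\mathcal{C}_{d_1}(2)\simeq S^{d_1-1}$ via the projection $\delta_1$. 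So the equivalence $F_{\vec{O};M}(0)\simeq\Omega^{d_1}F'_{\vec{O};M}(0)$ and its naturality are just as immediate as at the other objects, and no workaround through the limit is needed.
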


\newpage

As a consequence of Theorem \ref{K2} together with Theorem \ref{L6}, applied to the map of $k$-fold bimodules $\eta:\vec{O}\rightarrow \mathcal{R}_{n}^{k}$, one has the following statement:\vspace{5pt}

\begin{thm}\label{R3}
We recall that $d_{1}\leq \cdots \leq d_{k}<n$ is a family of integers. One has the following description of the loop spaces associated to high-dimensional spaces of string links: \vspace{5pt}
$$
\begin{array}{rcll}\vspace{9pt}
\overline{Emb}_{c}\big(\, \underset{1\leq i\leq k}{\displaystyle\coprod} \mathbb{R}^{d_{i}}\,;\,\mathbb{R}^{n}\,\big) & \simeq & \Omega^{d_{1}}\mathbb{F}'_{\vec{O}}(\mathcal{R}_{n}^{k}), & \text{if } d_{k}+3\leq n,\\
T_{\vec{r}}\,\overline{Emb}_{c}\big(\, \underset{1\leq i\leq k}{\displaystyle\coprod} \mathbb{R}^{d_{i}}\,;\,\mathbb{R}^{n}\,\big) & \simeq & \Omega^{d_{1}}\big(\, T_{\vec{r}}\,\mathbb{F}'_{\vec{O}}(\mathcal{R}_{n}^{k})\,\big),& \text{for any } \vec{r}\in (\mathbb{N}\sqcup \{\infty\})^{k}. 
\end{array}\vspace{2pt}
$$ 
In particular, if $d_{1}=\cdots =d_{k}=d$, then one has\vspace{5pt}
$$
\begin{array}{rcll}\vspace{9pt}
\overline{Emb}_{c}\big(\, \underset{1\leq i\leq k}{\displaystyle\coprod} \mathbb{R}^{d_{i}}\,;\,\mathbb{R}^{n}\,\big) & \simeq & \Omega^{d} Bimod_{\vec{O}}^{h}(\mathbb{O}^{+}\,;\,\mathcal{R}_{n}^{k}), & \text{if } d_{k}+3\leq n,\\
T_{\vec{r}}\,\overline{Emb}_{c}\big(\, \underset{1\leq i\leq k}{\displaystyle\coprod} \mathbb{R}^{d_{i}}\,;\,\mathbb{R}^{n}\,\big) & \simeq & \Omega^{d}\big( T_{\vec{r}}\, Bimod_{\vec{O}}^{h}(\mathbb{O}^{+}\,;\,\mathcal{R}_{n}^{k})\big) & \text{for any } \vec{r}\in (\mathbb{N}\sqcup \{\infty\})^{k}.

\end{array} 
$$ 
\end{thm}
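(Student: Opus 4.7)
The plan is to combine Theorem \ref{K2} with Theorem \ref{L6} applied to the canonical $k$-fold bimodule map $\eta\colon\mathbb{O}^{+}\to\mathcal{R}_{n}^{k}$ constructed in Example \ref{L7}. Under the assumption $d_{k}+3\leq n$, Theorem \ref{K2} already provides the weak equivalence
\[
\mathcal{L}(d_{1},\ldots,d_{k}\,;\,n)\simeq Ibimod_{\vec{O}}^{h}(\mathbb{O}\,;\,\mathcal{R}_{n}^{k}),
\]
and, for arbitrary $\vec{r}\in(\mathbb{N}\sqcup\{\infty\})^{k}$, the analogous truncated identification
\[
T_{\vec{r}}\,\mathcal{L}(d_{1},\ldots,d_{k}\,;\,n)\simeq T_{\vec{r}}\,Ibimod_{\vec{O}}^{h}(\mathbb{O}\,;\,\mathcal{R}_{n}^{k}).
\]
Thus it suffices to feed the $k$-fold bimodule $\mathcal{R}_{n}^{k}$ into Theorem \ref{L6} and read off the iterated loop space description of the right-hand sides.

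The first step will be to verify that $\mathcal{R}_{n}^{k}$, viewed as a $k$-fold bimodule over $\vec{O}=\{\mathcal{C}_{d_{i}}\to\mathcal{C}_{n}\}$, fulfills the hypotheses of Conjecture \ref{J3} (proved for the family of little cubes operads in Theorem \ref{R1}). Concretely, I must check that $\mathcal{R}_{n}^{k}(A_{1},\ldots,A_{k})=\ast$ whenever each $A_{i}\in\{+,\emptyset\}$ — this is immediate from the defining formula $\mathcal{R}_{n}^{k}(A_{1},\ldots,A_{k})=\mathcal{R}_{n}(\coprod_{A_{i}\neq +}A_{i})$ together with $\mathcal{R}_{n}(\emptyset)=\ast$. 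Next, the comparison map
\[
\iota_{i}\colon\mathcal{R}_{n}^{k}(+,\ldots,A_{i},\ldots,+)\longrightarrow\mathcal{R}_{n}^{k}(\emptyset,\ldots,A_{i},\ldots,\emptyset)
\]
is the identity map $\mathcal{R}_{n}(A_{i})\to\mathcal{R}_{n}(A_{i})$ up to the relabelling induced by the $\Lambda^{\times k}_{+}$-structure, hence a (trivial) weak equivalence. Finally $\Sigma$-cofibrancy of $\mathcal{R}_{n}^{k}$ follows from $\Sigma$-cofibrancy of $\mathcal{R}_{n}$ componentwise.

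With the hypotheses secured, Theorem \ref{L6} applied to $\eta\colon\mathbb{O}^{+}\to\mathcal{R}_{n}^{k}$ yields
\[
Ibimod_{\vec{O}}^{h}(\mathbb{O}\,;\,(\mathcal{R}_{n}^{k})^{-})\simeq\Omega^{d_{1}}\,\mathbb{F}'_{\vec{O}}(\mathcal{R}_{n}^{k}),
\qquad
T_{\vec{r}}\,Ibimod_{\vec{O}}^{h}(\mathbb{O}\,;\,(\mathcal{R}_{n}^{k})^{-})\simeq\Omega^{d_{1}}\bigl(T_{\vec{r}}\,\mathbb{F}'_{\vec{O}}(\mathcal{R}_{n}^{k})\bigr),
\]
and the uniform case $d_{1}=\cdots=d_{k}=d$ gives the simpler form in which $\mathbb{F}'_{\vec{O}}$ is replaced by $Bimod_{\vec{O}}^{h}(\mathbb{O}^{+}\,;\,\mathcal{R}_{n}^{k})$, since the limit defining $\mathbb{F}'$ over $C_{k}$ collapses (all spheres $S^{d_{i}-d_{1}}=S^{0}$ and the restriction maps are identified). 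Splicing these equivalences with the two displayed consequences of Theorem \ref{K2} produces the four weak equivalences asserted in the statement.

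The main conceptual obstacle is in fact hidden upstream, in Theorem \ref{L6}, whose proof via Theorem \ref{R1} requires transport along the zig-zag $\vec{\mathcal{C}}\leftarrow\vec{\mathcal{W}(\mathcal{C})}\to\vec{\mathcal{F}}$ to bring the weakly $2$-reduced little cubes into the $2$-reduced setting of Theorem \ref{D3}. At the level of the present deduction nothing essentially new is required: one merely has to take care that the $\Sigma$-cofibrant replacement $M_{1}^{c}$ of $\mathcal{R}_{n}^{k}$ produced by Proposition \ref{J4} still satisfies the $\Lambda^{\times k}_{+}$-condition $M(+,\ldots,A_{i},\ldots,+)\xrightarrow{\simeq}M(\emptyset,\ldots,A_{i},\ldots,\emptyset)$, which is preserved under the Quillen equivalences because weak equivalences and the underlying $k$-fold sequences are preserved by $\hat{\alpha}^{!}_{B}$ applied to cofibrant objects. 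Once this bookkeeping is in place, the theorem follows by concatenating Theorems \ref{K2} and \ref{L6}.
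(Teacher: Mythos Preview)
Your proposal is correct and follows exactly the paper's own approach: the theorem is stated as an immediate consequence of combining Theorem \ref{K2} with Theorem \ref{L6} applied to the $k$-fold bimodule map $\eta\colon\mathbb{O}^{+}\to\mathcal{R}_{n}^{k}$ from Example \ref{L7}. You have simply supplied the hypothesis checks (reducedness at $\{+,\emptyset\}$-arities, the equivalence \eqref{P9}, and $\Sigma$-cofibrancy) that the paper leaves implicit.
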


\subsection{Polynomial approximation of embedding spaces with singularities}\label{N5}

Similarly to the previous section, $\vec{O}$ and $\mathbb{O}$ are the $k$-fold sequences associated with the family of weakly $2$-reduced operads $\mathcal{C}_{d_{1}},\ldots,\mathcal{C}_{d_{k}}$ relative to the contractible space $\mathcal{C}_{n}(1)$, with $d_{1}\leq \cdots \leq d_{k} < n$. In the following, we introduce another application of Theorems \ref{J3} and \ref{L6}. For a family $\vec{u}=\{u_{pq}\}_{1\leq p\leq q\leq k}$, with $u_{pq}\in \mathbb{N}_{>0}\sqcup\{\infty\}$, we consider the space of $\vec{u}$-immersions compactly supported\vspace{5pt}
$$
Imm_{c}^{\vec{u}}\big(\, \underset{1\leq i\leq k}{\bigsqcup} \mathbb{R}^{d_{i}}\,;\,\mathbb{R}^{n}\,\big)\subset Imm_{c}\big(\, \underset{1\leq i\leq k}{\bigsqcup} \mathbb{R}^{d_{i}}\,;\,\mathbb{R}^{n}\,\big),\vspace{2pt}
$$
formed by immersions $f=(f_{1},\ldots,f_{k}):\sqcup_{i}\mathbb{R}^{d_{i}}\rightarrow \mathbb{R}^{n}$ such that the cardinality of the preimage $f_{q}^{-1}(x)$ of a point $x\in Im(f_{p})$ is smaller than $u_{pq}$. In particular, if $u_{pq}=\infty$ for all $1\leq p\leq q\leq k$, then we get the usual space of immersions. Similarly, if $u_{pq}=1$ for all $1\leq p\leq q\leq k$, then we get the usual space of embeddings. The space of $\vec{r}$-immersions is endowed with a product, given by the concatenation, which commutative up to homotopy only if $d_{1}\geq 2$.
\begin{figure}[!h]
\begin{center}
\includegraphics[scale=1.8]{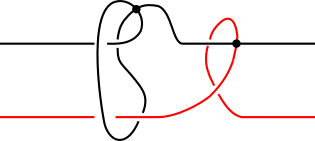}
\caption{An element in $Imm_{c}^{\vec{u}}\big(\,  \mathbb{R}\sqcup\mathbb{R}\,;\,\mathbb{R}^{3}\,\big)$ with $u_{11}=1$, $u_{12}=1$ and $u_{22}=0$.}\vspace{-14pt}
\end{center}
\end{figure}

Unfortunately, we have no information about the convergence of the Goodwillie-Weiss' tower associated with the space of $\vec{u}$-immersions. Nevertheless, using the method describe in Section \ref{L3}, we are able to identify the polynomial approximations of the homotopy fiber\vspace{5pt}
$$
\overline{Imm}_{c}^{\vec{u}}\big(\, \underset{1\leq i\leq k}{\bigsqcup} \mathbb{R}^{d_{i}}\,;\,\mathbb{R}^{n}\,\big)\coloneqq hofib\left(
Imm_{c}^{\vec{u}}\big(\, \underset{1\leq i\leq k}{\bigsqcup} \mathbb{R}^{d_{i}}\,;\,\mathbb{R}^{n}\,\big) \longrightarrow Imm_{c}\big(\, \underset{1\leq i\leq k}{\bigsqcup} \mathbb{R}^{d_{i}}\,;\,\mathbb{R}^{n}\,\big)
\right)\vspace{2pt}
$$
with derived mapping space of (possibly truncated) $k$-fold infinitesimal bimodule. Then, using this identification, we will prove that the polynomial approximations are weakly equivalent to explicit iterated loop spaces. For this purpose, we need to consider the following $k$-fold bimodule. \vspace{-15pt} 

\newpage

\begin{defi}\textbf{The $k$-fold bimodule $\mathcal{R}_{n}[\,\vec{u}\,]$}

\noindent Let $\vec{u}=\{u_{pq}\}_{1\leq p\leq q\leq k}$ be a family  with $u_{pq}\in \mathbb{N}_{>0}\sqcup\{\infty\}$. Then we consider the $k$-fold sequence
$$
\mathcal{R}_{n}[\,\vec{u}\,](A_{1},\ldots,A_{k})\subset \mathcal{R}_{n}^{\infty}(A_{1}\sqcup\ldots\sqcup A_{k}),\hspace{15pt}\forall (A_{1},\ldots,A_{k})\in \Sigma^{\times k},
$$
which is the subspace formed by configurations of little cubes $\{r_{a}\}_{1\leq i\leq k}^{a\in A_{i}}$ satisfying the relation
$$
\forall 1\leq p\leq q\leq k,\,\,\forall a\in A_{p},\,\,\forall (b_{1},\ldots,b_{u_{pq}})\subset A_{q},\,\,Im(r_{a})\cap\underset{1\leq i\leq u_{pq}}{\bigcap} Im(r_{b_{i}})=\emptyset.
$$
The $k$-fold sequence so obtained inherits a $k$-fold bimodule structure over the family of operads $\mathcal{C}_{d_{1}},\ldots,\mathcal{C}_{d_{k}}$ relative to $\mathcal{C}_{n}$ from $\mathcal{R}_{n}^{\infty}$. More precisely, the right $k$-fold bimodule operations are defined using the operadic structure of $\mathcal{R}_{n}^{\infty}$ and the maps $\mathcal{C}_{d_{i}}\rightarrow \mathcal{C}_{d_{k}}\hookrightarrow \mathcal{R}_{d_{k}} \rightarrow \mathcal{R}_{n}^{\infty}$. The left $k$-fold bimodule structure is obtained using the map $\varepsilon$ introduced in Example \ref{L7}.\vspace{3pt}  
\begin{figure}[!h]
\begin{center}
\includegraphics[scale=0.18]{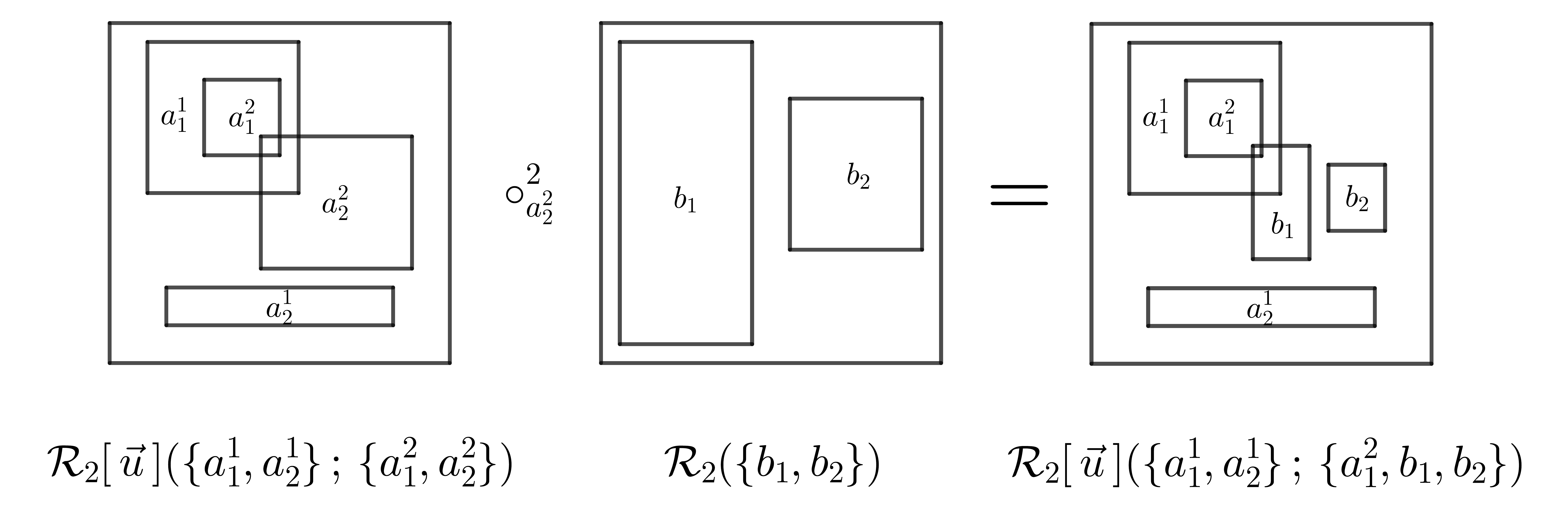}\vspace{-5pt}
\caption{Illustration of the operation $\circ_{a_{2}^{2}}^{2}$ with $u_{11}=1$, $u_{12}=3$ and $u_{22}=2$.}\vspace{-15pt}
\end{center}
\end{figure}
\end{defi}

Similarly to Theorem \ref{K2}, we can use Proposition \ref{K5} in order to express the polynomial approximation of the functor associated with the $\vec{u}$-immersions modulo immersions in terms of derived mapping space of $k$-fold infinitesimal bimodule. More precisely, we get the following identifications:\vspace{5pt}
$$
\begin{array}{rcl}\vspace{7pt}
T_{\vec{r}}\,\overline{Imm}^{\vec{u}}_{c}\big(\, \underset{1\leq i\leq k}{\displaystyle\coprod} \mathbb{R}^{d_{i}}\,;\,\mathbb{R}^{n}\,\big) & \simeq & T_{\vec{r}}\, Ibimod_{\vec{O}}^{h}\big( \mathbb{O}\,;\, \mathcal{R}_{n}[\,\vec{u}\,]\,\big),\\ 
T_{\vec{\infty}}\,\overline{Imm}^{\vec{u}}_{c}\big(\, \underset{1\leq i\leq k}{\displaystyle\coprod} \mathbb{R}^{d_{i}}\,;\,\mathbb{R}^{n}\,\big) & \simeq & Ibimod_{\vec{O}}^{h}\big( \mathbb{O}\,;\, \mathcal{R}_{n}[\,\vec{u}\,]\,\big).
\end{array} \vspace{2pt}
$$ 
As a consequence of Theorem \ref{L6}, one has the following delooping statement:\vspace{5pt}

\begin{thm}\label{R0}
For any integers $d_{1}\leq \cdots \leq d_{k}<n$, one has the identifications \vspace{5pt}
$$
\begin{array}{rcl}\vspace{7pt}
T_{\vec{r}}\,\overline{Imm}^{\vec{u}}_{c}\big(\, \underset{1\leq i\leq k}{\displaystyle\coprod} \mathbb{R}^{d_{i}}\,;\,\mathbb{R}^{n}\,\big) & \simeq &  \Omega^{d_{1}}\big(\,T_{\vec{r}}\,\mathbb{F}'_{\vec{O}}(\mathcal{R}_{n}[\,\vec{u}\,])\big),\\ 
T_{\vec{\infty}}\,\overline{Imm}^{\vec{u}}_{c}\big(\, \underset{1\leq i\leq k}{\displaystyle\coprod} \mathbb{R}^{d_{i}}\,;\,\mathbb{R}^{n}\,\big) & \simeq & \Omega^{d_{1}}\mathbb{F}'_{\vec{O}}(\mathcal{R}_{n}[\,\vec{u}\,]).
\end{array}\vspace{2pt}
$$ 
In particular, if $d_{1}=\cdots =d_{k}=d$, then one has \vspace{5pt}
$$
\begin{array}{rcl}\vspace{7pt}
T_{\vec{r}}\,\overline{Imm}^{\vec{u}}_{c}\big(\, \underset{1\leq i\leq k}{\displaystyle\coprod} \mathbb{R}\,;\,\mathbb{R}^{n}\,\big) & \simeq &  \Omega^{d}\big(\,T_{\vec{r}}\,Bimod_{\vec{O}}^{h}(\mathbb{O}^{+}\,;\,\mathcal{R}_{n}[\,\vec{u}\,])\big),\\ 
T_{\vec{\infty}}\,\overline{Imm}^{\vec{u}}_{c}\big(\, \underset{1\leq i\leq k}{\displaystyle\coprod} \mathbb{R}^{d}\,;\,\mathbb{R}^{n}\,\big) & \simeq & \Omega^{d}Bimod_{\vec{O}}^{h}(\mathbb{O}^{+}\,;\,\mathcal{R}_{n}[\,\vec{u}\,]).
\end{array}
$$ 
\end{thm}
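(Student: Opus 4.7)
The plan is to reduce Theorem \ref{R0} to the combination of Proposition \ref{K5} (which identifies polynomial approximations with mapping spaces of $k$-fold infinitesimal bimodules) and Theorem \ref{L6} (which produces the $d_1$-fold delooping from a map of $k$-fold bimodules $\mathbb{O}^{+}\to M$). The identifications with $Ibimod_{\vec{O}}^{h}(\mathbb{O}\,;\,\mathcal{R}_{n}[\,\vec{u}\,])$ and its truncated versions are already stated just before the theorem as consequences of Proposition \ref{K5}; the remaining content is to verify that $\mathcal{R}_{n}[\,\vec{u}\,]$ satisfies the hypotheses of Theorem \ref{L6}.

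First I would check that the contravariant functor $\vec{\mathcal{U}}\mapsto \overline{Imm}_{c}^{\vec{u}}(\vec{\mathcal{U}}\,;\,\mathbb{R}^{n})$ is good and context-free in the sense of Section \ref{L3}, exactly as in Example \ref{Q3}: the homotopy fiber construction preserves the good and context-free properties of $Imm_{c}^{\vec{u}}(-\,;\,\mathbb{R}^{n})$ and $Imm_{c}(-\,;\,\mathbb{R}^{n})$, and the constraint on the multiplicity of preimages is preserved under isotopy equivalences and filtered colimits. Applying Proposition \ref{K5} and the weak equivalence $sEmb(-\,;\,\sqcup_{i}\mathbb{R}^{d_{i}})\simeq \mathbb{O}$ from Example \ref{K3} gives the identifications with $T_{\vec{r}}\,Ibimod_{\vec{O}}^{h}(\mathbb{O}\,;\,\overline{Imm}_{c}^{\vec{u}}(-\,;\,\mathbb{R}^{n}))$. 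Mimicking the end of the proof of Theorem \ref{K2}, I would construct an intermediate context-free functor $\overline{Imm}_{c,\,st}^{\vec{u}}(-\,;\,\mathbb{R}^{n})$ built from standard embeddings into parallel hyperplanes $H_{1},\ldots,H_{k}\subset \mathbb{R}^{n}$, and then exhibit a zig-zag of weak equivalences of $k$-fold infinitesimal bimodules between $\overline{Imm}_{c,\,st}^{\vec{u}}(-\,;\,\mathbb{R}^{n})$ and $(\mathcal{R}_{n}[\,\vec{u}\,])^{-}$, using that the multiplicity conditions defining $\mathcal{R}_{n}[\,\vec{u}\,]$ translate exactly into the multiplicity conditions defining $\vec{u}$-immersions.

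Next, before invoking Theorem \ref{L6}, I need to verify that $\mathcal{R}_{n}[\,\vec{u}\,]$ meets the hypotheses of Conjecture \ref{J3} as established in Theorem \ref{R1}: namely, $\Sigma$-cofibrancy of each component, the condition $\mathcal{R}_{n}[\,\vec{u}\,](A_{1},\ldots,A_{k})=\ast$ whenever every $A_{i}\in\{+,\emptyset\}$, and the requirement that the forgetting maps $\iota_{i}\colon \mathcal{R}_{n}[\,\vec{u}\,](+,\ldots,A_{i},\ldots,+)\to \mathcal{R}_{n}[\,\vec{u}\,](\emptyset,\ldots,A_{i},\ldots,\emptyset)$ be weak equivalences. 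The first two are immediate from the definition (the relevant component reduces to a single rectangle configuration), and the third follows because both sides can be identified with $\mathcal{R}_{n}(A_{i})$ up to a contractible space of choices for the distinguished ``marked'' rectangle absorbing the anti-cube.

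With these checks in hand, Theorem \ref{L6} applied to $\eta\colon \mathbb{O}^{+}\to \mathcal{R}_{n}[\,\vec{u}\,]$ yields
\begin{equation*}
Ibimod_{\vec{O}}^{h}(\mathbb{O}\,;\,(\mathcal{R}_{n}[\,\vec{u}\,])^{-})\simeq \Omega^{d_{1}}\mathbb{F}'_{\vec{O}}(\mathcal{R}_{n}[\,\vec{u}\,]),
\end{equation*}
and analogously for the $\vec{r}$-truncated versions; combined with step one this gives the theorem, and in the case $d_{1}=\cdots=d_{k}=d$ the limit defining $\mathbb{F}'_{\vec{O}}$ collapses (as observed in Section \ref{P1}) to a single derived mapping space of $k$-fold bimodules, yielding the second pair of identifications. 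The main obstacle I expect is the verification that the multiplicity conditions built into $\mathcal{R}_{n}[\,\vec{u}\,]$ match those built into $\overline{Imm}_{c,\,st}^{\vec{u}}$ at the level of $k$-fold infinitesimal bimodule structure, rather than just pointwise: this amounts to checking that the infinitesimal left operation $\mu$ (involving the map $\varepsilon$ of Example \ref{L7}) behaves compatibly with the collision/preimage data of the singular immersions, a verification parallel to but slightly more delicate than the one needed in Theorem \ref{K2} for ordinary embeddings.
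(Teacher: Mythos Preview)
Your proposal is correct and follows exactly the approach the paper takes: the paper itself does not give a standalone proof of this theorem, but states just before it that the identifications with $T_{\vec{r}}\,Ibimod_{\vec{O}}^{h}(\mathbb{O}\,;\,\mathcal{R}_{n}[\,\vec{u}\,])$ follow from Proposition~\ref{K5} (argued ``similarly to Theorem~\ref{K2}''), and then invokes Theorem~\ref{L6} to obtain the delooping. Your write-up simply makes explicit the hypothesis checks that the paper leaves implicit; one small slip is your remark about an ``anti-cube'' when verifying the $\iota_i$ condition---there is no anti-cube in the bimodule $\mathcal{R}_{n}[\,\vec{u}\,]$, and both sides are in fact the $u_{ii}$-overlapping subspace of $\mathcal{R}_{n}^{\infty}(A_i)$, so the map is a homeomorphism.
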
 

\newpage

\appendix 

\section{Table of coherence relations}\label{Z0}

\subsection{Axioms associated to $k$-fold infinitesimal bimodules}\label{W0}

Let $O_{1},\ldots,O_{k}$ be a family of reduced operads relative to a topological monoid $X$. A $k$-fold infinitesimal bimodule $N$ satisfies the following axioms: \vspace{5pt}
\begin{itemize}[leftmargin=12pt]
\item[$\blacktriangleright$] For $(A_{1},\ldots,A_{k})\in \Sigma^{\times k}$, finite sets $B$ and $C$ in $\Sigma$ and for all $i\leq j\in \{1,\ldots, k\}$, $a\in A_{i}$ and $b\in A_{j}$ (with $a\neq b$ if $i=j$), the following diagram commutes:
$$
\xymatrix{
N(A_{1},\ldots,A_{k})\times O_{i}(B)\times O_{j}(C) \ar[r]^{\hspace{-5pt}\circ_{i}^{a}\times id} \ar[d]_{\circ_{j}^{b}\times id} & N(A_{1},\ldots, A_{i}\cup_{a}B,\ldots,A_{k})\times O_{j}(C) \ar[d]^{\circ_{j}^{b}}\\
N(A_{1},\ldots, A_{j}\cup_{b}C,\ldots,A_{k})\times O_{i}(B) \ar[r]_{\hspace{-10pt}\circ_{i}^{a}} & N(A_{1},\ldots,  A_{i}\cup_{a}B,\ldots,A_{j}\cup_{b}C,\ldots,A_{k})
}
$$

\item[$\blacktriangleright$] For $(A_{1},\ldots,A_{k})\in \Sigma^{\times k}$, finite sets $B$ and $C$ in $\Sigma$ and for all $i\in \{1,\ldots, k\}$, $a\in A_{i}$ and $b\in B$, the following diagram commutes:
$$
\xymatrix{
N(A_{1},\ldots,A_{k})\times O_{i}(B)\times O_{i}(C) \ar[r]^{\hspace{-5pt}\circ_{i}^{a}\times id} \ar[d]_{id\times \circ_{b}} & N(A_{1},\ldots, A_{i}\cup_{a}B,\ldots,A_{k})\times O_{i}(C) \ar[d]^{\circ_{i}^{b}}\\
N(A_{1},\ldots,A_{k})\times O_{i}(B\cup_{b}C) \ar[r]_{\hspace{-5pt}\circ_{i}^{a}} & N(A_{1},\ldots,  A_{i}\cup_{a}B \cup_{b}C,\ldots,A_{k})
}
$$

\item[$\blacktriangleright$] For $(A_{1},\ldots,A_{k})$, $(B_{1},\ldots,B_{k})$ and $(C_{1},\ldots,C_{k})$ in $\Sigma^{\times k}$, the following diagram commutes:
$$
\xymatrix{
\vec{O}(A_{1},\ldots,A_{k}) \times \vec{O}(B_{1},\ldots,B_{k}) \times N(C_{1},\ldots,C_{k}) \ar[r]^{\hspace{2pt}\tilde{\mu} \times id} \ar[d]_{id\times \mu} & \vec{O}(A_{1}\sqcup B_{1},\ldots,A_{k}\sqcup B_{k}) \times N(C_{1},\ldots, C_{k}) \ar[d]^{\mu}\\
\vec{O}(A_{1},\ldots,A_{k}) \times N(B_{1}\sqcup C_{1},\ldots,B_{k}\sqcup C_{k}) \ar[r]_{\hspace{-1pt}\mu} & N(A_{1}\sqcup B_{1}\sqcup C_{1},\ldots,  A_{k}\sqcup B_{k}\sqcup C_{k})
}
$$

\item[$\blacktriangleright$] For $(A_{1},\ldots,A_{k})$ and $(B_{1},\ldots,B_{k})$ in $\Sigma^{\times k}$, a finite set $C\in \Sigma$ and for all $i\in \{1,\ldots,k\}$ and $b\in B_{i}$, the following diagram commutes: 
$$
\xymatrix{
\vec{O}(A_{1},\ldots,A_{k}) \times N(B_{1},\ldots,B_{k}) \times O_{i}(C) \ar[r]^{\hspace{-10pt}id\times \circ_{i}^{b}} \ar[d]_{\mu\times id} & \vec{O}(A_{1},\ldots,A_{k}) \times N(B_{1},\ldots, B_{i}\cup_{b}C,\ldots,B_{k}) \ar[d]^{\mu}\\
N(A_{1}\sqcup B_{1},\ldots,A_{k}\sqcup B_{k})\times O_{i}(C) \ar[r]_{\hspace{-20pt}\circ_{i}^{b}} & N(A_{1}\sqcup B_{1},\ldots,A_{i}\sqcup B_{i} \cup_{b}C,\ldots,  A_{k}\sqcup B_{k})
}
$$

\item[$\blacktriangleright$]For $(A_{1},\ldots,A_{k})$ and $(B_{1},\ldots,B_{k})$ in $\Sigma^{\times k}$, a finite set $C\in \Sigma$ and for all $i\in \{1,\ldots,k\}$ and $a\in A_{i}$, the following diagram commutes: 
$$
\xymatrix{
\vec{O}(A_{1},\ldots,A_{k}) \times N(B_{1},\ldots,B_{k}) \times O_{i}(C) \ar[r]^{\hspace{-10pt}\circ_{i}^{a}\times id} \ar[d]_{\mu\times id} & \vec{O}(A_{1},\ldots, A_{i}\cup_{a}C,\ldots,A_{k}) \times N(B_{1},\ldots,B_{k}) \ar[d]^{\mu}\\
N(A_{1}\sqcup B_{1},\ldots,A_{k}\sqcup B_{k})\times O_{i}(C) \ar[r]_{\hspace{-20pt}\circ_{i}^{a}} & N(A_{1}\sqcup B_{1},\ldots,A_{i}\sqcup B_{i} \cup_{a}C,\ldots,  A_{k}\sqcup B_{k})
}
$$

\item[$\blacktriangleright$] For $i\in \{1,\ldots,k\}$ and $(A_{1},\ldots,A_{k})\in \Sigma^{\times k}$, the following diagrams commute in which the horizontal maps are obtained by taking the unit $\ast_{1}\in O_{i}(1)$ and the point $(\ast_{1},\ldots,\ast_{1})\in \vec{O}(\emptyset,\ldots,\emptyset)$, respectively:
$$
\xymatrix{
N(A_{1},\ldots,A_{k}) \ar[r] \ar@{=}[d] & N(A_{1},\ldots,A_{k})\times O_{i}(1) \ar[dl]^{\circ_{i}^{1}}\\
N(A_{1},\ldots,A_{k}) &
}\hspace{20pt}
\xymatrix{
N(A_{1},\ldots,A_{k}) \ar[r] \ar@{=}[d] & \vec{O}(\emptyset,\ldots,\emptyset) \times N(A_{1},\ldots,A_{k}) \ar[dl]^{\mu}\\
N(A_{1},\ldots,A_{k}) &
}
$$
\end{itemize}

\subsection{Axioms associated to $k$-fold bimodules}\label{W1}

Let $O_{1},\ldots,O_{k}$ be a family of reduced operads relative to a reduced operad $O$. A $k$-fold bimodule $M$ satisfies the following axioms: \vspace{5pt}
\begin{itemize}[leftmargin=12pt]
\item[$\blacktriangleright$] For $i\leq j\in \{1,\ldots,k\}$, $(A_{1},\ldots,A_{k})\in \Sigma^{\times k}_{+}$ with $A_{i}\neq +\neq A_{j}$ and finite sets $B$ and $C$ in $\Sigma$, and for all $a\in A_{i}$ and $b\in A_{j}$ (with $a\neq b$ if $i=j$), the following diagram commutes:
$$
\xymatrix{
M(A_{1},\ldots,A_{k})\times O_{i}(B)\times O_{j}(C) \ar[r]^{\hspace{-5pt}\circ_{i}^{a}\times id} \ar[d]_{\circ_{j}^{b}\times id} & M(A_{1},\ldots, A_{i}\cup_{a}B,\ldots,A_{k})\times O_{j}(C) \ar[d]^{\circ_{j}^{b}}\\
M(A_{1},\ldots, A_{j}\cup_{b}C,\ldots,A_{k})\times O_{i}(B) \ar[r]_{\hspace{-10pt}\circ_{i}^{a}} & M(A_{1},\ldots,  A_{i}\cup_{a}B,\ldots,A_{j}\cup_{b}C,\ldots,A_{k})
}
$$

\item[$\blacktriangleright$] For $i\in \{1,\ldots,k\}$, $(A_{1},\ldots,A_{k})\in \Sigma^{\times k}_{+}$ with $A_{i}\neq +$ and finite sets $B$ and $C$ in $\Sigma$, and for all $a\in A_{i}$ and $b\in B$, the following diagram commutes:
$$
\xymatrix{
M(A_{1},\ldots,A_{k})\times O_{i}(B)\times O_{i}(C) \ar[r]^{\hspace{-5pt}\circ_{i}^{a}\times id} \ar[d]_{id\times \circ_{b}} & M(A_{1},\ldots, A_{i}\cup_{a}B,\ldots,A_{k})\times O_{i}(C) \ar[d]^{\circ_{i}^{b}}\\
M(A_{1},\ldots,A_{k})\times O_{i}(B\cup_{b}C) \ar[r]_{\hspace{-5pt}\circ_{i}^{a}} & M(A_{1},\ldots,  A_{i}\cup_{a}B \cup_{b}C,\ldots,A_{k})
}
$$

\item[$\blacktriangleright$] For $i\in \{1,\ldots,k\}$ and $(A_{1},\ldots,A_{k})\in \Sigma^{\times k}_{+}$, the following diagrams commute in which the horizontal maps are obtained by taking the unit $\ast_{1}\in O_{i}(1)$ and the point $(\ast_{1},\ldots,\ast_{1})\in \vec{O}_{S}(\emptyset,\ldots,\emptyset)$, respectively:
$$
\xymatrix{
M(A_{1},\ldots,A_{k}) \ar[r] \ar@{=}[d] & M(A_{1},\ldots,A_{k})\times O_{i}(1) \ar[dl]^{\circ_{i}^{1}}\\
M(A_{1},\ldots,A_{k}) &
}\hspace{20pt}
\xymatrix{
M(A_{1},\ldots,A_{k}) \ar[r] \ar@{=}[d] & \vec{O}_{S}(\emptyset,\ldots,\emptyset) \times M(A_{1},\ldots,A_{k}) \ar[dl]^{\mu_{S}}\\
M(A_{1},\ldots,A_{k}) &
}
$$

\item[$\blacktriangleright$] For finite sets $A$ and $B$, any element $a\in A$, $S=(S_{1},\ldots,S_{k})\in \mathcal{P}_{k}(A)$ and  $S'=(S_{1}',\ldots,S_{k}')\in \mathcal{P}_{k}(B)$ with the condition $S_{i}'=+$ iff $a\notin S_{i}$, and $(A_{1}^{c},\ldots,A_{k}^{c})\in \Sigma^{\times k}_{+}$, with $c\in A\cup_{a}B$ and $B_{i}^{c}=+$ iff $c\in S_{i}\cup_{a}S_{i}'$, the following diagram commutes (by abuse of notation, we denote by $\vec{O}_{S}$ the space $\vec{O}_{S}(S_{1},\ldots,S_{k})$): 
$$
\xymatrix{
\vec{O}_{S}\times \vec{O}_{S'}\times \underset{c\in A\cup_{a}B}{\displaystyle\prod} M(A_{1}^{c},\ldots,A_{k}^{c}) \ar[r]^{\mu_{a}\times id} \ar[d]_{id\times \mu_{S'}} & \vec{O}_{S\cup_{a}S'}\times \underset{c\in A\cup_{a}B}{\displaystyle\prod} M(A_{1}^{c},\ldots,A_{k}^{c})\ar[d]^{\mu_{A\cup_{a}B}}\\
\vec{O}_{S} \times M\left( \underset{\substack{c\in B\\ A_{1}^{c}\neq +}}{\displaystyle\coprod} A_{1}^{c} ,\ldots, \underset{\substack{c\in B\\ A_{k}^{c}\neq +}}{\displaystyle\coprod} A_{k}^{c}  \right) \times \underset{c\in A\setminus \{a\}}{\displaystyle\prod} M(A_{1}^{c},\ldots,A_{k}^{c})\ar[r]_{\hspace{40pt}\mu_{S}}
& M\left( \underset{\substack{c\in A\cup_{a}B\\ A_{1}^{c}\neq +}}{\displaystyle\coprod} A_{1}^{c} ,\ldots, \underset{\substack{c\in A\cup_{a}B\\ A_{k}^{c}\neq +}}{\displaystyle\coprod} A_{k}^{c}  \right)
}
$$

\item[$\blacktriangleright$] For finite sets $A$ and $C$, $S=(S_{1},\ldots,S_{k})\in \mathcal{P}_{k}(A)$, $(B_{1}^{a},\ldots,B_{k}^{a})\in \Sigma^{\times k}_{+}$, with $a\in A$ and $B_{i}^{a}=+$ iff $a\in S_{i}$, and for all $i\in \{1,\ldots,k\}$, $a'\in A$ and $b\in B_{i}^{a'}\neq +$, the following diagram commutes: 
$$
\xymatrix{
\vec{O}_{S}\times \underset{a\in A}{\displaystyle\prod} M(B_{1}^{a},\ldots,B_{k}^{a})\times O_{i}(C)\ar[r]^{\mu_{S}\times id} \ar[d]_{id\times \circ_{i}^{b}} & M\left( \underset{\substack{a\in A\\ B_{1}^{a}\neq +}}{\displaystyle\coprod} B_{1}^{a} ,\ldots, \underset{\substack{a\in A\\ B_{k}^{a}\neq +}}{\displaystyle\coprod} B_{k}^{a}  \right) \times O_{i}(C)\ar[d]^{\circ_{i}^{b}}\\
\vec{O}_{S} \times M(B_{1}^{a'},\ldots,B_{1}^{a'}\cup_{b}C,\ldots,B_{k}^{a'}) \underset{a\in A\setminus a'}{\displaystyle\prod} M(B_{1}^{a},\ldots,B_{k}^{a})\ar[r]_{\mu_{S}} & M\left( \underset{\substack{a\in A\\ B_{1}^{a}\neq +}}{\displaystyle\coprod} B_{1}^{a} ,\ldots, \underset{\substack{a\in A\\ B_{i}^{a}\neq +}}{\displaystyle\coprod} B_{i}^{a} \cup_{b}C, \ldots, \underset{\substack{a\in A\\ B_{k}^{a}\neq +}}{\displaystyle\coprod} B_{k}^{a}  \right)
}
$$

\end{itemize}

\begin{merci}
I would like to thank Thomas Willwacher for his help in preparing this paper. I also wish to express my gratitude to Pedro Boavida, Najib Idrissi, Muriel Livernet and Victor Turchin for many helpful comments. The author is also indebted to Benoit Fresse for answering numerous questions. 
\end{merci}

\bibliographystyle{amsplain}
\bibliography{bibliography}

\vspace{20pt}

\noindent Julien Ducoulombier: Department of Mathematics, ETH Zurich, Ramistrasse 101, Zurich, Switzerland\\
\textit{E-mail address: } \href{mailto:julien.ducoulombier@math.ethz.ch}{julien.ducoulombier@math.ethz.ch}

\end{document}